\DeclareMathAlphabet{\pazocal}{OMS}{zplm}{m}{n}
\newcommand{\Aa}{\mathcal{A}}
\newcommand{\Ab}{\pazocal{A}}
\newcommand{\Ba}{\mathcal{B}}
\newcommand{\Da}{\mathcal{D}}
\newcommand{\Db}{\pazocal{D}}
\newcommand{\Ea}{\mathcal{E}}
\newcommand{\Eb}{\pazocal{E}}
\newcommand{\Fa}{\mathcal{F}}
\newcommand{\Fb}{\pazocal{F}}
\newcommand{\Ga}{\mathcal{G}}
\newcommand{\Gb}{\pazocal{G}}
\newcommand{\Ib}{\pazocal{I}}
\newcommand{\Jb}{\pazocal{J}}
\newcommand{\La}{\mathcal{L}}
\newcommand{\Lb}{\pazocal{L}}
\newcommand{\Mb}{\pazocal{M}}
\newcommand{\Nb}{\pazocal{N}}
\newcommand{\Ob}{\pazocal{O}}
\newcommand{\Pb}{\pazocal{P}}
\newcommand{\Sb}{\pazocal{S}}
\newcommand{\Tb}{\pazocal{T}}
\newcommand{\Ub}{\pazocal{U}}
 \newtheorem{lem}{Lemma}[subsection]
 \newtheorem{prop}[lem]{Proposition}
  \newtheorem{coro}[lem]{Corollary}
 \newtheorem{theo}[lem]{Theorem}
  \newtheorem{defi}[lem]{Definition}
  \newtheorem{rem}[lem]{Remark}
 \newtheorem{exa}[lem]{Example}
  \newtheorem{roots}[lem]{}
  \newtheorem{UEA_AD}[lem]{}
  \newtheorem{char_alg}[lem]{}  
 \newcommand{\C }{{\mathbb C }}
 \newcommand{\Q }{{\mathbb Q }}
 \newcommand{\G }{{\mathbb G }}
 \newcommand{\T }{{\mathbb T }}
 \newcommand{\B }{{\mathbb B }}
 \newcommand{\Z }{{\mathbb Z }}
 \newcommand{\N }{{\mathbb N }}
\newcommand{\simeqd}{\mathrel{\rotatebox[origin=c]{-90}{$\simeq$}}}
\begin{document}
\def \bangle{ \atopwithdelims \langle \rangle}
\title{\textbf{ A Beilinson-Bernstein theorem for twisted arithmetic differential operators on the formal flag variety}}
\author{ Andr\'{e}s Sarrazola Alzate}
\date{}
\maketitle
\begin{abstract}
Let $\Q_p$ be the field of $p$-adic numbers and $\G$ a split connected reductive group scheme over $\Z_p$. In this work we will introduce a sheaf of twisted arithmetic differential operators on the formal flag variety of $\G$ , associated to a general character. In particular, we will generalize the results of \cite{Huyghe2}, concerning the $\Da^{\dag}$-affinity of the smooth formal flag variety of $\G$, of certain sheaves of $p$-adically complete twisted arithmetic differential operators associated to an algebraic character, and the results of \cite{HS2} concerning the calculation of the global sections.
\end{abstract}
\justify
\textbf{Key words}: Formal flag variety, twisted arithmetic differential operators, Beilinson-Bernstein correspondence.

\makeatletter
\@starttoc{toc}
\makeatother

\section{Introduction}
\justify
An important theorem in group theory is the so-called Beilinson-Bernstein theorem \cite{BB}. Let us briefly recall its statement. Let $G$ be a semi-simple complex algebraic group with Lie algebra $\mathfrak{g}_{\C}:=\text{Lie}(G)$. Let $\mathfrak{t}_{\C}\subset\mathfrak{g}_{\C}$ be a Cartan subalgebra and $\mathfrak{z}\subset\Ub(\mathfrak{g}_{\C})$ the center of the universal enveloping algebra of $\mathfrak{g}_{\C}$. For every character $\lambda\in\mathfrak{t}_{\C}^*$ we denote by $\mathfrak{m}_{\lambda}\subset\mathfrak{z}$ the corresponding maximal ideal determined by the Harish-Chandra homomorphism. We put $\Ub_{\lambda}:=\Ub(\mathfrak{g}_{\C})/\mathfrak{m}_{\lambda}$. The theorem states that if $X$ is the flag variety associated to $G$ and $\Db_{X,\lambda}$ is the sheaf of $\lambda$-twisted differential operators on $X$, then $\text{Mod}_{qc}(\Db_{X,\lambda})\simeq\text{Mod}(\Ub_{\lambda})$, provided that $\lambda$ is a dominant and regular character. Here $\text{Mod}_{qc}(\Db_{X,\lambda})$ denotes the category of $\Ob_{X}$-quasi-coherent $\Db_{X,\lambda}$-modules. Moreover, under this equivalence of categories coherent $\Db_{X,\lambda}$-modules correspond to finitely generated $\Ub_{\lambda}$-modules \cite[théorème principal]{BB}. \\
The Beilinson-Bernstein theorem has been proved independently by A. Beilinson and J. Bernstein in \cite{BB} and, by J-L. Brylinski and M. Kashiwara in \cite{BK}. This result is an essential tool in the proof of Kazhdan-Lusztig's multiplicity conjecture \cite{KL}. In mixed characteristic, an important progress has been achieved by C. Huyghe in \cite{Huyghe1, Huyghe2} and Huyghe-Schmidt in \cite{HS2}. They use Berthelot's arithmetic differential operators \cite{Berthelot1} to prove an arithmetic version of the Beilinson-Bernstein theorem for the smooth formal flag variety over $\Z_p$\footnote{In reality over a discrete valuation ring.}. In this setting, the global sections of these operators equal a crystalline version of the classical distribution algebra $\text{Dist}(G)$ of the group scheme $G$.
\justify
Let $\Q_p$ be the field of $p$-adic numbers. Throughout this paper $\G$ will denote a split connected reductive group scheme over $\Z_p$, $\B\subset\G$ a Borel subgroup and $\T\subset\B$ a split maximal torus of $\G$. We will also denote by  $X=\G/\B$ the smooth flag $\Z_p$-scheme associated to $\G$. In this work, we introduce sheaves of \textit{twisted differential operators} on the formal flag scheme $\mathfrak{X}$ and we show an arithmetic Beilinson-Bernstein correspondence. Here the twist is respect to a morphism of $\Z_p$-algebras $\lambda:\text{Dist}(\T)\rightarrow\Z_p$, where $\text{Dist}(\T)$ denotes the distribution algebra in the sense of \cite{DG}. Those sheaves are denoted by $\Da^{\dag}_{\mathfrak{X},\lambda}$. In particular, there exists a basis $\Sb$ of $\mathfrak{X}$ consisting of affine open subsets, such that for every $\mathfrak{U}\in\Sb$ we have
\begin{eqnarray*}
\Da^{\dag}_{\mathfrak{X},\lambda}|_{\mathfrak{U}}\simeq \Da^{\dag}_{\mathfrak{U}}.
\end{eqnarray*} 
In other words, locally we recover the sheaves of Berthelots' differential operators, \cite{Berthelot1} (of course, this clarifies why they are called twisted differential operators).
\justify
To calculate their global sections, we remark for the reader  that actually we dispose of a description of the distribution algebra $\text{Dist}(\G)$ as an inductive limit of filtered noetherian $\Z_p$-algebras $\text{Dist}(\G)=\varinjlim_{m\in\N}D^{(m)}(\G)$, such that for every $m\in\N$ we have $D^{(m)}(\G)\otimes_{\Z_p}\Q_p=\Ub(\text{Lie}(\G)\otimes_{\Z_p}\Q_p)$ the universal enveloping algebra of Lie$(\G)\otimes_{\Z_p}\Q_p$. For instance $D^{(0)}(\G)=\Ub(\text{Lie}(\G))$. In particular, every character $\lambda:\text{Dist}(\T)\rightarrow\Z_p$ induces, via tensor product with $\Q_p$, an infinitesimal character $\chi_{\lambda}$. The last one allows us to define $\hat{D}^{(m)}(\G)_{\lambda}$ as the $p$-adic completion of the central reduction $D^{(m)}(\G)/(D^{(m)}(\G)\cap\text{ker}(\chi_{\lambda+\rho}))$, and we denote by $D^{\dag}(\G)_{\lambda}$ the limit of the inductive system $\hat{D}^{(m)}(\G)_{\lambda}\otimes_{\Z_p}\Q_p\rightarrow \hat{D}^{(m')}(\G)_{\lambda}\otimes_{\Z_p}\Q_p$. In this paper we show the following result
\justify
\textbf{Theorem.} \textit{ Let us suppose that $\lambda : \text{Dist}(\T)\rightarrow\Z_p$ is a character of $\text{Dist}(\T)$ such that $\lambda + \rho \in \mathfrak{t}^*_\Q$ \footnote{Here the character $\lambda$ is induced via tensor product with $\Q_p$ and the correspondence (\ref{Iso_chars_Z_p}).} ($\mathfrak{t}_\Q := \text{Lie}(\T)\otimes_{\Z_p}\Q_p$) is a dominant and regular character of $\mathfrak{t}^*_{\Q}$. Then the global sections functor induces an equivalence between the categories of coherent $\Da^{\dag}_{\mathfrak{X},\lambda}$-modules and finitely presented $D^{\dag}(\G)_{\lambda}$-modules.}
\justify
This theorem is based on a refined version for the sheaves (of \textit{level m twisted differential operators} which we will define later) $\hat{\Da}^{(m)}_{\mathfrak{X},\lambda,\Q}$. As in the classical case, the inverse functor is determined by the \textit{localization functor} 
\begin{eqnarray*}
\Lb oc_{\mathfrak{X},\lambda}^{\dag}(\bullet):=\Da^{\dag}_{\mathfrak{X},\lambda}\otimes_{D^{\dag}(\G)_{\lambda}}(\bullet),
\end{eqnarray*}
with a completely analogous definition for every $m\in\N$. 
\justify
The first section of this work is devoted to fix some important arithmetic definitions. They are necessary to define an integral model  (in the sense of definition \ref{defi I.models}) of the usual sheaf of twisted differential operators on the smooth flag variety $X_\Q:=X\times_{\text{Spec}(\Z_p)}\text{Spec}(\Q_p)$, associated to the split connected reductive algebraic group $\G_\Q:=\G\times_{\text{Spec}(\Z_p)}\text{Spec}(\Q_p)$. One of the most important is the algebra of distributions of level $m$, which is denoted by $D^{(m)}(\G)$ and it is introduced in subsection 2.5. This is a filtered noetherian $\Z_p$-algebra which plays a fundamental roll in the rest of our work.
\justify
Inspired by the works \cite{AW}, \cite{BMR} and \cite{BB1},  in the first part of the third section we construct our \textit{level m twisted arithmetic differential operators} on the formal flag scheme $\mathfrak{X}$. To do this, we denote by $\mathfrak{t}$ the commutative $\Z_p$-Lie algebra of the maximal torus $\T$ and by $\mathfrak{t}_\Q:=\mathfrak{t}\otimes_{\Z_p}\Q_p$. They are Cartan subalgebras of $\mathfrak{g}$ and $\mathfrak{g}_\Q$, respectively. Let \textbf{N} be the unipotent radical subgroup of the Borel subgroup $\B$ and let us define
\begin{eqnarray*}
\widetilde{X}:=\G/\textbf{N},\;\;\; X:=\G/\B
\end{eqnarray*}
the basic affine space and the flag scheme of $\G$. These are smooth and separated schemes over $\Z_p$, and $\widetilde{X}$ is endow with commuting  $(\G,\T)$-actions making the canonical projection $\xi:\widetilde{X}\rightarrow X$ a locally trivial $\T$-torsor for the Zariski topology on $X$. Following \cite{BB1},  the right $\T$-action on $\widetilde{X}$ allows to  define the \textit{level m relative enveloping algebra} of the torsor $\xi$ as the sheaf of $\T$-invariants of $\xi_*\Db_{\widetilde{X}}^{(m)}$:
\begin{eqnarray*}
\widetilde{\Db^{(m)}}:=\left(\xi_*\Db_{\widetilde{X}}^{(m)}\right)^{\T}.
\end{eqnarray*}
As we will explain later this is a sheaf of $D^{(m)}(\T)$-modules and we will show that over an affine open subset $U\subset X$ that trivialises the torsor, the sheaf $\widetilde{\Db^{(m)}}$ may be described as the tensor product $\Db^{(m)}_{X}|_{U}\otimes_{\Z_p}D^{(m)}(\T)$ (this is the arithmetic analogue of \cite[page 180]{BB1}).
\justify
Two fundamental properties of the distribution algebra $\text{Dist}(\T)$  are: it constitutes an integral model of the universal enveloping algebra $\Ub(\mathfrak{t}_\Q)$ and  $\text{Dist}(\T)=\varinjlim_{m}D^{(m)}(\T)$. These properties allow us to introduce the \textit{central reduction} as follows. First of all, we say that an $\Z_p$-algebra map $\lambda:\text{Dist}(\T)\rightarrow\Z_p$ is a \textit{character} of $\text{Dist}(\T)$  (cf. \ref{UEA_AD}). By the properties just stated, it induces a character of the Cartan subalgebra $\mathfrak{t}_\Q$ \footnote{In order to soft the notation through this work we will always denote by $\lambda$ the character $\lambda\otimes 1_{\Q_p}$. This should not cause any confusion to the reader.}
\begin{eqnarray*}
(\lambda=)\;\lambda\otimes1 :\mathfrak{t}_\Q\hookrightarrow \Ub(\mathfrak{t}_\Q)=\text{Dist}(\T)\otimes_{\Z_p}\Q_p\rightarrow \Q_p.
\end{eqnarray*}
Now, let us consider the ring $\Z_p$ as a $D^{(m)}(\T)$-module via $\lambda$. We can define the sheaf of \textit{level m twisted arithmetic differential operators} on the flag scheme $X$ by
\begin{eqnarray}\label{defi twist}
\Db_{X,\lambda}^{(m)}:=\widetilde{\Db^{(m)}}\otimes_{D^{(m)}(\T)}\Z_p.
\end{eqnarray}
This is a sheaf of $\Z_p$-algebras which is an integral model of $\Db_{\lambda}$.\footnote{Tensoring with $\Q_p$ and restricting to the generic fiber $X_\Q\hookrightarrow X$ equals $\Db_{\lambda}$, \cite[page 170]{BB1}.}
\justify
The second part of the third section is dedicated to explore some finiteness properties of the cohomology of coherent $\Db^{(m)}_{X,\lambda}$-modules. Notably important is the case when the character $\lambda + \rho\in \mathfrak{t}_\Q^*$ is dominant and regular. Under this assumption, the cohomology groups of every coherent $\Db^{(m)}_{X,\lambda}$-module have the nice property of being of finite $p$-torsion. This is one of the central results in this work.
\justify
In section 4 we will consider the $p$-adic completion of (\ref{defi twist}). It will be denoted by $\widehat{\Da}^{(m)}_{\mathfrak{X},\lambda}$ and the sheaf $\widehat{\Da}^{(m)}_{\mathfrak{X},\lambda,\Q}:=\widehat{\Da}^{(m)}_{\mathfrak{X},\lambda}\otimes_{\Z_p}\Q_p$ is our sheaf of \textit{level m twisted arithmetic differential operators} on the formal flag scheme $\mathfrak{X}$. 
\justify
Let $Z(\mathfrak{g}_\Q)$ be the center of the universal enveloping algebra $\Ub(\mathfrak{g}_\Q)$. From now on, we will assume that $\lambda + \rho\in\mathfrak{t}^*_\Q$ is dominant and regular, and that $\chi_{\lambda}:Z(\mathfrak{g}_\Q)\rightarrow \Q_p$ is the central character induced by $\lambda$ via the classical Harish-Chandra homomorphism. We will show that if Ker$(\chi_{\lambda + \rho})_{\Z_p}:=D^{(m)}(\G)\cap\text{Ker}(\chi_{\lambda+\rho})$ and $\widehat{D}^{(m)}(\G)_{\lambda}$ denotes the $p$-adic completion of the central reduction  $D^{(m)}(\G)_{\lambda}:=D^{(m)}(\G)/D^{(m)}(\G)\text{Ker}(\chi_{\lambda+\rho})_{\Z_p}$, then we have an isomorphism of complete $\Z_p$-algebras 
\begin{eqnarray*}
\widehat{D}^{(m)}(\G)_{\lambda}\otimes_{\Z_p}\Q_p \xrightarrow{\simeq} H^0\left(\mathfrak{X},\widehat{\Da}^{(m)}_{\mathfrak{X},\lambda,\Q}\right).
\end{eqnarray*}
Finally, in subsection 4.3 we will introduce the localization functor $\La oc^{(m)}_{\mathfrak{X},\lambda}$, from the category of finitely generated $\widehat{D}^{(m)}(\G)_{\lambda}\otimes_{\Z_p}\Q_p$-modules to the category of coherent $\widehat{\Da}^{(m)}_{\mathfrak{X},\lambda,\Q}$-modules as the sheaf associated to the presheaf defined by
\begin{eqnarray*}
\Ub\subseteq \mathfrak{X}\mapsto \widehat{\Da}^{(m)}_{\mathfrak{X},\lambda,\Q}(\Ub)\otimes_{\widehat{D}^{(m)}(\G)_{\lambda}\otimes_{\Z_p}\Q_p}E,
\end{eqnarray*}
where $E$ is a finitely generated $\widehat{D}^{(m)}(\G)_{\lambda}\otimes_{\Z_p}\Q_p$-module. We will show
\justify
 \textbf{ theorem:}
Let us suppose that $\lambda:\text{Dist}(\T)\rightarrow\Z_p$ is a character of $\text{Dist}(\T)$ such that $\lambda + \rho \in \mathfrak{t}^*_\Q$ is a dominant and regular character of $\mathfrak{t}_\Q$.
\begin{itemize}
\item[(i)] The functors $\La oc^{(m)}_{\mathfrak{X},\lambda}$ and $H^{0}(\mathfrak{X},\bullet)$ are quasi-inverse equivalences of categories between the abelian categories of finitely generated (left) $\widehat{D}^{(m)}(\G)_{\lambda}\otimes_{\Z_p}\Q_p$-modules and coherent  $\widehat{\Da}^{(m)}_{\mathfrak{X},\lambda,\Q}$-modules.
\item[(ii)] The functor $\La oc^{(m)}_{\mathfrak{X},\lambda}$ is an exact functor.
\end{itemize}
\justify
Section 5 of this work is devoted to treat the problem of passing to the inductive limit. In fact, if $m\le m'$ the sheaves $\hat{\Da}^{(m)}_{\mathfrak{X},\lambda,\Q}$ and $\hat{\Da}^{(m')}_{\mathfrak{X},\lambda,\Q}$ are related via a natural map $\hat{\Da}^{(m)}_{\mathfrak{X},\lambda,\Q}\rightarrow\hat{\Da}^{(m')}_{\mathfrak{X},\lambda,\Q}$ and we can define $\Da^{\dag}_{\mathfrak{X},\lambda}$ to be the inductive limit. Similarly, we dispose of canonical maps $\widehat{D}^{(m)}(\G)_{\lambda}\otimes_{\Z_p}\Q_p\rightarrow\widehat{D}^{(m')}(\G)_{\lambda}\otimes_{\Z_p}\Q_p$ and we denote by $D^{\dag}(\G)_{\lambda}$ its limit. Introducing the localization functor $\La oc^{\dag}_{\mathfrak{X},\lambda}$ exactly as we have made before, we get an analogue of the previous theorem for the sheaves $\Da^{\dag}_{\mathfrak{X},\lambda}$.
\justify
The work developed by Huyghe in \cite{Huyghe1} and by D. Patel, T. Schmidt and M. Strauch in \cite{PSS1}, \cite{PSS2} and \cite{HPSS}, shows that the Beilinson-Berstein theorem is an important tool in the following localization theorem \cite[theorem 5.3.8]{HPSS}: if $\mathfrak{X}$ denotes the formal flag scheme of a split connected reductive group $\G$, then the theorem provides an equivalence of categories between the category of admissible locally analytic $\G(\Q_p)$- representations (with trivial character!) \cite{PT} and a category of coadmissible equivariant arithmetic $\Da$-modules (on the family of formal models of the rigid analytic flag variety of $\G$). Our motivation is to study this localization in the twisted case (in a work still in progress, we have proved the affinity of an admissible formal blow-up of $\mathfrak{X}$ for the sheaf of twisted arithmetic differential operators with a congruence level $k$ \footnote{The point here is that an admissible formal blow-up is not necessarily smooth and therefore, we can not use immediately Berthelot's differential operators. For the definition of these differential operators the reader is invited to visit  \cite{HSS}.}. This is a fundamental result to achieve the previous localization theorem \cite{HPSS}). 
\justify
The case of a finite extension $L$ of $\Q_p$ seems to present several technical problems (the reader can take a look to \ref{UEA_AD} to see one of this obstacles). We will treat this case in a future work. 
\justify
\textbf{Acknowledgements:} The present article contains a part of the author PhD thesis written at the Universities of Strasbourg and Rennes 1 under the supervision of C. Huyghe and T. Schmidt. Both have always been very patient and attentive supervisors. For this, I express my deep gratitude to them. 
\justify
\textbf{Notation:} Throughout this work $p$ will denote a prime number and $\Z_p$ the ring of $p$-adic integers. Furthermore, if $X$ is an arbitrary noetherian scheme over $\Z_p$ and $j\in\N$, then we will denote by $X_j:=X\times_{\text{Spec}(\Z_p)}\text{Spec}(\Z_p/p^{j+1})$ the reduction modulo $p^{j+1}$, and by
\begin{eqnarray*}
\mathfrak{X}= \varinjlim_{j}X_j
\end{eqnarray*}
the formal completion of $X$ along the special fiber. Moreover, if  $\Eb$ is a sheaf of $\Z_p$-modules on $X$ then its $p$-adic completion $\Ea:=\varprojlim_{j}\Eb/p^{j+1}\Eb$ will be considered as a sheaf on $\mathfrak{X}$. Finally, the base change of a sheaf of $\Z_p$-modules on $X$ (resp. on $\mathfrak{X}$) to $\Q_p$ will always be denoted by the subscript $\Q$. For instance $\Eb_{\Q}:=\Eb\otimes_{\Z_p}\Q_p$ (resp. $\Ea_{\Q}:=\Ea\otimes_{\Z_p}\Q_p$).
\newpage
\section{Arithmetic definitions}
In this section we will describe the arithmetic objects on which the definitions and constructions of our work are based. We will give their functorial constructions and we will enunciate  their most remarkable properties. For a more detailed approach, the reader is invited to take a look to the references \cite{Berthelot2}, \cite{Huyghe1}, \cite{HS1} and \cite{Berthelot1}.
\subsection{Partial divided power structures of level m}
Let $p\in\Z$ be a prime number. In this subsection $\Z_{(p)}$ denotes the localization of  $\Z$ with respect to the prime ideal $(p)$.
\justify
We start recalling the following definition \cite[Definition 3.1]{Berthelot2}.

\begin{defi}\label{PD-structure}
Let $A$ be a commutative ring and $I\subset A$ an ideal. By a structure of divided powers on I we mean a collection of maps $\gamma_{i}:I\rightarrow A$ for all integers $i\ge 0$, such that
\begin{itemize}
\item[(i)] For all $x\in I$, $\gamma_{0}(x)=1$, $\gamma_{1}(x)=x$ and $\gamma_{i}(x)\in I$ if $i\ge 2$.
\item[(ii)] For $x,y\in I$ and $k\ge 1$ we have $\gamma_{k}(x+y)=\sum_{i+j=k}\gamma_i(x)\gamma_{j}(y)$.
\item[(iii)] For $a\in A$ and $x\in I$ we have $\gamma_k(ax)=a^k\gamma_k(x)$.
\item[(iv)] For $x\in I$ we have $\gamma_{i}(x)\gamma_{j}(x)=((i,j))\gamma_{i+j}(x)$, where $((i,j)):=(i+j)!(i!)^{-1}(j!)^{-1}$.
\item[(v)] We have $\gamma_{p}(\gamma_{q}(x))=C_{p,q}\gamma_{pq}(x)$, where $C_{p,q}:=(pq)!(p!)^{-1}(q!)^{-p}$.
\end{itemize}
\end{defi}
\justify
Throughout this paper we will use the terminology: "$(I,\gamma)$ is a PD-ideal", "$(A,I, \gamma)$ is a PD-ring" and "$\gamma$ is a PD-structure on $I$". Moreover, we say that $\phi: (A,I,\gamma)\rightarrow (B,J,\delta)$ is a \textit{PD-homomorphism} if $\phi: A\rightarrow B$ is a homorphism of rings such that $\phi(I)\subset J$ and $\delta_k\circ\phi|_I=\phi\circ\gamma_k$, for every $k\ge 0$.

\begin{exa}\label{Example}\cite[Section 3, Examples 3.2 (3)]{Berthelot2}
Let $L|\Q_p$ be a finite extension of $\Q_p$, such that $\mathfrak{o}$ is its valuation ring and $\varpi$ is a uniformizing parameter. Let us write $p=u\varpi^{e}$, with $u$ a unit and $e$ a positive integer (called the absolute ramification index of $\mathfrak{o}$). Then $\gamma_{k}(x):= x^{k}/k!$ defines a PD-structure on $(\varpi)$ if and only if $e\le p-1$.   In particular, we dispose of a PD-structure on $(p)\subset \Z_{p}$. We let $x^{[k]}:=\gamma_{k}(x)$ and we denote by $((p), [\; ])$ this PD-ideal.
\end{exa}
\justify
Let us fix a positive integer $m\in\Z$. For the next terminology we will always suppose that $(A,I,\gamma)$ is a $\Z_{(p)}$-PD-algebra whose PD-structure is compatible (in the sense of \cite[subsection 1.2]{Berthelot1}) with the PD-structure induced by $((p), [\; ])$ (we recall to the reader that the PD-structure $((p), [\;])$ always \textit{extends} to a PD-structure on any $\Z_{(p)}$-algebra \cite[proposition 3.15]{Berthelot2} ). We will also denote by $I^{(p^m)}$ the ideal generated by $x^{p^m}$, with $x\in I$.

\begin{defi}
Let $m$ be a positive integer. Let $A$ be a $\Z_{(p)}$-algebra  and $I\subset A$ an ideal. We call a m-PD-structure on $I$ a PD-ideal $(J,\gamma)\subset A$ such that $I^{(p^m)}+pI\subset J$. 
\end{defi}
\justify
We will say that $(I,J,\gamma)$ is a m-PD-ideal of $A$. Moreover, we say that $\phi: (A,I,J,\gamma)\rightarrow (A',I',J',\gamma')$ is a m-PD-morphism if $\phi:A\rightarrow A'$ is a ring morphism such that $\phi(I)\subset I'$, and such that $\phi: (A,J,\gamma)\rightarrow (A',J',\gamma')$ is a PD-morphism. \\
For every $k\in\N$ we denote by $k=p^mq+r$ the Euclidean division of $k$ by $p^m$, and for every $x\in I$ we define $x^{\{k\}_{(m)}}:=x^{r}(\gamma_{q}(x^{p^m}))$. We remark to the reader that the relation $q!\gamma_{q}(x)=x^q$ (which is an easy consequence of (i) and (iv) of definition \ref{PD-structure}) implies that $q!x^{\{k\}_{(m)}}=x^{k}$.\\
On the other side, the m-PD-structure $(I,J,\gamma)$ allows us to define an increasing filtration $(I^{\{n\}})_{n\in\N}$ on the ring $A$ which is finer that the $I$-adic filtration and called the \textit{m-PD-filtration}. It is characterized by the following conditions \cite[1.3]{Berthelot3}:
\begin{itemize}
\item[(i)] $I^{\{0\}}=A$, $I^{\{1\}}=I$.
\item[(ii)] For every $n\ge 1$, $x\in I^{\{n\}}$ and $k\ge 0$ we have $x^{\{k\}}\in I^{\{kn\}}$.
\item[(iii)] For every $n\ge 0$, $(J+pA)\cap I^{\{n\}}$ is a PD-subideal of $(J+pA)$.
\end{itemize}
\begin{prop}\label{m-PD-structure} \cite[proposition 1.4.1]{Berthelot2}
Let us suppose that $R$ is a $\Z_{(p)}$-algebra endowed with a m-PD-structure $(\mathfrak{a}, \mathfrak{b},\alpha)$. Let $A$ be a $R$-algebra and $I\subset A$ an ideal. There exists an $R$-algebra $P_{(m)}(I)$, an ideal $\overline{I}\subset P_{(m)}(I)$ endowed with a m-PD-structure $(\tilde{I}, [\; ])$ compatible with $(\mathfrak{b},\alpha)$, and a ring homomorphism $\phi:A\rightarrow P_{(m)}(I)$ such that $\phi(I)\subset \overline{I}$. Moreover, $(P_{(m)}(I),\overline{I},\tilde{I},[\;],\phi)$ satisfies the following universal property: for every $R$-homomorphism $f:A\rightarrow A'$ sending $I$ to an ideal $I'$ which is endowed with a m-PD-structure $(J',\gamma')$ compatible with $(\mathfrak{b},\alpha)$, there exists a unique m-PD-morphism $g:(P_{(m)}(I),\overline{I},\tilde{I}, [\;])\rightarrow (A',I',J',\gamma')$ such that $g\circ\phi=f$.
\end{prop}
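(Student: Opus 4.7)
The plan is to construct $P_{(m)}(I)$ explicitly, following the strategy of the classical PD-envelope construction (the case $m=0$, as in Berthelot--Ogus), and then to verify the universal property directly. The existence goes in three stages: reduction to a free situation, construction in the free case, and descent by quotient.

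First I would reduce to a ``free'' situation. Writing $I$ as generated by a family $(x_\alpha)_{\alpha\in\Lambda}$, present $A$ as a quotient of the polynomial algebra $R[T_\alpha]$ via $T_\alpha\mapsto x_\alpha$. The envelope of $I$ can then be built from the envelope of the ideal $(T_\alpha)\subset R[T_\alpha]$ by a quotient that imposes the relations in $A$ together with all their m-PD-consequences. Since the desired construction commutes with filtered colimits, I may further assume $\Lambda$ is finite.

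Second, for the free case, I would construct the \emph{m-PD-polynomial algebra} $R\{T_1,\ldots,T_n\}_{(m)}$. As an $R$-module it has a basis consisting of formal m-PD-monomials $T^{\{\underline{k}\}_{(m)}} := \prod_{i=1}^n T_i^{\{k_i\}_{(m)}}$. The multiplication is forced by the identities $q!\,x^{\{k\}_{(m)}}=x^k$ (for $k=p^mq+r$) together with the rules in Definition \ref{PD-structure}, yielding structure constants $c_{k,k'}^{(m)}$ satisfying $T_i^{\{k\}_{(m)}}\!\cdot T_i^{\{k'\}_{(m)}}=c_{k,k'}^{(m)}T_i^{\{k+k'\}_{(m)}}$. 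On the ideal $\overline{I}_0=(T_1,\ldots,T_n)$, the PD-sub-ideal $\tilde{I}_0$ generated by the images of the $T_i^{\{p^m\}_{(m)}}$ together with $p\overline{I}_0$ then carries a canonical PD-structure, equipping $R\{T_i\}_{(m)}$ with an m-PD-structure compatible with $(\mathfrak{b},\alpha)$ by base change from $R$.

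Third, for general $A$ and $I$, I would define $P_{(m)}(I)$ as the quotient of $R\{T_\alpha\}_{(m)}$ by the ideal generated by $K:=\text{Ker}(R[T_\alpha]\to A)$ together with all its m-PD-consequences. Here the essential tool is the m-PD-analogue of the classical stability lemma: if $K\subset B$ is an ideal in an m-PD-algebra $(B,\overline{I},\tilde{I},[\,])$ with $[\,]_n(K\cap\tilde{I})\subset K$ for all $n\geq 1$, then $(B/K,\overline{I}/K,(\tilde{I}+K)/K)$ inherits a canonical m-PD-structure. The point is that one can always enlarge $K$ to satisfy this stability condition, and the resulting enlargement is the minimal one enforcing the relations. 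For the universal property, given $f:A\to A'$ with target an m-PD-algebra compatible with $(\mathfrak{b},\alpha)$, define $g$ on generators by $g(T_\alpha)=f(x_\alpha)$ and extend to m-PD-monomials using $\gamma'_n$ on $J'$; the compatibility with $(\mathfrak{b},\alpha)$ ensures $g$ descends through the quotient, and uniqueness follows because $P_{(m)}(I)$ is generated as an m-PD-algebra by $\phi(A)$.

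The main obstacle will be in the third step, namely verifying that the candidate sub-ideal $\tilde{I}\subset\overline{I}$ in the quotient is a genuine PD-ideal rather than merely a PD-ideal up to the quotiented relations. This requires the m-PD stability lemma, whose proof uses in an essential way both the Euclidean division identity $q!\,x^{\{k\}_{(m)}}=x^k$ (to reduce m-PD expressions to ordinary polynomial expressions times divided-power factors) and the compatibility with $(\mathfrak{b},\alpha)$ (to handle the interaction between $p$-divisions coming from $R$ and the PD-operations being constructed). Once this compatibility is in place, the verification of the universal property is a matter of routine bookkeeping.
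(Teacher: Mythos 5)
This proposition is stated in the paper purely as a citation --- no proof is given --- so there is no ``paper's own proof'' to compare against. (Incidentally, the reference \cite[proposition 1.4.1]{Berthelot2} appears to be a slip for \cite[proposition 1.4.1]{Berthelot1}, Berthelot's \emph{$\mathcal{D}$-modules arithm\'etiques I}, where that numbering corresponds to the existence of the $m$-PD-envelope; the divided-power formalism in Berthelot--Ogus uses a different numbering scheme.)

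Your first step contains a gap that propagates through the construction. You propose to ``present $A$ as a quotient of $R[T_\alpha]$ via $T_\alpha\mapsto x_\alpha$'' where the $(x_\alpha)$ generate $I$. But the $x_\alpha$ generate only the ideal $I$, not $A$ as an $R$-algebra, so the map $R[T_\alpha]\to A$ is not in general surjective, and the ideal $K=\ker(R[T_\alpha]\to A)$ that your third step quotients by is not defined. The standard fix (this is the shape of the $m=0$ construction in Berthelot--Ogus) is to take the free $m$-PD-algebra $A\{T_\alpha\}_{(m)}$ over $A$, with $\phi\colon A\to A\{T_\alpha\}_{(m)}$ the inclusion of constants, and to quotient by the ideal generated by the elements $T_\alpha-x_\alpha$ together with its $m$-PD-consequences; the stability lemma you invoke in your fourth paragraph is then exactly what is needed to see that the quotient inherits an $m$-PD-structure. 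With that change of base ring, the rest of your outline is sound.

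It is also worth knowing that the cited proof goes by a genuinely different route which buys a shorter argument: rather than rebuilding the free-algebra-plus-quotient machinery in the $m$-PD setting, one reduces directly to the already-known existence of the \emph{ordinary} PD-envelope. Put $I':=I^{(p^m)}+pI$, form the ordinary PD-envelope $(D,\bar J,[\ ])$ of $(A,I')$ compatible with $(\mathfrak b,\alpha)$, and set $\overline I:=\phi(I)D+\bar J$, $\tilde I:=\bar J$. A short binomial/multinomial computation gives $\overline I^{(p^m)}+p\,\overline I\subset\bar J$, so this is an $m$-PD-ideal, and the universal property is inherited immediately from that of $D$ together with the constraint $f(I)\subset I'$. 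The trade-off: Berthelot's route requires no new machinery beyond classical PD-envelopes, whereas your route (once corrected) is more self-contained but has to re-establish the free $m$-PD-polynomial algebra and the quotient stability lemma from scratch.
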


\begin{defi}
Under the hypothesis of the preceding proposition, we call the $R$-algebra $P_{(m)}(I)$, endowed with the m-PD-ideal $(\bar{I},\tilde{I},[\; ])$, the m-PD-envelope of $(A,I)$. 
\end{defi}
\justify
Finally, if we endow $P^{n}_{(m)}(I):=P_{(m)}(I)/\overline{I}^{\{n+1\}}$ with the quotient m-PD-structure \cite[1.3.4]{Berthelot1} we have
\begin{coro}\label{order n mPD}\cite[Corollary 1.4.2]{Berthelot1}
Under the hypothesis of the preceding proposition, there exists an $R$- algebra $P^{n}_{(m)}(I)$ endowed with a m-PD-structure $(\overline{I},\tilde{I}, [\; ])$ compatible with $(\mathfrak{b},\alpha)$ and such that $\overline{I}^{\{n+1\}}=0$. Moreover, there exists an $R$-homomorphism $\phi_n: A\rightarrow P^{n}_{(m)}(I)$ such that $\phi(I)\subset \overline{I}$, and universal for the $R$-homomorphisms $A\rightarrow (A',I',J',\gamma')$ sending $I$ into a m-PD-ideal $I'$ compatible with $(\mathfrak{b},\alpha)$ and such that $I'^{\{n+1\}}=0$.
\end{coro}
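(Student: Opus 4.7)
The plan is to realize $P^{n}_{(m)}(I)$ as the quotient of the m-PD-envelope $P_{(m)}(I)$ (furnished by Proposition \ref{m-PD-structure}) by the ideal $\overline{I}^{\{n+1\}}$ coming from the m-PD-filtration, and to deduce the universal property from that of $P_{(m)}(I)$. Accordingly I would first check that the m-PD-structure $(\overline{I}, \tilde{I}, [\;])$ on $P_{(m)}(I)$ descends to the quotient $P^{n}_{(m)}(I) := P_{(m)}(I)/\overline{I}^{\{n+1\}}$, then verify that the natural map $\phi_n : A \to P_{(m)}(I) \twoheadrightarrow P^{n}_{(m)}(I)$ has the right universal property.

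For the descent of the structure, I would invoke condition (iii) in the characterization of the m-PD-filtration, namely that $(\tilde{I} + p P_{(m)}(I)) \cap \overline{I}^{\{n+1\}}$ is a PD-subideal of $\tilde{I} + p P_{(m)}(I)$. By the general formalism of quotients of PD-rings by PD-subideals, this guarantees that the PD-structure on $\tilde{I}$ passes to the image $\overline{\tilde{I}}$ in $P^{n}_{(m)}(I)$. Conditions (i) and (ii) of the filtration, together with the inclusion $\overline{I}^{(p^m)} + p\overline{I} \subset \tilde{I}$, then show that the resulting data $(\overline{I}, \tilde{I}, [\;])$ in $P^{n}_{(m)}(I)$ is still a m-PD-structure, still compatible with $(\mathfrak{b}, \alpha)$ since the quotient projection is an m-PD-morphism. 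By construction $\overline{I}^{\{n+1\}} = 0$ in $P^{n}_{(m)}(I)$.

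The universal property is then formal. Given an $R$-homomorphism $f : A \to A'$ sending $I$ into a m-PD-ideal $(I', J', \gamma')$ compatible with $(\mathfrak{b}, \alpha)$ and satisfying $I'^{\{n+1\}} = 0$, Proposition \ref{m-PD-structure} provides a unique m-PD-morphism $g : P_{(m)}(I) \to A'$ such that $g \circ \phi = f$. Being an m-PD-morphism, $g$ respects the m-PD-filtration, so $g(\overline{I}^{\{n+1\}}) \subset I'^{\{n+1\}} = 0$; hence $g$ factors uniquely through $P^{n}_{(m)}(I)$, which yields the desired $\phi_n$.

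The main point where one must be careful is the first step: it is precisely because the m-PD-filtration was built (via condition (iii)) to be compatible with the PD-operations on $\tilde{I}$ that the quotient construction goes through. Had one naively used the $\overline{I}$-adic filtration, the PD-structure on $\tilde{I}$ would not descend, and the whole strategy would break down. Once this point is secured, the rest of the argument is a straightforward unwinding of the universal properties.
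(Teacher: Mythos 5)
Your proposal is correct and takes essentially the same route as the paper, which defines $P^{n}_{(m)}(I)$ as the quotient $P_{(m)}(I)/\overline{I}^{\{n+1\}}$ equipped with the quotient m-PD-structure (citing Berthelot 1.3.4 for the descent, justified exactly by condition (iii) of the m-PD-filtration as you observe) and then deduces the universal property from that of $P_{(m)}(I)$ together with the fact that m-PD-morphisms respect m-PD-filtrations.
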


\subsection{Arithmetic differential operators}\label{ADP}
Let us suppose that $\Z_p$ is endowed with the m-PD-structure defined in example \ref{Example} (cf. \cite[Subsection 1.3, example (i)]{Berthelot1}). Let $X$ be a smooth $\Z_p$-scheme, and $\Ib\subset\Ob_{X}$ a quasi-coherent ideal. The presheaves
\begin{eqnarray*}
U\mapsto  P_{(m)}(\Gamma(U,\Ib)) \;\;\;\text{and}\;\;\; U\mapsto  P^{n}_{(m)}(\Gamma(U,\Ib))
\end{eqnarray*} 
are sheaves of quasi-coherent $\Ob_{X}$-modules which we denote by $\Pb_{(m)}(\Ib)$ and $\Pb^{n}_{(m)}(\Ib)$. In a completely analogous way, we can define a canonical ideal $\overline{\Ib}$ of $\Pb_{(m)}(\Ib)$, a sub-PD-ideal $(\tilde{\Ib}, [\; ])\subset \bar{\Ib}$, and the sequence of ideals $(\bar{\Ib}^{\{n\}})_{n\in\N}$ defining the m-PD-filtration. Those are also quasi-coherent sheaves \cite[subsection 1.4]{Berthelot1}.\\
\medskip
Now, let us consider the diagonal embedding $\Delta: X\hookrightarrow X\times_{\Z_p} X$ and let $W\subset X\times_{\Z_p}X$ be an open subset such that $X\subset W$ is a closed subset, defined by a quasi-coherent sheaf $\Ib\subset\Ob_{W}$. For every $n\in\N$, the algebra $\Pb^{n}_{X,(m)}:=\Pb^{n}_{(m)}(\Ib)$ is quasi-coherent and its support is contained in $X$. In particular, it is independent of the open subset $W$ \cite[2.1]{Berthelot1}. Moreover, by proposition \ref{m-PD-structure} the projections $p_1,p_2: X\times_{\Z_p}X\rightarrow X$ induce two morphisms $d_1,d_2:\Ob_{X}\rightarrow \Pb^{n}_{X,(m)}$ endowing $ \Pb^{n}_{X,(m)}$ of a \textit{left} and a \textit{right} structure of $\Ob_{X}$-algebra, respectively. 

\begin{defi}\label{Diff order n}
Let $m,n$ be positive integers. The sheaf of differential operators of level $m$ and order less or equal to n on $X$ is defined by
\begin{eqnarray*}
\Db^{(m)}_{X,n}:=\mathcal{H}om_{\Ob_{X}}(\Pb^n_{X,(m)},\Ob_{X}).
\end{eqnarray*}
\end{defi}
\justify
If $n\le n'$ corollary \ref{order n mPD} gives us a canonical surjection $\Pb^{n'}_{X,(m)}\rightarrow\Pb^{n}_{X,(m)}$ which induces the injection $\Db^{(m)}_{X,n}\hookrightarrow\Db_{X,n'}^{(m)}$ and the sheaf of \textit{differential operators of level} $m$ is defined by
\begin{eqnarray*}
\Db_{X}^{(m)}:=\bigcup_{n\in\N}\Db^{(m)}_{X,n}.
\end{eqnarray*}
We remark for the reader that by definition $\Db_{X}^{(m)}$ is endowed with a natural filtration called the \textit{order filtration}, and like the sheaves $\Pb^{n}_{X,(m)}$, the sheaves $\Db_{X,n}^{(m)}$ are endowed with two natural structures of $\Ob_{X}$-modules. Moreover, the sheaf $\Db^{(m)}_{X}$ acts on $\Ob_{X}$: if $P\in\Db^{(m)}_{X,n}$, then this action is given by the composition $\Ob_{X}\xrightarrow{d_1} \Pb^{n}_{X,(m)}\xrightarrow{P} \Ob_{X}$.\\
Finally, let us give a local description of $\Db^{(m)}_{X,n}$. Let $U$ be a smooth open affine subset of $X$ endowed with a family of local coordinates $x_1,\;.\;.\;.\;,x_N$. Let $dx_1,\;.\;.\;.\;,dx_N$ be a basis of $\Omega_{X}(U)$ and $\partial_{x_1},\;.\;.\;.\;,\partial_{x_N}$ the dual basis of $\Tb_X(U)$ (as usual, $\Tb_{X}$ and $\Omega_X$ denote the tangent and cotangent sheaf on $X$, respectively). Let $\underline{k}\in\N^N$. Let us denote by $|\underline{k}|=\sum_{i=1}^{N}k_i$ and   $\partial_i^{[k_i]}=\partial_{x_i}/k_i!$  for every $1\le i\le N$. Then, using multi-index notation, we have $\underline{\partial}^{[\underline{k}]}=\prod_{i=1}^{N}\partial_i^{[k_i]}$ and $\underline{\partial}^{<\underline{k}>}=q_{\underline{k}}!\underline{\partial}^{[\underline{k}]}$. In this case, the sheaf $\Db_{X,n}^{(m)}$ has the following description on $U$
\begin{eqnarray}\label{locally_Berthelot}
\Db_{X,n}^{(m)}(U)=\bigg\{\sum_{|\underline{k}|\le n}a_{\underline{k}}\underline{\partial}^{<\underline{k}>}\;|\; a_{\underline{k}}\in\Ob_{X}(U)\;\text{and}\;\underline{k}\in\N^{N}\bigg\}.
\end{eqnarray}

\subsection{Symmetric algebra of finite level}
In this subsection we will focus on introducing the constructions in \cite{Huyghe1}. Let $X$ be a $\Z_p$-scheme, $\Lb$ a locally free $\Ob_X$-module of finite rank, $\textbf{S}_X(\Lb)$ the symmetric algebra associated to $\Lb$ and $\Ib$ the ideal of homogeneous elements of degree 1. Using the notation of section 2.1 we define
\begin{eqnarray}
\Gamma_{X,(m)}(\Lb):=\Pb_{\textbf{S}_{X}(\Lb),(m)}(\Ib)\;\;\;\;\text{and}\;\;\;\; \Gamma_{X,(m)}^n(\Lb):= \Gamma_{X,(m)}(\Lb)/\bar{\Ib}^{\{n+1\}}.
\end{eqnarray}
Those algebras are graded \cite[Proposition 1.3.3]{Huyghe1}, and if $\eta_1,...,\eta_N$ is a local basis of $\Lb$, we have 
\begin{eqnarray*}
\Gamma_{X,(m)}^{n}(\Lb)=\displaystyle\bigoplus_{|\underline{l}|\le n}\Ob_{X}\underline{\eta}^{\{\underline{l}\}}.
\end{eqnarray*}
As before $\underline{\eta}^{\{\underline{l}\}}=\prod_{i=1}^{N}\eta_i^{\{l_i\}}$ and $q_i!\eta_i^{\{l_i\}}=\eta^{l_i}$. We define by duality 
\begin{eqnarray*}
\text{Sym}^{(m)}(\Lb):=\displaystyle\bigcup_{k\in\N}\mathcal{H}om_{\Ob_X}\left(\Gamma_{X,(m)}^k(\Lb^{\vee}), \Ob_{X}\right),
\end{eqnarray*}
By \cite[Propositions 1.3.1, 1.3.3 and 1.3.6]{Huyghe1} we know that $\text{Sym}^{(m)}(\Lb)=\oplus_{n\in\N}\text{Sym}^{(m)}_n(\Lb)$ is a commutative graded algebra with noetherian sections over any open affine subset. Moreover, locally  over a basis $\eta_1,...,\eta_N$ we have the following description
\begin{eqnarray*}
\text{Sym}^{(m)}_n(\Lb)=\displaystyle\bigoplus_{|\underline{l}|=n}\Ob_{X}\underline{\eta}^{<\underline{l}>},\;\;\;\text{where}\;\;\; \frac{l_i !}{q_i!}\eta_i^{<l_i>}=\eta_i^{l_i}.
\end{eqnarray*}

\begin{rem}
By \cite[A.10]{Berthelot2} we have that $\textbf{S}^{(0)}_X(\Lb)$ is the symmetric algebra of $\Lb$, which justifies the terminology.
\end{rem}
\justify
We end this subsection by  remarking the following results from \cite{Huyghe1}. Let $\Ib$ be the kernel of the comorphism $\Delta^{\sharp}$ of the diagonal embedding  $\Delta: X\rightarrow X\times_{\text{Spec}(\Z_p)}X$. In \cite[Proposition 1.3.7.3]{Huyghe1} Huyghe shows that the graded algebra associated to the m-PD-adic filtration of $\Pb_{X,(m)}$ it is identified with the graded m-PD-algebra $\Gamma_{X,(m)}(\Ib/\Ib^2)=\Gamma_{X,(m)}(\Omega_{X}^1)$. More exactly, we have canonical isomorphisms $\Gamma^{n}_{X,(m)}:=\Gamma^n_{X,(m)}(\Omega_{X}^1) \xrightarrow{\simeq}gr_{\bullet}(\Pb^n_{X,(m)})$ which, by definition, induce a graded isomorphism of algebras
\begin{eqnarray}\label{graded}
\text{Sym}^{(m)}(\Tb_{X})\xrightarrow{\simeq} gr_{\bullet}\Db^{(m)}_{X}.
\end{eqnarray}
\subsection{Arithmetic distribution algebra of finite level}\label{ADA}
As in the introduction, let us consider $\G$ a split connected reductive group scheme over $\Z_p$ and $m\in\N$ fixed. We propose to give a description of the algebra of distributions of level $m$ introduced in \cite{HS1}. Let $I$ denote the kernel of the surjective morphism of $\Z_p$-algebras $\epsilon_\G:\Z_p[\G]\rightarrow \Z_p$, given by the identity element of $\G$. We know that $I/I^2$ is a free $\Z_p=\Z_p[\G]/I$-module of finite rank. Let $t_1,\;.\;.\;.\;,t_l\in I$ such that modulo $I^2$, these elements form a basis of $I/I^2$. The $m$-divided power enveloping of $(\Z_p[\G],I)$ (proposition \ref{m-PD-structure}) denoted by $P_{(m)}(\G)$, is a free $\Z_p$-module with basis the elements $\underline{t}^{\{\underline{k}\}}=t_{1}^{\{k_1\}}\;.\;.\;.\;t_{l}^{\{k_l\}}$,
where $q_{i}!t_{i}^{\{k_i\}}=t_i^{k_i}$, for every $k_i=p^mq_i+r_i$ and $0\le r_i<p^m$. These algebras are endowed with a decreasing filtration by ideals $\overline{I}^{\{n\}}$ (subsection 2.1), such that $\overline{I}^{\{n\}}=\oplus_{|\underline{k}|\ge n}\Z_p\;\underline{t}^{\{\underline{k}\}}$.
The quotients $P^n_{(m)}(\G):=P_{(m)}(\G)/\overline{I}^{\{n+1\}}$ are therefore $\Z_p$-modules generated by the elements $\underline{t}^{\{\underline{k}\}}$ with $|\underline{k}|\le n$ \cite[Proposition 1.5.3 (ii)]{Berthelot1}. Moreover, there exists an isomorphism of $\Z_p$-modules 
\begin{eqnarray*}
P^n_{(m)}(\G)\simeq \displaystyle\bigoplus_{|\underline{k}|\le n} \Z_p\;\underline{t}^{\{\underline{k}\}}.
\end{eqnarray*}
Corollary \ref{order n mPD} gives us for any two integers $n$, $n'$ such that $n\le n'$ a canonical surjection $\pi^{n',n}:P^{n'}_{(m)}(\G)\rightarrow P^n_{(m)}(\G)$. Moreover, for every $m'\ge m$, the universal property of the divided powers gives us a unique morphism of filtered $\Z_p$-algebras $\psi_{m,m'}:P_{(m')}(\G)\rightarrow P_{(m)}(\G)$ which induces a homomorphism of $\Z_p$-algebras $\psi^{n}_{m,m'}:P^n_{(m')}(\G)\rightarrow P^n_{(m)}(\G)$. The module of distributions of level $m$ and order $n$ is $D_n^{(m)}(\G):=\text{Hom}(P^n_{(m)}(\G),\Z_p)$. \textit{The algebra of distributions of level m} is
\begin{eqnarray*}
D^{(m)}(\G):=\varinjlim_{n} D^{(m)}_{n}(\G),
\end{eqnarray*}
where the limit is formed respect to the maps  $\text{Hom}_{\Z_p}(\pi^{n',n},\Z_p)$. The multiplication is defined as follows. By universal property (Corollary \ref{order n mPD}) there exists a canonical application $\delta^{n,n'}:P^{n+n'}_{(m)}(\G)\rightarrow P^{n}_{(m)}\G)\otimes_{\Z_p}P^{n'}_{(m)}(\G)$. If $(u,v)\in D^{(m)}_n(\G)\times D^{(m)}_{n'}(\G)$, we define $u.v$ as the composition
\begin{eqnarray*}
u.v: P^{n+n'}_{(m)}(\G)\xrightarrow{\delta^{n,n'}} P^{n}_{(m)}(\G)\otimes_{\Z_p}P^{n'}_{(m)}(\G)\xrightarrow{u\otimes v}\Z_p.
\end{eqnarray*}
Let us denote by $\mathfrak{g}:=\text{Hom}_{\Z_p}(I/I^2,\Z_p)$
the Lie  algebra of $\G$. This is a free $\Z_p$-module with basis $\xi_1,\;.\;.\;.\;,\xi_l$  defined as the dual basis of the elements $t_1,\;.\;.\;.\;, t_l$. Moreover, if for every multi-index $\underline{k}\in\N^l$, $|\underline{k}|\le n$, we denote by $\underline{\xi}^{<\underline{k}>}$ the dual of the element $\underline{t}^{\{\underline{k}\}}\in P^n_{(m)}(\G)$, then $D^{(m)}_n(\G)$ is a free $\Z_p$-module of finite rank with a basis given by the elements $\underline{\xi}^{<\underline{k}>}$ with $|\underline{k}|\le n$ \cite[proposition 4.1.6]{HS1}. 

\begin{rem}\footnote{This remark exemplifies the local situation when $X=\text{Spec}(A)$ with $A$ a $\Z_{p}$-algebra \cite[Subsection 1.3.1]{Huyghe2}.}\label{Sym-Locally}
Let A be an $\Z_p$-algebra and $E$ a free $A$-module of finite rank with base  $(x_1,...,x_N)$. Let $(y_1, ... ,y_{N})$ be the dual base of  $E^{\vee}:=\text{Hom}_{A}(E,A)$. As in the preceding subsection, let $\textbf{S}(E^{\vee})$ be the symmetric algebra and $\textbf{I}(E^{\vee})$ the augmentation ideal. Let $\Gamma_{(m)}(E^{\vee})$  be the $m$-PD-envelope of $(\textbf{S}(E^{\vee}),\;\textbf{I}(E^{\vee}))$.  We put $\Gamma^n_{(m)}(E^{\vee}):=\Gamma_{(m)}(E^{\vee})/\overline{I}^{\{n+1\}}$. These are free $A$-modules with base $y_1^{\{k_1\}}...y_N^{\{k_N\}}$ with $\sum k_i\le n$ \cite[1.1.2]{Huyghe1}. Let $\{\underline{x}^{<\underline{k}>}\}_{|\underline{k}|\le n}$ be the dual base of Hom$_A(\Gamma_{(m)}^n(E^{\vee}),A)$. We define
\begin{eqnarray*}
Sym^{(m)}(E):=\displaystyle\bigcup_{n\in\N}\text{Hom}_{A}\left(\Gamma^{n}_{(m)}(E^{\vee}),A\right).
\end{eqnarray*}
This is a free $A$-module with a base given by all the $\underline{x}^{<\underline{k}>}$. The inclusion $\text{Sym}^{(m)}(E)\subseteq \text{Sym}^{(m)}(E)\otimes_{\Z_p}\Q_p$ gives  the relation
\begin{eqnarray}\label{Rel_base_change}
x_i^{<k_i>} = \displaystyle\frac{k_i !}{q_i !} x^{k_i}.
\end{eqnarray}
Moreover, it also has a structure of algebra defined as follows. By \cite[Proposition 1.3.1]{Huyghe1} there exists an application $\Delta_{n,n'}:\Gamma_{(m)}^{n+n'}(E^{\vee})\rightarrow\Gamma^{n}_{(m)}(E^{\vee})\otimes_{A}\Gamma^{n'}_{(m)}(E^{\vee})$, which allows to define the product of $u\in Hom_{A}(\Gamma^{n}_{(m)}(E^{\vee}),A)$ and $v\in Hom_{A}(\Gamma^{n'}_{(m)}(E^{\vee}),A)$  by the composition 
\begin{eqnarray*}
u.v: \Gamma^{n+n'}_{(m)}(E^{\vee})\xrightarrow{\Delta_{n,n'}}\Gamma^{n}_{(m)}(E^{\vee})\otimes_{A}\Gamma^{n'}_{(m)}(E^{\vee})\xrightarrow{u\otimes v} A.
\end{eqnarray*}
This maps endows $Sym^{(m)}(E)$  of a structure of a graded noetherian $\Z_p$-algebra \cite[Propositions 1.3.1, 1.3.3 and 1.3.6]{Huyghe1}.
\end{rem}
\justify
We have the following important properties \cite[Proposition 4.1.15]{HS1}.

\begin{prop}\label{gr.Dist algebra}
\begin{itemize}
\item[(i)] There exists a canonical isomorphism of graded $\Z_p$-algebras $gr_{\bullet}(D^{(m)}(\G))\simeq Sym^{(m)}(\mathfrak{g})$.
\item[(ii)] The $\Z_p$-algebras $gr_{\bullet}(D^{(m)}(\G))$ and $D^{(m)}(\G)$ are noetherian. 
\end{itemize}
\end{prop}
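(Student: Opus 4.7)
The plan is to derive the proposition by passing the m-PD filtration of $P_{(m)}(\G)$ to the dual, and then invoking a standard filtered-to-graded noetherianity argument.

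First, I would recall the structural results on the m-PD envelope $P_{(m)}(\G)$. By the discussion preceding the statement, the augmentation ideal $I\subset \Z_p[\G]$ satisfies $I/I^2\simeq \mathfrak{g}^\vee$ freely of finite rank over $\Z_p$, and the explicit bases $\underline{t}^{\{\underline{k}\}}$ show that $\overline{I}^{\{n\}}/\overline{I}^{\{n+1\}}$ is $\Z_p$-free of finite rank, with the graded pieces exactly matching the $n$th homogeneous component of the m-PD polynomial algebra $\Gamma_{(m)}(I/I^2)$. This is the distributional analogue of Huyghe's result \cite[Proposition 1.3.7.3]{Huyghe1} mentioned in Subsection 2.3 for $\Pb_{X,(m)}$; I would reduce to it (or reproduce its short argument) to obtain a canonical graded isomorphism
\begin{equation*}
gr_\bullet P_{(m)}(\G) \;\xrightarrow{\simeq}\; \Gamma_{(m)}(I/I^2).
\end{equation*}

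Next, I would dualize. Because each $P^n_{(m)}(\G)$ is $\Z_p$-free of finite rank, the short exact sequence $0\to \overline{I}^{\{n+1\}}/\overline{I}^{\{n+2\}}\to P^{n+1}_{(m)}(\G)\to P^n_{(m)}(\G)\to 0$ stays exact after $\mathrm{Hom}_{\Z_p}(-,\Z_p)$, giving
\begin{equation*}
gr_{n+1} D^{(m)}(\G) \;=\; D^{(m)}_{n+1}(\G)/D^{(m)}_n(\G) \;\simeq\; \mathrm{Hom}_{\Z_p}\bigl(\Gamma^{n+1}_{(m)}(\mathfrak{g}^\vee),\Z_p\bigr),
\end{equation*}
which by the very definition recalled in Remark \ref{Sym-Locally} is the degree-$(n+1)$ component of $\mathrm{Sym}^{(m)}(\mathfrak{g})$. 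On explicit bases this sends $\underline{\xi}^{<\underline{k}>}\mapsto \underline{\xi}^{<\underline{k}>}$, so the identification is canonical. To promote this module isomorphism to a graded algebra isomorphism, I would check multiplicative compatibility: the product on $D^{(m)}(\G)$ is defined as the transpose of the coproduct $\delta^{n,n'}:P^{n+n'}_{(m)}(\G)\to P^n_{(m)}(\G)\otimes P^{n'}_{(m)}(\G)$, while the product on $\mathrm{Sym}^{(m)}(\mathfrak{g})$ is the transpose of the coproduct $\Delta_{n,n'}$ on $\Gamma_{(m)}$. On the associated graded, $gr\,\delta^{n,n'}$ is induced by the comultiplication of $\Gamma_{(m)}(I/I^2)$ given by the natural coalgebra structure of divided powers, and this is exactly $\Delta_{n,n'}$ applied to $\mathfrak{g}^\vee=I/I^2$. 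This gives (i).

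For (ii), noetherianity of $\mathrm{Sym}^{(m)}(\mathfrak{g})$ is already recorded in Remark \ref{Sym-Locally} (via \cite[Propositions 1.3.1, 1.3.3, 1.3.6]{Huyghe1}), so $gr_\bullet D^{(m)}(\G)$ is noetherian by the isomorphism just established. Noetherianity of $D^{(m)}(\G)$ itself then follows from the classical lemma that a filtered ring whose order filtration is exhaustive, separated, and whose associated graded is (left and right) noetherian is itself noetherian, applied to the order filtration $D^{(m)}_n(\G)$, which is exhaustive by definition of the inductive limit and separated because $\bigcap_n \overline{I}^{\{n\}}=0$ in $P_{(m)}(\G)$.

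The main obstacle I anticipate is the algebra-compatibility in step (i): one must be careful that the coproduct $\delta^{n,n'}$ on m-PD envelopes really induces, at the graded level, the standard divided-power coalgebra structure on $\Gamma_{(m)}(I/I^2)$, with the correct normalizations of the $\{\,\}_{(m)}$-symbols. Once that explicit check is done on the basis $\underline{t}^{\{\underline{k}\}}$, both (i) and (ii) follow cleanly.
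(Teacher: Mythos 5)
The paper states this proposition by citing \cite[Proposition 4.1.15]{HS1} and gives no proof, so there is no in-paper argument to compare against. Your proof plan---pass the $m$-PD filtration on $P_{(m)}(\G)$ to the associated graded to obtain $\Gamma_{(m)}(\mathfrak{g}^\vee)$, dualize degree by degree (harmless, since all the modules in sight are finite free over $\Z_p$), identify the result with $\text{Sym}^{(m)}(\mathfrak{g})$ via the definition recalled in Remark \ref{Sym-Locally}, check multiplicativity by comparing the transposed coproducts $\delta^{n,n'}$ and $\Delta_{n,n'}$, and then invoke the standard filtered-to-graded transfer of noetherianity---is sound and is almost certainly the route taken in \cite{HS1}, since it is the natural one in this framework.

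Two small slips worth fixing. First, dualizing $0\to \overline{I}^{\{n+1\}}/\overline{I}^{\{n+2\}}\to P^{n+1}_{(m)}(\G)\to P^n_{(m)}(\G)\to 0$ identifies $gr_{n+1}D^{(m)}(\G)$ with $\text{Hom}_{\Z_p}\bigl(\overline{I}^{\{n+1\}}/\overline{I}^{\{n+2\}},\Z_p\bigr)$, which is the degree-$(n+1)$ homogeneous component of $\text{Sym}^{(m)}(\mathfrak{g})$; as written, $\text{Hom}_{\Z_p}\bigl(\Gamma^{n+1}_{(m)}(\mathfrak{g}^\vee),\Z_p\bigr)$ is the whole degree-$\leq(n+1)$ piece, not the graded quotient. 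Second, for the noetherianity transfer, what the classical lemma requires is that the order filtration on $D^{(m)}(\G)$ be a positive, exhaustive, ascending filtration with $D^{(m)}_{-1}(\G)=0$---all of which hold by construction. The condition $\bigcap_n \overline{I}^{\{n\}}=0$ is a statement about the descending $m$-PD filtration on $P_{(m)}(\G)$; it is not what ``separated'' means for the ascending order filtration on the dual, and in fact no separatedness hypothesis on $D^{(m)}(\G)$ is needed here. Neither slip affects the validity of the plan.
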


\subsection{Integral models}
We start this subsection with the following definition \cite[subsection 3.4]{Berthelot1}.
\begin{defi}\label{defi I.models}
Let $A$ be a $\Q_p$-algebra (resp. a sheaf of $\Q_p$-algebras). We say that a $\Z_p$-subalgebra $A_0$ (resp. a subsheaf of $\Z_p$-algebras) is an integral model of $A$ if $A_0\otimes_{\Z_p}\Q_p= A$.
\end{defi} 
\justify
Let us recall that throughout this paper $\mathfrak{g}$ denotes the Lie algebra of a connected reductive group scheme $\G$ and $\Ub(\mathfrak{g})$ its universal enveloping algebra. As we have remarked in the introduction, if $\mathfrak{g}_\Q$ denotes the Lie algebra of the algebraic group  $\G_\Q=\G\times_{\text{Spec}(\Z_p)}\text{Spec}(\Q_p)$ and $\Ub(\mathfrak{g}_\Q)$ its universal enveloping algebra, then it is known that $\Ub(\mathfrak{g})$ is an integral model of $\Ub(\mathfrak{g}_\Q)$.  Moreover, the algebra of distributions of level $m$, introduced in the preceding subsection, is also an integral model of $\Ub(\mathfrak{g}_{\Q})$ \cite[subsection 4.1]{HS1}. This latest example will be specially important in this work.
\justify
In the following discussion we will assume that $X$ is a smooth $\Z_p$-scheme endowed with a right $\G$-action.

\begin{prop}\label{morp.HS}
The right $\G$-action induces a canonical homomorphism of filtered $\Z_p$-algebras 
\begin{eqnarray*}
\Phi^{(m)}: D^{(m)}(\G)\rightarrow H^{0}(X,\Db^{(m)}_{X}).
\end{eqnarray*}
\end{prop}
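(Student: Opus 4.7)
The plan is to build $\Phi^{(m)}$ as the arithmetic analogue of the classical "fundamental vector field" map $\Ub(\mathfrak{g}) \to \Db_X$, replacing the infinitesimal neighborhoods of the diagonal by their m-PD envelopes. The key geometric input is the morphism
\[
\Psi:X\times_{\Z_p}\G\longrightarrow X\times_{\Z_p}X,\qquad (x,g)\longmapsto (x,x\cdot g),
\]
which maps the closed subscheme $X\times\{e\}$ into the diagonal $\Delta\subset X\times X$. Hence, denoting by $\Ib\subset\Ob_{X\times X}$ the ideal of $\Delta$ (on a suitable open neighborhood, as in subsection \ref{ADP}) and by $\Jb\subset\Ob_{X\times\G}$ the pullback under the second projection of the augmentation ideal $I\subset\Ob_{\G}$ at $e$, one has $\Psi^{\sharp}(\Ib)\subset\Jb$.

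By the universal property of the m-PD envelope (proposition \ref{m-PD-structure} and corollary \ref{order n mPD}), this containment induces, for every $n\ge 0$, a homomorphism of $\Ob_X$-algebras
\[
\Psi^{\sharp}_n:\Pb^{n}_{X,(m)}\longrightarrow \Ob_X\otimes_{\Z_p} P^n_{(m)}(\G),
\]
the right-hand side being the m-PD envelope of $(\Ob_X\otimes_{\Z_p}\Ob_{\G},\,\Ob_X\otimes_{\Z_p} I)$, which decomposes as a tensor product because $\Ob_X$ is flat over $\Z_p$ and $I$ is generated by a regular sequence in an \'etale neighborhood of $e$. For $u\in D^{(m)}_n(\G)=\text{Hom}_{\Z_p}(P^n_{(m)}(\G),\Z_p)$, set
\[
\Phi^{(m)}_n(u)\;:=\;(\mathrm{id}_{\Ob_X}\otimes u)\circ\Psi^{\sharp}_n\;\in\;\mathcal{H}om_{\Ob_X}(\Pb^{n}_{X,(m)},\Ob_X)=\Db^{(m)}_{X,n}(X).
\]
Compatibility of $\Psi^{\sharp}_n$ with the canonical surjections $\Pb^{n+1}_{X,(m)}\twoheadrightarrow\Pb^{n}_{X,(m)}$ and $P^{n+1}_{(m)}(\G)\twoheadrightarrow P^{n}_{(m)}(\G)$ implies that the $\Phi^{(m)}_n$ are compatible as $n$ varies, so they glue into a $\Z_p$-linear map $\Phi^{(m)}:D^{(m)}(\G)\to H^0(X,\Db^{(m)}_X)$ that preserves the order filtrations.

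It remains to verify that $\Phi^{(m)}$ is a ring homomorphism. The unit sent to the unit is immediate: the augmentation $\varepsilon_{\G}\in D^{(m)}_0(\G)$ corresponds, under the construction, to the identity map $\Ob_X\to \Ob_X$, which is the unit of $\Db^{(m)}_X$. For multiplicativity, recall that the product on $D^{(m)}(\G)$ is dual to the map $\delta^{n,n'}:P^{n+n'}_{(m)}(\G)\to P^n_{(m)}(\G)\otimes P^{n'}_{(m)}(\G)$ induced by the group law $\G\times\G\to\G$, while the product on $\Db^{(m)}_X$ is dual to $\delta^{n,n'}_X:\Pb^{n+n'}_{X,(m)}\to \Pb^n_{X,(m)}\otimes_{\Ob_X}\Pb^{n'}_{X,(m)}$ coming from the map $X\times X\times X\to X\times X$ between triples and pairs. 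The required identity
\[
(\Psi^{\sharp}_n\otimes\Psi^{\sharp}_{n'})\circ\delta^{n,n'}_X\;=\;(\mathrm{id}_{\Ob_X}\otimes\delta^{n,n'})\circ\Psi^{\sharp}_{n+n'}
\]
expresses exactly the associativity of the $\G$-action: the two sides correspond to the two ways of reading the composed morphism
\[
X\times\G\times\G\longrightarrow X\times X\times X,\qquad (x,g,g')\longmapsto (x,xg,xgg'),
\]
through the pairs of projections. By the universal property of m-PD envelopes, both compositions coincide, and the multiplicativity of $\Phi^{(m)}$ follows by dualizing and contracting against $u\otimes v$.

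The step I expect to be the main obstacle is the last one: verifying the commutative square above rigorously. It is conceptually a straightforward associativity diagram, but making it precise requires tracking the identification between the m-PD envelope of $X\times\G\times\G$ along $X\times\{e\}\times\{e\}$ and the tensor product $\Ob_X\otimes_{\Z_p}P^n_{(m)}(\G)\otimes_{\Z_p}P^{n'}_{(m)}(\G)$, and checking that under this identification the action map $\Psi$ composed with itself transports the canonical $\delta^{n,n'}_X$ to $\mathrm{id}\otimes\delta^{n,n'}$. This is the arithmetic analogue of the classical check that the map $\Ub(\mathfrak{g})\to\Db_X$ is a morphism of algebras, and it proceeds by reduction to an open affine where one can use the local coordinates from subsection \ref{ADA}.
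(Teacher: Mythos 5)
Your construction is essentially the same as the paper's (which defers the details to \cite[Proposition 4.4.1 (ii)]{HS1}): the paper uses the action $\rho:X\times\G\to X$ and the induced map $\rho^{(n)}_m:\Pb^{n}_{X,(m)}\to\Ob_X\otimes_{\Z_p}P^{n}_{(m)}(\G)$, which is precisely your $\Psi^{\sharp}_n$ — your graph-of-action map $\Psi(x,g)=(x,xg)$ is just the way to package $\rho$ together with the first projection so that the universal property of m-PD envelopes applies to the diagonal ideal, and your definition $\Phi^{(m)}(u)=(\mathrm{id}\otimes u)\circ\Psi^{\sharp}_n$ matches the paper's verbatim. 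Your extra work spelling out compatibility in $n$ and multiplicativity via the associativity square is correct and fills in what the paper leaves to the cited reference.
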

\begin{proof}
The reader can find the proof of this proposition in \cite[Proposition 4.4.1 (ii)]{HS1}, we will briefly discuss the construction of $\Phi^{(m)}$.  The central idea in the construction is that if $\rho:X\times_{\Z_p}\G\rightarrow X$ denotes the action, then the comorphism $\rho^{\natural}:\Ob_{X}\rightarrow\Ob_{X}\otimes_{\Z_p}\Z_p[\G]$ induces a morphism 
\begin{eqnarray*}
\rho^{(n)}_{m}:\Pb^{n}_{X,(m)}\rightarrow\Ob_{X}\otimes_{\Z_p}P^{n}_{(m)}(\G)
\end{eqnarray*}
for every $n\in\N$. Those applications are compatible when varying $n$. Let $u\in D^{(m)}_n(\G)$ we define $\Phi^{(m)}(u)$ by 
\begin{eqnarray*}
\Phi^{(m)}(u):\Pb^{n}_{X,(m)}\xrightarrow{\rho^{(n)}_m} \Ob_{X}\otimes_{\Z_p}P^{n}_{(m)}(\G)\xrightarrow{id\otimes u}\Ob_{X}.
\end{eqnarray*}
Again, those applications are compatible when varying $n$ and we get the morphism of the proposition.
\end{proof}
\begin{rem}  \label{Invarian global sections of group}
\begin{itemize}
\item[(i)] If $X$ is endowed with a left $\G$-action, then it turns out that $\Phi^{(m)}$ is an anti-homomorphism.
\item[(ii)] In \cite[Theorem 4.4.8.3]{HS1} Huyghe and Schmidt have shown that if $X=\G$ and we consider the right (resp. left) regular action, then the morphism of the preceding proposition is in fact a canonical filtered isomorphism (resp. an anti-isomorphism) between $D^{(m)}(\G)$ and  $H^0(\G, \Db^{(m)}_{\G})^{\G}$, the $\Z_p$-submodule of (left) $\G$-invariant global sections (cf. definition \ref{T-invariants}). This isomorphism induces a bijection between $D_n^{(m)}(\G)$ and $H^{0}(\G,\Db^{(m)}_{\G,n})^{\mathbb{G}}$, and it is compatible when varying $m$.  
\end{itemize}
\end{rem}
\justify
Let us define $\Ab_{X}^{(m)}:=\Ob_{X}\otimes_{\Z_p}D^{(m)}(\G)$, and let us remark that we can endow this sheaf with the skew ring multiplication (see (\ref{smash}) below) coming from the action of $D^{(m)}(\G)$ on $\Ob_{X}$ via the morphism $\Phi^{(m)}$ of proposition \ref{morp.HS}.  This multiplication defines over $\Ab_X^{(m)}$ a structure of a sheaf of associative $\Z_p$-algebras, such that it becomes an integral model of the sheaf of $\Q_p$-algebras $\Ub^\circ:=\Ob_{X_\Q}\otimes_{\Q_p}\Ub(\mathfrak{g}_\Q)$. To see this, let us recall how the multiplicative structure of the sheaf $\Ub^\circ$ is defined (cf. \cite[subsection 5.1]{PSS1} or \cite[section 2]{Milicic1}).\\
Differentiating the right action of $\G_\Q$ on $X_\Q$ we get a morphism of $\Q_p$-Lie algebras
\begin{eqnarray}\label{Action_Lie}
\tau:\ \mathfrak{g}_\Q\rightarrow H^0(X_\Q,\Tb_{X_\Q}).
\end{eqnarray}
This implies that $\mathfrak{g}_{\Q}$ acts on $\Ob_{X_{\Q}}$ by derivations and we can endow $\Ub^{\circ}$ with the skew ring multiplication
\begin{eqnarray}\label{smash}
(f\otimes\eta)(g\otimes\zeta)=f\tau(\eta)g\otimes\zeta + fg\otimes\eta\zeta
\end{eqnarray}
for $\eta\in\mathfrak{g}_L$, $\zeta\in\Ub(\mathfrak{g}_\Q)$ and $f,g\in\Ob_{X_\Q}$. With this product, the sheaf $\Ub^{\circ}$ becomes a sheaf of associative algebras and given that $D^{(m)}(\G)$ is an integral model of the universal enveloping algebra $\Ub(\mathfrak{g}_\Q)$, then $\Ab_{X}^{(m)}$ is also a sheaf of associative $\Z_p$-algebras being a subsheaf of $\Ub^{\circ}$. We have the following result from \cite[Corollary 4.4.6]{HS1}.

\begin{prop}\label{prop 1.4.1} 
\begin{itemize}
\item[(i)]  The sheaf $\Ab_{X}^{(m)}$ is a locally free $\Ob_X$-module.
\item[(ii)]  There exist a unique structure over $\Ab_{X}^{(m)}$ of filtered $\Ob_{X}$-ring and a canonical isomorphism of graded $\Ob_{X}$-algebras $gr (\Ab_{X}^{(m)})= \Ob_{X}\otimes_{\Z_p}\text{Sym}^{(m)}(\mathfrak{g})$.
\item[(iii)]  The sheaf $\Ab_{X}^{(m)}$ (resp. $gr(\Ab_{X}^{(m)})$) is a coherent sheaf of $\Ob_{X}$-rings (resp. a coherent sheaf of $\Ob_{X}$-algebras), with noetherian sections over open affine subsets of $X$. 
\end{itemize}
\end{prop}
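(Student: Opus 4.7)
All three statements should fall out of combining what is already established for the distribution algebra $D^{(m)}(\G)$ (Subsection 2.4, Proposition \ref{gr.Dist algebra}) with standard facts about smash/skew products of bialgebras acting on algebras of functions. For (i), recall that $D^{(m)}(\G)=\varinjlim_n D^{(m)}_n(\G)$, where each $D^{(m)}_n(\G)$ is $\Z_p$-free of finite rank with basis $\underline{\xi}^{<\underline{k}>}$, $|\underline{k}|\leq n$, and the transition maps $D^{(m)}_n(\G)\hookrightarrow D^{(m)}_{n'}(\G)$ are split inclusions; thus $D^{(m)}(\G)$ is itself $\Z_p$-free, and $\Ab_X^{(m)}=\Ob_X\otimes_{\Z_p}D^{(m)}(\G)$ is locally free (indeed free) as an $\Ob_X$-module.

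\textbf{Filtered ring structure and graded identification (ii).} Put $F_n\Ab_X^{(m)}:=\Ob_X\otimes_{\Z_p}D^{(m)}_n(\G)$. I would give the product via the smash-product formula dictated by the bialgebra coproduct $\Delta$ on $D^{(m)}(\G)$:
\[
(f\otimes u)(g\otimes v)=\sum_{(u)} f\cdot\Phi^{(m)}(u_{(1)})(g)\otimes u_{(2)}v,
\]
which for $u\in\mathfrak{g}$ primitive specializes to (\ref{smash}). Associativity follows from the bialgebra axioms and the fact that $\Phi^{(m)}$ is an algebra map (Proposition \ref{morp.HS}); compatibility with $F_\bullet$ uses that $\Delta$ respects the order filtration, i.e. $\Delta D^{(m)}_n(\G)\subset \sum_{n_1+n_2\leq n}D^{(m)}_{n_1}(\G)\otimes D^{(m)}_{n_2}(\G)$. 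Uniqueness then comes from requiring that $F_0=\Ob_X$ and that $1\otimes u$, $f\otimes 1$ generate. For the graded isomorphism, observe that $[F_n,F_{n'}]\subseteq F_{n+n'-1}$ (the second summand in the product formula above contributes to pure degree $n+n'$, while the derivation terms drop the order), so $gr\,\Ab_X^{(m)}$ is commutative, and tensoring Proposition \ref{gr.Dist algebra}(i) with $\Ob_X$ yields the canonical isomorphism $gr\,\Ab_X^{(m)}\simeq \Ob_X\otimes_{\Z_p}\text{Sym}^{(m)}(\mathfrak{g})$.

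\textbf{Noetherian sections (iii).} This is where the real work lies. For an affine open $U=\text{Spec}(A)$, I would deduce noetherianity of $\Ab_X^{(m)}(U)=A\otimes_{\Z_p}D^{(m)}(\G)$ from the classical filtered-graded criterion (the filtration $F_n(U)=A\otimes_{\Z_p}D^{(m)}_n(\G)$ is exhaustive, $F_0=A$ is noetherian, and each $F_n$ is finitely generated over $F_0$), provided the associated graded $A\otimes_{\Z_p}\text{Sym}^{(m)}(\mathfrak{g})$ is itself noetherian. The main obstacle is exactly this: tensor products of noetherian $\Z_p$-algebras need not be noetherian in general. The way out is to exploit that $A$ is of finite type over $\Z_p$ (since $X$ is smooth over $\Z_p$, hence locally of finite presentation), so $A\otimes_{\Z_p}\text{Sym}^{(m)}(\mathfrak{g})$ is a finitely generated algebra over the noetherian base $\text{Sym}^{(m)}(\mathfrak{g})$ (Proposition \ref{gr.Dist algebra}(ii)), hence noetherian by Hilbert's basis theorem. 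The claim about $gr(\Ab_X^{(m)})$ being itself a coherent sheaf of $\Ob_X$-algebras with noetherian sections then follows by the same finite-type argument applied directly.
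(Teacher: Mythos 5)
The paper does not actually prove this proposition: it is cited from \cite[Corollary 4.4.6]{HS1}, so there is no internal argument to compare against. Your reconstruction is nevertheless correct and is the expected one: (i) follows from the fact that $D^{(m)}(\G)$ is $\Z_p$-free on the $\underline{\xi}^{<\underline{k}>}$; (iii) correctly combines finite type of $\Ob_X(U)$ over $\Z_p$ (from smoothness of $X$) with noetherianity of $\text{Sym}^{(m)}(\mathfrak{g})$ (Proposition \ref{gr.Dist algebra}(ii)) via Hilbert's basis theorem, and then the standard ascending filtered-graded criterion, which applies since $F_{-1}=0$ and the filtration is exhaustive. The one step in (ii) that should not be taken for granted is that $D^{(m)}(\G)$ is actually closed under the coproduct of $\text{Dist}(\G)$, so that your smash-product formula is internal; this does hold because multiplication on $\Z_p[\G]$ is an $m$-PD-morphism inducing maps $P^{n}_{(m)}(\G)\otimes_{\Z_p} P^{n'}_{(m)}(\G)\rightarrow P^{n+n'}_{(m)}(\G)$, and since all these are finite free $\Z_p$-modules one can dualize to get $\Delta\bigl(D^{(m)}_{n+n'}(\G)\bigr)\subset D^{(m)}_n(\G)\otimes_{\Z_p} D^{(m)}_{n'}(\G)$, which also yields the order-additivity of $\Delta$ that you invoke for the graded identification. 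With that point made explicit, the argument is complete.
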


\begin{rem}
If we take the tensor product with $\Q_p$ in proposition \ref{morp.HS}, then by functoriality we get the following commutative diagram
\begin{eqnarray*}
\begin{tikzcd}
D^{(m)}(\G) \arrow[r,"\Phi^{(m)}"] \arrow[d,hook] & H^{0}(X,\Db^{(m)}_X)\arrow[d, hook]\\
\Ub(\mathfrak{g}_\Q) \arrow[r,"\Psi_{X_\Q}"]  & H^{0}(X_\Q,\Db_{X_\Q})
\end{tikzcd}
\end{eqnarray*}
where $\Psi_{X_\Q}$ is the morphism induced by (\ref{Action_Lie}) and it is called the operator-representation \cite{BB2}.
\end{rem}

\section{Integral twisted arithmetic differential operators} 
\subsection{Torsors} Let us suppose that $\mathbb{T}$ is a smooth affine algebraic group over $\Z_p$ with Lie algebra denoted by $\mathfrak{t}$, and that $\widetilde{X}$ and $X$ are smooth separated schemes over $\Z_p$, such that $\widetilde{X}$  is endowed with a right $\mathbb{T}$-action $\sigma: \widetilde{X}\times_{\text{Spec}(\Z_p)}\mathbb{T}\rightarrow \widetilde{X}$. We will also assume that $\T$ acts trivially on $X$. \footnote{For example if $\T\subset \B$ and $X=\G/\B$ is the flag variety.}
\justify
We say that a morphism $\xi:\widetilde{X}\rightarrow X$ is a $\mathbb{T}$-\textit{torsor} for the Zariski topology, if $\xi$ is a faithfully flat morphism such that the diagram
\begin{eqnarray*}
\begin{tikzcd}
\widetilde{X}\times_{\text{Spec}(\mathfrak{o)}}\T \arrow[r,"\sigma "] \arrow[d, "p_1"]
& \widetilde{X} \arrow[d, "\xi "]\\
\widetilde{X} \arrow[r, "\xi "]
& X
\end{tikzcd}
\end{eqnarray*}
is commutative and the map (induced by the previous diagram)
\begin{eqnarray}\label{Cartesian_diag}
\widetilde{X}\times_{\text{Spec}(\mathfrak{o})}\mathbb{T}\rightarrow\widetilde{X}\times_{X}\widetilde{X};\;\;\; (x,h)\mapsto (x,xh)
\end{eqnarray}
is an isomorphism. 
\justify
Let $U\subset X$ be an affine open subset and  $\text{pr}_{1}:U\times_{\text{Spec}(\Z_p)}\T\rightarrow U$ the first projection. We say that $U$ \textit{trivializes the torsor} $\xi$ if there is a $\mathbb{T}$-equivariant isomorphism $\alpha_U:U\times_{\text{Spec}(\Z_p)}\mathbb{T}\xrightarrow{\simeq}\xi^{-1}(U)$, where $\mathbb{T}$ acts on $U\times_{\text{Spec}(\Z_p)}\mathbb{T}$ by right translations on the second factor, and such that $\text{pr}_1=\xi|_{\xi^{-1}(U)}\;\circ\; \alpha_U$.

\begin{rem}\label{rem 5.1}
As $X$ is separated, the set $\Sb$ of open affine subschemes $U$ of $X$ that trivialises the torsor and such that $\Ob_X(U)$ is a finitely generated $\Z_p$-algebra, it is stable under intersections. Moreover, if $U\in \Sb$ and $W$ is an open affine subscheme of $U$, then $W\in\Sb$.
\end{rem}

\begin{defi} \label{locally trivial}
We say that $\xi$ is locally trivially for the Zariski topology if $X$ can be covered by opens in $\Sb$.  
\end{defi}
 
\begin{lem}\label{triviality}
Let $\xi:\widetilde{X}\rightarrow X$ be a locally trivial $\T$-torsor and let $\Mb$ be a quasi-coherent $\Ob_{\widetilde{X}}$-module. Then $\text{R}^{1}\xi_{*}\Mb=0$.
\end{lem}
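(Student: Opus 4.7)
The claim is local on $X$, so I would compute $\mathrm{R}^{1}\xi_{*}\Mb$ as the sheaf associated to the presheaf
\[
U\longmapsto H^{1}\bigl(\xi^{-1}(U),\,\Mb|_{\xi^{-1}(U)}\bigr),
\]
and try to show this presheaf vanishes on a basis of the Zariski topology of $X$. The natural basis to use is $\Sb$.

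First I would check that $\Sb$ actually is a basis for the Zariski topology on $X$. By the definition of locally trivial, $X$ is covered by members of $\Sb$. Given $x\in X$ and an arbitrary open neighbourhood $V$ of $x$, choose $U\in\Sb$ with $x\in U$, then pick an affine open $W\subset U\cap V$ containing $x$; the preceding remark guarantees that every affine open subscheme of an element of $\Sb$ lies again in $\Sb$, so $W\in\Sb$. Thus $\Sb$ is a basis.

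Next, for each $U\in\Sb$ the trivialisation gives an isomorphism $\xi^{-1}(U)\simeq U\times_{\mathrm{Spec}(\Z_p)}\T$. Because $U$ is affine and $\T$ is an affine group scheme over $\Z_p$, the fiber product $U\times_{\mathrm{Spec}(\Z_p)}\T$ is an affine scheme. The restriction $\Mb|_{\xi^{-1}(U)}$ remains quasi-coherent on this affine scheme, so Serre's cohomological vanishing theorem for quasi-coherent sheaves on affine schemes yields $H^{1}(\xi^{-1}(U),\Mb|_{\xi^{-1}(U)})=0$.

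Consequently the presheaf $U\mapsto H^{1}(\xi^{-1}(U),\Mb|_{\xi^{-1}(U)})$ is identically zero on the basis $\Sb$, and its sheafification $\mathrm{R}^{1}\xi_{*}\Mb$ must vanish. The only genuinely substantive point is verifying that $\Sb$ constitutes a basis of the topology; once that is settled, the argument reduces to an immediate application of Serre vanishing on an affine scheme, so I do not anticipate any real obstacle.
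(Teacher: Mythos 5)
Your argument is correct and follows essentially the same route as the paper: compute $\mathrm{R}^{1}\xi_{*}\Mb$ as the sheafification of $U\mapsto H^{1}(\xi^{-1}(U),\Mb)$, observe that $\Sb$ is a basis, and invoke Serre vanishing on the affine scheme $\xi^{-1}(U)\simeq U\times_{\mathrm{Spec}(\Z_p)}\T$. You merely spell out two points the paper takes for granted, namely that $\Sb$ is a basis (via the stability under passing to affine opens recorded in the remark preceding the lemma) and that a product of two affine $\Z_p$-schemes is affine.
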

\begin{proof}
We recall for the reader that R$^{1}\xi_{*}\Mb$ is the sheaf associated to the presheaf \cite[chapter III, prop. 8.1]{Hartshorne1}
\begin{eqnarray*}
U\mapsto H^{1}(\xi^{-1}(U),\Mb).
\end{eqnarray*}
As $\xi$ is locally trivial, the set $\Sb$ of affine open subsets of $X$ that trivializes the torsor is a base for the Zariski topology of $X$. Moreover, if $U\in\Sb$ then by definition $\xi^{-1}(U)$ is an affine open subset of $X$ and given that $\Mb$ is a quasi-coherent $\Ob_{\widetilde{X}}$-module, we can conclude that $H^{1}(\xi^{-1}(U),\Mb)=0$.
\end{proof}

\subsection{$\T$-equivariant sheaves and sheaves of $\T$-invariant sections}
Let us denote by $m:\T\times_{\text{Spec}(\Z_p)}\T\rightarrow\T$ the group law of $\T$ and by 
\begin{eqnarray*}
p_1:\widetilde{X}\times_{\text{Spec}(\Z_p)}\T\rightarrow \widetilde{X}\;\;\text{and}\;\; p_{1,2}:\widetilde{X}\times_{\text{Spec}(\Z_p)}\T\times_{\text{Spec}(\Z_p)}\T\rightarrow\widetilde{X}\times_{\text{Spec}(\Z_p)}\T
\end{eqnarray*}
the respective projections. We will also denote by
\begin{eqnarray*}
f_1,\; f_2,\; f_3:\widetilde{X}\times_{\text{Spec}(\Z_p)}\T\times_{\text{Spec}(\Z_p)}\T\rightarrow\widetilde{X}
\end{eqnarray*}
the morphisms defined by $f_1(x,t_1,t_2)=x$, $f_2(x,t_1,t_2)= xt_1$ and $f_3(x,t_1,t_2)= xt_1t_2$. Following \cite[chapter 0, section 3]{MF}, we say that a sheaf $\Mb$ of $\Ob_{\widetilde{X}}$-modules is $\T$-\textit{equivariant} if there exists an isomorphism 
\begin{eqnarray} \label{EquivarianceDatum}
p_{1}^{*}\Mb\xrightarrow{\Psi} \sigma^{*}\Mb
\end{eqnarray}
such that the following diagram is commutative (cocycle condition \cite[(9.10.10)]{HTT})

\begin{eqnarray}\label{CocycleCondition}
\begin{tikzcd} [row sep=huge]
(id_{\widetilde{X}}\times m)^{*}p_1^*\Mb=f_1^*\Mb=p_{1,2}^*p_1^*\Mb \arrow[rd, "(id_{\widetilde{X}}\times m)^*\Psi"] \arrow[r, "p_{1,2}^{*}\Psi"] & p_{1,2}^*\sigma^*\Mb=f_2^*\Mb=(\sigma\times id_{\T})^{*}p_1^{*}\Mb \arrow[d, "(\sigma\times id_{\T})^{*}\Psi"] \\
& (id_{\widetilde{X}}\times m)^*\sigma^*\Mb = f_3^*\Mb=(\sigma\times id_{\T})^*\sigma^{*}\Mb.
\end{tikzcd}
\end{eqnarray}
We will need the following lemmas. First of all, we recall for the reader  that the category of $\T$-equivariant quasi-coherent $\Ob_{\widetilde{X}}$-modules is an abelian category \cite[Lemma 2.17]{Hashimoto}. In particular it is complete and cocomplete.

\begin{lem}\label{Eq.Filtration}
Let $\Mb$ be an $\Ob_{\widetilde{X}}$-module filtered by a family $(\Mb_n)_{n\in\N}$ of $\T$-equivariant $\Ob_{\widetilde{X}}$-modules such that the inclusions
 $\Mb_n\hookrightarrow\Mb_{n+1}$ are $\T$-equivariant. Then $\Mb$ is also $\T$-equivariant.
\end{lem}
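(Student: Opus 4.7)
The plan is to exploit the fact that the two pullback functors $p_1^{*}$ and $\sigma^{*}$ appearing in the equivariance datum (\ref{EquivarianceDatum}) are left adjoints (to the corresponding pushforwards) and therefore commute with arbitrary colimits, in particular with the filtered colimit $\Mb=\varinjlim_{n}\Mb_{n}$ coming from the hypothesised filtration. Concretely, I would first record that the $\T$-equivariance of each $\Mb_{n}$ furnishes an isomorphism $\Psi_{n}\colon p_1^{*}\Mb_{n}\xrightarrow{\simeq}\sigma^{*}\Mb_{n}$ satisfying (\ref{CocycleCondition}), and that the $\T$-equivariance of the inclusions $\iota_{n}\colon\Mb_{n}\hookrightarrow\Mb_{n+1}$ translates into the commutativity of the square
\begin{eqnarray*}
\begin{tikzcd}
p_{1}^{*}\Mb_{n} \arrow[r,"\Psi_{n}"] \arrow[d,"p_{1}^{*}\iota_{n}"'] & \sigma^{*}\Mb_{n} \arrow[d,"\sigma^{*}\iota_{n}"] \\
p_{1}^{*}\Mb_{n+1} \arrow[r,"\Psi_{n+1}"'] & \sigma^{*}\Mb_{n+1}.
\end{tikzcd}
\end{eqnarray*}

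Next, using that $p_{1}^{*}$ and $\sigma^{*}$ preserve filtered colimits, I would identify $p_{1}^{*}\Mb\simeq\varinjlim_{n}p_{1}^{*}\Mb_{n}$ and $\sigma^{*}\Mb\simeq\varinjlim_{n}\sigma^{*}\Mb_{n}$. The compatibility square above says exactly that the family $(\Psi_{n})_{n\in\N}$ is a morphism of filtered diagrams, hence it induces a canonical morphism
\begin{eqnarray*}
\Psi:=\varinjlim_{n}\Psi_{n}\colon p_{1}^{*}\Mb\longrightarrow\sigma^{*}\Mb.
\end{eqnarray*}
Since each $\Psi_{n}$ is an isomorphism and the filtered colimit of isomorphisms is an isomorphism (one can construct a two-sided inverse as the colimit of $\Psi_{n}^{-1}$, which is compatible with the transition maps by inverting the square above), $\Psi$ is itself an isomorphism.

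Finally, I would check the cocycle condition (\ref{CocycleCondition}) for $\Psi$. Applying the three pullback functors $p_{1,2}^{*}$, $(\sigma\times\mathrm{id}_{\T})^{*}$ and $(\mathrm{id}_{\widetilde X}\times m)^{*}$, which again commute with filtered colimits, one reduces the pentagon to its analogue for each $\Psi_{n}$, and that analogue holds by hypothesis. Passing to the colimit yields the required commutativity, giving $\Mb$ the structure of a $\T$-equivariant $\Ob_{\widetilde{X}}$-module.

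The step that requires the most care is the identification of pullbacks with colimits in a way that is genuinely compatible on the nose (as opposed to only up to canonical isomorphism) with the cocycle pentagon; this is essentially bookkeeping with the universal property of colimits of quasi-coherent modules, but writing it out cleanly is the only real subtlety. Once this compatibility is in place, the rest is formal and the equivariance isomorphism $\Psi$ together with its cocycle condition follow automatically from the corresponding data on the $\Mb_{n}$.
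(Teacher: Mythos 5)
Your proposal is correct and follows essentially the same route as the paper's sketch: both exploit that $p_1^*$ and $\sigma^*$ respect the filtration (the paper phrases this via exactness/flatness, you via commutation with filtered colimits, which amount to the same thing here), and then assemble $\Psi$ from the compatible $\Psi_n$. You spell out the verification of the cocycle condition and the colimit-of-isomorphisms step more explicitly than the paper does, but the underlying idea is the same.
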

\justify
In fact, under the hypothesis on the lemma, the exactness of  the functors $\sigma^{*}$ and $p_1^*$ allows us to conclude that $\sigma^{*}(\Mb)$ and $p_1^*(\Mb)$ are endowed with canonical filtrations $(\sigma^{*}(\Mb_n))_{n\in\N}$ and $(p_1^{*}(\Mb))_{n\in\N}$, respectively. Since the components of this filtration have compatible $\T$-equivariant structures we can conclude that $\Mb$ is also $\T$-equivariant via a filtered isomorphism.
  
\begin{lem}\label{Eq.Dual}
Let $(\Lb,\Psi)$ be a $\T$-equivariant locally free $\Ob_{\widetilde{X}}$-module of finite rank. Then its dual $\Lb^{\vee}$ is also $\T$-equivariant.
\end{lem}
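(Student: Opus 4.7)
The plan is to produce the required $\mathbb{T}$-equivariant structure on $\mathcal{L}^\vee$ by transporting the given datum $\Psi: p_1^*\mathcal{L}\xrightarrow{\simeq}\sigma^*\mathcal{L}$ through the duality functor. The key input is that for any morphism of schemes $f:Y\to Z$ and any locally free $\mathcal{O}_Z$-module $\mathcal{F}$ of finite rank, there is a canonical isomorphism $f^*(\mathcal{F}^\vee)\xrightarrow{\simeq}(f^*\mathcal{F})^\vee$ which is functorial in $\mathcal{F}$ and compatible with composition of pullbacks. Applying this to the morphisms $p_1$, $\sigma$, $f_1$, $f_2$, $f_3$, $p_{1,2}$, $\mathrm{id}_{\widetilde{X}}\times m$ and $\sigma\times\mathrm{id}_{\mathbb{T}}$, each of the eight pullbacks appearing in the cocycle diagram (\ref{CocycleCondition}) for $\mathcal{L}^\vee$ is canonically identified with the dual of the corresponding pullback for $\mathcal{L}$.

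First I would define the equivariance morphism for $\mathcal{L}^\vee$ as
\[
\Psi^\vee \;:=\; \bigl(\Psi^{-1}\bigr)^{t}: \; p_1^*\mathcal{L}^\vee \;\xrightarrow{\simeq}\; \sigma^*\mathcal{L}^\vee,
\]
where the transpose is taken via the canonical identifications above and $\Psi^{-1}$ exists since $\Psi$ is an isomorphism. This is itself an isomorphism, which takes care of (\ref{EquivarianceDatum}) for $\mathcal{L}^\vee$. Next, I would verify the cocycle condition. Applying the contravariant functor $(-)^{t}$ to the cocycle diagram for $\Psi$, and using that transposition reverses composition while inversion of isomorphisms does the same, the commutative triangle for $\Psi$ dualizes into the commutative triangle required for $\Psi^\vee$. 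Concretely, one checks
\[
(\sigma\times\mathrm{id}_{\mathbb{T}})^*\Psi^\vee \circ p_{1,2}^*\Psi^\vee \;=\; (\mathrm{id}_{\widetilde{X}}\times m)^*\Psi^\vee
\]
by transposing and inverting the analogous relation for $\Psi$, with pullbacks and duality commuted through the canonical isomorphisms.

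The only substantive point to nail down is the coherence of the identifications $f^*(\mathcal{L}^\vee)\simeq (f^*\mathcal{L})^\vee$ under composition of pullbacks; once this coherence is recorded, the verification of the cocycle condition is formal. I expect this bookkeeping of natural isomorphisms — ensuring that the two routes around the diagram agree after transposing — to be the only real obstacle, and it is handled by the standard fact that for locally free sheaves of finite rank, evaluation $\mathcal{L}\otimes\mathcal{L}^\vee\to\mathcal{O}$ is a perfect pairing that is preserved by arbitrary pullbacks.
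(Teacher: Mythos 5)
Your proof takes essentially the same approach as the paper: both use the canonical isomorphism $f^*(\Lb^\vee)\simeq(f^*\Lb)^\vee$ for locally free sheaves of finite rank and define the equivariance datum on $\Lb^\vee$ to be $(\Psi^{-1})^\vee$. Your write-up is actually more careful than the paper's, which stops after naming $(\Psi^{-1})^\vee$ and omits the verification of the cocycle condition that you correctly sketch.
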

\begin{proof}
Given that $\Lb$ is a locally free sheaf of finite rank, we dispose of a canonical and functorial isomorphism
\begin{eqnarray*}
\sigma^{*}\Lb^{\vee}\xrightarrow{\simeq} \mathcal{H}om_{\Ob_{\widetilde{X}\times_{\Z_p}\T}}(\sigma^{*}\Lb,\Ob_{\widetilde{X}\times_{\Z_p}\T}).
\end{eqnarray*}
This implies that $(\Psi^{-1})^{\vee}$, the dual of $\Psi^{-1}$, defines the $\T$-equivariant structure on $\Lb^{\vee}$. 
\end{proof}
\justify
Since the category of $\T$-equivariant quasi-coherent $\Ob_{\widetilde{X}}$-modules is an abelian category, the cokernel of a $\T$-equivariant morphism $\Mb\rightarrow\Nb$ between two $\T$-equivariant quasi-coherent $\Ob_{\widetilde{X}}$-modules is again a  $\T$-equivariant quasi-coherent $\Ob_{\widetilde{X}}$-module. As we will demonstrate below, we can give an independent proof of this result.

\begin{lem}\label{Eq.sequence}
Let $(\Mb,\Phi_1)$ and $(\Nb,\Phi_2)$ be $\T$-equivariant quasi-coherent $\Ob_{\widetilde{X}}$-modules. Let $\Lb$ be a quasi-coherent $\Ob_{\widetilde{X}}$-module such that 
\begin{eqnarray*}
0\rightarrow \Mb\xrightarrow{\phi}\Nb\xrightarrow{\psi}\Lb\rightarrow 0
\end{eqnarray*}
is an exact sequence which induces a commutative diagram
\begin{eqnarray}\label{Eq_3_factor}
\begin{tikzcd}
0 \arrow[r] 
& p_1^*(\Mb) \arrow[r, "p_1^*(\phi)"] \arrow[d, "\Phi_1"] 
& p_1^*(\Nb) \arrow[r, "p_1^*(\psi)"] \arrow[d,"\Phi_2"]
& p_1^*(\Lb) \arrow[r] 
& 0 \\
0 \arrow[r]
& \sigma^{*}(\Mb) \arrow[r, "\sigma^*(\phi)"]
& \sigma^{*}(\Nb) \arrow[r, "\sigma^*(\psi)"]
& \sigma^{*}(\Lb) \arrow[r]
& 0.
\end{tikzcd}
\end{eqnarray}
Then $\Lb$ is also $\T$-equivariant.
\end{lem}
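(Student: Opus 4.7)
The plan is to construct the equivariance isomorphism $\Phi_3:p_1^*\Lb \to \sigma^*\Lb$ for $\Lb$ by passing to the quotient along $\psi$, and then to verify the cocycle condition (\ref{CocycleCondition}) for $\Phi_3$ by exploiting the surjectivity of the pullbacks of $\psi$.

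For the construction of $\Phi_3$, commutativity of (\ref{Eq_3_factor}) together with $\psi\circ\phi=0$ yields
\[
\sigma^*(\psi)\circ\Phi_2\circ p_1^*(\phi)=\sigma^*(\psi)\circ\sigma^*(\phi)\circ\Phi_1=0.
\]
Exactness of the top row of (\ref{Eq_3_factor}) identifies the image of $p_1^*(\phi)$ with $\ker(p_1^*(\psi))$, so $\sigma^*(\psi)\circ\Phi_2$ factors uniquely through $p_1^*(\psi)$ to give a morphism $\Phi_3:p_1^*\Lb\to\sigma^*\Lb$ characterised by $\Phi_3\circ p_1^*(\psi)=\sigma^*(\psi)\circ\Phi_2$. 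Since $\Phi_1$ and $\Phi_2$ are isomorphisms, the short five lemma applied to (\ref{Eq_3_factor}) augmented by $\Phi_3$ on the right forces $\Phi_3$ itself to be an isomorphism.

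For the cocycle condition, I must check that $(id_{\widetilde{X}}\times m)^*\Phi_3$ and $(\sigma\times id_{\T})^*\Phi_3\circ p_{1,2}^*\Phi_3$ coincide as morphisms $f_1^*\Lb\to f_3^*\Lb$. Since pullback of quasi-coherent sheaves is right exact, the map $f_1^*(\psi):f_1^*\Nb\to f_1^*\Lb$ is surjective, so it suffices to verify equality after precomposition with $f_1^*(\psi)$. Using the defining identity $\Phi_3\circ p_1^*(\psi)=\sigma^*(\psi)\circ\Phi_2$ and the naturality of pullback, both compositions reduce respectively to
\[
f_3^*(\psi)\circ(id_{\widetilde{X}}\times m)^*\Phi_2\quad\text{and}\quad f_3^*(\psi)\circ(\sigma\times id_{\T})^*\Phi_2\circ p_{1,2}^*\Phi_2,
\]
which are equal by the cocycle identity (\ref{CocycleCondition}) already satisfied by $\Phi_2$.

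The only real difficulty is the bookkeeping of the interplay between the various base-change functors and the identifications $f_1^*=p_{1,2}^*p_1^*=(id_{\widetilde{X}}\times m)^*p_1^*$, $f_2^*=p_{1,2}^*\sigma^*=(\sigma\times id_{\T})^*p_1^*$, $f_3^*=(id_{\widetilde{X}}\times m)^*\sigma^*=(\sigma\times id_{\T})^*\sigma^*$ already laid out in (\ref{CocycleCondition}). Once this organisation is in place, every step is a formal consequence of right exactness of pullback, the universal property of quotients, and the equivariance data already present on $\Mb$ and $\Nb$.
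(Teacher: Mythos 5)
Your proof is correct, and it reaches the same conclusion by a cleaner route than the paper's. The paper constructs the isomorphism $\Phi: p_1^*\Lb\to\sigma^*\Lb$ by hand: it picks an intersection-stable affine basis of $\widetilde{X}$, performs an explicit diagram chase on local sections to define a candidate map $\Phi_U$ over each such $U$, checks it is well defined, invokes the five lemma to see it is an isomorphism, and then glues the $\Phi_U$ via a cube argument to get a global isomorphism. You bypass all of the local bookkeeping by invoking the universal property of cokernels in the abelian category of quasi-coherent sheaves on $\widetilde{X}\times_{\Z_p}\T$: since $\sigma^*(\psi)\circ\Phi_2$ annihilates $\operatorname{im}(p_1^*(\phi))=\ker(p_1^*(\psi))$, it factors uniquely through $p_1^*(\Lb)=\operatorname{coker}(p_1^*(\phi))$, giving $\Phi_3$ directly, and the five lemma then applies at the sheaf level. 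For the cocycle condition, both you and the paper exploit the surjectivity of the appropriate pullback of $\psi$ to reduce to the cocycle identity for $\Phi_2$; the paper phrases this as a commutative prism, you phrase it as precomposition with an epimorphism, but the content is identical. The paper's local construction has the pedagogical virtue of making the isomorphism concrete on sections, while your categorical argument is shorter and avoids the need to verify compatibility on overlaps, since the universal property already guarantees a canonical global map.
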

\begin{proof}
To define the right vertical morphism, we consider a basis $\Aa$ of $\widetilde{X}$ consisting of affine open subsets, which can be assumed stable under intersection because $\widetilde{X}$ is separated. Let us fix $U\in\Aa$. As (\ref{Eq_3_factor}) is a diagram of quasi-coherent $\Ob_{\tilde{X}\times_{\Z_p}\T}$-modules, the horizontal short exact sequences remains exact when we take local sections on $U\times_{\Z_p}\T$. To soft the notation we will assume that $M(\sigma):=\sigma^{*}(\Mb)(U\times_{\Z_p}\T)$ ($N(\sigma):=\sigma^*(\Nb)(U\times_{\Z_p}\T)$ and $L(\sigma):=\sigma^*(\Lb)(U\times_{\Z_p}\T)$) and $M(p_1):=p_1^{*}(\Mb)(U\times_{\Z_p}\T)$ ($N(p_1):=p_1^*(\Nb)(U\times_{\Z_p}\T)$ and $L(p_1):=p_1^*(\Lb)(U\times_{\Z_p}\T)$). Also, we will suppose that $\phi_1:=p_1^{*}(\phi)(U\times_{\Z_p}\T)$ (resp. $\psi_{1}:=p_1^{*}(\psi)(U\times_{\Z_p}\T)$), $\phi_2:=\sigma^*(\phi)(U\times_{\Z_p}\T)$ (resp. $\psi_2:=\sigma^*(\psi)(U\times_{\Z_p}\T)$), $\Phi_{1,U}:=\Phi_{1}(U\times_{\Z_p}\T)$ and $\Phi_{2,U}:=\Phi_{2}(U\times_{\Z_p}\T)$. In such a way that we have the following commutative diagram
 
\begin{eqnarray*}
\begin{tikzcd}
0 \arrow[r]
              & M(p_1) \arrow[r,"\phi_1"] \arrow[d,"\Phi_{1,U}"]
              & N(p_1) \arrow[r,"\psi_1"] \arrow[d,"\Phi_{2,U}"]
              & L(p_1) \arrow[r]
              & 0\\
0 \arrow[r]
              & M(\sigma) \arrow[r,"\phi_2"] 
              & N(\sigma) \arrow[r,"\psi_2"]
              & L(\sigma) \arrow[r]
              & 0              
\end{tikzcd}
\end{eqnarray*}
Let $x\in L(p_1)$. By surjectivity of $\psi_1$ we can find $y_1\in N(p_1)$ such that $\psi_1(y_1)=x$. We define then $\Phi_{U}(x):= \psi_2(\Phi_{2,U}(y_1)) \in L(\sigma)$. Let us see that $\Phi_{U}$ is well-defined, this means that it does not depend of the choice of $y_1\in N(p_1)$. Let $y_2\in N(p_1)$ such that $\psi_1(y_2)=x$. We want to see $\Phi_{U}(x):= \psi_2(\Phi_{2,U}(y_1))= \psi_{2}(\Phi_{2,U}(y_2))$. Let $y:=y_1-y_2 \in N(p_1)$. By definition $y\in\text{Ker}(\psi_1)=\text{Im}(\phi_1)$, therefore there exists $z\in M(p_1)$ such that $\phi_{1}(z)=y$. Let $z'=\Phi_{1,U}(z)\in M(\sigma)$. By commutativity of the diagram we have 
\begin{eqnarray*}
\phi_2(z')=\phi_2(\Phi_{1,U}(z))= \Phi_{2,U}(\phi_1(z))= \Phi_{2,U}(y) = \Phi_{2,U}(y_1)-\Phi_{2,U}(y_2)
\end{eqnarray*}
and therefore 
\begin{eqnarray*}
0=\psi_{2}(\phi_{2}(z'))=\psi_2(\Phi_{2,U}(y_1))-\psi_2(\Phi_{2,U}(y_2))= \Phi_{U}(x)-\psi_2(\Phi_{2,U}(y_2)).
\end{eqnarray*}
Moreover, it is straightforward to see that $\Phi_U$ is in fact a morphism of $\Ob_{\tilde{X}\times_{\Z_p}\T}(U\times_{\Z_p}\T)$-modules, which by the well-known \textit{five lemma} becomes an isomorphism. From the preceding reasoning, and the quasi-coherence of the sheaves involved, for every $U\in\Aa$ we get an isomorphism  $\tilde{\Phi}_{U}:p_1^*(\Nb)|_{U\times\T}\rightarrow\sigma^{*}(\Lb)|_{U\times\T}$ of sheaves of $\Ob_{U\times\T}$-modules.\\
To complete the construction of the right vertical isomorphism we need to globalize the preceding reasoning. This means that if we chose $U,V\in\Aa$ then we have $\tilde{\Phi}_{U}|_{U\cap V}=\tilde{\Phi}_{V}|_{U\cap V}$. As $\Aa$ is stable under intersections we can construct, in the same way as before, an isomorphism $\tilde{\Phi}_{U\cap V}$ over $U\cap V$. Let us see that $\tilde{\Phi}_{U}|_{U\cap V}=\tilde{\Phi}_{U\cap V}$ (the reader can fallow the same reasoning to proof that $\tilde{\Phi}_{V}|_{U\cap V}=\tilde{\Phi}_{U\cap V}$). We consider the following cube 
\begin{eqnarray*}
\begin{tikzcd}[row sep=scriptsize, column sep=scriptsize]
& p_1^{*}(\Nb)(U\times_{\Z_p}\T)  \arrow[dl, "res"] \arrow[rr, "p_1^*(\psi)(U\times\T)"] \arrow[dd, pos=0.7, "\Phi_{2}(U\times\T)"] & & p_1^*(\Lb)(U\times_{\Z_p}\T) \arrow[dl, "res"] \arrow[dd, "\Phi_{U}"] \\
p_1^{*}(\Nb)(U\cap V\times_{\Z_p}\T) \arrow[rr, crossing over, "p_1^*(\psi)(U\cap V\times\T)", near end] \arrow[dd, "\Phi_2(U\cap V)"] & & p_1^{*}(\Lb)(U\cap V\times_{\Z_p}\T) \\
& \sigma^{*}(\Nb)(U\times_{\Z_p}\T) \arrow[dl, "res"] \arrow[rr, "\sigma^{*}(\psi)(U\times\T)" near start] & & \sigma^*(\Lb)(U\times_{\Z_p}\T) \arrow[dl, "res"] \\
\sigma^*(\Nb)(U\cap V\times_{\Z_p}\T) \arrow[rr, "\sigma^*(\psi)(U\cap V\times\T)"] & & \sigma^*(\Lb)(U\cap V\times_{\Z_p}\T) \arrow[from=uu, crossing over, "\Phi_{U\cap V}", pos=0.7]\\
\end{tikzcd}
\end{eqnarray*}
Except for the right lateral face, all the other faces form, by construction, commutative diagrams which implies that also the right lateral face forms a commutative diagram. This shows that $\tilde{\Phi}_{U}|_{U\cap V}=\tilde{\Phi}_{U\cap V}$. We have constructed an isomorphism $\Phi:p_1^*(\Lb)\rightarrow\sigma^{*}(\Lb)$ of quasi-coherent $\Ob_{\tilde{X}\times_{\Z_p}\T}$-modules. Let us show that $\Psi$ defines a $\T$-equivariant structure. To do that we consider the following diagram   
\begin{eqnarray*}
\begin{tikzcd}[row sep=huge, column sep=huge]
  & (id_{\widetilde{X}}\times m)^{*}p_1^*\Nb \arrow[r] \arrow[d, "(id_{\widetilde{X}}\times m)^*p_1^*\psi"] \arrow[dl] & (\sigma\times id_{\T})^{*}p_1^{*}\Nb \arrow [dll, crossing over] \arrow[d] \\
 (id_{\widetilde{X}}\times m)^*\sigma^*\Nb = (\sigma\times id_{\T})^*\sigma^{*}\Nb \arrow[d] &  (id_{\widetilde{X}}\times m)^{*}p_1^*\Lb \arrow[r, "p_{1,2}^*\Phi"] \arrow[dl, "(id_{\widetilde{X}}\times m)^*\Phi"] &  (\sigma\times id_{\T})^{*}p_1^{*}\Lb \arrow[dll, "(\sigma\times id_{\T})^*\Phi", shift left=0.9ex] \\
    (id_{\widetilde{X}}\times m)^*\sigma^*\Lb = (\sigma\times id_{\T})^*\sigma^{*}\Lb
\end{tikzcd}
\end{eqnarray*}
The triangle on the top is commutative and by functoriality the lateral faces of the prism are also commutative. Its is straightforward to show from this that 
\begin{eqnarray*}
(id_{\widetilde{X}}\times m)^*\Phi\; \circ\; (id_{\widetilde{X}}\times m)^*p_1^*\psi= (\sigma\times id_{\T})^*\Phi\; \circ\;  p_{1,2}^*\Phi\; \circ\; (id_{\widetilde{X}}\times m)^*p_1^*\psi,
\end{eqnarray*}
and as $(id_{\widetilde{X}}\times m)^*p_1^*\psi$ is surjective we can conclude that the triangle on the bottom is also commutative. Therefore $\Psi$ defines a $\T$-equivariant structure for $\Lb$. 
\end{proof}
\justify
Let $(\Mb,\Psi)$ be a $\T$-equivariant $\Ob_{\widetilde{X}}$-module. By the Künneth formula \cite[Theorem 6.7.8]{Grothendieck3} we have a canonical isomorphism
\begin{eqnarray*}
H^0(\widetilde{X}\times_{\Z_p}\mathbb{T},p_1^*\Mb)\simeq H^{0}(\widetilde{X},\Mb)\otimes_{\Z_p}\Z_p[\mathbb{T}]
\end{eqnarray*}
which composing with the application 
\begin{eqnarray*}
H^{0}(\widetilde{X},\Mb)\longrightarrow H^0(\widetilde{X}\times_{\Z_p}\mathbb{T},\sigma^*\Mb)\xrightarrow{H^0(\Psi)} H^{0}(\widetilde{X}\times_{\Z_p}\mathbb{T},p_1^*\Mb)
\end{eqnarray*}
(the first application is induced via the canonical map $\Mb\rightarrow\sigma_*\sigma^*\Mb$) gives us a morphism 
\begin{eqnarray*}
\Delta: H^0(\widetilde{X}, \Mb)\rightarrow H^{0}(\widetilde{X},\Mb)\otimes_{\Z_p}\Z_p[\mathbb{T}],
\end{eqnarray*}
defining a structure of $\mathbb{T}$-module on $H^{0}(\widetilde{X},\Mb)$. The co-module relations are given by the cocycle condition \cite[chapter 0, definition 1.6]{MF}.
\begin{defi}\label{T-invariants}
The $\mathbb{T}$-invariant elements of $H^0(\widetilde{X},\Mb)$ are the elements $P\in H^{0}(\widetilde{X},\Mb)$ such that $\Delta(P)=P\otimes 1$. This subspace will be denoted by $H^0(\widetilde{X},\Mb)^{\mathbb{T}}$. 
\end{defi}
\justify
Now, let us suppose that $\widetilde{X}$ can be covered by a family $\widetilde{\Sb}$ of affine open subsets, which are stable under finite intersection and invariant under the right action of $\T$. This means that for every $\tilde{U}\in\widetilde{\Sb}$ the morphism $\sigma$, inducing the right $\T$-action on $\widetilde{X}$, induces a right $\T$-action  $\tilde{\sigma}:=\sigma|_{\tilde{U}\times_{\Z_p}\T}: \tilde{U}\times_{\Z_p}\T\rightarrow \tilde{U}$ on $\tilde{U}$. By pulling back $\Psi$ under the inclusion $\tilde{U}\times_{\Z_p}\T \hookrightarrow \widetilde{X}\times_{\Z_p}\T$ we get an isomorphism $\Psi|_{\tilde{U}}:\tilde{\sigma}^*\Mb_{\tilde{U}}\rightarrow p_1^{*}\Mb_{\tilde{U}}$ which satisfies the respective cocycle condition (\ref{CocycleCondition}), and, as before, we obtain a comodule map
\begin{eqnarray*}
\Delta_{\tilde{U}}: \Gamma(\tilde{U},\Mb)\rightarrow \Gamma(\tilde{U},\Mb)\otimes_{\Z_p}\Z_p[\T].
\end{eqnarray*}
As in definition \ref{T-invariants}, we can define the $\Z_p$-submodule of $\T$-\textit{invariant sections} on $\tilde{U}$ by 
\begin{eqnarray}\label{T-inv on an open}
\Gamma(\tilde{U},\Mb)^{\T}=\left\{P\in \Gamma(\tilde{U},\Mb)|\;\; \Delta_{\tilde{U}}(P)=P\otimes 1\right\}.
\end{eqnarray}
Finally, let us suppose that $\Mb$ is also quasi-coherent. By \cite[Chapter II, Corollary 5.5]{Hartshorne1} on every affine open subset $\tilde{U}\in\widetilde{\Sb}$ we can define a subsheaf 
\begin{eqnarray*}
\left(\Mb|_{\tilde{U}}\right)^{\T}:= \widetilde{\Gamma(\tilde{U},\Mb)^{\T}}\subset \widetilde{\Gamma(\tilde{U},\Mb)}=\Mb|_{\tilde{U}}.
\end{eqnarray*}
By definition, and giving that $\widetilde{\Sb}$ was supposed to be stable under finite intersections, the preceding sheaves glue together to a subsheaf $\left(\Mb\right)^{\T}\subset \Mb$ which does not depend on the covering $\widetilde{\Sb}$. We sum up the preceding construction in the next definition. 
\begin{defi}\label{Sheaf of T-inv sections}
Let $\widetilde{X}$ be a smooth separated $\Z_p$-scheme endowed with a right $\T$-action, and covered by a family of affine open subsets $\widetilde{\Sb}$ stable under finite intersections and the $\T$-action. For every $\T$-equivariant quasi-coherent $\Ob_{\widetilde{X}}$-module $\Mb$, the subsheaf $(\Mb)^{\T}$ is called the subsheaf of $\T$-invariant sections of $\Mb$. 
\end{defi}
\justify
As an application of the preceding construction let us point out that if $\xi:\widetilde{X}\rightarrow X$ is a locally trivial $\T$-torsor, then we actually dispose of a subsheaf of $\T$-invariant sections of the direct image sheaf $\xi_{*}\Mb$, with $\Mb$ a $\T$-equivariant quasi-coherent $\Ob_{\widetilde{X}}$-module. In fact, if $\Sb$ denotes the collection of all affine open subsets that trivialises the torsor $\xi$, then for every $U\in\Sb$ we know that $\xi^{-1}(U)$ is stable under the right $\T$-action and, as in (\ref{T-inv on an open}), we can define 
\begin{eqnarray}\label{T-inv on an open for xi}
\left((\xi_*\Mb)(U)\right)^{\T}=\left(\Mb(\xi^{-1}(U))\right)^{\T}\subset \Mb(\xi^{-1}(U)).
\end{eqnarray}
As $\widetilde{X}$ is noetherian $\xi_*\Mb$ is quasi-coherent and therefore from (\ref{T-inv on an open for xi}) we have a subsheaf
\begin{eqnarray*}
\left((\xi_*\Mb)|_{U}\right)^{\T}:= \widetilde{\left((\xi_*\Mb)(U)\right)^{\T}} \subset \widetilde{\left(\xi_*\Mb\right)(U)} =\left(\xi_*\Mb\right)|_{U}.
\end{eqnarray*} 
Since $\Sb$ is stable under finite intersections, those sheaves glue together to define a subsheaf 
\begin{eqnarray}\label{T-invariants for  xi}
 \left(\xi_*\Mb\right)^{\T}\subset \xi_*\Mb.
\end{eqnarray}
\justify
For the rest of this subsection we will always suppose that $\xi:\widetilde{X}\rightarrow X$ is a locally trivial $\T$-torsor.

\begin{lem}\label{xi} The morphism $\xi:\widetilde{X}\rightarrow X$ induces an isomorphism $\xi^{\natural}:\Ob_{X}\rightarrow \left(\xi_*\Ob_{\widetilde{X}}\right)^{\T}$.
\end{lem}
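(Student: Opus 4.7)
The plan is to reduce the statement to a local computation on a covering by trivialising open subsets, and from there to the computation of the right $\T$-invariants of the regular representation on $\Z_p[\T]$.

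First I would exploit local triviality. Since $\xi$ is a locally trivial $\T$-torsor, the family $\Sb$ of affine open subsets $U\subset X$ that trivialise $\xi$ forms a basis of the Zariski topology, stable under finite intersection by Remark 3.1.1. For such a $U$, the trivialisation yields a $\T$-equivariant isomorphism $\xi^{-1}(U)\simeq U\times_{\Z_p}\T$ over $U$, so
$$
(\xi_*\Ob_{\widetilde{X}})(U)\;\simeq\;\Ob_X(U)\otimes_{\Z_p}\Z_p[\T].
$$
The $\T$-equivariant structure on $\Ob_{\widetilde{X}}$ is the canonical one (both $p_1^*\Ob_{\widetilde{X}}$ and $\sigma^*\Ob_{\widetilde{X}}$ are identified with $\Ob_{\widetilde{X}\times_{\Z_p}\T}$ and $\Psi=\mathrm{id}$), so the comodule map $\Delta_U$ of Definition 3.2.6 is, under the above identification, just the comorphism of the right translation action; explicitly it is $\mathrm{id}_{\Ob_X(U)}\otimes\Delta_{\T}$, where $\Delta_{\T}\colon \Z_p[\T]\to\Z_p[\T]\otimes_{\Z_p}\Z_p[\T]$ is the Hopf-algebra comultiplication.

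Second I would compute $\Z_p[\T]^{\T}=\Z_p$ directly. Since $\T$ is a split torus, $\Z_p[\T]=\Z_p[X^*(\T)]$ with basis the characters $t^{\alpha}$, and $\Delta_{\T}(t^{\alpha})=t^{\alpha}\otimes t^{\alpha}$. For $g=\sum_\alpha c_\alpha t^{\alpha}$ the invariance condition $\Delta_{\T}(g)=g\otimes 1$ reads
$$
\sum_{\alpha}c_\alpha\,t^{\alpha}\otimes(t^{\alpha}-1)\;=\;0,
$$
and since the $\{t^{\alpha}\otimes t^{\beta}\}$ are $\Z_p$-linearly independent in $\Z_p[\T]\otimes_{\Z_p}\Z_p[\T]$, this forces $c_\alpha=0$ for $\alpha\neq 0$. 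Hence $\Z_p[\T]^{\T}=\Z_p\cdot 1$, and tensoring gives $((\xi_*\Ob_{\widetilde{X}})(U))^{\T}=\Ob_X(U)$; the local comorphism $f\mapsto f\otimes 1$ from $\Ob_X(U)$ to $\Ob_X(U)\otimes_{\Z_p}\Z_p[\T]$ lands exactly in the invariants and identifies $\Ob_X(U)$ with $((\xi_*\Ob_{\widetilde{X}})(U))^{\T}$.

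Finally I would glue. Because $\Sb$ is stable under intersection, the local isomorphisms constructed above are compatible with restrictions to smaller members of $\Sb$, and since $\Sb$ is a basis of the topology of $X$, they glue to a global isomorphism $\xi^{\natural}\colon\Ob_X\xrightarrow{\simeq}(\xi_*\Ob_{\widetilde{X}})^{\T}$. The only mildly subtle step is the identification of the abstract coaction of Definition 3.2.6 with $\mathrm{id}\otimes\Delta_{\T}$ on a trivialising open; this is an unwinding of the canonical equivariance datum on $\Ob_{\widetilde{X}}$ through the Künneth isomorphism, while the remaining steps are routine.
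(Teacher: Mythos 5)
Your proof is correct and follows the same overall structure as the paper's: localize to a trivializing affine open $U\in\Sb$, identify $(\xi_*\Ob_{\widetilde X})(U)$ with $\Ob_X(U)\otimes_{\Z_p}\Z_p[\T]$ carrying the coaction $\mathrm{id}\otimes\Delta_\T$, and compute the $\T$-invariants. The difference lies in how that local invariants computation is justified. The paper invokes Jantzen's lemma that rational cohomology (in particular $H^0(\T,-)$) commutes with direct limits, together with the observation that $\Ob_X(U)$, being $\Z_p$-flat, is a direct limit of free $\Z_p$-modules; this reduces to pulling a free module through the invariants functor. You instead compute $\Z_p[\T]^{\T}=\Z_p$ directly from the character basis, which is more elementary and avoids the Jantzen reference entirely. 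One step that should be tightened is the phrase ``tensoring gives $((\xi_*\Ob_{\widetilde X})(U))^{\T}=\Ob_X(U)$'', since taking $\T$-invariants does not commute with $M\otimes_{\Z_p}(-)$ for an arbitrary $\Z_p$-module $M$. Here the cleanest fix is to observe that your character-basis argument goes through verbatim with coefficients in $\Ob_X(U)$: because $\Z_p[\T]\otimes_{\Z_p}\Z_p[\T]$ is free on the elements $t^\alpha\otimes t^\beta$, a section $\sum_\alpha g_\alpha\otimes t^\alpha$ of $\Ob_X(U)\otimes_{\Z_p}\Z_p[\T]$ is invariant exactly when $g_\alpha=0$ for all $\alpha\neq 0$, giving the claim without any flatness input. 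The trade-off is that the paper's direct-limit mechanism is the one reused in the proof of Lemma \ref{Exact for tangent}, where the coefficient module $\Tb_X(U)$ is no longer of the special form $\Z_p[\T]$, so it is worth being comfortable with both formulations.
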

\begin{proof}
As this is a local problem, we can take $U\in\Sb$ and suppose that $\xi: \xi^{-1}(U)=U\times_{\text{Spec}(\Z_p)}\T\rightarrow U$ is the first projection. Since rational cohomolgy  commutes with direct limits \cite[Part I, Lemma 4.17]{Jantzen} and $\Ob_{X}(U)$ is a direct limit of free $\Z_p$-modules, we can conclude that $\left(\xi_{*}\Ob_{\widetilde{X}}\right)^{\T}(U)=\left(\Ob_{X}(U)\otimes_{\Z_p}\Z_p[\T]\right)^{\T}=\Ob_{X}(U)$.
\end{proof}
\subsection{Relative enveloping algebras of finite level}
Let us fix a positive integer $m\in\Z$. As in the preceding subsections $\widetilde{X}$ and $X$ will denote smooth separated $\Z_p$-schemes, and $\T$ a smooth affine commutative group scheme over $\Z_p$. We will also assume that $\xi:\widetilde{X}\rightarrow X$ is a locally trivial $\T$-torsor. We start this subsection recalling the construction of the $\T$-equivariant structures of the sheaf of level $m$ differential operators $\Db^{(m)}_{\widetilde{X}}$.
\justify
Let $p_1:\widetilde{X}\times_{\text{Spec}(\Z_p)}\T\rightarrow\widetilde{X}$ and $p_{2}:\widetilde{X}\times_{\text{Spec}(\Z_p)}\T\rightarrow\T$ be the projections. For every $n\in\N$ the universal property of the m-PD-envelopes (proposition \ref{order n mPD}) gives us two canonical morphisms
\begin{eqnarray*}
d^np_1:p_1^*\Pb^{n}_{\widetilde{X},(m)}\rightarrow\Pb^{n}_{\widetilde{X}\times_{\text{Spec}(\Z_p)}\T,(m)}\;\;\;\; \text{and}\;\;\;\; d^np_2:  p_2^{*}\Pb^{n}_{\T,(m)}\rightarrow \Pb^{n}_{\widetilde{X}\times_{\text{Spec}(\Z_p)}\T,(m)}.
\end{eqnarray*}
Let $\Jb$ be the m-PD-ideal of the m-PD-algebra $\Pb^{n}_{\T,(m)}$. We have a canonical m-PD-morphism 
\begin{eqnarray*}
s:\Pb^{n}_{\widetilde{X}\times_{\text{Spec}(\Z_p)}\T,(m)}\rightarrow \Pb^{n}_{\widetilde{X}\times_{\text{Spec}(\Z_p)}\T,(m)}/p_2^*\Jb
\end{eqnarray*}
and $\rho:=s\circ d^{n}p_1$ is a m-PD-isomorphism. Then we dispose of a canonical section of $d^{n}{p_1}$, named\label{Can_sect} $q^n_1:=\rho^{-1}\circ s$ \cite[(14)]{HS1}. On the other hand, by functoriality, we obtain a morphism $d^{n}\sigma:\sigma^{*}\Pb^{n}_{\widetilde{X},(m)}\rightarrow \Pb^{n}_{{\widetilde{X}\times_{\Z_p}\T,(m)}}$ and the $\T$-equivariant structure for $\Pb^{n}_{\widetilde{X}}$ is defined by \label{Inv_D} $\Phi^{n}_{(m)}:= q^n_1\circ d^n\sigma$ \cite[Proposition 3.4.1]{HS1}. Definition \ref{Diff order n} and lemma \ref{Eq.Dual} allow us to conclude that for every $n\in\N$ the sheaf $\Db^{(m)}_{\widetilde{X},n}$ is $\T$-equivariant and the inclusions $\Db^{(m)}_{\widetilde{X},n}\hookrightarrow \Db^{(m)}_{\widetilde{X},n+1}$ are $\T$-equivariant morphisms. In particular, by lemma \ref{Eq.Filtration}, the sheaf of level $m$ differential operators is $\T$-equivariant.

\begin{rem}\label{T-Inv_Sym}
(Notation as at the end of subsection 2.3) Following the preceding lines of reasoning we can also show that, for every $n\in\N$, there exists a $m$-PD-morphism
\begin{eqnarray*}
q_1^{'n}: \Gamma^{n}_{\tilde{X}\times\T,(m)}\rightarrow p_1^*\Gamma^n_{\tilde{X},(m)}
\end{eqnarray*}
which is a section of the canonical $m$-PD-morphism $p_1^*\Gamma^n_{\tilde{X},(m)}\rightarrow \Gamma^{n}_{\tilde{X}\times\T,(m)}$ induced by $p_1$ \cite[Subsection 2.2.2]{HS1}. Let $\Gamma^n(\sigma):\sigma^*\Gamma^{n}_{\tilde{X},(m)}\rightarrow\Gamma^n_{\tilde{X}\times\T,(m)}$ be the canonical $m$-PD-morphism induced by $\sigma$. Then $\Phi^{'n}:= q_1^{'n}\circ\Gamma^{n}(\sigma)$ is a $\T$-equivariant structure for $\Gamma^{n}_{\tilde{X},(m)}$. As before, this implies that $Sym^{(m)}(\Tb_{\tilde{X}})$ is $\T$-equivariant.
\end{rem}

\begin{rem}
Although it is well-known that the tangent sheaf $\Tb_{\widetilde{X}}$ is a $\T$-equivariant quasi-coherent sheaf, we point out to the reader that this can be proven using the preceding discussion. In fact, as $\Pb^0_{\widetilde{X},(m)}=\Ob_{\widetilde{X}}$ and $\Pb^{1}_{\widetilde{X},(m)}=\Ob_{\widetilde{X}}\oplus\Omega_{\widetilde{X}}^1$ \cite[Subsection 2.2.3 (18)]{HS1}, we can apply the lemma \ref{Eq.sequence} to the sequence
\begin{eqnarray*}
0\rightarrow \Ob_{\widetilde{X}}\rightarrow \Ob_{\widetilde{X}}\oplus\Omega^1_{\widetilde{X}}\rightarrow \Omega^1_{\widetilde{X}}\rightarrow 0
\end{eqnarray*}
and lemma \ref{Eq.Dual} gives us the $\T$-equivariance of $\Tb_{\widetilde{X}}$. In particular, we dispose of the sheaves $(\Tb_{\T})^{\T}$ and $(\xi_*\Tb_{\widetilde{X}})^{\T}$.
\end{rem}
\justify
Let us recall the following discussion from \cite[4.4]{AW}. Let us suppose $U\in\Sb$ and $\tau\in \Tb_{\widetilde{X}}(\xi^{-1}(U))^{\T}$. This assumption in particular implies that $\tau$ is a $\T$-invariant vector field on $\xi^{-1}(U)$ and therefore a $\T$-invariant endomorphism of $\Ob_{\widetilde{X}}(\xi^{-1}(U))$. Hence it preserves $\Ob_{X}(\xi^{-1}(U))^{\T}$ and by lemma \ref{xi} it induces a vector field $\nu(\tau)\in\Tb_{X}(U)$. We get then a map of $\Ob_{X}$-modules
\begin{eqnarray*}
\nu: \left(\xi_{*}\Tb_{\widetilde{X}}\right)^{\T}\rightarrow\Tb_{X}.
\end{eqnarray*}
\justify
On the other hand, differentiating the right $\T$-action on $\widetilde{X}$ we obtain a $\Z_p$-linear Lie homomorphism $\mathfrak{t}\rightarrow\Tb_{\widetilde{X}}$, which induces a map of $\Ob_{X}$-modules
\begin{eqnarray*}
\mathfrak{t}\otimes_{\Z_p}\Ob_{X}\rightarrow  \left(\xi_{*}\Tb_{\widetilde{X}}\right)^{\T}.
\end{eqnarray*}
\justify
We get a complex of $\Ob_{X}$-modules $\mathfrak{t}\otimes_{\Z_p}\Ob_{X}\rightarrow \left(\xi_{*}\Tb_{\widetilde{X}}\right)^{\T}\xrightarrow{\nu}\Tb_{X}$ which is functorial in $\widetilde{X}$ \cite[subsection 4.4]{AW}.

\begin{lem}\footnote{The reasoning is as in \cite[Lemma 4.4]{AW}.}\label{Exact for tangent}
If $\xi:\widetilde{X}\rightarrow X$ is a locally trivial $\T$-torsor, then the preceding complex is in fact a short exact sequence 
\begin{eqnarray*}
0\rightarrow \mathfrak{t}\otimes_{\Z_p}\Ob_{X}\rightarrow \left(\xi_{*}\Tb_{\widetilde{X}}\right)^{\T}\xrightarrow{\nu}\Tb_{X}\rightarrow 0.
\end{eqnarray*}
\end{lem}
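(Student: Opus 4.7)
The claim is local on $X$, so by the definition of a locally trivial torsor I may assume $\widetilde{X}=U\times_{\Z_p}\T$ for an affine open $U\in\Sb$, with $\xi$ being the first projection and $\T$ acting by right translation on the second factor. Since $\xi$ is affine and all the sheaves involved are quasi-coherent, it suffices to establish exactness on global sections over $U$ (and over any affine open of $U$).

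In this trivial situation the tangent sheaf splits as $\Tb_{\widetilde{X}}\simeq p_1^{*}\Tb_U\oplus p_2^{*}\Tb_{\T}$. Because $\T$ is an affine group scheme over $\Z_p$, the left-invariant vector fields trivialize the tangent sheaf, $\Tb_{\T}\simeq \Ob_{\T}\otimes_{\Z_p}\mathfrak{t}$; these left-invariant fields are tautologically invariant under right translation, since left and right translations on a group commute. Taking global sections I obtain
\begin{eqnarray*}
(\xi_{*}\Tb_{\widetilde{X}})(U)\;\simeq\;\bigl(\Tb_U(U)\otimes_{\Z_p}\Z_p[\T]\bigr)\;\oplus\;\bigl(\Ob_U(U)\otimes_{\Z_p}\Z_p[\T]\otimes_{\Z_p}\mathfrak{t}\bigr),
\end{eqnarray*}
where $\T$ acts on each summand only through the right-regular action on the factor $\Z_p[\T]$, and trivially on $\Tb_U(U)$, $\Ob_U(U)$, and $\mathfrak{t}$.

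Next I pass to $\T$-invariants. Arguing exactly as in the proof of Lemma \ref{xi}, I write $\Tb_U(U)$ and $\Ob_U(U)\otimes_{\Z_p}\mathfrak{t}$ as direct limits of free $\Z_p$-modules, commute invariants past the colimit using that rational cohomology commutes with direct limits \cite[Part I, Lemma 4.17]{Jantzen}, and reduce to the elementary identity $\Z_p[\T]^{\T}=\Z_p$. This yields
\begin{eqnarray*}
\bigl((\xi_{*}\Tb_{\widetilde{X}})(U)\bigr)^{\T}\;\simeq\;\Tb_U(U)\;\oplus\;\bigl(\Ob_U(U)\otimes_{\Z_p}\mathfrak{t}\bigr),
\end{eqnarray*}
and the same computation on each affine open inside $U$ provides a sheaf-level direct-sum decomposition over $U$.

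Finally I identify the two morphisms in the complex with the obvious inclusion and projection associated to this decomposition. Unwinding the construction preceding the lemma: the morphism $\mathfrak{t}\otimes_{\Z_p}\Ob_X\rightarrow (\xi_{*}\Tb_{\widetilde{X}})^{\T}$ obtained by differentiating the right $\T$-action lands in the subspace generated by left-invariant vector fields on the $\T$-factor, i.e.\ the second summand, and is the canonical inclusion there. On the other hand, the map $\nu$ sends a $\T$-invariant vector field to the derivation it induces on $(\xi_{*}\Ob_{\widetilde{X}})^{\T}\simeq\Ob_X$ (Lemma \ref{xi}); the ``vertical'' vector fields (those coming from $\mathfrak{t}\otimes\Ob_X$) kill every $\T$-invariant function, so $\nu$ is precisely the projection onto the first summand. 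Exactness of the sequence is therefore immediate from the direct-sum decomposition. The only technical obstacle is the integral computation of $\T$-invariants, which is handled by the colimit/free-module argument already used in Lemma \ref{xi}; once that is in place, the remainder is a direct product/projection computation.
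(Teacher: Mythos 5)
Your proof is correct and takes essentially the same approach as the paper: reduce to a local trivialization $\xi^{-1}(U)\simeq U\times_{\Z_p}\T$ for $U\in\Sb$, split $\Tb_{U\times\T}$ as in (\ref{local sections of T_tide}), take $\T$-invariants by the direct-limit argument of Lemma \ref{xi} together with $(\Z_p[\T])^{\T}=\Z_p$, and read off exactness from the resulting splitting (\ref{Locally_Split_2}); you also usefully spell out the identification of the two maps of the complex with the canonical inclusion and projection, which the paper leaves implicit. One small slip: the claim that the trivializing left-invariant fields on $\T$ are fixed by the right-regular $\T$-action does not follow merely from the fact that left and right translations commute — that only shows $(R_t)_*X_v$ is again left-invariant, and in general $(R_t)_*X_v=X_{\operatorname{Ad}(t^{-1})v}$; what you actually need is that $\T$ is commutative (so $\operatorname{Ad}$ is trivial), which is precisely the content of $H^0(\T,\Tb_\T)^{\T}=\mathfrak{t}$ in (\ref{Sections of T_T}), cited from \cite[Lemma 2]{BB1}.
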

\justify
Before starting the proof we recall for the reader the following relations, which  come from the $\T$-equivariant structure of $\Tb_{\T}$ \cite[Lemma 2 ]{BB1}  
\begin{eqnarray}\label{Sections of T_T}
H^{0}(\T,\Tb_{\T})=\Z_p[\T]\otimes_{\Z_p}\mathfrak{t}\;\;\;\;\text{and}\;\;\;\; H^{0}(\T,\Tb_{\T})^{\T}=\mathfrak{t}.
\end{eqnarray}
\justify
Moreover, by \cite[Section II, exercise 8.3]{Hartshorne1} we also dispose of the local description
\begin{eqnarray}\label{local sections of T_tide}
\Tb_{U\times_{\Z_p}\T} = \left(\Tb_{U}\otimes_{\Z_p}\Ob_{\T}\right)\oplus \left(\Ob_{U}\otimes_{\Z_p}\Tb_{\T}\right).
\end{eqnarray}

\begin{proof}[Proof of lemma \ref{Exact for tangent}]
As the sheaves in the sequence are quasi-coherent it is enough to check exactness over an affine open subset $U\in\Sb$. First of all, since $\Tb_{X}(U)$ is a locally free $\Ob_{X}(U)$-module and $\Ob_{X}(U)$ is a flat $\Z_p$-algebra, we can conclude that $\Tb_{X}(U)$ is an inductive limit of free $\Z_p$-modules. Therefore 
\begin{eqnarray*}
(\Tb_{X}(U)\otimes_{\Z_p}\Z_p[\T])^{\T}=\Tb_{X}(U)\otimes_{\Z_p}(\Z_p[\T])^{\T}=\Tb_{X}(U).
\end{eqnarray*}
This relation, together with (\ref{Sections of T_T}) and (\ref{local sections of T_tide}), allow us to conclude that 
\begin{eqnarray}\label{Locally_Split_2}
\Tb_{\widetilde{X}}(\xi^{-1}(U))^{\T}=\Tb_{X}(U)\oplus\left(\Ob_{X}(U)\otimes_{\Z_p}\mathfrak{t}\right).
\end{eqnarray} 
\end{proof}

\begin{rem}
The preceding lemma shows that $\left(\xi_*\Tb_{\tilde{X}}\right)^{\T}$ is a locally free $\Ob_X$-module of finite rank. In particular  $Sym^{(m)}((\xi_*\Tb_{\tilde{X}})^{\T})$ is well-defined.
\end{rem}

\begin{defi}
Let $\xi:\widetilde{X}\rightarrow X$ be a locally trivial $\T$-torsor. Following \cite[page 180]{BB1} we define the level $m$ relative enveloping algebra of the torsor to be the sheaf of $\T$-invariants of $\xi_{*}\Db_{\widetilde{X}}^{(m)}$:
\begin{eqnarray*}
\widetilde{\Db^{(m)}}:=\left(\xi_{*}\Db^{(m)}_{\widetilde{X}}\right)^{\T}.
\end{eqnarray*}
\end{defi}
\justify
The preceding sheaf is endowed with a canonical filtration 
\begin{eqnarray}\label{Fil.Invariants}
\text{Fil}_{d}\left(\widetilde{\Db^{(m)}}\right) = \left(\xi_{*}\Db_{\widetilde{X},d}\right)^{\T}.
\end{eqnarray}
\justify
\textbf{Tensor product filtration.} Let $\Ab$ be a filtered sheaf of commutative rings on a topological space $Y$ \cite[A: III. 2]{Bjork}. Let $\Mb$ and $\Nb$ be filtered $\Ab$-modules \cite[A: III. 2.5]{Bjork}. The sheaf of $\Ab$-modules $\Mb\otimes_{\Ab}\Nb$ carries a natural filtration called \textit{the tensor product filtration} and it is defined as follows.  Let $n\in \N$ fix. For every $U\subset Y$ we let $F_n(\Mb(U)\otimes_{\Ab(U)}\Nb(U))$ be the abelian subgroup of $\Mb(U)\otimes_{\Ab(U)}\Nb(U)$ generated  by elements of type $x\otimes y$ with $x\in\Mb_{l}(U)$, $y\in\Nb_s(U)$, and such that $l+s\le n$. This process defines a presheaf on $Y$ and we let $F_n(\Mb\otimes_{\Ab}\Nb)$ be its sheafification. The sheaf $\Mb\otimes_{\Ab}\Nb$ becomes therefore a filtered sheaf of $\Ab$-modules
\begin{eqnarray*}
F_0(\Mb\otimes_{\Ab}\Nb)\subseteq ...\subseteq F_n(\Mb\otimes_{\Ab}\Nb)\subseteq ... \subseteq \Mb\otimes_{\Ab}\Nb.
\end{eqnarray*} 
\justify
Furthermore, for every open subset $U\subset Y$ we have a canonical map 
\begin{eqnarray*}
gr_{\bullet}\left(\Mb(U)\right)\otimes_{gr_{\bullet}\left(\Ab(U)\right)}gr_{\bullet}\left(\Nb(U)\right)\rightarrow gr_{\bullet}(\Mb(U)\otimes_{\Ab(U)}\Nb(U))
\end{eqnarray*}
by putting $x_{(l)}\otimes y_ {(s)}\rightarrow (x\otimes y)_{l+s}$, where $x\in F_l\Mb(U)-F_{l-1}\Mb(U)$, $y\in F_s\Nb(U)-F_{s-1}\Nb(U)$, and $x_{(l)}:=x+F_{l-1}\Mb(U)$ and $y_{(s)}:=y+F_{s-1}\Nb(U)$. Moreover, these morphisms are compatible under restrictions and therefore, by the universal property of the sheafification, we get a morphism of graded sheaves
\begin{eqnarray}\label{Gr_morph}
gr_{\bullet}(\Mb)\otimes_{gr_{\bullet}(\Ab)}gr_{\bullet}(\Nb)\rightarrow gr_{\bullet}\left(\Mb\otimes_{\Ab}\Nb\right)
\end{eqnarray}
which is surjective by \cite[Section I, 6.13]{Huishi}.

\begin{prop}\label{locally tilde}
For any $U\in \Sb$ there exists an isomorphism of sheaves of filtered $\Z_p$-algebras
\begin{eqnarray*}
\widetilde{\Db^{(m)}}|_{U}\xrightarrow{\simeq} \Db^{(m)}_{X}|_{U}\otimes_{\Z_p}D^{(m)}(\T).
\end{eqnarray*}
\end{prop}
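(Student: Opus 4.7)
Since $U\in\Sb$ trivialises the torsor, fix an equivariant isomorphism $\alpha_U:U\times_{\Z_p}\T\xrightarrow{\simeq}\xi^{-1}(U)$ intertwining $\xi|_{\xi^{-1}(U)}$ with the first projection $p_1$ and making the right $\T$-action correspond to right translation on the second factor. Under this identification, $\widetilde{\Db^{(m)}}|_U=\bigl(p_{1,*}\Db^{(m)}_{U\times_{\Z_p}\T}\bigr)^{\T}$, and the strategy is to build a natural filtered $\Z_p$-algebra map into this sheaf and show it is an isomorphism by passing to the associated graded.

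\textbf{Step 1: construction of the map.} I will produce a morphism of filtered $\Z_p$-algebras
\begin{eqnarray*}
\Phi_U:\ \Db^{(m)}_X|_U\otimes_{\Z_p}D^{(m)}(\T)\longrightarrow\widetilde{\Db^{(m)}}|_U
\end{eqnarray*}
from two commuting pieces. The first piece is the pullback $\Db^{(m)}_X|_U\hookrightarrow p_{1,*}\Db^{(m)}_{U\times\T}$, which lands in $\T$-invariants because the $\T$-action on $U$ is trivial. The second piece is supplied by Proposition \ref{morp.HS} applied (via $p_2$) to the right regular action of $\T$ on itself: by Remark \ref{Invarian global sections of group}(ii) it yields a filtered algebra map $D^{(m)}(\T)\to H^{0}(\T,\Db^{(m)}_\T)^\T$ whose image consists of left-invariant fibrewise differential operators on $U\times\T$; they are automatically right-$\T$-invariant because $\T$ is commutative. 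These two families of operators commute in $\widetilde{\Db^{(m)}}(U)$ (one has coefficients independent of the $\T$-factor and differentiates only in the $U$-direction; the other differentiates only in the $\T$-direction), so together they induce the desired filtered algebra homomorphism $\Phi_U$.

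\textbf{Step 2: isomorphism on associated gradeds.} By (\ref{graded}) and Proposition \ref{gr.Dist algebra}(i), the associated graded of the left-hand side (with the tensor product filtration) is $\text{Sym}^{(m)}(\Tb_X)|_U\otimes_{\Z_p}\text{Sym}^{(m)}(\mathfrak{t})$. For the right-hand side, the filtration (\ref{Fil.Invariants}) is obtained by taking $\T$-invariants of the order filtration of $\Db^{(m)}_{\widetilde X}$; since $\T$ is a split torus, the functor of $\T$-invariants is exact on the quasi-coherent $\T$-equivariant sheaves involved, hence commutes with forming the associated graded. Using (\ref{graded}) once more and Remark \ref{T-Inv_Sym}, this yields $\text{gr}_\bullet(\widetilde{\Db^{(m)}}|_U)=(\xi_*\text{Sym}^{(m)}(\Tb_{\widetilde X}))^\T|_U$, which by the local splitting (\ref{Locally_Split_2}) of Lemma \ref{Exact for tangent} and the compatibility of level-$m$ symmetric algebras with direct sums (Remark \ref{Sym-Locally}) identifies canonically with $\text{Sym}^{(m)}(\Tb_X)|_U\otimes_{\Z_p}\text{Sym}^{(m)}(\mathfrak{t})$. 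A direct check on generators $\underline{\partial_x}^{\langle\underline k\rangle}$ and $\underline{\xi}^{\langle\underline j\rangle}$ shows that $\text{gr}(\Phi_U)$ coincides with this identification. As both filtrations are exhaustive and bounded below, this implies that $\Phi_U$ itself is an isomorphism of filtered $\Z_p$-algebras.

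\textbf{Main obstacle.} The subtle point is the commutation of $\T$-invariants with the associated graded, together with the identification of the associated graded of the invariants with the symmetric algebra of $(\xi_*\Tb_{\widetilde X})^\T$. Once exactness of $\T$-invariants on the $\T$-equivariant locally free pieces of the filtration is granted (a consequence of $\T$ being a split torus, hence having semisimple rational representations), the decomposition $\text{Sym}^{(m)}(A\oplus B)\simeq\text{Sym}^{(m)}(A)\otimes\text{Sym}^{(m)}(B)$ of Remark \ref{Sym-Locally} closes the argument.
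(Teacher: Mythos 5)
Your proof is correct but takes a genuinely different route from the paper's. The paper proves the statement by a direct computation of local sections: over a trivializing open $U\in\Sb$ it identifies $\Db^{(m)}_{\widetilde X}(\xi^{-1}(U))^\T$ with $\Db^{(m)}_X(U)\otimes_{\Z_p} H^0(\T,\Db^{(m)}_\T)^\T$ using the product decomposition (\ref{Filtration_D}) of $\Db^{(m)}_{U\times\T}$ together with the K\"unneth formula for global sections, and then invokes Remark \ref{Invarian global sections of group}(ii) to replace $H^0(\T,\Db^{(m)}_\T)^\T$ by $D^{(m)}(\T)$. You instead construct an explicit filtered $\Z_p$-algebra map $\Phi_U:\Db^{(m)}_X|_U\otimes_{\Z_p} D^{(m)}(\T)\to\widetilde{\Db^{(m)}}|_U$ out of two commuting families of $\T$-invariant operators and then check isomorphism at the level of associated gradeds. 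This forces you to anticipate (a local version of) Proposition \ref{gr tilde}: you need the identification $\text{gr}_\bullet\bigl(\widetilde{\Db^{(m)}}|_U\bigr)\simeq\text{Sym}^{(m)}(\Tb_X)|_U\otimes_{\Z_p}\text{Sym}^{(m)}(\mathfrak t)$, which you assemble from the exactness of $\xi_*$ and of $\T$-invariants, the local splitting (\ref{Locally_Split_2}), and the compatibility of $\text{Sym}^{(m)}$ with direct sums. Your version makes the isomorphism canonical by construction, rather than emerging from a chain of identifications; the paper's version is shorter because it bypasses the graded reduction entirely and defers that computation to the subsequent Proposition \ref{gr tilde}. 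One point you glide over: the canonical map (\ref{Gr_morph}) from $\text{gr}_\bullet(\Mb)\otimes\text{gr}_\bullet(\Nb)$ to $\text{gr}_\bullet(\Mb\otimes\Nb)$ is a priori only surjective, and you need it to be an isomorphism for your left-hand side. This does hold here, because the order filtration of $\Db^{(m)}_X|_U$ is by locally free $\Ob_X$-modules and that of $D^{(m)}(\T)$ is by free $\Z_p$-modules with free quotients, so both filtrations split; but it deserves a word.
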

\justify
Before starting the proof of the proposition let us consider the following facts. Let $n\in\N$ fix and $i\le n$. For the next few lines we will suppose that $X$ and $Z$ are smooth $\Z_p$-schemes and that $Y=X\times_{\text{Spec}(\Z_p)}Z$. Let $p_1$ and $p_2$ be the projections. By following \cite{HS1} we have defined in page \pageref{Can_sect} two canonical applications
\begin{eqnarray*}
q_1^i:\Pb^{i}_{Y,(m)}\rightarrow p_1^*\Pb^{i}_{X,(m)}\;\;\;\text{and}\;\;\; q_2^{n-1}:\Pb^{n-i}_{Y,(m)}\rightarrow p_2^*\Pb^{n-i}_{Z,(m)}.
\end{eqnarray*}
Locally, if $(t_1,...,t_{N})$ and $(t'_1,...,t'_{N'})$ are coordinated systems on $X$ and $Z$, respectively, then we obtain a coordinated system on $Y$ by putting $(p_1^*(t_1),... ,p_1^{*}(t_N), p_2^{*}(t'_1),...,p_2^*(t'_{N'})$. We have 
\begin{eqnarray*}
\;\;\;\;\;\;\Pb^{i}_{Y,(m)}\simeq \displaystyle\bigoplus_{|\underline{l_1}|+|\underline{l_2}|\le i}\Ob_{Y}p_1^*(\underline{\tau}^{\{\underline{l_1}\}})p_{2}^*(\underline{\tau'}^{\{\underline{l_2}\}})\;\;\;(\tau_i:=1\otimes t_i-t_i\otimes 1\;\text{and}\;\tau'_i:=1\otimes t'_1-t'_i\otimes 1).
\end{eqnarray*} 
In this case \cite[subsection 2.2.2]{HS1}
\begin{eqnarray*}
q_{1}^i\left(\sum_{\underline{l_1},\underline{l_2}}a_{\underline{l_1},\underline{l_2}}p_{1}^*(\underline{\tau}^{\{\underline{l_1}\}})p_2^{*}(\underline{\tau'}^{\{\underline{l_2}\}})\right)=\sum_{\underline{l_1}}a_{\underline{l_1},0}p_{1}^*(\underline{\tau}^{\{\underline{l_1}\}})
\end{eqnarray*}
(with a similar description for $q_{2}^{n-i}$) and we have an isomorphism
\begin{eqnarray}\label{Filtration_P}
\Pb_{Y,(m)}^{n}\xrightarrow{\simeq} \displaystyle\bigoplus_{0\le i\le n}p_{1}^*\Pb^{i}_{X,(m)}\otimes_{\Ob_{Y}}p_2^*\Pb^{n-i}_{Z,(m)}.
\end{eqnarray}
Moreover, since $\Pb^{i}_{X,(m)}$ and $\Pb^{n-i}_{Z,(m)}$ are locally free $\Ob$-modules of finite rank, taking duals in (\ref{Filtration_P}) we get a canonical isomorphism
\begin{eqnarray}\label{Filtration_D}
\Db^{(m)}_{Y,n}\xrightarrow{\simeq}\displaystyle\bigoplus_{0\le i \le n}p_1^*\Db^{(m)}_{X,i}\otimes_{\Ob_{Y}}p_2^{*}\Db^{(m)}_{Z,n-i}
\end{eqnarray} 
\begin{proof}[Proof of proposition \ref{locally tilde}]
Let $U\in\Sb$ and let $\xi^{-1}(U)\simeq U\times_{\text{Spec}(\Z_p)}\T$ be a trivialization of $\xi$ over $U$.  We obtain the following isomorphisms of filtered $\Z_p$-algebras
\begin{eqnarray*}
\left(\xi_*\Db_{\widetilde{X}}^{(m)}\right)^{\T}(U)=\Db^{(m)}_{\widetilde{X}}(\xi^{-1}(U))^{\T}
\simeq  \Db^{(m)}_{U\times\T}(U\times\T)^{\T}
\simeq  \Db^{(m)}_{X}(U)\otimes_{\Z_p}H^{0}(\T,\Db_{\T}^{(m)})^{\T}
\simeq   \Db^{(m)}_{X}(U)\otimes_{\Z_p} D^{(m)}(\T)
\end{eqnarray*}
where the first isomorphism follows from the fact that $U$ trivializes the $\T$-torsor $\xi$, the second isomorphism becomes from (\ref{Filtration_D}) and the Kunneth formula \cite[Theorem 6.7.8]{Grothendieck3}), and the third isomorphism is given by $(ii)$ in remark \ref{Invarian global sections of group}. Since the previous isomorphisms are compatible with restrictions to open affine subsets contained in $U$, we obtain the desired isomorphism of sheaves of filtered $\Z_p$-algebras.  
\end{proof}

\begin{prop}\label{gr tilde}
If $\xi$ is a locally trivial $\T$-torsor, then there exists a canonical and graded isomorphism
\begin{eqnarray*}
Sym^{(m)}\left(\left(\xi_{*}\Tb_{\widetilde{X}}\right)^{\T}\right)\xrightarrow{\simeq} gr_{\bullet}\left(\widetilde{\Db^{(m)}}\right).
\end{eqnarray*}
\end{prop}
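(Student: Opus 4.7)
The plan is to construct a canonical graded $\Ob_X$-algebra morphism
\[
\phi:\ Sym^{(m)}\bigl((\xi_*\Tb_{\widetilde{X}})^{\T}\bigr)\ \longrightarrow\ gr_{\bullet}\widetilde{\Db^{(m)}}
\]
and verify that it is an isomorphism on a trivializing affine cover. To build $\phi$ globally, I would start from the canonical graded isomorphism (\ref{graded}) applied to the smooth scheme $\widetilde{X}$, namely $Sym^{(m)}(\Tb_{\widetilde{X}})\xrightarrow{\simeq}gr_{\bullet}\Db^{(m)}_{\widetilde{X}}$, which is $\T$-equivariant by functoriality and by Remark \ref{T-Inv_Sym}. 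Pushing forward along $\xi$ and taking $\T$-invariants yields an isomorphism of graded $\Ob_X$-algebras $(\xi_*Sym^{(m)}(\Tb_{\widetilde{X}}))^{\T}\simeq(\xi_*gr_{\bullet}\Db^{(m)}_{\widetilde{X}})^{\T}$. Inside the right-hand side sits $gr_{\bullet}\widetilde{\Db^{(m)}}$, via the canonical injections obtained from the short exact sequences $0\to\Db^{(m)}_{\widetilde{X},d-1}\to\Db^{(m)}_{\widetilde{X},d}\to Sym^{(m)}_d(\Tb_{\widetilde{X}})\to 0$ after applying the left-exact functor $(\xi_*-)^{\T}$. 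On the source side, the inclusion $(\xi_*\Tb_{\widetilde{X}})^{\T}\hookrightarrow\xi_*\Tb_{\widetilde{X}}$ together with the functoriality of $Sym^{(m)}$ described in Remark \ref{Sym-Locally} produces a natural map landing in the same target, and $\phi$ is obtained by showing this composite factors through $gr_{\bullet}\widetilde{\Db^{(m)}}$.

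The isomorphism property would then be verified on a trivializing open $U\in\Sb$. Proposition \ref{locally tilde} gives a filtered isomorphism $\widetilde{\Db^{(m)}}|_{U}\simeq \Db^{(m)}_X|_U\otimes_{\Z_p}D^{(m)}(\T)$. Because the order filtrations on each tensor factor split at the level of $\Ob_X|_U$- (resp.\ $\Z_p$-)modules via the canonical PBW-type sections underlying (\ref{graded}) and Proposition \ref{gr.Dist algebra}(i), the surjection (\ref{Gr_morph}) is in fact an isomorphism in this situation, and we obtain
\[
gr_{\bullet}\widetilde{\Db^{(m)}}|_U\ \simeq\ Sym^{(m)}(\Tb_X)|_U\otimes_{\Z_p}Sym^{(m)}(\mathfrak{t}).
\]
For the left-hand side, the local splitting (\ref{Locally_Split_2}) of Lemma \ref{Exact for tangent} gives $(\xi_*\Tb_{\widetilde{X}})^{\T}|_U\simeq\Tb_X|_U\oplus(\Ob_X|_U\otimes_{\Z_p}\mathfrak{t})$. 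A direct inspection of local bases via Remark \ref{Sym-Locally} shows that the level-$m$ symmetric algebra of a direct sum of locally free $\Ob$-modules equals the tensor product of the individual level-$m$ symmetric algebras, so
\[
Sym^{(m)}\bigl((\xi_*\Tb_{\widetilde{X}})^{\T}\bigr)|_U\ \simeq\ Sym^{(m)}(\Tb_X)|_U\otimes_{\Z_p}Sym^{(m)}(\mathfrak{t}).
\]

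Tracing through the definitions, the local maps induced by $\phi$ coincide with the composite of the two identifications above, so $\phi$ is an isomorphism on each $U\in\Sb$ and therefore globally, since $\Sb$ is a basis of the Zariski topology of $X$ by definition \ref{locally trivial}. The main obstacle I anticipate is the careful verification that the various canonical maps are compatible with the tensor product filtration on the local model of Proposition \ref{locally tilde}: this rests on the fact that the order filtrations on both $\Db^{(m)}_X$ and $D^{(m)}(\T)$ are split as modules (via the canonical sections $q_1^n$ constructed on page \pageref{Can_sect}), which is precisely what ensures that passing to $gr_{\bullet}$ commutes with the tensor product and that no cross-filtration terms appear that would spoil injectivity of $\phi$.
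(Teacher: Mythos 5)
Your overall strategy — build a canonical graded morphism $\phi$ and verify it is an isomorphism on the trivializing cover $\Sb$, using the local description of $\widetilde{\Db^{(m)}}$ from Proposition \ref{locally tilde} and the splitting (\ref{Locally_Split_2}) — is essentially the same as the paper's, and the local computation you describe is correct (it is exactly \cite[Proposition 1.3.5]{Huyghe1} that gives $Sym^{(m)}$ of a direct sum as a tensor product, which the paper also invokes). Two points, however, deserve more care.

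First, the construction of the \emph{global} map $\phi$ is glossed over. You appeal to ``the functoriality of $Sym^{(m)}$ described in Remark \ref{Sym-Locally},'' but unlike the ordinary symmetric algebra $\textbf{S}=Sym^{(0)}$, the functor $Sym^{(m)}$ does not enjoy a universal mapping property from module maps into commutative $\Ob_X$-algebras: there is no a priori reason a map $(\xi_*\Tb_{\widetilde{X}})^{\T}\to\Ab$ into a graded algebra $\Ab$ should factor through $Sym^{(m)}\bigl((\xi_*\Tb_{\widetilde{X}})^{\T}\bigr)$. This is precisely the point where the paper's proof splits into cases: it first handles $m=0$, where the universal property of $\textbf{S}$ produces the canonical map $\varphi:\textbf{S}\bigl((\xi_*\Tb_{\widetilde{X}})^{\T}\bigr)\to(\xi_*\textbf{S}(\Tb_{\widetilde{X}}))^{\T}$, and then shows — using the embedding $Sym^{(m)}(\Lb)\subset\textbf{S}(\Lb)\otimes_{\Z_p}\Q_p$ coming from relation (\ref{Rel_base_change}) — that $\varphi\otimes1_{\Q_p}$ restricts to the desired map on the level-$m$ subalgebras. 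Your argument needs some version of this rational-embedding step to make $\phi$ well defined before you check it locally.

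Second, you only invoke left-exactness of $(\xi_*-)^{\T}$, but the paper uses the stronger fact that both $\xi_*$ (Lemma \ref{triviality}) and $(-)^{\T}$ ($\T$ being diagonalisable, \cite[Part I, Lemma 4.3 (b)]{Jantzen}) are \emph{exact} here. That exactness is what lets one commute the composite functor past $gr_{\bullet}$ directly, giving $\bigl(\xi_*Sym^{(m)}(\Tb_{\widetilde{X}})\bigr)^{\T}\simeq gr_{\bullet}\widetilde{\Db^{(m)}}$ without any auxiliary local argument about splittings of the tensor-product filtration or about (\ref{Gr_morph}) being an isomorphism. Using it reduces the whole proposition to the single identification $Sym^{(m)}\bigl((\xi_*\Tb_{\widetilde{X}})^{\T}\bigr)\simeq\bigl(\xi_*Sym^{(m)}(\Tb_{\widetilde{X}})\bigr)^{\T}$, which is what your local computation establishes; this is a cleaner route than the detour through Proposition \ref{locally tilde} and the tensor-product filtration, though your version is not incorrect.
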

\begin{proof}
We will divide the proof into two cases. We will first consider the case $m=0$ and then we will generalize for all $m\in\Z_{>0}$.
\justify
\textit{Case 1.} Let us suppose that $m=0$. By the remark given after the proposition 1.2.2 in \cite{Huyghe1} we know that if $\Lb$ is a locally free $\Ob_{\widetilde{X}}$-module of finite rank, then $\text{Sym}^{(0)}(\Lb)= \textbf{S}(\Lb)$ is the symmetric algebra of $\Lb$. By (\ref{graded}), which is true for every $m\in\N$ (cf. \cite[Proposition 1.3.7.3]{Huyghe1}), we have a canonical isomorphism of graded $\Ob_{\widetilde{X}}$-algebras
\begin{eqnarray*}
\text{\textbf{S}}(\Tb_{\widetilde{X}})\xrightarrow{\simeq} \text{gr}_{\bullet}\left(\Db^{(0)}_{\widetilde{X}}\right).
\end{eqnarray*}
Applying the direct image functor $\xi_*$ to the preceding isomorphism and then taking $\T$-invariants sections (both functors being exact by lemma \ref{triviality} and the fact that $\T$ is diagonalisable \cite[Part I, Lemma 4.3 (b)]{Jantzen})
we get an isomorphism
\begin{eqnarray*}
\left(\xi_*\text{\textbf{S}}\left(\Tb_{\widetilde{X}}\right)\right)^{\T}\xrightarrow{\simeq} \text{gr}_{\bullet}\left(\left(\xi_*\Db^{(0)}_{\widetilde{X}}\right)^{\T}\right).
\end{eqnarray*}
We remark for the reader that the left-hand side of the previous isomorphism is well defined by remark \ref{T-Inv_Sym}. To complete the proof of the first case, we need to show that $(\xi_*\;\text{\textbf{S}}(\Tb_{\widetilde{X}}))^{\T}=\text{\textbf{S}}\left((\xi_*\Tb_{\widetilde{X}})^{\T}\right)$. To do that, we start by considering the canonical map of $\Ob_X$-modules 
\begin{eqnarray*}
\left(\xi_*\Tb_{\widetilde{X}}\right)^{\T}\rightarrow \left(\xi_*\text{\textbf{S}}\left(\Tb_{\widetilde{X}}\right)\right)^{\T}
\end{eqnarray*}
which induces, by universal property of $\text{\textbf{S}}(\bullet)$, a canonical morphism of graded $\Ob_{X}$-algebras
\begin{eqnarray*}
\text{\textbf{S}}\left(\left(\xi_{*}\Tb_{\widetilde{X}}\right)^{\T}\right)\xrightarrow{\varphi} \left(\xi_{*}\text{\textbf{S}}\left(\Tb_{\widetilde{X}}\right)\right)^{\T}.
\end{eqnarray*}
Let us see that $\varphi$ is indeed an isomorphism. Let us take $U\in\Sb$. We have a commutative diagram
\begin{eqnarray*}
\begin{tikzcd}
\widetilde{U}:=\xi^{-1}(U) \arrow{rr}{\simeq} \arrow[swap]{dr}{\xi}& & U\times_{\Z_p}\T \arrow{dl}{\text{p}_1}\\
& U & 
\end{tikzcd}
\end{eqnarray*}
which tells us that (cf. \cite[Section II, exercise 8.3]{Hartshorne1}) 
\begin{eqnarray}\label{T_over_U_tilde}
\Tb_{\widetilde{U}} = \xi^{*}\Tb_{U}\oplus \text{p}_2^* \Tb_{\T} = \xi^* \Tb_{U}\oplus \left(\Ob_{\widetilde{U}}\otimes_{\Z_p}\mathfrak{t}\right).
\end{eqnarray}
By (\ref{Locally_Split_2}), we have 
\begin{eqnarray*}
\text{\textbf{S}}\left(\left(\xi_*\Tb_{\widetilde{X}}\right)^{\T}\right)(U) = \text{\textbf{S}}\left(\left(\xi_*\Tb_{\widetilde{X}} (U)\right)^{\T}\right) = \text{\textbf{S}}\left(\Tb_{U}(U)\oplus\left(\Ob_{U}(U)\otimes_{\Z_p}\mathfrak{t}\right)\right).  
\end{eqnarray*}
On the other hand, by (\ref{T_over_U_tilde}) we have the following relation
\begin{eqnarray}\label{Rel_1}
\text{\textbf{S}}(\Tb_{\widetilde{U}}) = \text{\textbf{S}}\left(\xi^*\Tb_{U}\right)\otimes_{\Ob_{\widetilde{U}}} \text{\textbf{S}}\left(\Ob_{\widetilde{U}}\otimes_{\Z_p}\mathfrak{t}\right) = \xi^*\text{\textbf{S}}\left(\Tb_{U}\right)\otimes_{\Ob_{\widetilde{U}}}\text{\textbf{S}}\left(\Ob_{\widetilde{U}}\otimes_{\Z_p}\mathfrak{t}\right)
\end{eqnarray}
which implies, by the projection formula \cite[Chapter II, section 5, exercise 5.1 (d)]{Hartshorne1} that

\begin{align}\label{Rel_2}
\xi_*\text{\textbf{S}}(\Tb_{\widetilde{U}}) & = \xi_*\left( \xi^*\text{\textbf{S}}\left(\Tb_{U}\right)\otimes_{\Ob_{\widetilde{U}}}\text{\textbf{S}}\left(\Ob_{\widetilde{U}}\otimes_{\Z_p}\mathfrak{t}\right)\right) \notag \\
& = \text{\textbf{S}}\left(\Tb_{U}\right)\otimes_{\Ob_{U}}\xi_*\text{\textbf{S}}\left(\Ob_{\widetilde{U}}\otimes_{\Z_p}\mathfrak{t}\right).
\end{align}
\justify
Taking $\T$-invariants and sections on $U$ we get
\begin{eqnarray*}
\left(\xi_*\text{\textbf{S}}(\Tb_{\widetilde{U}})\right)^{\T}(U) = \text{\textbf{S}}\left(\Tb_{U}(U)\right)\otimes_{\Ob_{U}(U)} \text{\textbf{S}}(\Ob_{U}(U)\otimes_{\Z_p}\mathfrak{t}).
\end{eqnarray*}
Summing up, we have the following commutative diagram
\begin{eqnarray*}
\begin{tikzcd}
\text{\textbf{S}}\left(\left(\xi_*\Tb_{\widetilde{X}}\right)^{\T}\right)(U)  \arrow[r, "\varphi_{U}"] \arrow[d, "\simeqd "]
& \left(\xi_*\text{\textbf{S}}(\Tb_{\widetilde{X}})\right)^{\T}(U) \arrow[d, "\simeqd "]\\
\text{\textbf{S}}\left(\Tb_{U}(U)\oplus\left(\Ob_{U}(U)\otimes_{\Z_p}\mathfrak{t}\right)\right)  \arrow[r, "\simeq"]
& \text{\textbf{S}}\left(\Tb_{U}(U)\right)\otimes_{\Ob_{U}(U)} \text{\textbf{S}}(\Ob_{U}(U)\otimes_{\Z_p}\mathfrak{t})
\end{tikzcd}
\end{eqnarray*}
which ends the proof of the first case because $\Sb$ is a base for the Zariski topology of $X$.
\justify
\textit{Case 2.} Let  us suppose now that $m\in\Z_{>0}$. Exactly as we have done at the beginning of case 1, applying $\xi_*$ to the isomorphism (\ref{graded}) and then taking $\T$-invariant sections, we get a canonical isomorphism of graded $\Ob_{X}$-algebras
\begin{eqnarray*}
\left(\xi_*\text{Sym}^{(m)}\left(\Tb_{\widetilde{X}}\right)\right)^{\T}\xrightarrow{\simeq}\text{gr}_{\bullet}\left(\left(\xi_*\Db^{(m)}_{\widetilde{X}}\right)^{\T}\right).
\end{eqnarray*}
We want to see that the map $\varphi$, built in case 1, induces an isomorphism
\begin{eqnarray*}
\text{Sym}^{(m)}\left(\left(\xi_*\Tb_{\widetilde{X}}\right)^{\T}\right) \simeq \left(\xi_*\text{Sym}^{(m)}\left(\Tb_{\widetilde{X}}\right)\right)^{\T}.
\end{eqnarray*}
To do that, we take $U\in\Sb$ and we begin by noticing that analogously to case 1 the relation (\ref{Locally_Split_2}) gives us
\begin{eqnarray}\label{Sym_1}
\text{Sym}^{(m)}\left(\left(\xi_*\Tb_{\widetilde{X}}\right)^{\T}\right)(U) = \text{Sym}^{(m)}\left(\Tb_{U}(U)\oplus\left(\Ob_{U}(U)\otimes_{\Z_p}\mathfrak{t}\right)\right).
\end{eqnarray}
Moreover, the relation (\ref{Locally_Split_2}) and \cite[Proposition 1.3.5]{Huyghe1} give us
\begin{eqnarray*}
\text{Sym}^{(m)}\left(\Tb_{\widetilde{U}}\right) = \text{Sym}^{(m)}\left(\xi^*\Tb_{U}\right)\otimes_{\Ob_{\widetilde{U}}}\text{Sym}^{(m)}\left(\Ob_{\widetilde{U}}\otimes_{\Z_p}\mathfrak{t}\right)
\end{eqnarray*}
which, following the same arguments that in (\ref{Rel_1}) and (\ref{Rel_2}), implies that
\begin{eqnarray}\label{Sym_2}
\left(\xi_*\text{Sym}^{(m)}\left(\Tb_{\widetilde{X}}\right)\right)^{\T}(U) = \text{Sym}^{(m)}\left(\Tb_{U}(U)\right)\otimes_{\Ob_{U}(U)}\text{Sym}^{(m)}\left(\Ob_{U}(U)\otimes_{\Z_p}\mathfrak{t}\right).
\end{eqnarray}
\justify
Again, by \cite[Proposition 1.3.5]{Huyghe1}, we have that (\ref{Sym_1}) and (\ref{Sym_2}) are canonically isomorphic, so in order to globalize this map, which we denote by $\varphi^{(m)}_{U}$, we need to check that the following diagram is commutative 
\begin{eqnarray*}
\begin{tikzcd}
 \text{Sym}^{(m)}\left(\Tb_{U}(U)\oplus\left(\Ob_{U}(U)\otimes_{\Z_p}\mathfrak{t}\right)\right) \arrow[r, hook] \arrow[d, "\varphi^{(m)}_{U}"]
& \text{\textbf{S}}\left(\Tb_{U}(U)\oplus\left(\Ob_{U}(U)\otimes_{\Z_p}\mathfrak{t}\right)\right)\otimes_{\Z_p}\Q_p \arrow [d, "\varphi_{U}\otimes_{\Z_p}1_{\Q_p}"]\\
 \text{Sym}^{(m)}\left(\Tb_{U}(U)\right)\otimes_{\Ob_{U}(U)}\text{Sym}^{(m)}\left(\Ob_{U}(U)\otimes_{\Z_p}\mathfrak{t}\right) \arrow[r, hook]
&  \text{\textbf{S}}\left(\Tb_{U}(U)\right)\otimes_{\Ob_{U}(U)}\text{\textbf{S}}\left(\Ob_{U}(U)\otimes_{\Z_p}\mathfrak{t}\right)\otimes_{\Z_p}\Q_p.
\end{tikzcd}
\end{eqnarray*}
Shrinking $U$ if necessary, we can suppose that $U$ is endowed with a set of local coordinates $x_1,\; ...,\; x_N$, in such a way that if $\Tb_{U}(U)$ is generated as $\Ob_{U}(U)$-module by the derivations $\partial_{x_1},\; ...,\;\partial_{x_N}$, and if $\zeta_1,\; ...,\; \zeta_l$ denotes a $\Z_p$-basis of $\mathfrak{t}$, then $ \text{Sym}^{(m)}\left(\Tb_{U}(U)\oplus\left(\Ob_{U}(U)\otimes_{\Z_p}\mathfrak{t}\right)\right)$ is generated (as $\Ob_{U}(U)$-module) by all the elements of the form $\underline{\partial}^{<\underline{k}>}\cdot\underline{\zeta}^{<\underline{v}>}$ \footnote{Here we use the multi-index notation introduced in sections \ref{ADP} and \ref{ADA}.}. In particular, 
\begin{eqnarray*}
\varphi^{(m)}_{U}\left(\underline{\partial}^{<\underline{k}>}\cdot\underline{\zeta}^{<\underline{v}>}\right) = \varphi_U\otimes_{\Z_p}1_{\Q_p}\left(\underline{\partial}^{<\underline{k}>}\cdot\underline{\zeta}^{<\underline{v}>}\right) = \displaystyle\frac{\underline{k}!}{q_{\underline{k}}!}\displaystyle\frac{\underline{v}!}{q_{\underline{v}}!} \; \underline{\partial}^{\underline{k}}\otimes_{\Q_p} \underline{\zeta}^{\underline{v}}.
\end{eqnarray*}
which shows that the preceding diagram is commutative. This ends the proof of the proposition.
\end{proof}

\subsection{Affine algebraic groups and homogeneous spaces}

Let us suppose that $\G$ is a  split connected reductive group scheme over $\Z_p$ and $\T$ is a split maximal torus in $\G$. As we know, the Lie algebra $\mathfrak{g}=\text{Lie}(\G)$ is a $\T$-module via the adjoint representation \cite[I, 7.18]{Jantzen} and the decomposition into weight spaces has the form
\begin{eqnarray*}
\text{Lie}(\G)= \text{Lie}(\T)\oplus \displaystyle\bigoplus_{\alpha\in \Lambda} (\text{Lie} (\G))_{\alpha}.
\end{eqnarray*}
Here $\Lambda$ is the subset of $X(\T)=\text{Hom}(\T,\G_{m})$ of non-zero weights of $\text{Lie}(\G)$, this means the roots of $\G$ with respect to $\T$.
\justify
For each $\alpha\in \Lambda$ there exists a homomorphism $x_\alpha: \G_a\rightarrow\G$ satisfying 
\begin{eqnarray}\label{Normal}
t\;x_{\alpha}(a)\;t^{-1}=x_{\alpha}(\alpha(t)\;a),
\end{eqnarray}
for any $\Z_p$-algebra $A$ and all $t\in\T(A)$, and such that the tangent map $dx_{\alpha}:\text{Lie}(\G_a)\rightarrow(\text{Lie}(\G))_{\alpha}$ is an isomorphism \cite[II, 1.2]{Jantzen}. This homomorphism defines a functor $A\mapsto x_{\alpha}(\G_{a}(A))$ which is a closed subgroup of $\G$ and it is denoted by $U_{\alpha}$. By definition we have  $\text{Lie}(U_{\alpha})=(\text{Lie}(\G))_{\alpha}$ and by  (\ref{Normal}) it is clear that $\T$ normalises $U_\alpha$.
\justify
Now, let us choose a positive system $\Lambda^+\subset \Lambda$. It is known that $\Lambda^+$ and $-\Lambda^+$ are unipotent and closed subsets of $\Lambda$\footnote{$\Lambda^+\cap(-\Lambda^{+})=\emptyset$ and $(\N\alpha +\N\beta)\cap\Lambda\subset\Lambda^{+}$ for any $\alpha,\beta\in\Lambda^{+}$.} \cite[II, 1.7]{Jantzen}. Let $\textbf{N}$ be the closed subgroup of $\G$ generated by all $U_{\alpha}$ with $\alpha\in \Lambda^+$. As we have remarked $\T$ normalises $\textbf{N}$. We set 
\begin{eqnarray}\label{Borel}
\B=\textbf{N}\rtimes\T
\end{eqnarray}
a Borel subgroup of $\G$. With this terminology $\textbf{N}$ is called the \textit{unipotent radical} of $\B$. We put
\begin{eqnarray*}
\widetilde{X}:= \G/\textbf{N},\;\; X:=\G/\B
\end{eqnarray*} 
for the corresponding quotients (the basic affine space and the flag scheme of $\G$ \cite[subsection 4.7]{AW}). Since $\Z_p$ is in particular a Dedekind domain, these are smooth and separated schemes over $\Z_p$ \cite[Lemma 4.7 (a)]{AW}.

\begin{rem}\label{Right_actions}
For technical reasons (cf. Proposition \ref{morp.HS}) in this work we will suppose that the group $\G$, and the schemes $\widetilde{X}$ and $X$ are endowed with the right regular $\G$-action. This means that for any $\Z_p$-algebra $A$ and $g_0,g\in \G(A)$ we have 
\begin{eqnarray*}
g_0\bullet g = g^{-1}g_0,\;\;\; g_0\;\textbf{N}(A)\bullet g = g^{-1}g_0\; \textbf{N}(A) \;\;\text{and}\;\; g_0\;\B(A)\bullet g=g^{-1}g_0\;\B(A).
\end{eqnarray*}
Under this actions, the canonical projections $\G\rightarrow \widetilde{X}$ and $\G\rightarrow X$ are clearly $\G$-equivariant.
\end{rem}
\justify
Now, as $\T$ normalises $\textbf{N}$ we have $\textbf{N}\T = \T\textbf{N}$ and therefore
\begin{eqnarray*}
(g\textbf{N}(A)).t\subset g\T(A)\textbf{N}(A),
\end{eqnarray*}
for any $\Z_p$-algebra $A$, $g\in\G(A)$ and $t\in\T(A)$. This defines a right $\T$-action on $\widetilde{X}$ which clearly commutes with the right regular $\G$-action (cf. Remark \ref{Right_actions}). Moreover, this right $\T$-action makes the canonical projection $\xi:\widetilde{X}\rightarrow X$
a $\T$-torsor for the Zariski topology of $X$. To see this we recall first that from (\ref{Borel}) the \textit{abstract Cartan group} $\textbf{H}:=\B/\textbf{N}$ is canonical isomorphic to $\T$. Let us consider the covering of $X$ given by the open subschemes $U_w$, $w\in W:=N_{\G}(\T)/\T$ (the Weyl group) where
\begin{eqnarray*}
U_w:= \text{image of}\; w\textbf{N}\B
\end{eqnarray*}
under the canonical projection $\G\rightarrow X$ \cite[Part II, chapter 1, 1.9 (7)]{Jantzen}. For every $w\in W$ we can find a morphism $\pi_{w}:U_w\rightarrow\G$ splitting the projection map $\G\rightarrow X$ \cite[Part II, chapter 1, 1.10 (1) and (2)]{Jantzen}. This map gives a map $\overline{\pi}_w:U_{w}\rightarrow \widetilde{X}$ such that $\xi\circ\overline{\pi}_{w}=id_{U_w}$. The map $(u,b\textbf{N})\mapsto\pi_{w}(u)b\textbf{N}$ is the required $\T$-invariant isomorphism $U_w\times\T\xrightarrow{\simeq} U_w\times\textbf{H}\xrightarrow{\simeq}\xi^{-1}(U_w)$. Now we can apply \cite[Chaper III, Proposition 4.1 (b)]{Milne}. As in definition \ref{locally trivial}  we denote by $\Sb$ the set of all affine open subsets of $X$ that trivialise the torsor $\xi$. This forms a base for the Zariski topology of $X$.

\subsection{Relative enveloping algebras of finite level on homogeneous spaces}\label{Rel_env_alg_on_hs}
\justify
In this section we adopt the notation of the preceding section. In particular, we recall for the reader that the set $\Sb$, of all affine open subsets of $X$ that trivialise the torsor $\xi$ forms a base for the Zariski topology of $X$.  Let us recall that by proposition \ref{morp.HS} and remark \ref{Right_actions} the right regular $\G$-action on $\widetilde{X}$ (introduced in remark \ref{Right_actions}) induces a homomorphism $\Phi^{(m)}: D^{(m)}(\G)\rightarrow H^{0}\left(\widetilde{X},\Db^{(m)}_{\widetilde{X}}\right)$ which equals the operator-representation (notation in subsection 2.5)
\begin{eqnarray*}
\Psi_{\widetilde{X}_\Q}: \Ub(\mathfrak{g}_\Q)\rightarrow H^{0}\left(\widetilde{X}_{\Q},\Db_{\widetilde{X}_\Q}\right)
\end{eqnarray*}
if we tensor with $\Q_p$ ($\Db_{\widetilde{X}_\Q}$ denotes the usual sheaf of differential operators on $\widetilde{X}_\Q$). Let us consider the base change $\T_\Q:=\T\times_{\text{Spec}(\Z_p)}\text{Spec}(\Q_p)$. We know by \cite[Part I, 2.10 (3)]{Jantzen} that 
\begin{eqnarray} \label{BaseChange}
H^0\left(\widetilde{X}_\Q,\Db_{\widetilde{X}_\Q}\right)^{\T_\Q}=H^0\left(\widetilde{X},\Db^{(m)}_{\widetilde{X}}\right)^{\T}\otimes_{\Z_p}\Q_p.
\end{eqnarray}
Given that the right regular $\G$-action on $\widetilde{X}$ commutes with the right action of the torus $\T$, the vector fields by which $\mathfrak{g}_\Q$ acts on $\widetilde{X}_\Q$ must be invariant under the $\T_\Q$-action \cite[Lemma 4.5]{BB2}. This means that the \textit{operator-representation} $\Psi_{\widetilde{X}_\Q}$ satisfies 
\begin{eqnarray*}
\Psi_{\widetilde{X}_\Q}(\mathfrak{g}_\Q)\subset H^0(\widetilde{X}_\Q,\Db_{\widetilde{X}_\Q})^{\T_\Q}.
\end{eqnarray*}
The relation (\ref{BaseChange}) tells us that for every $x\in D^{(m)}(\G)$ there exists $k(x)\in\N$ (a natural number that depends of $x$) such that  
\begin{eqnarray*}
p^{k(x)}\Phi^{(m)}(x)\subset H^{0}(\widetilde{X},\Db^{(m)}_{\widetilde{X}})^{\T}.
\end{eqnarray*}
Since the $\T$-action on $H^{0}(\widetilde{X},\Db^{(m)}_{\widetilde{X}})$ is $\Z_p$-linear, for every $\Z_p$-algebra $A$ and every $t\in\T(A)$ we have \label{Reasoning}
\begin{eqnarray}\label{omega_mult}
p^{k(x)}\Phi^{(m)}(x)=t.\left(p^{k(x)}\Phi^{(m)}(x)\right)=p^{k(x)}\left(t.\Phi^{(m)}(x)\right).
\end{eqnarray}
Since $\widetilde{X}$ is a smooth $\Z_p$-scheme, the local description (\ref{locally_Berthelot}) tells us that the sheaf $\Db^{(m)}_{\tilde{X}}$ is $p$-torsion free. In particular, $H^{0}(\widetilde{X},\Db^{(m)}_{\widetilde{X}})$ is also $p$-torsion free and therefore, from (\ref{omega_mult}), we have that $\Phi^{(m)}$ induces the filtered morphism 
\begin{eqnarray*}\label{Can.Morp1}
\Phi^{(m)}:D^{(m)}(\G)\rightarrow H^{0}\left(\widetilde{X},\Db^{(m)}_{\widetilde{X}}\right)^{\T}= H^{0}\left(X,\xi_*\Db_{\widetilde{X}}^{(m)}\right)^{\T}.
\end{eqnarray*} 
From the preceding reasoning we have an $\Ob_{X}$-morphism of sheaves of filtered $\Z_p$-algebras
\begin{eqnarray}\label{Can.Morp2}
\Phi^{(m)}_{X}: \Ab^{(m)}_{X}\rightarrow \widetilde{\Db^{(m)}}.
\end{eqnarray}
\justify
The sheaf $\Ab_{X}^{(m)}:=\Ob_{X}\otimes_{\Z_p}D^{(m)}(\G)$ of associative $\Z_p$-algebras has been introduced in the subsection 2.5. We recall for the reader that this is an integral model of the sheaf $\Ub^{\circ}:=\Ob_{X_\Q}\otimes_{\Q}\Ub(\mathfrak{g}_\Q)$. \\
To twist the sheaves $\widetilde{\Db^{(m)}}$ introduced in the subsection 3.3 we consider the classical distribution algebra as in \cite[Chapter II, 4.6.1]{DG}. To define it, we suppose that $\varepsilon: \text{Spec}(\Z_p)\rightarrow\T$ is the identity of $\T$ and we take $J:=\{f\in \Z_p[\T]|\; f(\varepsilon)=0\}$. Then $\Z_p[\T]=\Z_p\oplus J$. We put
\begin{eqnarray*}
\text{Dist}_{n}(\T):=\left(Z_p[\T]/J^{n+1}\right)^*= \text{Hom}_{\Z_p}(\Z_p[\T]/J^{n+1},\Z_p)\subset\Z_p[\T]^*
\end{eqnarray*}
the space of distributions of order $n$, and then $\text{Dist}(\T):= \varinjlim_{n\in\N} \text{Dist}_{n}(\T)$. Moreover, if $\Delta_{\T}:\Z_p[\T]\rightarrow\Z_p[\T]\otimes_{\Z_p}\Z_p[\T]$ denotes the comorphism associated to the multiplication of $\T$, $\varepsilon_{\T}:\Z_p[\T]\rightarrow\Z_p$ is the counit associated to the identity element and $i^*_{\T}:\Z_p[\T]\rightarrow\Z_p[\T]$ is the coinverse (these maps defining a structure of Hopf algebra on $\Z_p[\T]$), then the product
\begin{eqnarray*}
uv:\Z_p[\T]\xrightarrow{\Delta_{\T}}\Z_p[\T]\otimes_{\Z_p}\Z_p[\T]\xrightarrow{u\otimes v}\Z_p\;\;\;\;\;\;\;\;u,v\in \Z_p[\T]^*
\end{eqnarray*}
defines a structure of algebra on $\Z_p[\T]^*$ and $\text{Dist}(\T)$ is a subalgebra with $\text{Dist}_{n}(\T).\text{Dist}_m(\T)\subset\text{Dist}_{m+n}(\T)$ \cite[Part I, 7.7]{Jantzen}. Furthermore, $\text{Dist}_n(\T)\simeq\Z_p\oplus(J/J^{n+1})^{*}$.

\begin{prop}\cite[Subsection 4.1]{HS1}
\begin{itemize}
\item[(i)]The applications $Hom_{\Z_p}(\psi_{m,m'},\Z_p): D^{(m)}(\T)\rightarrow D^{(m')}(\T)$, with $\psi_{m,m'}$ as in subsection 2.4, induce an isomorphism of filtered $\Z_p$-algebras $\varinjlim_{m\in\N} D^{(m)}(\T) \xrightarrow{\simeq} Dist(\T)$.
\item[(ii)] The distribution algebra $\text{Dist}(\T)$ is an integral model of $\Ub(\mathfrak{t}_\Q)$,  this means that $\text{Dist}(\T)\otimes_{\Z_p} \Q_p=\Ub(\mathfrak{t}_\Q)$.
\end{itemize}
\end{prop}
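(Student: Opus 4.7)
The plan is to prove part (i) by comparing the local bases of $P^n_{(m)}(\T)$ and $\Z_p[\T]/J^{n+1}$, and to derive part (ii) from (i) by base change to $\Q_p$. First I would construct, for every $m$ and $n$, a canonical $\Z_p$-linear map $\iota_{m,n} : D^{(m)}_n(\T)\to \text{Dist}_n(\T)$ as the dual of the natural arrow $\phi_n : \Z_p[\T]/J^{n+1} \to P^n_{(m)}(\T)$. The latter is well-defined because the structural map $\phi : \Z_p[\T]\to P_{(m)}(\T)$ sends $J^{n+1}$ into $\overline{J}^{n+1}\subseteq \overline{J}^{\{n+1\}}$, and so factors through $P^n_{(m)}(\T)= P_{(m)}(\T)/\overline{J}^{\{n+1\}}$.

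Using the explicit local basis $\{\underline{t}^{\{\underline{k}\}}\}_{|\underline{k}|\le n}$ of $P^n_{(m)}(\T)$ recalled in subsection~\ref{ADA}, together with the decomposition $k_i = p^m q_i + r_i$ and the relation $\underline{q}!\,\underline{t}^{\{\underline{k}\}} = \underline{t}^{\underline{k}}$, one sees at once that $\phi_n$ is injective with $\Z_p$-torsion cokernel, so $\iota_{m,n}$ is injective as $\Z_p$ is torsion-free. Naturality of the universal property of the $m$-PD-envelope implies $\psi_{m,m'}\circ \phi_n^{(m')} = \phi_n^{(m)}$, so the $\iota_{m,n}$ assemble into a directed system of inclusions inside the fixed $\Z_p$-module $\text{Dist}_n(\T)$. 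Surjectivity of the induced map $\varinjlim_m D^{(m)}_n(\T)\to \text{Dist}_n(\T)$ is then straightforward: for $\eta\in \text{Dist}_n(\T)$, choosing $m$ with $p^m > n$ forces $\underline{q} = 0$ for every multi-index $\underline{k}$ with $|\underline{k}|\le n$, so $\underline{t}^{\{\underline{k}\}} = \underline{t}^{\underline{k}}$ in $P^n_{(m)}(\T)$ and $\eta$ extends uniquely to a $\Z_p$-linear form on $P^n_{(m)}(\T)$, i.e.\ lies in $D^{(m)}_n(\T)$. The filtrations by order match by construction, and compatibility with the ring structures reduces to noting that both products come from the Hopf comultiplication $\Delta_{\T}:\Z_p[\T]\to \Z_p[\T]\otimes_{\Z_p}\Z_p[\T]$, while the comparison maps $\delta^{n,n'}$ of Corollary~\ref{order n mPD} are the unique factorisations of $\Delta_{\T}$ through the $m$-PD-envelopes.

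Part (ii) then follows formally from (i) combined with the identification $D^{(m)}(\T)\otimes_{\Z_p}\Q_p = \Ub(\mathfrak{t}_\Q)$ recalled in the introduction: since $\otimes_{\Z_p}\Q_p$ commutes with filtered direct limits,
\begin{equation*}
\text{Dist}(\T)\otimes_{\Z_p}\Q_p \;=\; \varinjlim_{m} \bigl(D^{(m)}(\T)\otimes_{\Z_p}\Q_p\bigr) \;=\; \Ub(\mathfrak{t}_\Q).
\end{equation*}
The main obstacle I anticipate is not any individual deep step but the combinatorial book-keeping with the $m$-PD-filtration, in particular pinning down how $\psi_{m,m'}$ acts on the divided-power basis $\underline{t}^{\{\underline{k}\}}$ and establishing the filtration inclusion $\overline{J}^{n+1}\subseteq \overline{J}^{\{n+1\}}$; once these ingredients are in place, every remaining assertion is a formal consequence of duality and the direct-limit formalism.
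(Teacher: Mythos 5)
The paper gives no proof of this proposition; it is cited from \cite[Subsection 4.1]{HS1}, so there is no internal argument to compare against. Your proof is correct and is the standard one: you dualize the natural maps $\phi_n:\Z_p[\T]/J^{n+1}\to P^n_{(m)}(\T)$, observe they are injective with finite cokernel (so the dual maps embed $D^{(m)}_n(\T)$ compatibly into $\text{Dist}_n(\T)$), and use that for $p^m>n$ the divided-power basis $\{\underline{t}^{\{\underline{k}\}}\}_{|\underline{k}|\le n}$ coincides with the monomial basis, so the inclusion becomes an equality and the union over $m$ exhausts $\text{Dist}_n(\T)$; compatibility of the algebra structures comes down to the fact that both products are induced by $\Delta_\T$, via the commuting square $\delta^{n,n'}\circ\phi_{n+n'}=(\phi_n\otimes\phi_{n'})\circ\Delta_\T$. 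One ingredient you should make explicit when writing this up is the multiplicativity $\overline{J}^{\{a\}}\cdot\overline{J}^{\{b\}}\subseteq\overline{J}^{\{a+b\}}$ of the $m$-PD filtration, which is what yields $\overline{J}^{n+1}\subseteq\overline{J}^{\{n+1\}}$ from $\overline{J}^{\{1\}}=\overline{J}$; the characterization of the $m$-PD filtration recalled in the paper does not state this, so cite it from Berthelot. Part (ii) is a clean formal deduction once one grants $D^{(m)}(\T)\otimes_{\Z_p}\Q_p=\Ub(\mathfrak{t}_\Q)$, which the paper itself takes as known from the same source, so using it here is appropriate.
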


\begin{exa}\label{example T_G_m}
Let us suppose that $\T=\G_m=\text{Spec}(\Z_p[T,T^{-1}])$. In this case $J$ is generated by $T-1$ and the residue classes of\; $1,\; T-1,\; ...,\; (T-1)^{n}$ form a basis of $\Z_p[\T]/J^{n+1}$. Let $\delta_{n}\in\text{Dist}(\T)$ such that $\delta_n((T-1)^i)=\delta_{n,i}$ (the Kronecker delta). By \cite[Part I, 7.8]{Jantzen} all $\delta_n$ with $n\in\N$ form a basis of $\text{Dist}(\T)$ and they satisfy the relation 
\begin{eqnarray}\label{Binomial}
n!\delta_{n}=\delta_1(\delta_1-1)...(\delta_1-n+1).
\end{eqnarray} 
Therefore $\text{Dist}(\T)\otimes_{\Z_p}\Q_p=\Q_p[\delta_1]$. Since $\mathfrak{t}=(J/J^2)^*$ we can conclude that $\text{Dist}(\T)\otimes_{\Z_p}\Q_p=\Ub(\mathfrak{t}_\Q)$.
\end{exa}
\justify
The preceding proposition in particular implies that every morphism of $\Z_p$-algebras $\lambda: \text{Dist}(\T)\rightarrow Z_p$ induces for every $m\in\N$ a morphism of $\Z_p$-algebras $\lambda:D^{(m)}(\T)\rightarrow\Z_p$.
\begin{UEA_AD}\label{UEA_AD}
Let us clarify the mysterious characters $\lambda:\text{Dist}(\T)\rightarrow\Z_p$. Let us suppose first that $\T=\G_m=\text{Spec}(\Z_p[T,T^{-1}])$. By the preceding example we know that the set of distributions $\{\delta_n\}_{n\in\N}$, where $\delta_n((T-1)^i)=0$ if $i< n$ and $\delta_{n}((T-1)^n)=1$, is a basis for $\text{Dist}(\T)$. Moreover, $\text{Dist}(\T)\otimes_{\Z_p}\Q_p=\Q_p[\delta_1]$. Now, let us take $\lambda\in\mathfrak{t}^{*}$, which induces a morphism of algebras $\lambda:\Ub(\mathfrak{t})\rightarrow\Z_p$. Taking the tensor product with $\Q_p$ and using the canonical isomorphism $\text{Dist}(\T)_{\Q}\simeq\Ub(\mathfrak{t}_{\Q})$ we obtain a character $\lambda:\text{Dist}(\T)\rightarrow\Q_p$ (of course, here we assume Dist$(\T)\subset\text{Dist}(\T)_{\Q}$). To see that its image is contained in $\Z_p$, we need to check that $\lambda(\delta_{n})\in\Z_p$. By (\ref{Binomial}) we have
\begin{eqnarray*}
\lambda(\delta_n)=\lambda\left({\delta_1\choose n}\right) = {\lambda(\delta_1)\choose n}\in\Z_p.
\end{eqnarray*}
Where we have used the fact that binomial coefficients extend to functions from $\Z_p$ to $\Z_p$ and the fact that $\delta_1\in\mathfrak{t}$. In the case of an arbitrary split maximal torus $\T=\G_m\times_{\text{Spec}(\Z_p)} ... \times_{\text{Spec}(\Z_p)} \G_m$ (n-times), the reader can follow the same reasoning using the canonical isomorphism $\text{Dist}(\T)=\text{Dist}(\G_m)\otimes_{\Z_p}...\otimes_{\Z_p}\text{Dist}(\G_m)$ (n-times) \cite[Part I, 7.9 (3)]{Jantzen}. We have therefore a correspondence between the characters of $\mathfrak{t}$ (the Lie algebra of a split maximal torus $\T\subset\G$) and the characters of the distribution algebra studied in this text
\begin{eqnarray}\label{Iso_chars_Z_p}
\text{Hom}_{\Z_p-\text{mods}}\left(\mathfrak{t},\Z_p\right)\xrightarrow{\simeq} \text{Hom}_{\Z_p-\text{alg}}\left(\text{Dist}(\T),\Z_p\right).
\end{eqnarray}
\end{UEA_AD}
\begin{roots}\label{roots} Let us consider the positive system $\Lambda^{+}\subset\Lambda \subset X(\T)$ ($X(\T)$ the group of algebraic characters) associated to the Borel subgroup scheme $\B\subset\G$ defined in the preceding subsection. The Weyl subgroup $W=N_{\G}(\T)/\T$ acts naturally on the space $\mathfrak{t}^*_\Q:=\text{Hom}_{\Q_p-\text{mod}}(\mathfrak{t}\otimes_{\Z_p}\Q_p,\Q_p)$, and via differentiation $d: X(\T)\hookrightarrow \mathfrak{t}^*$ we may view $X(\T)$ as a subgroup of $\mathfrak{t}^*$ in such a way that $X^*(\T)\otimes_{\Z_p}\Q_p=\mathfrak{t}^*_\Q$. Let $\rho=\frac{1}{2}\sum_{\alpha\in\Lambda^{+}}\alpha$ be the so-called Weyl vector. Let $\check{\alpha}$ be a coroot of $\alpha\in\Lambda$ viewed as an element of $\mathfrak{t}_\Q$. An arbitrary weight $\lambda\in\mathfrak{t}^*_\Q$ is called \textit{dominant} if $\lambda(\check{\alpha})\ge 0$ for all $\alpha\in\Lambda^{+}$. The weight $\lambda$ is called \textit{regular} if its stabilizer under the $W$-action is trivial.
\end{roots}
\justify
We recall for the reader that $D^{(m)}(\T)$ is an integral model of the universal enveloping algebra $\Ub(\mathfrak{t}_\Q)$.
 
\begin{defi}\label{character of the dist. Alg}  
We say that a morphism of $\Z_p$-algebras $\lambda:\text{Dist}(\T)\rightarrow \Z_p$ (resp. the induced morphism $\lambda:D^{(m)}(\T)\rightarrow\Z_p$) is a character of the distribution algebra $Dist(\T)$ (resp. a character of the level $m$ distribution algebra $D^{(m)}(\T)$). We say that a character $\lambda:\text{Dist}(\T)\rightarrow\Z_p$ (resp. a character $\lambda:D^{(m)}(\T)\rightarrow\Z_p$) is a dominant and regular character if the $\Q_p$-linear map induced by tensoring with $\Q_p$ is a dominant and regular character of $\mathfrak{t}_\Q$.
\end{defi}
\justify
Let  $\lambda: \text{Dist}(\T)\rightarrow\Z_p$  be a character of the distribution algebra of  $\T$. We can consider the ring $\Z_p$ as a $D^{(m)}(\T)$-module  via $\lambda$.
\justify
The reader can easily verify the following elementary lemma. 
\begin{lem}
Let $A$ be a $\Q_p$-algebra and $A_0\subset A$ a $\Z_p$-subalgebra such that $A_0\otimes_{\Z_p}\Q_p=A$. If $Z(A)$ denotes the center of $A$ (resp. $Z(A_0)$ denotes the center of $A_0$), then $Z(A)=Z(A_0)\otimes_{\Z_p}\Q_p$.
\end{lem}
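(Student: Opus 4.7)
The plan is to prove the two inclusions $Z(A_0)\otimes_{\Z_p}\Q_p \subseteq Z(A)$ and $Z(A) \subseteq Z(A_0)\otimes_{\Z_p}\Q_p$ separately, both of them reducing to straightforward manipulations inside $A$ once we exploit the hypothesis $A = A_0\otimes_{\Z_p}\Q_p$. Note that since $A_0$ embeds into the $\Q_p$-algebra $A$, it is automatically $p$-torsion free, so the canonical map $A_0 \hookrightarrow A_0\otimes_{\Z_p}\Q_p = A$ is an injection and we may freely identify $A_0$ with its image.

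For the first inclusion, I would argue as follows. An element of $Z(A_0)\otimes_{\Z_p}\Q_p$ has the form $p^{-n}z$ with $z \in Z(A_0)$ and $n \in \N$. Let $a \in A$; write $a = p^{-m}b$ with $b \in A_0$. Then $(p^{-n}z)a = p^{-(n+m)}zb = p^{-(n+m)}bz = a(p^{-n}z)$, where the middle equality uses that $z$ commutes with every element of $A_0$. Hence $p^{-n}z \in Z(A)$, giving $Z(A_0)\otimes_{\Z_p}\Q_p \subseteq Z(A)$.

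For the reverse inclusion, take $z \in Z(A)$ and write $z = p^{-n}x$ with $x \in A_0$ and $n \in \N$. For any $y \in A_0 \subseteq A$ we have $zy = yz$, so multiplying by $p^n$ yields $xy = yx$ as an equality in $A$. Since $A_0 \hookrightarrow A$, this same equality holds in $A_0$. Therefore $x$ commutes with every element of $A_0$, i.e.\ $x \in Z(A_0)$, and hence $z = p^{-n}x \in Z(A_0)\otimes_{\Z_p}\Q_p$.

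There is essentially no hard step in this lemma; the only subtlety is the implicit claim that $A_0$ is $p$-torsion free (needed so that the identification $A_0 \subseteq A$ makes sense and so that $xy = yx$ holds in $A_0$ as soon as it holds in $A$), which follows for free from $A_0$ being a subalgebra of a $\Q_p$-algebra. Combining the two inclusions gives the claimed equality $Z(A) = Z(A_0)\otimes_{\Z_p}\Q_p$.
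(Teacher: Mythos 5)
Your proof is correct; the paper does not actually give a proof, stating only that the reader can easily verify this elementary lemma, and your argument is exactly the natural verification one would supply. The key observation you make explicit — that $A_0$ is $p$-torsion free since it embeds in a $\Q_p$-algebra, so the identification $A_0\subset A$ is legitimate and commutation relations pass back and forth — is precisely the point that makes the two inclusions go through.
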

\justify
Let us consider $\Db_{\tilde{X}_\Q}$ the usual sheaf of differential operators \cite{Grothendieck4, Berthelot2} on $\tilde{X}_\Q:=\tilde{X}\times_{\text{Spec}(\Z_p)}\text{Spec}(\Q_p)$ (resp. $X_\Q:=X\times_{\text{Spec}(\Z_p)}\text{Spec}(\Q_p)$ and $\T_{\Q}=\T\times_{\text{Spec}(\Z_p)}\text{Spec}(\Q_p)$). By \cite[Part I, 2.10 (3)]{Jantzen} we have
\begin{eqnarray} \label{Base_Change}
H^{0}\left(X_\Q,(\xi\times_{\Z_p} id_{\Q_p})_*\Db_{\tilde{X}_\Q}\right)^{\T_\Q} = H^{0}(\tilde{X}_\Q,\Db_{\tilde{X}_\Q})^{\T_\Q} = H^{0}(\widetilde{X},\Db^{(m)}_{\widetilde{X}})^{\T}\otimes_{\Z_p}\Q_p.
\end{eqnarray}
On the other hand, we know by \ref{morp.HS} that the right $\T$-action on $\tilde{X}$ induces a canonical morphism of filtered $\Z_p$-algebras
\begin{eqnarray*}
\Phi^{(m)}_{\T}: D^{(m)}(\T)\rightarrow H^{0}(\tilde{X},\Db^{(m)}_{\tilde{X}})
\end{eqnarray*}
and by \cite[page 7]{BG} $\Phi^{(m)}_{\T}\otimes_{\Z_p}\Q_p$ factors through the center of $H^{0}(X_\Q,(\xi\times_{\Z_p} id_{\Q_p})_*\Db_{\tilde{X}_\Q})^{\T_\Q}$. By (\ref{Base_Change}) and the preceding lemma we have the following morphism
\begin{eqnarray*}
D^{(m)}(\T)\hookrightarrow \Ub(\mathfrak{t}_\Q)\xrightarrow{\Phi^{(m)}_{\T}\otimes_{\Z_p}\Q_p} Z\left(H^{0}(\tilde{X},\Db_{\tilde{X}}^{(m)})^{\T}\right)\otimes_{\Z_p}\Q_p
\end{eqnarray*}
(we recall for the reader that $\mathfrak{t}_\Q:=\text{Lie}(\T)\otimes_{\Z_p}\Q_p$ and that $D^{(m)}(\T)\otimes_{\Z_p}\Q_p=\Ub(\mathfrak{t}_\Q)$, for every $m\in\N$). Following the same lines of reasoning that in page \pageref{Reasoning} we can conclude that $\Phi^{(m)}_{\T}$ induces a morphism of filtered $\Z_p$-algebras 
\begin{eqnarray*}
\Phi^{(m)}_{\T}: D^{(m)}(\T)\rightarrow H^{0}(X, Z(\widetilde{\Db^{(m)}})).
\end{eqnarray*}
Here $Z(\widetilde{\Db^{(m)}})$ is the center of $\widetilde{\Db^{(m)}}$ and its filtration is the one induced by (\ref{Fil.Invariants}). We have the following definition.

\begin{defi} \label{TDO 2}
Let $\lambda: D^{(m)}(\T)\rightarrow\Z_p$ be an integral character. We define the sheaf of level $m$ integral twisted arithmetic differential operators $\Db^{(m)}_{X,\lambda}$ on the flag scheme $X$ by
\begin{eqnarray*}
\Db^{(m)}_{X,\lambda}:=\widetilde{\Db^{(m)}}\otimes_{D^{(m)}(\T)}\Z_p.
\end{eqnarray*} 
\end{defi}
\justify
If we endow $\Z_p$ with the \textit{trivial filtration} as a $D^{(m)}(\T)$-module, this means $0=:\text{F}_{-1}\Z_p$ and $\text{F}_{i}\Z_p:=\Z_p$ for all $i\ge 0$, then using (\ref{Fil.Invariants}) we can view $\Db^{(m)}_{X,\lambda}$ as a sheaf of filtered $\Z_p$-algebras, equipped with the tensor product filtration. 

\begin{prop}\label{why a TDO}
Let $U\in\Sb$. Then $\Db^{(m)}_{X,\lambda}|_{U}$ is isomorphic to $\Db^{(m)}_X|_U$ as a sheaf of filtered $\Z_p$-algebras.
\end{prop}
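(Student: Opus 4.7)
The plan is to combine Proposition \ref{locally tilde} with the definition of $\Db^{(m)}_{X,\lambda}$ and show that, over $U\in\Sb$, the $D^{(m)}(\T)$-module structure on $\widetilde{\Db^{(m)}}|_U$ coming from $\Phi^{(m)}_\T$ corresponds under the local isomorphism to multiplication on the right tensor factor. First, Proposition \ref{locally tilde} provides an isomorphism of sheaves of filtered $\Z_p$-algebras
\begin{eqnarray*}
\widetilde{\Db^{(m)}}|_U \xrightarrow{\simeq} \Db^{(m)}_X|_U \otimes_{\Z_p} D^{(m)}(\T).
\end{eqnarray*}
Given this identification, tensoring with $\Z_p$ over $D^{(m)}(\T)$ via $\lambda$ and using associativity of the tensor product formally yields
\begin{eqnarray*}
\Db^{(m)}_{X,\lambda}|_U = \widetilde{\Db^{(m)}}|_U \otimes_{D^{(m)}(\T)} \Z_p \;\simeq\; \Db^{(m)}_X|_U \otimes_{\Z_p} \left( D^{(m)}(\T) \otimes_{D^{(m)}(\T)} \Z_p \right) \;\simeq\; \Db^{(m)}_X|_U.
\end{eqnarray*}

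The crux is to justify that the $D^{(m)}(\T)$-action on $\widetilde{\Db^{(m)}}|_U$ coming from $\Phi^{(m)}_\T:D^{(m)}(\T)\to H^0(X,Z(\widetilde{\Db^{(m)}}))$ matches, under Proposition \ref{locally tilde}, the action of $D^{(m)}(\T)$ on $\Db^{(m)}_X|_U\otimes_{\Z_p} D^{(m)}(\T)$ by (left) multiplication on the second factor. For this I would unfold the proof of Proposition \ref{locally tilde}: the trivialization $\xi^{-1}(U)\simeq U\times_{\Z_p}\T$ identifies the right $\T$-action with right translation on the second factor, so the Künneth/filtration decomposition (\ref{Filtration_D}) together with the isomorphism $H^0(\T,\Db^{(m)}_\T)^{\T}\simeq D^{(m)}(\T)$ of Remark \ref{Invarian global sections of group} (ii) realizes $\Phi^{(m)}_\T$ as the embedding $1\otimes\mathrm{id}:D^{(m)}(\T)\hookrightarrow \Db^{(m)}_X(U)\otimes_{\Z_p} D^{(m)}(\T)$. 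Since multiplication in the right-hand algebra is the skew product and $D^{(m)}(\T)$ sits in the centre of $\widetilde{\Db^{(m)}}$, the image lies in the centre of the local algebra and left and right multiplication by $\Phi^{(m)}_\T(D^{(m)}(\T))$ coincide with multiplication on the second factor.

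Once this identification is established, it remains to verify that the resulting isomorphism is compatible with the filtrations. The filtration on $\widetilde{\Db^{(m)}}|_U$ is $\mathrm{Fil}_d(\widetilde{\Db^{(m)}})=(\xi_*\Db^{(m)}_{\widetilde{X},d})^{\T}$ and, by the proof of Proposition \ref{locally tilde}, the local isomorphism is filtered, the right-hand side carrying the tensor product filtration. Endowing $\Z_p$ with the trivial filtration and computing the tensor product filtration on $\Db^{(m)}_X|_U\otimes_{\Z_p}D^{(m)}(\T)\otimes_{D^{(m)}(\T)}\Z_p$, one sees at once that the $D^{(m)}(\T)$-factor contributes only through its unit, and the resulting filtration on $\Db^{(m)}_X|_U$ is the order filtration.

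The main obstacle is the centrality step, i.e.\ checking that $\Phi^{(m)}_\T$ really lands in the centre of $\widetilde{\Db^{(m)}}$ and that under Proposition \ref{locally tilde} this central copy is the right-hand tensor factor. Everything else is formal manipulation of tensor products and filtrations; the substantive input is the geometric fact that the $\T$-action is free and on the second factor of the trivialization, which isolates its infinitesimal generators in the second tensor slot and makes them commute with differential operators along $U$.
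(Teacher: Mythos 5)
Your proposal takes essentially the same route as the paper — apply Proposition \ref{locally tilde} and tensor over $D^{(m)}(\T)$ via $\lambda$ — and your elaboration of the implicit compatibility (that the local isomorphism carries the $D^{(m)}(\T)$-action coming from $\Phi^{(m)}_{\T}$ onto the second tensor factor, since both the local isomorphism and $\Phi^{(m)}_{\T}$ arise from the same right $\T$-action) correctly fills in what the paper's one-line proof leaves tacit. One small inaccuracy: the multiplication on $\Db^{(m)}_X|_U\otimes_{\Z_p}D^{(m)}(\T)$ is the ordinary tensor product of algebras (coming from the K\"unneth/filtration decomposition (\ref{Filtration_D})), not a skew product as for $\Ab^{(m)}_X$, though since $\T$ acts trivially on $X$ this does not affect your argument.
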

\begin{proof}
Let us recall that by proposition \ref{locally tilde} for every $U\in\Sb$ we have an isomorphism of filtered $\Z_p$-algebras
\begin{eqnarray*}
\widetilde{\Db^{(m)}}|_U\xrightarrow{\simeq} \Db^{(m)}_X|_U\otimes_{\Z_p} D^{(m)}(\T)
\end{eqnarray*}
which induces an isomorphism $\Db^{(m)}_{X,\lambda}|_{U}\xrightarrow{\simeq}\Db^{(m)}_{X}|_{U}$ of filtered $\Z_p$-algebras. 
\end{proof}
\begin{rem}
This proposition justifies the name of "twisted arithmetic differential operators".
\end{rem}
\justify
Let us recall that, as $X$ is a smooth $\Z_p$-scheme, the sheaf of Berthelot's differential operators $\Db^{(m)}_X$ is a sheaf of $\Ob_X$-rings with noetherian sections over all open affine subsets of $X$ \cite[corollary 2.2.5]{Berthelot1}. As $\xi$ is locally trivial, the family $\Sb$ forms a base for the Zariski topology of $X$ and therefore the preceding proposition and the same reasoning given in \cite[Proposition 2.2.2 (iii)]{HSS} imply the following meaningful result.

\begin{prop}\label{prop 3.2.10}
The sheaf $\Db_{X,\lambda}^{(m)}$ is a sheaf of $\Ob_X$-rings with noetherian sections over all open affine subsets of $X$.
\end{prop}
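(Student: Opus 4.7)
The plan is to leverage Proposition~\ref{why a TDO} (which identifies $\Da^{(m)}_{X,\lambda}|_U$ with Berthelot's sheaf $\Da^{(m)}_X|_U$ as filtered $\Z_p$-algebras for every $U \in \Sb$) together with the fact that $\Sb$ is a base for the Zariski topology of $X$. The ring-sheaf structure is already implicit in Definition~\ref{TDO 2}: the morphism $\Phi^{(m)}_{\T}$ factors through the center of $\widetilde{\Da^{(m)}}$, so the tensor product $\widetilde{\Da^{(m)}} \otimes_{D^{(m)}(\T)} \Z_p$ naturally inherits a structure of sheaf of $\Ob_X$-algebras, making the first assertion essentially immediate.

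For the noetherianity of sections over an arbitrary open affine $V \subseteq X$ (not necessarily in $\Sb$), I would argue via the order filtration and its associated graded. First, since $\Sb$ is a base of the topology and is closed under passage to affine open subschemes by Remark~\ref{rem 5.1}, the filtration $\mathrm{Fil}_d(\Da^{(m)}_{X,\lambda})$ inherited from (\ref{Fil.Invariants}) is, restricted to each $U \in \Sb$, identified via Proposition~\ref{why a TDO} with the order filtration of $\Da^{(m)}_X|_U$. In particular each $\mathrm{Fil}_d(\Da^{(m)}_{X,\lambda})$ is a quasi-coherent (even coherent) $\Ob_X$-module. Next, combining (\ref{graded}) with Proposition~\ref{why a TDO} on each $U \in \Sb$ yields local isomorphisms $\mathrm{gr}_\bullet(\Da^{(m)}_{X,\lambda})|_U \simeq \mathrm{Sym}^{(m)}(\Tb_X)|_U$; these local isomorphisms arise from the canonical identifications and therefore glue to a canonical global isomorphism of graded $\Ob_X$-algebras
\begin{equation*}
\mathrm{gr}_\bullet(\Da^{(m)}_{X,\lambda}) \;\simeq\; \mathrm{Sym}^{(m)}(\Tb_X).
\end{equation*}

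Taking sections over the open affine $V$ and using quasi-coherence of each filtration step (which guarantees vanishing of higher cohomology on $V$, so that passage to sections commutes with passage to the graded), one obtains $\mathrm{gr}_\bullet(\Da^{(m)}_{X,\lambda}(V)) \simeq \mathrm{Sym}^{(m)}(\Tb_X)(V)$. The latter is a noetherian $\Z_p$-algebra by \cite[Proposition 1.3.6]{Huyghe1}. Since the order filtration on $\Da^{(m)}_{X,\lambda}(V)$ is exhaustive and positively indexed, the standard lemma in filtered-ring theory \cite[Section I, 6.13]{Huishi} allows lifting noetherianity from the graded to the filtered ring, whence $\Da^{(m)}_{X,\lambda}(V)$ is noetherian.

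The main technical point I would have to verify carefully is that the local isomorphisms $\mathrm{gr}_\bullet(\Da^{(m)}_{X,\lambda})|_U \simeq \mathrm{Sym}^{(m)}(\Tb_X)|_U$ for $U \in \Sb$ genuinely glue into a well-defined morphism of sheaves on all of $X$. This is really the heart of the matter: the isomorphism of Proposition~\ref{why a TDO} depends a priori on a chosen trivialization of the torsor, but one has to check that its effect on the associated graded is intrinsic (i.e., independent of the trivialization) because the images $\Phi^{(m)}_X(\Ab^{(m)}_X)$ from (\ref{Can.Morp2}) and the splitting from Lemma~\ref{Exact for tangent} (equation (\ref{Locally_Split_2})) both provide canonical lifts modulo lower-order terms. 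Once this compatibility is checked, the remaining steps reduce to invocations of Huyghe's noetherianity result and the filtered-to-graded lifting lemma, as sketched above.
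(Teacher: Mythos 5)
Your proof is correct and is, in substance, the expanded version of the argument the paper leaves implicit: the paper's own proof is a one-line reference to Proposition~\ref{why a TDO} and to ``the same reasoning given in'' \cite[Proposition~2.2.2(iii)]{HSS}, which is precisely the filtration/associated-graded route you take (coherent filtration steps, noetherian graded sections via \cite[Propositions~1.3.1, 1.3.3, 1.3.6]{Huyghe1}, lift noetherianity through the positive exhaustive filtration). The gluing worry you flag is the right thing to be nervous about, but it has a clean resolution that the paper itself supplies a bit later: Proposition~\ref{canonical 2}(i) never glues local isomorphisms. It first builds a canonical \emph{global} map $\text{Sym}^{(m)}(\Tb_X)\to\text{gr}_{\bullet}(\Db^{(m)}_{X,\lambda})$ from the surjection (\ref{Gr_morph}), Proposition~\ref{gr tilde}, and the exact sequence of Lemma~\ref{Exact for tangent}, and only afterwards checks on each $U\in\Sb$ that this canonical map is an isomorphism; since Proposition~\ref{canonical 2} does not rely on Proposition~\ref{prop 3.2.10}, there is no circularity in borrowing that construction here. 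Alternatively, you do not actually need the global identification with $\text{Sym}^{(m)}(\Tb_X)$: once you know $\text{gr}_{\bullet}(\Db^{(m)}_{X,\lambda})$ is quasi-coherent with coherent graded pieces and is, on the base $\Sb$, a finitely generated commutative $\Ob_X$-algebra generated in bounded degree, surjectivity of the corresponding $\text{Sym}$-algebra map can be checked on $\Sb$ and then read off over any affine $V$, so $\text{gr}_{\bullet}(\Db^{(m)}_{X,\lambda})(V)$ is a finitely generated $\Ob_X(V)$-algebra and hence noetherian; your filtered-to-graded lifting step then finishes the argument exactly as you wrote.
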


\begin{defi}\label{arithmetic TDO}
We will denote by 
\begin{eqnarray}
\widehat{\Da}^{(m)}_{\mathfrak{X},\lambda}:= \displaystyle \varprojlim_{j}\Db_{X,\lambda}^{(m)}/p^{j+1}\Db_{X,\lambda}^{(m)}
\end{eqnarray}
the $p$-adic completion of $\Db^{(m)}_{X,\lambda}$ and we consider it as a sheaf on $\mathfrak{X}$. Following the notation given at the beginning of this work, the sheaf $\widehat{\Da}^{(m)}_{\mathfrak{X},\lambda,\Q}$ will denote our sheaf of level m twisted arithmetic differential operators on the formal flag scheme $\mathfrak{X}$.
\end{defi}

\begin{prop}\label{prop coherence }
\begin{itemize}
\item[(i)] There exists a basis $\Ba$ of the topology of $\mathfrak{X}$, consisting of open affine subsets, such that for every $\mathfrak{U}\in\Ba$ the ring $\widehat{\Da}^{(m)}_{\mathfrak{X},\lambda}(\mathfrak{U})$ is twosided noetherian.
\item[(ii)] The sheaf of rings $\widehat{\Da}^{(m)}_{\mathfrak{X},\lambda,\Q}$ is coherent. 
\end{itemize}
\end{prop}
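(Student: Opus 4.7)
The plan is to deduce both assertions from the corresponding classical results of Berthelot for the sheaf $\widehat{\Da}^{(m)}_{\mathfrak{X}}$ by means of the local isomorphism established in Proposition \ref{why a TDO}.

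For (i), I would take $\Ba$ to consist of the formal completions $\mathfrak{U}$ of those affine open subsets $U\in\Sb$ which moreover carry a system of \'etale coordinates over $\Z_p$. Since $X$ is smooth and $\Sb$ is a basis for the Zariski topology on $X$, and since $X$ and $\mathfrak{X}$ have the same underlying topological space, such opens still form a basis of $\mathfrak{X}$. By Proposition \ref{why a TDO} we have an isomorphism of sheaves of filtered $\Z_p$-algebras $\Db^{(m)}_{X,\lambda}|_U \simeq \Db^{(m)}_X|_U$. The order filtration has locally free $\Ob_X$-modules of finite rank as graded pieces (by the local description \eqref{locally_Berthelot}), so it is $p$-adically separated; consequently, passing to $p$-adic completions commutes with the local isomorphism, giving $\widehat{\Da}^{(m)}_{\mathfrak{X},\lambda}|_\mathfrak{U} \simeq \widehat{\Da}^{(m)}_{\mathfrak{U}}$. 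Taking sections on $\mathfrak{U}$ and invoking Berthelot's noetherianness result (\cite[Proposition 3.3.4]{Berthelot1}), which applies because $\mathfrak{U}$ admits local coordinates, yields that $\widehat{\Da}^{(m)}_{\mathfrak{X},\lambda}(\mathfrak{U})$ is two-sided noetherian.

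For (ii), coherence is a local property on $\mathfrak{X}$, so it suffices to check it on each $\mathfrak{U}\in\Ba$. Tensoring the local isomorphism from (i) with $\Q_p$ yields $\widehat{\Da}^{(m)}_{\mathfrak{X},\lambda,\Q}|_{\mathfrak{U}}\simeq \widehat{\Da}^{(m)}_{\mathfrak{U},\Q}$. The right-hand side is a coherent sheaf of rings by Berthelot's theorem \cite[Th\'eor\`eme 3.5.3]{Berthelot1}, so the same holds for $\widehat{\Da}^{(m)}_{\mathfrak{X},\lambda,\Q}$ on every $\mathfrak{U}\in\Ba$, and hence globally.

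The main technical obstacle is verifying that $p$-adic completion commutes with restriction in the required way, namely that $\widehat{\Da}^{(m)}_{\mathfrak{X},\lambda}|_{\mathfrak{U}}$ genuinely coincides with the $p$-adic completion of $\Db^{(m)}_{X,\lambda}|_{U}$. This reduces, because the filtration pieces of $\Db^{(m)}_{X,\lambda}|_{U}$ are quasi-coherent $\Ob_{X}|_{U}$-modules and $U$ is affine noetherian, to the analogous statement at the level of $\Ob_X(U)$-modules, which is a standard fact. Everything else amounts to formal manipulation transporting Berthelot's noetherianness and coherence statements along the local isomorphism $\Db^{(m)}_{X,\lambda}|_U \simeq \Db^{(m)}_X|_U$.
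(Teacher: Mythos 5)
Your proposal is correct and takes essentially the same approach as the paper: transport Berthelot's results for $\widehat{\Da}^{(m)}_{\mathfrak{X}}$ along the local isomorphism of Proposition \ref{why a TDO}, using the compatibility of $p$-adic completion with sections over affine noetherian opens (the paper invokes \cite[$0_I$, 3.2.6]{Grothendieck1} at this step). One small remark on references: in \cite{Berthelot1} the noetherianness of sections is 3.2.3 (iv) and the coherence statement is proposition 3.3.4, rather than 3.3.4 and th\'eor\`eme 3.5.3 (the latter is the flatness theorem, which is not what is needed here).
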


\begin{proof}
To show (i) we can take an open affine subset $U\in\Sb$ and to consider $\mathfrak{U}$ its formal completion along the special fiber. We have
\begin{eqnarray*}
H^{0}(\mathfrak{U},\widehat{\Da}^{(m)}_{ \mathfrak{X},\lambda})
\simeq \widehat{H^{0}(U,\Db^{(m)}_{X,\lambda})}
\simeq \widehat{H^{0}(U,\Db^{(m)}_{X})}
\simeq H^{0}(\mathfrak{U},\widehat{\Da}^{(m)}_{\mathfrak{X}})
\end{eqnarray*}
The first and third isomorphism are given by \cite[($0_I$, 3.2.6)]{Grothendieck1} and the second one arises from the preceding proposition. By \cite[3.2.3 (iv)]{Berthelot1} the ring $H^{0}(\mathfrak{U},\widehat{\Da}^{(m)}_{\mathfrak{X}})$ is twosided noetherian. Therefore, we can take $\Ba$ as the set of affine open subsets of $\mathfrak{X}$ contained in the $p$-adic completion of an affine open subset $U\in\Sb$. This proves (i). By \cite[proposition 3.3.4]{Berthelot1} we can conclude that (ii) is an immediately consequence of (i) because $H^{0}(\mathfrak{U},\widehat{\Da}_{\mathfrak{X},\lambda,\Q}^{(m)})=H^{0}(\mathfrak{U},\widehat{\Da}_{\mathfrak{X},\lambda}^{(m)})\otimes_{\Z_p}\Q_p$ \cite[(3.4.0.1)]{Berthelot1}.
\end{proof}
\justify
Using the morphism $\Phi^{(m)}_X$ defined in (\ref{Can.Morp2}) and the canonical projection from $\widetilde{\Db^{(m)}}$ onto $\Db^{(m)}_{X,\lambda}$ ( $\lambda$ is $\Z_p$-linear) we can define a canonical map 
\begin{eqnarray}\label{c. map}
\Phi^{(m)}_{X,\lambda}:\Ab^{(m)}_X\rightarrow \Db^{(m)}_{X,\lambda}.
\end{eqnarray}

\begin{prop}\label{canonical 2}
\begin{itemize}
\item[(i)] There exists a canonical isomorphism $Sym^{(m)}(\Tb_X)\simeq gr_{\bullet}(\Db^{(m)}_{X,\lambda})$.
\item[(ii)] The canonical morphism $\Phi^{(m)}_{X,\lambda}$ is surjective. 
\item[(iii)] The sheaf $\Db^{(m)}_{X,\lambda}$ is a coherent $\Ab^{(m)}_X$-module.
\end{itemize}
\end{prop}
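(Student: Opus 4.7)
All three parts are local on $X$, and on a trivializing open $U \in \Sb$ the sheaf $\Db^{(m)}_{X,\lambda}|_U$ coincides with Berthelot's $\Db^{(m)}_X|_U$ by proposition~\ref{why a TDO}. My strategy is to prove (i) by comparing associated gradeds canonically, to deduce (ii) by passing to gradeds, and to obtain (iii) from the coherence of $\Ab^{(m)}_X$.

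For (i), I start from proposition~\ref{gr tilde}: $gr_\bullet(\widetilde{\Db^{(m)}}) \simeq \text{Sym}^{(m)}((\xi_*\Tb_{\widetilde{X}})^{\T})$. Equipping $\Z_p$ with the trivial filtration, $gr_\bullet(\Z_p) = \Z_p$ sits in degree zero via the augmentation of $gr_\bullet(D^{(m)}(\T)) \simeq \text{Sym}^{(m)}(\mathfrak{t})$ from proposition~\ref{gr.Dist algebra}. The canonical surjection~(\ref{Gr_morph}) attached to the tensor product filtration then yields
\begin{equation*}
\text{Sym}^{(m)}((\xi_*\Tb_{\widetilde{X}})^{\T}) \otimes_{\text{Sym}^{(m)}(\mathfrak{t})} \Z_p \twoheadrightarrow gr_\bullet(\Db^{(m)}_{X,\lambda}).
\end{equation*}
The short exact sequence of lemma~\ref{Exact for tangent} is locally split by~(\ref{Locally_Split_2}), so the level-$m$ symmetric functor together with \cite[Proposition 1.3.5]{Huyghe1} identifies the left-hand side canonically with $\text{Sym}^{(m)}(\Tb_X)$. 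To upgrade the displayed surjection to a bijection I restrict to $U \in \Sb$: propositions~\ref{locally tilde} and~\ref{why a TDO} give $\Db^{(m)}_{X,\lambda}|_U \simeq \Db^{(m)}_X|_U$ as filtered $\Z_p$-algebras, whose graded is $\text{Sym}^{(m)}(\Tb_X|_U)$ by~(\ref{graded}), matching the left-hand side locally.

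For (ii), proposition~\ref{prop 1.4.1}(ii) and part (i) identify the graded of $\Phi^{(m)}_{X,\lambda}$ with the morphism $\Ob_X \otimes_{\Z_p} \text{Sym}^{(m)}(\mathfrak{g}) \to \text{Sym}^{(m)}(\Tb_X)$ induced on symbols by the infinitesimal right regular action~(\ref{Action_Lie}) of $\G$ on $X$. Transitivity of this action on $X = \G/\B$ forces $\Ob_X \otimes_{\Z_p} \mathfrak{g} \to \Tb_X$ to be surjective, and the level-$m$ symmetric functor preserves surjections of locally free $\Ob_X$-modules, as visible on the explicit bases of remark~\ref{Sym-Locally}. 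Hence $gr_\bullet(\Phi^{(m)}_{X,\lambda})$ is surjective and, by exhaustiveness of both filtrations, so is $\Phi^{(m)}_{X,\lambda}$. For (iii), proposition~\ref{prop 1.4.1}(iii) makes $\Ab^{(m)}_X$ a coherent sheaf of rings with noetherian sections on affine opens; combined with (ii), this exhibits $\Db^{(m)}_{X,\lambda}$ as a finite-type $\Ab^{(m)}_X$-module whose kernel is locally finitely generated by noetherianity of $\Ab^{(m)}_X(U)$ for $U \in \Sb$, yielding coherence.

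The main obstacle I expect is making part (i) genuinely canonical rather than chart-by-chart: each local trivialization of the torsor splits the sequence of lemma~\ref{Exact for tangent} differently, and the induced identifications $\Db^{(m)}_{X,\lambda}|_U \simeq \Db^{(m)}_X|_U$ are not canonical. I avoid this by working intrinsically with the tensor product filtration and the canonical morphism~(\ref{Gr_morph}), and use the local models only to certify that the global surjection displayed above is injective.
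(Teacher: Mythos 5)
Your proof matches the paper's argument essentially step for step: you build the same canonical morphism from~(\ref{Gr_morph}), proposition~\ref{gr tilde}, and lemma~\ref{Exact for tangent}, and verify it is an isomorphism locally on $\Sb$ via proposition~\ref{why a TDO}, then pass to associated gradeds for (ii) and invoke proposition~\ref{prop 1.4.1} for (iii). The only cosmetic difference is in (ii), where the paper simply cites \cite[Proposition 1.6.1]{Huyghe2} for the surjectivity of $\Ob_X\otimes_{\Z_p}\text{Sym}^{(m)}(\mathfrak{g})\to\text{Sym}^{(m)}(\Tb_X)$, while you unpack its content (transitivity of the $\G$-action on $X=\G/\B$ forcing $\Ob_X\otimes_{\Z_p}\mathfrak{g}\twoheadrightarrow\Tb_X$, then $\text{Sym}^{(m)}$ applied to a locally split surjection of locally free sheaves) — a correct expansion, not a different route.
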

\begin{proof}
By (\ref{Gr_morph}) we have a canonical map 
\begin{eqnarray*}
gr_{\bullet}\left(\widetilde{\Db^{(m)}}\right)\otimes_{gr_{\bullet}(D^{(m)}(\T))}gr_{\bullet}(\Z_p)\rightarrow gr_{\bullet}\left(\Db_{X,\lambda}^{(m)}\right).
\end{eqnarray*} 
By proposition \ref{gr tilde} we know that $gr_{\bullet}(\widetilde{\Db^{(m)}})\simeq Sym^{(m)}((\xi_{*}\Tb_{\widetilde{X}})^{\T})$. Moreover, by definition, we know that $gr_{\bullet}(\Z_p)=\Z_p$ as a $gr_{\bullet}(D^{(m)}(\T))\;(=Sym^{(m)}(\mathfrak{t})$ proposition \ref{gr.Dist algebra})-module. We obtain a morphism of sheaves of graded $\Z_p$-algebras
\begin{eqnarray*}
Sym^{(m)}\left((\xi_{*}\Tb_{\widetilde{X}})^{\T}\right)\otimes_{Sym^{(m)}(\mathfrak{t})}\Z_p\rightarrow gr_{\bullet}\left(\Db^{(m)}_{X,\lambda}\right)
\end{eqnarray*}
(the structure of $\text{Sym}^{(m)}(\mathfrak{t})$-module is guaranteed by (\ref{Sym_1})). Using the short exact sequence $0\rightarrow \Ob\otimes_{\Z_p}\mathfrak{t}\rightarrow \left(\xi_{*}\Tb_{\widetilde{X}}\right)^{\T}\xrightarrow{\nu} \Tb_{X}\rightarrow 0$ of lemma \ref{Exact for tangent} we see that 
\begin{eqnarray*}
Sym^{(m)}((\xi_{*}\Tb_{\widetilde{X}})^{\T})\otimes_{Sym^{(m)}(\mathfrak{t})}\Z_p\xrightarrow{Sym^{(m)}(\nu)\otimes 1}Sym^{(m)}(\Tb_{X}) 
\end{eqnarray*}
is an isomorphism and we get a canonical morphism of $\Z_p$-algebras
\begin{eqnarray*}
\varphi: Sym^{(m)}(\Tb_{X})\rightarrow gr_{\bullet}\left(\Db_{X,\lambda}^{(m)}\right)
\end{eqnarray*}
By proposition \ref{why a TDO}, we have a commutative diagram for any $U\in\Sb$ 
\begin{eqnarray*}
\begin{tikzcd}[column sep = small]
\text{Sym}^{(m)}\left(\Tb_{X}(U)\right) \arrow{rr}{\varphi_{U}} \arrow[swap]{dr}{}& &\text{gr}_{\bullet}\left(\Db_{X,\lambda}^{(m)}(U)\right) \arrow{dl}{}\\
& \text{gr}_{\bullet}\left(\Db_{X}^{(m)}(U)\right), & 
\end{tikzcd}
\end{eqnarray*}
\justify
here the left diagonal arrow is given by (\ref{graded}). As $\Sb$ is a basis for the Zariski topology of $X$ we can conclude that $\varphi$ is an isomorphism.
\justify 
 For the second claim we can calculate $gr_{\bullet}(\Phi^{(m)}_{X,\lambda})$. By the first part of the proof and proposition \ref{prop 1.4.1} this morphism is identified with 
\begin{eqnarray*}
\Ob_{X}\otimes_{\Z_p}Sym^{(m)}(\mathfrak{g})\rightarrow Sym^{(m)}(\Tb_X)
\end{eqnarray*}
which is surjective by \cite[Proposition 1.6.1]{Huyghe2}. Finally, item $(iii)$ follows from $(ii)$ and proposition \ref{prop 1.4.1}. 
\end{proof}

\begin{rem}\label{rem 5.1.18}
\begin{itemize}
\item[(a)] By construction $\Db^{(m)}_{X,\lambda,\Q}= \Db_{\lambda}$ is the sheaf of usual $\lambda+\rho$\footnote{Here the $\lambda$ on the right is the character of the $\Q_p$-Lie algebra $\mathfrak{t}_\Q$ induced via the correspondence (\ref{Iso_chars_Z_p}).}-twisted differential operators on the flag variety $X_\Q$ \cite[page 170]{BB1}.
\item[(b)] Let us recall that the regular right action of $\G$ on $\widetilde{X}$ (cf. \ref{Right_actions}) induces a natural map $\Phi_{\lambda}: \Ub(\mathfrak{g}_\Q)\rightarrow H^{0}(X_{\Q},\Db_{\lambda})$. This implies that if $\Phi^{(m)}_{\lambda}$ denotes the canonical map induced by $\Phi^{(m)}_{X,\lambda}$ by taking global sections, then $\Phi^{(m)}_{\lambda}\otimes_{\Z_p} \Q_p=\Phi_{\lambda}$ \cite[Page 170 and 186]{BB1}.
\end{itemize}
\end{rem}
\justify
The relation given in (\ref{Iso_chars_Z_p}) tells us that, as in the classical case (see for example  \cite{BB_J} or \cite{BB2}), the sheaf $\widetilde{\Db^{(m)}}:=\left(\xi_*\Db^{(m)}_{\widetilde{X}}\right)^{\T}$ can be regarded as a family of twisted differential operators on $X$ parametrized by $\mathfrak{t}^*:=\text{Hom}(\mathfrak{t},\Z_p)$. 

\begin{char_alg}\label{char_alg}
Before investigating the finiteness properties of these sheaves, let us study the particular case when $\lambda\in\mathfrak{t}^*$ is an algebraic character. This means, $\lambda$ is obtained by differentiating a character of $\T$\footnote{By abusing notation, we can rephrase this by $\lambda =\text{d}\lambda\in\mathfrak{t}^*$, with $\lambda\in\text{Hom}(\T,\G_m)$.}. As is shown in \cite[Part I, chapter 5, (5.8)]{Jantzen}, the character $\lambda\in\text{Hom}(\T,\G_m)$ induces an invertible sheaf $\Lb (\lambda)$ on $X$ and we can consider the sheaf of (integral) differential operators acting on $\Lb (\lambda)$
\begin{eqnarray*}
\Db^{(m)}_{X}(\lambda) := \Lb(\lambda)\otimes_{\Ob_{X}}\Db^{(m)}_{X}\otimes_{\Ob_{X}}\Lb(\lambda)^{\vee}.
\end{eqnarray*}
In fact the (left) action of $\Db^{(m)}_{X}(\lambda)$ on $\Lb (\lambda)$ is defined by
\begin{eqnarray*}
\left(t\otimes P\otimes t^{\vee}\right)\bullet s :=\left(P\bullet\left<t^{\vee},s \right>\right) t
\hspace{1 cm}
(s,t\in\Lb (\lambda),\;\text{and}\; t^{\vee}\in\Lb(\lambda)^{\vee}).
\end{eqnarray*}
\justify
Let us denote by $\widehat{\Da}^{(m)}_{\mathfrak{X}}(\lambda)$ its $p$-adic completion (which is considered as a sheaf on $\mathfrak{X}$) and by $\Da^{\dag}_{\mathfrak{X}}(\lambda)$ its inductive limit tensored with $\Q_p$\footnote{The inductive limit is formed by tensoring the maps \cite[(2.2.1.5)]{Berthelot1} on the left with $\Lb(\lambda)$ and on the right with $\Lb(\lambda)^{\vee}$.}. These sheaves have been studied in \cite{Huyghe2} and \cite{HS2}. As before, we put $\widehat{\Da}^{(m)}_{\mathfrak{X},\Q}(\lambda) :=\widehat{\Da}^{(m)}_{\mathfrak{X}}(\lambda)\otimes_{\Z_p}\Q_p$.
\end{char_alg}
\justify
 For the next result, we will abuse of the notation and we will suppose that $\lambda\in\text{Hom}(\T,\G_m)$ is an algebraic character of $\mathfrak{t}$ and that $\lambda ' :\text{Dist}(\T)\rightarrow\Z_p$ denotes the character of the distribution algebra $\text{Dist}(\T)$ induced by the bijection (\ref{Iso_chars_Z_p}).

\begin{prop}\label{HSS}
The sheaves $\widehat{\Da}^{(m)}_{\mathfrak{X},\Q}(\lambda)$ and $\widehat{\Da}^{(m)}_{\mathfrak{X},\lambda',\Q}$ are canonically isomorphic. 
\end{prop}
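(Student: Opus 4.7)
The strategy will be to construct a canonical morphism of sheaves of filtered $\Z_p$-algebras $\varphi^{(m)} : \Db^{(m)}_{X,\lambda'} \to \Db^{(m)}_X(\lambda)$, verify that it is an isomorphism by checking locally on the base $\Sb$, and then pass to the $p$-adic completion and tensor with $\Q_p$. For the construction, I would use the fact that, since $\lambda$ is algebraic, the invertible sheaf $\Lb(\lambda)$ may be identified with the $\lambda$-weight subsheaf of $\xi_*\Ob_{\widetilde{X}}$ under the right $\T$-action defining the torsor. The ring $\widetilde{\Db^{(m)}} = \bigl(\xi_*\Db^{(m)}_{\widetilde{X}}\bigr)^{\T}$ acts on $\xi_*\Ob_{\widetilde{X}}$ through the natural action of $\Db^{(m)}_{\widetilde{X}}$ on $\Ob_{\widetilde{X}}$, and the $\T$-invariance of its sections forces this action to preserve every $\T$-weight component; hence $\widetilde{\Db^{(m)}}$ acts on $\Lb(\lambda)$ by $\Ob_X$-linear differential operators of level $m$. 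Since $\Lb(\lambda)$ is a line bundle, $\mathcal{E}nd^{(m)}_{\Ob_X}(\Lb(\lambda)) = \Lb(\lambda)\otimes_{\Ob_X}\Db^{(m)}_X\otimes_{\Ob_X}\Lb(\lambda)^{\vee} = \Db^{(m)}_X(\lambda)$, which produces a morphism $\widetilde{\Db^{(m)}} \to \Db^{(m)}_X(\lambda)$. The action of the subalgebra $\Phi^{(m)}_{\T}(D^{(m)}(\T)) \subset \widetilde{\Db^{(m)}}$ on the $\lambda$-weight subsheaf is scalar multiplication by $\lambda'$, so the morphism factors through $\Db^{(m)}_{X,\lambda'} = \widetilde{\Db^{(m)}}\otimes_{D^{(m)}(\T)}\Z_p$, producing $\varphi^{(m)}$.

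Next, I would check that $\varphi^{(m)}$ is an isomorphism by restricting to opens $U \in \Sb$. Since every such $U$ trivializes the $\T$-torsor $\xi$, it also trivializes the associated line bundle $\Lb(\lambda)$, yielding $\Db^{(m)}_X(\lambda)|_U \simeq \Db^{(m)}_X|_U$. Simultaneously, Proposition \ref{why a TDO} gives $\Db^{(m)}_{X,\lambda'}|_U \simeq \Db^{(m)}_X|_U$ as sheaves of filtered $\Z_p$-algebras. A direct computation using the tensor product description from Proposition \ref{locally tilde}, the local split (\ref{Locally_Split_2}), and the explicit trivialization of $\Lb(\lambda)$ then shows that under these two identifications $\varphi^{(m)}|_U$ is the identity of $\Db^{(m)}_X|_U$. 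Since $\Sb$ is a base for the Zariski topology of $X$, this suffices to conclude that $\varphi^{(m)}$ is an isomorphism.

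Finally, applying $p$-adic completion along the special fiber and base change to $\Q_p$ yields the required isomorphism $\widehat{\Da}^{(m)}_{\mathfrak{X},\lambda',\Q} \simeq \widehat{\Da}^{(m)}_{\mathfrak{X},\Q}(\lambda)$, since both operations preserve isomorphisms of coherent sheaves of $\Ob_X$-modules. The hard part in this plan is the local verification in the second paragraph: although both sides become isomorphic to $\Db^{(m)}_X|_U$, the two identifications come from rather different constructions (one from the tensor-product description of $\widetilde{\Db^{(m)}}|_U$ via the torsor trivialization together with the character $\lambda'$ on $D^{(m)}(\T)$, the other from the action on the line bundle $\Lb(\lambda)$), so matching them demands keeping careful track of how the algebraic character $\lambda$ enters both sides simultaneously, as well as verifying compatibility with the transition data on overlaps $U \cap V$ in $\Sb$.
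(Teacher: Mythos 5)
Your approach is genuinely different from the paper's, and it aims at a strictly stronger statement than what the proposition asserts. You propose to construct an explicit morphism $\varphi^{(m)}:\Db^{(m)}_{X,\lambda'}\to\Db^{(m)}_X(\lambda)$ at the integral, uncompleted level (via the action of $\widetilde{\Db^{(m)}}$ on the $\lambda$-weight subsheaf of $\xi_*\Ob_{\widetilde{X}}$) and then show it is an isomorphism on opens $U\in\Sb$. The paper does something weaker and slicker: it observes that both $\Db^{(m)}_{X,\lambda'}$ and $\Db^{(m)}_X(\lambda)$ arise as quotients of $\Ab^{(m)}_X$ by two-sided ideals $\Jb^{(m)}_1,\Jb^{(m)}_2$ which agree after inverting $p$ (because both rational quotients equal the classical TDO $\Db_\lambda$), then uses noetherianity of sections and quasi-compactness of $X$ to show the ideals are $p$-power commensurable, i.e.\ $p^{N_1}\Jb^{(m)}_1\subseteq\Jb^{(m)}_2$ and $p^{N_2}\Jb^{(m)}_2\subseteq\Jb^{(m)}_1$; after $p$-adic completion and tensoring with $\Q_p$ this already forces the completed rational quotients to coincide. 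The paper's argument never asserts, and in particular never needs, that $\Jb^{(m)}_1=\Jb^{(m)}_2$ — which is exactly what your integral isomorphism would require.

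The risk in your plan is concentrated precisely where you flag it. The local verification that $\varphi^{(m)}|_U$ is the identity after identifying both sides with $\Db^{(m)}_X|_U$ demands that the trivialization of $\Lb(\lambda)$ inherited from a torsor trivialization $\alpha_U$ be compatible, on the nose, with the tensor-product presentation of Proposition~\ref{locally tilde} and with the normalizations hidden in the identification of characters (\ref{Iso_chars_Z_p}). This involves tracking (a) whether $\Lb(\lambda)$ is the $\lambda$- or $(-\lambda)$-weight piece of $\xi_*\Ob_{\widetilde{X}}$ under the chosen right $\T$-action, (b) the $\rho$-shift baked into the rational identification $\Db^{(m)}_{X,\lambda,\Q}=\Db_\lambda$ (Remark~\ref{rem 5.1.18}), and (c) the compatibility of these choices over overlaps $U\cap V$. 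None of this is impossible, but if any convention is off by an inverse or a $\rho$-shift your $\varphi^{(m)}$ would fail to factor, or would land in $\Db^{(m)}_X(-\lambda)$ or $\Db^{(m)}_X(w_0\lambda)$ instead. The paper's commensurability argument deliberately bypasses all of this: it never exhibits a morphism between the two integral sheaves at all, only compares them as subquotients of a common ambient $\Ab^{(m)}_X$ after passing to $\Q_p$. If you do carry out the local matching carefully, you will have obtained a refinement of the proposition (an integral isomorphism $\Db^{(m)}_{X,\lambda'}\simeq\Db^{(m)}_X(\lambda)$ before completion), but as written the proposal leaves the decisive computation open, whereas the paper's route closes the gap with much softer inputs.
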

\justify
Before starting the proof let us recall the following facts\footnote{This facts are detailed in \cite{Huyghe2}.}. First of all, the order filtration of $\Db^{(m)}_{X}$ (definition \ref{Diff order n}) induces a filtration $\left(\Db^{(m)}_{X,d}(\lambda)\right)_{d\in\N}$ of $\Db^{(m)}_{X}(\lambda)$ by
\begin{eqnarray*}
\Db^{(m)}_{X,d}(\lambda) := \Lb (\lambda)\otimes_{\Ob_{X}}\Db^{(m)}_{X,d}\otimes_{\Ob_{X}}\Lb (\lambda)^{\vee},
\end{eqnarray*}
such that 
\begin{eqnarray*}
\text{gr}_{\bullet}\left(\Db^{(m)}_{X}(\lambda)\right)\simeq \text{Sym}^{(m)}\left(\Tb_{X}\right).
\end{eqnarray*}
It is easy to see that in order to achieve the preceding isomorphism we only need to use (\ref{graded}) and the commutativity of the symmetric algebra.
On the other hand, by the work developed by Huyghe-Schmidt in \cite{HS2} we have a canonical morphism of filtered $\Z_p$-algebras
\begin{eqnarray*}
\Phi^{(m)}:\Ab^{(m)}_{X} := \Ob_{X}\otimes_{\Z_p}D^{(m)}(\G) \rightarrow \Db^{(m)}_{X}(\lambda)
\end{eqnarray*}
which is in fact surjective thanks to the preceding isomorphism and the fact that $X$ is an homogeneous space (cf. \cite[Section 1.6]{Huyghe2}).

\begin{proof}[Proof of proposition \ref{HSS}]
By the preceding discussion and proposition \ref{canonical 2} we have two canonical surjective morphisms of filtered $\Z_p$-algebras
\begin{eqnarray*}
\Phi^{(m)}:  \Ab^{(m)}_{X}  \rightarrow
\Db^{(m)}_{X}(\lambda)  
\hspace{0.5 cm}
\text{and}
\hspace{0.5 cm}
\Phi^{(m)}_{X,\lambda'} : \Ab^{(m)}_{X} \rightarrow \Db^{(m)}_{X,\lambda '}.
\end{eqnarray*}
In particular, there exist two-sided ideals $\Jb^{(m)}_1$ and $\Jb^{(m)}_{2}$ of $\Ab^{(m)}_{X}$ such that
\begin{eqnarray*}
\Db^{(m)}_{X,\lambda '} = \Ab^{(m)}_{X}\big / \Jb^{(m)}_1 
\hspace{0.5 cm} 
\text{and}
\hspace{0.5 cm}
\Db^{(m)}_{X}(\lambda) = \Ab^{(m)}_{X}\big / \Jb^{(m)}_2.
\end{eqnarray*} 
Moreover, since 
\begin{eqnarray*}
\Db^{(m)}_{X,\lambda '}\otimes_{\Z_p}\Q_p = \Db^{(m)}_{X}(\lambda) \otimes_{\Z_p}\Q_p = \Db_{\lambda}
\end{eqnarray*}
we have $\Phi^{(m)}\otimes_{\Z_p}\Q_p = \Phi^{(m)}_{X,\lambda '}\otimes_{\Z_p}\Q_p$ and therefore  
\begin{eqnarray}\label{Equal_kernels}
 \Jb^{(m)}_{2,\Q} = \text{Ker} (\Phi^{(m)}\otimes_{\Z_p}\Q_p ) = \text{Ker}(\Phi^{(m)}_{X,\lambda'}\otimes_{\Z_p}\Q_p) = \Jb^{(m)}_{1,\Q}.
\end{eqnarray}
\textit{Claim:} There exist $N_1, N_2\in\N$ such that $p^{N_1}\Jb^{(m)}_1\subseteq \Jb^{(m)}_2$ and $p^{N_2}\Jb^{(m)}_{2}\subseteq \Jb^{(m)}_1$.
\begin{proof}
Let $U\subseteq X$ be an affine open subset. Since $\Ab^{(m)}_{X}$ has noetherian sections over all affine open subsets (proposition \ref{prop 1.4.1}) we can find $t_1,\; ...,\; t_l\in \Jb^{(m)}_{2}(U)$ such that $\frac{t_1}{1},\; ...,\; \frac{t_l}{1}\in \Jb^{(m)}_2(U)\otimes_{\Z_p}\Q_p$ generate $\Jb^{(m)}_{2}(U)\otimes_{\Z_p}\Q_p$ as $\Ab^{(m)}_{X}(U)\otimes_{\Z_p}\Q_p$-module. Let $s_1,\; ...,\; s_d\in\Jb^{(m)}_1(U)$ a set of generators of $\Jb^{(m)}_1(U)$ as $\Ab^{(m)}_{X}(U)$-module. By (\ref{Equal_kernels}) we can find $a_{11},\; ...,\; a_{dl}\in \Ab^{(m)}_{X}(U)\otimes_{\Z_p}\Q_p$ such that
\begin{eqnarray*}
\begin{pmatrix} 
\frac{s_1}{1} \\
\cdot \\
\cdot \\
\cdot \\
\frac{s_d}{1} 
\end{pmatrix}
=
\begin{pmatrix} 
a_{11}   & \cdot & \cdot & \cdot & a_{1l} \\
\cdot     & \cdot &         &         & \cdot \\
\cdot     &         & \cdot &         & \cdot \\
\cdot     &         &         & \cdot & \cdot \\
a_{d1}   & \cdot & \cdot & \cdot & a_{dl}
\end{pmatrix}
\begin{pmatrix}
\frac{t_1}{1} \\
\cdot \\
\cdot \\
\cdot \\
\frac{t_l}{1} 
\end{pmatrix},
\hspace{0.5 cm}
A := \begin{pmatrix} 
a_{11}   & \cdot & \cdot & \cdot & a_{1l} \\
\cdot     & \cdot &         &         & \cdot \\
\cdot     &         & \cdot &         & \cdot \\
\cdot     &         &         & \cdot & \cdot \\
a_{d1}   & \cdot & \cdot & \cdot & a_{dl}
\end{pmatrix}
\in \Mb_{d\times l }\left(\Ab^{(m)}_{X}(U)\otimes_{\Z_p}\Q_p\right).
\end{eqnarray*}
In particular, we can find $u\in \Z_{p}^{\times}$ and $N\in\N$ such that $up^{N} A \in \Mb_{d\times l}(\Ab^{(m)}_{X}(U))$, and therefore $p^{N}s_j\in \Jb^{(m)}_{2}(U)$, for all $1\le j \le d$. Now, as $X$ is compact, we can find a finite affine covering $X=\cup_{1\le i \le k} U_i$ and we take $N_1:=\text{max}\left\{N_i|\; 1\le i\le k\right\}$, such that $p^{N_1}\Jb^{(m)}_1\subseteq \Jb^{(m)}_{2}$. By applying the same reasoning, we can see that there exists $N_2\in\N$ such that $p^{N_2}\Jb^{(m)}_2 \subseteq \Jb^{(m)}_1$.
\end{proof}
\justify
Let us denote by $\widehat{\Ab}^{(m)}_{\mathfrak{X}}$, $\widehat{\Jb}^{(m)}_1$ and $\widehat{\Jb}^{(m)}_2$ the respective $p$-adic completions (considered as sheaves on $\mathfrak{X}$). By the preceding claim, we have the following diagram
\begin{eqnarray*}
\begin{tikzcd}[row sep= large]
\widehat{\Ab}^{(m)}_{\mathfrak{X},\Q}\big / \widehat{\Jb}^{(m)}_{1,\Q} = \widehat{\Ab}^{(m)}_{\mathfrak{X},\Q}\big /p^{N_1} \widehat{\Jb}^{(m)}_{1,\Q} \arrow{r}{} \arrow[swap]{dr}{id} & \widehat{\Ab}^{(m)}_{\mathfrak{X},\Q}\big / \widehat{\Jb}^{(m)}_{2,\Q} = \widehat{\Ab}^{(m)}_{\mathfrak{X},\Q}\big /p^{N_2} \widehat{\Jb}^{(m)}_{2,\Q} \arrow{d}{} \\
     & \widehat{\Ab}^{(m)}_{\mathfrak{X},\Q}\big / \widehat{\Jb}^{(m)}_{1,\Q}.
\end{tikzcd}
\end{eqnarray*}
Given that $\Ab^{(m)}_{X}$ has noetherian sections over all open affine subsets, the $p$-adic completion is in this case an exact functor and therefore 
\begin{eqnarray*}
\widehat{\Da}^{(m)}_{\mathfrak{X},\lambda ',\Q} \simeq \widehat{\Ab}^{(m)}_{\mathfrak{X},\Q}\big / \widehat{\Jb}^{(m)}_{1,\Q} \simeq \widehat{\Ab}^{(m)}_{\mathfrak{X},\Q}\big / \widehat{\Jb}^{(m)}_{2,\Q} = \widehat{\Da}^{(m)}_{\mathfrak{X},\Q}(\lambda).
\end{eqnarray*}
\end{proof}

\subsection{Finiteness properties}
Let $\lambda:\text{Dist}(\T)\rightarrow\Z_p$ be a character. In this section we start the study of the cohomological properties of coherent   $\Db^{(m)}_{X,\lambda}$-modules. We follow the arguments of \cite{Huyghe2} to show a technical important finiteness property about the $p$-torsion of the cohomology groups of coherent  $\Db^{(m)}_{X,\lambda}$-modules, when the character $\lambda + \rho\in\mathfrak{t}_\Q^*$ is dominant and regular (proposition \ref{prop 2.2.3}). To start with, let us recall the twist by the sheaf $\Ob(1)$. As $X$ is a projective  $\Z_p$-scheme, there exists a very ample invertible sheaf $\Ob(1)$ on $X$ \cite[chapter II, remark 5.16.1]{Hartshorne1}. Therefore, for any arbitrary $\Ob_{X}$-module $\Eb$ we can consider the twist
\begin{eqnarray*}
\Eb(r):=\Eb\otimes_{\Ob_{X}}\Ob(r),
\end{eqnarray*}
where $r\in\Z$ and  $\Ob(r)$ means the $r$-th tensor product of $\Ob(1)$ with itself. We recall to the reader that there exists $r_0\in\Z$, depending of $\Ob(1)$, such that for every $k\in\Z_{>0}$ and for every $s\ge r_0$, $H^{k}(X,\Ob(s))=0$ \cite[chapter II, theorem 5.2 (b)]{Hartshorne1}. 
\justify
We start the results of this section with the following proposition which states three important properties of coherent $\Ab_{X}^{(m)}$-modules \cite[proposition A.2.6.1]{HS1}. This is a key result in this work. Let  $\Eb$ be a coherent $\Ab_{X}^{(m)}$-module.
\begin{prop}\label{prop 1.4.2}
\begin{itemize}
\item[(i)] $H^0(X,\Ab_{X}^{(m)})=D^{(m)}(\G)$ is a noetherian $\Z_p$-algebra.
\item[(ii)] There exists a surjection of $\Ab_{X}^{(m)}$-modules $\left(\Ab_{X}^{(m)}(-r)\right)^{\oplus a}\rightarrow\Eb\rightarrow 0$ for suitable $r\in\Z$ and $a\in\N$.
\item[(iii)] For any $k\ge 0$ the group $H^k(X,\Eb)$ is a finitely generated $D^{(m)}(\G)$-module.
\end{itemize} 
\end{prop}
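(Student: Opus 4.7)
I would treat the three parts in the order they are stated, using each to bootstrap the next.

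\emph{Part (i).} I would exploit the decomposition $\Ab_{X}^{(m)}=\Ob_{X}\otimes_{\Z_p}D^{(m)}(\G)$ combined with the fact that $D^{(m)}(\G)=\varinjlim_{n}D^{(m)}_{n}(\G)$ is a filtered colimit of free $\Z_p$-modules of finite rank (subsection 2.4). Since $X$ is a noetherian scheme, cohomology of quasi-coherent sheaves commutes with finite direct sums and filtered colimits, so $H^{0}(X,\Ab_{X}^{(m)})=H^{0}(X,\Ob_{X})\otimes_{\Z_p}D^{(m)}(\G)$. The flag scheme $X=\G/\B$ is projective and geometrically connected over $\Z_p$, whence $H^{0}(X,\Ob_{X})=\Z_p$ and the claim follows. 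Noetherianity is Proposition \ref{gr.Dist algebra}(ii).

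\emph{Part (ii).} I would adapt Serre's standard twisting argument to the filtered sheaf of rings $\Ab_{X}^{(m)}$. The preliminary step is to produce a coherent $\Ob_{X}$-submodule $\Fb\subseteq\Eb$ that generates $\Eb$ as an $\Ab_{X}^{(m)}$-module. Since $\Eb$ is quasi-coherent over $\Ob_{X}$, since $X$ is noetherian, and since on each patch of a finite affine cover $\Eb$ is generated by finitely many sections over $\Ab_{X}^{(m)}$, one can assemble such a coherent $\Ob_{X}$-subsheaf. Applying Serre's theorem to $\Fb$ (the sheaf $\Ob(1)$ being very ample on the projective $\Z_p$-scheme $X$) yields, for $r$ sufficiently large, a surjection $\Ob_{X}^{\oplus a}(-r)\to\Fb$. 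Applying the functor $\Ab_{X}^{(m)}\otimes_{\Ob_{X}}(-)$, which preserves surjections because $\Ab_{X}^{(m)}$ is locally free over $\Ob_{X}$ by Proposition \ref{prop 1.4.1}(i), and composing with the structural surjection $\Ab_{X}^{(m)}\otimes_{\Ob_{X}}\Fb\to\Eb$, produces the desired surjection $(\Ab_{X}^{(m)}(-r))^{\oplus a}\to\Eb$.

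\emph{Part (iii).} I would proceed by descending induction on $k$, starting from the vanishing $H^{k}(X,\Eb)=0$ for $k$ greater than the Krull dimension of $X$. For the inductive step, apply (ii) to form a short exact sequence
\[
0\longrightarrow\Kb\longrightarrow\bigl(\Ab_{X}^{(m)}(-r)\bigr)^{\oplus a}\longrightarrow\Eb\longrightarrow 0,
\]
where $\Kb$ is coherent as the kernel of a morphism of coherent $\Ab_{X}^{(m)}$-modules (Proposition \ref{prop 1.4.1}(iii)). The long exact sequence in cohomology gives a presentation of $H^{k}(X,\Eb)$ by the cohomology of $(\Ab_{X}^{(m)}(-r))^{\oplus a}$ and of $\Kb$ in degree $k+1$. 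The latter is finitely generated over $D^{(m)}(\G)$ by induction, and the former equals $H^{k}(X,\Ob(-r))\otimes_{\Z_p}D^{(m)}(\G)^{\oplus a}$ by the colimit/flatness argument of (i); this is finitely generated because $H^{k}(X,\Ob(-r))$ is a finitely generated $\Z_p$-module by Serre's coherence theorem for projective schemes. Noetherianity of $D^{(m)}(\G)$, established in (i), then guarantees that $H^{k}(X,\Eb)$ is finitely generated.

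\emph{Main obstacle.} The only non-routine step is the construction in (ii) of a coherent $\Ob_{X}$-submodule $\Fb\subseteq\Eb$ that generates $\Eb$ over $\Ab_{X}^{(m)}$, since $\Eb$ is a priori coherent only over the non-commutative filtered sheaf of rings $\Ab_{X}^{(m)}$ and not over $\Ob_{X}$. This is handled by using the filtration on $\Ab_{X}^{(m)}$ whose graded pieces are coherent $\Ob_{X}$-modules (Proposition \ref{prop 1.4.1}(ii,iii)), together with the standard approximation of a quasi-coherent sheaf on a noetherian scheme by its coherent $\Ob_{X}$-subsheaves.
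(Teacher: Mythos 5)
The paper gives no proof of this proposition — it is quoted verbatim from \cite[Proposition A.2.6.1]{HS1} — so there is no internal argument to compare against, but your reconstruction is correct and is the expected one. Part (i) reduces to $H^0(X,\Ob_X)=\Z_p$ via the filtered-colimit description $D^{(m)}(\G)=\varinjlim_n D^{(m)}_n(\G)$ by free $\Z_p$-modules of finite rank; part (ii) is Serre's twisting theorem transported to the filtered $\Ob_X$-ring $\Ab_X^{(m)}$ by first choosing a coherent $\Ob_X$-submodule $\Fb\subseteq\Eb$ with $\Ab_X^{(m)}\cdot\Fb=\Eb$; part (iii) is the descending induction on cohomological degree, resting on Serre's coherence theorem and the noetherianity of $D^{(m)}(\G)$. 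This is precisely the Serre-vanishing-plus-noetherian-bootstrap pattern that the paper itself repeats for $\Db^{(m)}_{X,\lambda}$ in Lemmas \ref{lem 2.2.1}, \ref{lem 2.2.2} and Proposition \ref{prop 2.2.3}, so your proof is in full harmony with the surrounding text. The one point you gloss over in (ii) is that the twist $\Ab_X^{(m)}(-r)=\Ab_X^{(m)}\otimes_{\Ob_X}\Ob(-r)$ must be formed using the right $\Ob_X$-module structure of $\Ab_X^{(m)}$ (so that the result carries a left $\Ab_X^{(m)}$-action); that this right structure is also locally free follows from the filtration of Proposition \ref{prop 1.4.1}(ii), whose graded pieces are locally free $\Ob_X$-modules on which the left and right $\Ob_X$-actions agree.
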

\justify
Inspired in proposition \ref{canonical 2}, in a first time we will be concentrated on coherent $\Ab_{X}^{(m)}$-modules. The next two results will play an important role when we consider formal completions.
\begin{lem}\label{lem 2.2.1}
For every coherent $\Ab_{X}^{(m)}$-module $\Eb$, there exists $r=r(\Eb)\in\Z$ such that $H^k(X,\Eb(s))=0$ for every $s\ge r$.
\end{lem}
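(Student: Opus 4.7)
The plan is to adapt the classical Serre vanishing argument to the present non-commutative setting, using descending induction on $k$ combined with the filtration-by-free-modules structure of $\Ab_{X}^{(m)}$.

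First I would treat the base case $\Eb=\Ab_{X}^{(m)}$. By \cite[Proposition 4.1.6]{HS1}, $D^{(m)}(\G)=\varinjlim_n D^{(m)}_n(\G)$ is a filtered colimit of free $\Z_p$-modules of finite rank, so
\begin{eqnarray*}
\Ab_{X}^{(m)}=\Ob_{X}\otimes_{\Z_p}D^{(m)}(\G)=\varinjlim_n\;\Ob_{X}\otimes_{\Z_p}D^{(m)}_n(\G)
\end{eqnarray*}
is a filtered colimit of finite free $\Ob_{X}$-modules. Since $X=\G/\B$ is projective over $\Z_p$, classical Serre vanishing provides an $r_0\in\Z$ such that $H^k(X,\Ob(s))=0$ for every $k\ge 1$ and every $s\ge r_0$. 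As $X$ is a noetherian scheme, sheaf cohomology commutes with filtered colimits, whence $H^k(X,\Ab_{X}^{(m)}(s))=0$ for every $k\ge 1$ and every $s\ge r_0$.

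Next I would run a descending induction on $k$ for an arbitrary coherent $\Ab_{X}^{(m)}$-module $\Eb$. Because $X$ is a noetherian scheme of finite Krull dimension, Grothendieck vanishing ensures $H^k(X,\Eb(s))=0$ for all $k>\dim X$ and all $s\in\Z$, which initialises the induction. For the inductive step, proposition \ref{prop 1.4.2} (ii) furnishes a short exact sequence of $\Ab_{X}^{(m)}$-modules
\begin{eqnarray*}
0\longrightarrow \Kb\longrightarrow \bigl(\Ab_{X}^{(m)}(-r_1)\bigr)^{\oplus a}\longrightarrow \Eb\longrightarrow 0,
\end{eqnarray*}
with $\Kb$ again coherent thanks to the noetherianity of the sections of $\Ab_{X}^{(m)}$ over affine opens (proposition \ref{prop 1.4.1} (iii)). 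Twisting by $\Ob(s)$ and writing the associated long exact sequence, the term $H^k(X,(\Ab_{X}^{(m)}(s-r_1))^{\oplus a})$ vanishes for $s\ge r_0+r_1$ by the base case, while $H^{k+1}(X,\Kb(s))$ vanishes for $s$ larger than some $r'=r'(\Kb)$ by the inductive hypothesis applied to the coherent module $\Kb$. Hence $H^k(X,\Eb(s))=0$ for every $s\ge \max(r_0+r_1,r')$, which closes the induction and yields the desired $r=r(\Eb)$.

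The main obstacle is that $\Ab_{X}^{(m)}$ itself is \emph{not} coherent as an $\Ob_{X}$-module, since $D^{(m)}(\G)$ has infinite rank over $\Z_p$, so Serre's vanishing theorem cannot be applied to $\Ab_{X}^{(m)}$ directly. The decisive observation that circumvents this is that $\Ab_{X}^{(m)}$ is nevertheless exhausted by a filtered colimit of finite free $\Ob_{X}$-modules; combined with the commutation of cohomology and filtered colimits on the noetherian scheme $X$, this yields a bound $r_0$ independent of the index in the colimit, which is exactly what the descending induction needs.
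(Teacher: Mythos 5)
Your proof is correct and follows essentially the same route as the paper: establish the base case for $\Ab_{X}^{(m)}$ itself via classical Serre vanishing combined with the fact that $D^{(m)}(\G)$ is a filtered colimit of finite free $\Z_p$-modules (the paper records this simply as $H^k(X,\Ab_X^{(m)}(s))=H^k(X,\Ob(s))\otimes_{\Z_p}D^{(m)}(\G)$), then pass to an arbitrary coherent $\Eb$ by descending induction on $k$ using the surjection from Proposition \ref{prop 1.4.2} (ii). The only difference is presentational — the paper delegates the induction to Huyghe's Proposition 2.2.1, whereas you write it out.
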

\begin{proof}
Let us fix $r_0\in\Z$ such  that $H^{k}(X,\Ob(s))=0$ for every $k>0$ and $s\ge r_0$. We have,
\begin{eqnarray*}
H^k(X,\Ab_X^{(m)}(s))= H^k(X,\Ob(s))\otimes_{\Z_p}D^{(m)}(\G)=0.
\end{eqnarray*}
Now, by the second part of proposition \ref{prop 1.4.2} there exist $a_0\in\N$ and $s_0\in\Z$ together with an epimorphism of $\Ab_X^{(m)}$-modules 
\begin{eqnarray*}
\Eb_0:=\left(\Ab_X^{(m)}(s_0)\right)^{\oplus a_0}\rightarrow\Eb\rightarrow 0.
\end{eqnarray*} 
If $r\ge r_0-s_0$ we see that $H^{k}(X,\Eb_0(r))=0$. Now we way use this relation and the preceding proposition to follow word by word the reasoning given in \cite[Proposition 2.2.1]{Huyghe2} to conclude the lemma.
\end{proof}

\begin{lem}\label{lem 2.2.2}
For every coherent $\Db^{(m)}_{X,\lambda}$-module $\Eb$, there exist $r=r(\Eb)\in\Z$, a natural number $a\in\N$ and an epimorphism of $\Db^{(m)}_{X,\lambda}$-modules
\begin{eqnarray*}
\left(\Db^{(m)}_{X,\lambda}(-r)\right)^{\oplus a}\rightarrow \Eb\rightarrow 0.
\end{eqnarray*} 
\end{lem}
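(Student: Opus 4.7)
The strategy is to transport the analogous statement from coherent $\Ab^{(m)}_X$-modules (proposition \ref{prop 1.4.2}(ii)) across the canonical surjective morphism $\Phi^{(m)}_{X,\lambda}:\Ab^{(m)}_X\to \Db^{(m)}_{X,\lambda}$ provided by proposition \ref{canonical 2}(ii). Via this morphism any $\Db^{(m)}_{X,\lambda}$-module is automatically an $\Ab^{(m)}_X$-module by restriction of scalars, so the first task is to check that this restriction preserves coherence.

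To establish coherence of $\Eb$ as an $\Ab^{(m)}_X$-module, I would combine two ingredients. First, $\Db^{(m)}_{X,\lambda}$ is itself a coherent $\Ab^{(m)}_X$-module by proposition \ref{canonical 2}(iii); hence any local $\Db^{(m)}_{X,\lambda}$-presentation of $\Eb$ exhibits $\Eb$ as locally finitely generated over $\Ab^{(m)}_X$. Second, $\Ab^{(m)}_X$ has noetherian sections over affine opens (proposition \ref{prop 1.4.1}(iii)), so on such opens every finitely generated $\Ab^{(m)}_X$-submodule of a finite free $\Ab^{(m)}_X$-module is again finitely generated, which supplies the finiteness of the relations needed for coherence. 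With $\Eb$ now a coherent $\Ab^{(m)}_X$-module, proposition \ref{prop 1.4.2}(ii) produces integers $r\in\Z$ and $a\in\N$ together with an $\Ab^{(m)}_X$-linear epimorphism
\[
u:\left(\Ab^{(m)}_X(-r)\right)^{\oplus a}\longrightarrow \Eb.
\]

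The final step is to upgrade $u$ to a $\Db^{(m)}_{X,\lambda}$-linear surjection by extension of scalars. Tensoring over $\Ab^{(m)}_X$ with the right $\Ab^{(m)}_X$-module $\Db^{(m)}_{X,\lambda}$ (via $\Phi^{(m)}_{X,\lambda}$) and using the canonical identification
\[
\Db^{(m)}_{X,\lambda}\otimes_{\Ab^{(m)}_X}\left(\Ab^{(m)}_X\otimes_{\Ob_X}\Ob(-r)\right)^{\oplus a}\;\simeq\;\left(\Db^{(m)}_{X,\lambda}(-r)\right)^{\oplus a},
\]
the map $u$ extends, by the universal property and the fact that $\Eb$ already carries a $\Db^{(m)}_{X,\lambda}$-action, to a unique $\Db^{(m)}_{X,\lambda}$-linear map $\tilde u:\left(\Db^{(m)}_{X,\lambda}(-r)\right)^{\oplus a}\to\Eb$ whose image contains that of $u$; surjectivity of $u$ therefore forces surjectivity of $\tilde u$.

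The only real obstacle is the coherence upgrade in the first step, which is a standard but non-trivial sheaf-theoretic statement about coherent ring extensions. Once this is granted, the rest of the argument is formal, with the sole bookkeeping subtlety being the interaction of the invertible twist $\Ob(-r)$ with extension of scalars, which is automatic since the twist involves only the $\Ob_X$-bimodule structure common to both $\Ab^{(m)}_X$ and $\Db^{(m)}_{X,\lambda}$.
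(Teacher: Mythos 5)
Your proposal follows essentially the same route as the paper: view $\Eb$ as a coherent $\Ab^{(m)}_X$-module via the surjection $\Phi^{(m)}_{X,\lambda}$ and the coherence of $\Db^{(m)}_{X,\lambda}$ over $\Ab^{(m)}_X$ from proposition~\ref{canonical 2}(iii), apply proposition~\ref{prop 1.4.2}(ii) to get an $\Ab^{(m)}_X$-linear surjection from a twisted free module, and then apply $\Db^{(m)}_{X,\lambda}\otimes_{\Ab^{(m)}_X}(\bullet)$, using that $\Db^{(m)}_{X,\lambda}\otimes_{\Ab^{(m)}_X}\Eb\simeq\Eb$ because the $\Ab^{(m)}_X$-action on $\Eb$ factors through $\Phi^{(m)}_{X,\lambda}$. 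The only cosmetic difference is that you phrase the last step via a universal-property extension rather than an explicit tensor-and-identify, but the argument and its ingredients coincide with the paper's.
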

\begin{proof}
Using the epimorphism in proposition \ref{canonical 2} we can suppose that $\Eb$ is also a coherent $\Ab_{X}^{(m)}$-module. In this case, by the second part of proposition \ref{prop 1.4.2}, there exist $r=r(\Eb)\in\Z$, a natural number $a\in\N$ and an epimorphism of $\Ab_{X}^{(m)}$-modules 
\begin{eqnarray*}
\left(\Ab_{X}^{(m)}(-r)\right)^{\oplus a}\rightarrow\Eb\rightarrow 0.
\end{eqnarray*}
Taking the tensor product with $\Db^{(m)}_{X,\lambda}$ we get the desired epimorphism of $\Db^{(m)}_{X,\lambda}$-modules
\begin{eqnarray*}
\left(\Db^{(m)}_{X,\lambda}(-r)\right)^{\oplus a}\simeq\Db^{(m)}_{X,\lambda}\otimes_{\Ab_{X}^{(m)}}\left(\Ab_{X}^{(m)}(-r)\right)^{\oplus a}\rightarrow \Db^{(m)}_{X,\lambda}\otimes_{\Ab_{X}^{(m)}}\Eb\simeq\Eb\rightarrow 0.
\end{eqnarray*}
\end{proof}
\justify
We recall to the reader that the distribution algebra of level $m$, which has been denoted by $D^{(m)}(\G)$ in subsection \ref{ADA},  it is a filtered noetherian $\Z_p$-algebra. This finiteness property is essential in the following proposition which we (personal) consider as the heart of this paper\footnote{We follow the same argument given in \cite{Huyghe1}.}.
\justify
\textbf{\textsc{notation}}: \textit{In the sequel we will refer to a character  $\lambda\in\mathfrak{t}^*_\Q$ as an $\Q_p$-linear application induced, via base change, by a character $\lambda:\text{Dist}(\T)\rightarrow\Z_p$ of the distribution algebra of the torus $\T$.}

\begin{prop}\label{prop 2.2.3} Let us suppose that $\lambda + \rho \in\mathfrak{t}_\Q^*$ is a dominant and regular character (definition \ref{character of the dist. Alg}).
\begin{itemize} 
\item[(i)] Let us fix $r\in\Z$. For every positive integer $k\in\Z_{>0}$, the cohomology group $H^{k}(X,\Db^{(m)}_{X,\lambda}(r))$ has bounded $p$-torsion.
\item[(ii)] For every coherent $\Db^{(m)}_{X,\lambda}$-module $\Eb$, the cohomology group $H^{k}(X,\Eb)$ has bounded $p$-torsion for all $k> 0$.
\end{itemize}
\end{prop}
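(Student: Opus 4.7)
The plan is to establish (i) first and then deduce (ii) from it by a descending induction on the cohomological degree.

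For (i), the key point is to compare with the generic fiber. By Remark \ref{rem 5.1.18} (a), tensoring $\Db^{(m)}_{X,\lambda}(r)$ with $\Q_p$ and restricting to the generic fiber $X_{\Q}$ recovers the classical twisted sheaf $\Db_{\lambda}(r)$. Since $\Db_{\lambda}(r)$ is an $\Ob_{X_{\Q}}$-quasi-coherent $\Db_{\lambda}$-module and $\lambda+\rho$ is dominant and regular, the classical Beilinson--Bernstein theorem yields $H^k(X_{\Q}, \Db_{\lambda}(r)) = 0$ for every $k > 0$. Because coherent cohomology on the noetherian scheme $X$ commutes with the flat base change $\Z_p\rightarrow\Q_p$, this forces $H^k(X, \Db^{(m)}_{X,\lambda}(r))\otimes_{\Z_p}\Q_p = 0$, i.e.\ every element of $H^k(X, \Db^{(m)}_{X,\lambda}(r))$ is killed by some power of $p$. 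To upgrade this to a uniform bound, I would appeal to finiteness: by Proposition \ref{canonical 2} (iii) the sheaf $\Db^{(m)}_{X,\lambda}$ is a coherent $\Ab^{(m)}_{X}$-module, and hence so is its twist $\Db^{(m)}_{X,\lambda}(r) = \Db^{(m)}_{X,\lambda}\otimes_{\Ob_X}\Ob(r)$. Proposition \ref{prop 1.4.2} (iii) then implies that $H^k(X, \Db^{(m)}_{X,\lambda}(r))$ is a finitely generated module over the noetherian ring $D^{(m)}(\G)$. A finitely generated module all of whose elements are $p$-torsion is automatically annihilated by a single power of $p$, which is exactly the bounded $p$-torsion statement.

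For (ii), let $d = \dim X$; I argue by descending induction on $k$. For $k > d$ the cohomology vanishes and the claim is trivial, supplying the base of the induction. Assume $0 < k \le d$ and the result for $k+1$. Given a coherent $\Db^{(m)}_{X,\lambda}$-module $\Eb$, Lemma \ref{lem 2.2.2} supplies integers $r$ and $a$ together with a surjection
\[
\Fb := \left(\Db^{(m)}_{X,\lambda}(-r)\right)^{\oplus a} \twoheadrightarrow \Eb.
\]
By Proposition \ref{prop 3.2.10}, the sheaf $\Db^{(m)}_{X,\lambda}$ has noetherian sections over affine opens, so the kernel $\Kb$ of the surjection above is again a coherent $\Db^{(m)}_{X,\lambda}$-module. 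The long exact cohomology sequence associated to $0 \to \Kb \to \Fb \to \Eb \to 0$ contains the segment
\[
H^k(X, \Fb) \longrightarrow H^k(X, \Eb) \longrightarrow H^{k+1}(X, \Kb).
\]
Part (i) applied summand-by-summand gives that $H^k(X, \Fb)$ has bounded $p$-torsion, and the inductive hypothesis (or the vanishing if $k+1 > d$) gives the same for $H^{k+1}(X, \Kb)$. Since an extension of a submodule of a bounded $p$-torsion module by a quotient of a bounded $p$-torsion module is again bounded $p$-torsion, the group $H^k(X, \Eb)$ inherits the property.

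The most delicate step is the very first: one must carefully justify the identification $\Db^{(m)}_{X,\lambda,\Q}(r) \simeq \Db_\lambda(r)$ on $X_{\Q}$, and then invoke higher-cohomology vanishing from the classical Beilinson--Bernstein theorem in the form that applies to $\Db_\lambda$-quasi-coherent modules such as $\Db_\lambda(r)$. Once that bridge to characteristic zero is in place, the remainder of the argument is formal: finite generation over the noetherian distribution algebra $D^{(m)}(\G)$ converts unbounded $p$-torsion into bounded $p$-torsion in (i), and the long exact sequence propagates the property from the generating twists to arbitrary coherent $\Db^{(m)}_{X,\lambda}$-modules in (ii).
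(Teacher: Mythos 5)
Your proof is correct and follows essentially the same route as the paper: for (i) you pass to the generic fiber, invoke the classical Beilinson--Bernstein vanishing for the coherent $\Db_\lambda$-module $\Db_\lambda(r)$, and convert all-$p$-torsion into bounded $p$-torsion via finite generation over the noetherian ring $D^{(m)}(\G)$ (Propositions \ref{canonical 2} and \ref{prop 1.4.2}), exactly as in the paper. For (ii) the paper merely cites \cite[Corollary 2.2.2]{Huyghe2} and \cite[Proposition 4.1.19]{HPSS}; your descending induction on cohomological degree, using the surjection from Lemma \ref{lem 2.2.2}, the coherence of the kernel (Proposition \ref{prop 3.2.10}), and the long exact sequence, is precisely the argument those references employ, so you have simply spelled out what the paper delegates.
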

\begin{proof}
To show $(i)$, we recall that $\Db^{(m)}_{X,\lambda,\Q}=\Db_{\lambda}$ is the usual sheaf of twisted differential operators on the flag variety $X_\Q$ (remark \ref{rem 5.1.18}). As $\Db^{(m)}_{X,\lambda,\Q}(r)$ is a coherent $\Db_{\lambda}$-module, the classical Beilinson-Bernstein theorem \cite{BB} allows us to conclude that $H^k(X,\Db^{(m)}_{X,\lambda}(r))\otimes_{\Z_p}\Q_p=0$ for every positive integer $k\in\Z_{>0}$. This in particular implies that the sheaf $\Db^{(m)}_{X,\lambda}(r)$ has $p$-torsion cohomology groups $H^k(X,\Db^{(m)}_{X,\lambda}(r))$, for every $k> 0$ and $r\in\Z$.\\
Now, by proposition \ref{canonical 2}, we know that $\Db^{(m)}_{X,\lambda}(r)$ is in particular a coherent $\Ab_X^{(m)}$-module and hence, by the third part of the proposition \ref{prop 1.4.2} we get that for every $k\ge 0$ the cohomology groups $H^k(X,\Db^{(m)}_{X,\lambda}(r))$ are finitely generated $D^{(m)}(\G)$-modules. Consequently, of finite $p$-torsion for every  integer $0<k\le \text{dim}(X)$ and $r\in\Z$. 
\justify
To show $(ii)$ the reader can follow the same arguments exhibited in \cite[Corollary 2.2.2]{Huyghe2} or \cite[Proposition 4.1.19]{HPSS}.
\end{proof}

\section{Passing to formal completions}
Let us start by recalling the formal completion of the sheaves introduced in the subsection \ref{Rel_env_alg_on_hs}. Let $\lambda\in\mathfrak{t}_\Q^*$ be a character of the Cartan subalgebra $\mathfrak{t}_\Q$ (we recall for the reader that this means an $\Q_p$-linear map induced by a character of $\text{Dist}(\T)$). We have introduced the following sheaves of $p$-adically complete $\Z_p$-algebras on the formal $p$-adic completion $\mathfrak{X}$ of $X$
\begin{eqnarray*}
\widehat{\Da}^{(m)}_{\mathfrak{X},\lambda}:=\displaystyle\varprojlim_j \Db^{(m)}_{X,\lambda}/p^{j+1}\Db^{(m)}_{X,\lambda}.
\end{eqnarray*}
The sheaf $\widehat{\Da}^{(m)}_{\mathfrak{X},\lambda,\Q}$ is our sheaf of \textit{level m twisted arithmetic differential operators} on the smooth formal flag scheme $\mathfrak{X}$.
\subsection{Cohomological properties}
From now on, we will suppose that  $\lambda + \rho\in\mathfrak{t}_\Q^*$ is a dominant and regular character of $\mathfrak{t}_\Q$ (to see \ref{roots} and definition \ref{character of the dist. Alg}). Our objective in this subsection is to prove an analogue of proposition \ref{prop 2.2.3} for coherent $\widehat{\Da}_{\mathfrak{X},\lambda}^{(m)}$-modules and to conclude that $H^0(\mathfrak{X},\bullet)$ is an exact functor over the category of coherent $\widehat{\Da}^{(m)}_{\mathfrak{X},\lambda,\Q}$-modules. 

\begin{prop}\label{prop 3.1.1}
Let $\Eb$ be a coherent $\Db^{(m)}_{X,\lambda}$-module and $\widehat{\Eb}:=\varprojlim_j\Eb/p^{j+1}\Eb$ its $p$-adic completion, which we consider as a sheaf on $\mathfrak{X}$.
\begin{itemize}
\item[(i)] For all $k\ge 0$ one has $H^{k}(\mathfrak{X},\widehat{\Eb})=\varprojlim_j H^{k}(X_j,\Eb/p^{j+1}\Eb)$.
\item[(ii)] For all $k>0$ one has $H^{k}(\mathfrak{X},\widehat{\Eb})=H^{k}(X,\Eb)$.
\item[(iii)] The global section functor $H^{0}(\mathfrak{X},\bullet)$ satisfies $H^0(\mathfrak{X},\widehat{\Eb})=\varprojlim_j H^0(X,\Eb)/p^{j+1}H^0(X,\Eb)$.
\end{itemize}
\end{prop}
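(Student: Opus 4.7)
The plan is to deduce all three parts from the standard Roos-type short exact sequence
\begin{eqnarray*}
0 \to R^1\varprojlim_j H^{k-1}(X, \Eb/p^{j+1}\Eb) \to H^k(\mathfrak{X}, \widehat{\Eb}) \to \varprojlim_j H^k(X, \Eb/p^{j+1}\Eb) \to 0,
\end{eqnarray*}
which is available because $|X|=|\mathfrak{X}|=|X_j|$ and $\widehat{\Eb}=\varprojlim_j \Eb_j$ in the category of sheaves of abelian groups on this common space, together with the Mittag--Leffler criterion for the vanishing of $R^1\varprojlim$. Throughout I write $\Eb_j:=\Eb/p^{j+1}\Eb$. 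By proposition \ref{canonical 2} the module $\Eb$ is also coherent over $\Ab^{(m)}_X$, hence by proposition \ref{prop 1.4.2}(iii) each $H^k(X,\Eb)$ is finitely generated over the noetherian algebra $D^{(m)}(\G)$; by proposition \ref{prop 2.2.3}(ii) it has bounded $p$-torsion for $k\ge 1$. Since $\Db^{(m)}_{X,\lambda,\Q}=\Db_\lambda$ (remark \ref{rem 5.1.18}) and classical Beilinson--Bernstein applies to $\Eb_\Q$ with $\lambda+\rho$ dominant regular, we further have $H^k(X,\Eb)\otimes_{\Z_p}\Q_p=0$ for $k>0$; combined with finite generation and noetherianness of $D^{(m)}(\G)$ this upgrades to the statement that $H^k(X,\Eb)$ is annihilated by some power $p^{N_k}$ for every $k\ge 1$.

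For part (i), the case $k=0$ is automatic because $H^0$ commutes with inverse limits of sheaves. For $k\ge 1$ the task reduces to checking the Mittag--Leffler condition for $\{H^{k-1}(X,\Eb_j)\}_j$. The short exact sequence $0\to\Eb\xrightarrow{p^{j+1}}\Eb\to\Eb_j\to 0$ yields, for each $i\ge 0$,
\begin{eqnarray*}
0 \to H^{i}(X,\Eb)/p^{j+1}H^{i}(X,\Eb) \to H^{i}(X,\Eb_j) \to H^{i+1}(X,\Eb)[p^{j+1}] \to 0,
\end{eqnarray*}
and I will check ML on the outer two systems. The left system has surjective transition maps (reduction modulo lower powers of $p$), hence is ML. For the right system, after passing to the cofinal range $j+1\ge N_{i+1}$ the module $H^{i+1}(X,\Eb)[p^{j+1}]$ equals the full $p$-primary torsion, on which the transition maps identify with multiplication by $p$; iterating $N_{i+1}$ times gives the zero map, so the right system is also ML (in fact, its images vanish eventually). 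Since ML is preserved under extensions, $\{H^{i}(X,\Eb_j)\}_j$ is ML for every $i\ge 0$, which gives (i).

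For part (ii), fix $k\ge 1$. Using the SES above at level $i=k$ and taking $j+1\ge\max(N_k,N_{k+1})$, multiplication by $p^{j+1}$ kills both $H^k(X,\Eb)$ and $H^{k+1}(X,\Eb)$, so $H^k(X,\Eb)/p^{j+1}H^k(X,\Eb)=H^k(X,\Eb)$ and $H^{k+1}(X,\Eb)[p^{j+1}]=H^{k+1}(X,\Eb)$, and the transitions on the left factor become identities. A direct check using the explicit description of the transition as multiplication by $p$ shows that $\varprojlim_j H^{k+1}(X,\Eb)[p^{j+1}]=0$: any compatible sequence $(x_j)$ satisfies $x_j=p^{N_{k+1}}x_{j+N_{k+1}}=0$. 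Since the left system is ML, $\varprojlim$ preserves exactness, and combining with (i) yields $H^k(\mathfrak{X},\widehat{\Eb})=\varprojlim_j H^k(X,\Eb_j)=H^k(X,\Eb)$.

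For part (iii), apply $\varprojlim_j$ to the SES at level $i=0$; the left system is ML (surjective transitions) and $\varprojlim_j H^1(X,\Eb)[p^{j+1}]=0$ by the same argument as for (ii) using the bound $N_1$ on the $p$-torsion of $H^1(X,\Eb)$. Combining with the $k=0$ case of (i) gives
\begin{eqnarray*}
H^0(\mathfrak{X},\widehat{\Eb})=\varprojlim_j H^0(X,\Eb_j)=\varprojlim_j H^0(X,\Eb)/p^{j+1}H^0(X,\Eb),
\end{eqnarray*}
proving (iii). The main obstacle is the Mittag--Leffler verification in (i); the crucial input is the bounded $p$-torsion result of proposition \ref{prop 2.2.3}, without which the higher cohomology terms in the Bockstein sequence would obstruct the vanishing of $R^1\varprojlim$ and thereby prevent cohomology from commuting with $p$-adic completion.
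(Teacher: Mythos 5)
Your overall strategy — the Roos $R^1\varprojlim$ sequence, the Bockstein exact sequences, and the Mittag--Leffler verification driven by the bounded $p$-torsion of proposition \ref{prop 2.2.3} — is the right one, and matches what the paper (via its reference to \cite[proposition 3.2]{Huyghe1}) is doing under the hood. But there is a genuine gap at the very first step of the Mittag--Leffler argument: you write down the short exact sequence $0\to\Eb\xrightarrow{p^{j+1}}\Eb\to\Eb_j\to 0$, and this is only exact on the left if $\Eb$ is $p$-torsion free. A coherent $\Db^{(m)}_{X,\lambda}$-module can perfectly well have nontrivial $p$-torsion, in which case $\ker(p^{j+1}\colon\Eb\to\Eb)=\Eb[p^{j+1}]\neq 0$ and the Bockstein sequence
\begin{eqnarray*}
0 \to H^{i}(X,\Eb)/p^{j+1}H^{i}(X,\Eb) \to H^{i}(X,\Eb_j) \to H^{i+1}(X,\Eb)[p^{j+1}] \to 0
\end{eqnarray*}
is simply false. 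Everything downstream of this (the identification of the transition maps with multiplication by $p$, the vanishing of the eventual images, the computation of $\varprojlim$) relies on it.

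This is precisely what the paper's own proof handles first, and you are missing that reduction. The paper lets $\Eb_t\subset\Eb$ be the $p$-torsion subsheaf, observes via proposition \ref{prop 3.2.10} (noetherian sections) that $\Eb_t$ is a coherent $\Db^{(m)}_{X,\lambda}$-submodule killed by a fixed power $p^c$, and sets $\Gb:=\Eb/\Eb_t$, a coherent $p$-torsion free quotient. For $\Gb$ your Bockstein argument is legitimate; for $\Eb_t$ the inverse system $\Eb_t/p^{j+1}\Eb_t$ is eventually constant (equal to $\Eb_t$ for $j+1\ge c$), so all three statements are immediate; and the general case follows by comparing the two via the long exact sequence attached to $0\to\Eb_t\to\Eb\to\Gb\to 0$ (using that $p$-adic completion is exact here, since the sheaves involved have noetherian sections on affines). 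Add this reduction and your argument becomes essentially the paper's proof.
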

\begin{proof}
Let $\Eb_t$ denote the torsion subpresheaf of $\Eb$. As $X$ is a noetherian space and $\Db^{(m)}_{X,\lambda}$ has noetherian rings sections over open affine subsets of $X$ (proposition \ref{prop 3.2.10}), we can conclude that $\Eb_t$ is in fact a coherent $\Db^{(m)}_{X,\lambda}$-module. This is generated by a coherent $\Ob_X$-module which is annihilated by a power $p^c$ of $p$, and so is $\Eb_t$. The quotient $\Gb:=\Eb/\Eb_t$ is again a coherent $\Db^{(m)}_{X,\lambda}$-module and therefore we can assume, after possibly replacing $c$ by a larger number, that $p^c\Eb_t=0$ and $p^cH^k(X,\Eb)=p^cH^k(X,\Gb)=0$ , for all $k>0$. From here on the proof of the proposition follows the same lines of reasoning that in \cite[proposition 3.2]{Huyghe1}.
\end{proof}
\justify
The next proposition is a natural result from lemmas \ref{lem 2.2.1} and \ref{lem 2.2.2}. Except for some technical details, the proof is exactly the same that in \cite[proposition 4.2.2]{HPSS}. Therefore, we will only give a brief sketch of it. 

\begin{prop}\label{prop 3.1.2}
Let $\Ea$ be a coherent $\widehat{\Da}^{(m)}_{\mathfrak{X},\lambda}$-module.
\begin{itemize}
\item[(i)] There exists $r_2=r_2(\Ea)\in\Z$ such that, for all $r\ge r_2$ there is $a\in\Z$ and an epimorphism of $\widehat{\Da}^{(m)}_{\mathfrak{X},\lambda}$-modules
\begin{eqnarray*}
\left(\widehat{\Da}^{(m)}_{\mathfrak{X},\lambda}(-r)\right)^{\oplus a}\rightarrow \Ea\rightarrow 0.
\end{eqnarray*}
\item[(ii)] There exists $r_3=r_3(\Ea)\in\Z$ such that, for all $r\ge r_3$ we have $H^i(\mathfrak{X},\Ea(r))=0$, for all $i>0$.
\end{itemize}
\end{prop}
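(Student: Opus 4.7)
My plan is to handle the two parts of the proposition separately, using the algebraic (non-completed) results of Lemmas~\ref{lem 2.2.1} and \ref{lem 2.2.2} as stepping stones, and then lifting them to the formal scheme.

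For part (i), the strategy is to transfer the question to the algebraic level. The cleanest route is to find a coherent $\Db^{(m)}_{X,\lambda}$-module $\Eb$ whose $p$-adic completion recovers $\Ea$; this is possible because $\widehat{\Da}^{(m)}_{\mathfrak{X},\lambda}$ has noetherian sections on a basis (Proposition~\ref{prop coherence }(i)), so that standard algebraization arguments apply. Lemma~\ref{lem 2.2.2} then produces $r_2 := r_2(\Eb)\in\Z$ and, for any $r\ge r_2$, a surjection $(\Db^{(m)}_{X,\lambda}(-r))^{\oplus a} \twoheadrightarrow \Eb$. Taking $p$-adic completions (exact on coherent modules thanks to the noetherian section property, and commuting with the twist by $\Ob(-r)$) produces the desired epimorphism $(\widehat{\Da}^{(m)}_{\mathfrak{X},\lambda}(-r))^{\oplus a} \twoheadrightarrow \Ea$. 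Alternatively, one may argue entirely on $\mathfrak{X}$: apply the analog of Lemma~\ref{lem 2.2.2} to the coherent module $\Ea/p\Ea$ over the special fiber to obtain a mod-$p$ surjection, lift it locally (using that the free modules $\widehat{\Da}^{(m)}_{\mathfrak{X},\lambda}(-r)^{\oplus a}$ are $p$-adically complete), and use a topological Nakayama lemma to conclude that the lifted morphism is already surjective.

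For part (ii), I would use descending induction on the cohomological degree $i$, anchored on a vanishing result for the ``free'' building blocks. Combining Proposition~\ref{prop 3.1.1}(ii) (which identifies $H^i(\mathfrak{X},\widehat{\Da}^{(m)}_{\mathfrak{X},\lambda}(s))$ with $H^i(X,\Db^{(m)}_{X,\lambda}(s))$ for $i>0$) with Lemma~\ref{lem 2.2.1} (applied to $\Db^{(m)}_{X,\lambda}(s)$, which is a coherent $\Ab^{(m)}_X$-module by Proposition~\ref{canonical 2}(iii)) yields an $s_0\in\Z$ such that $H^i(\mathfrak{X},\widehat{\Da}^{(m)}_{\mathfrak{X},\lambda}(s))=0$ for all $s\ge s_0$ and $i>0$. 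Cohomology vanishes identically for $i>\dim\mathfrak{X}$, which starts the downward induction. For the inductive step at degree $i$, apply part (i) to fit $\Ea$ in a short exact sequence
\[
0\to\Ka\to\Fa\to\Ea\to 0,\qquad \Fa=(\widehat{\Da}^{(m)}_{\mathfrak{X},\lambda}(-r_2))^{\oplus a},
\]
noting that $\Ka$ is coherent because $\widehat{\Da}^{(m)}_{\mathfrak{X},\lambda}$ is a coherent sheaf of rings (Proposition~\ref{prop coherence }(ii)). Twisting by $\Ob_{\mathfrak{X}}(r)$ and taking the long exact cohomology sequence: for $r$ sufficiently large, the base vanishing kills $H^i(\mathfrak{X},\Fa(r))$, and the inductive hypothesis applied to the coherent module $\Ka$ kills $H^{i+1}(\mathfrak{X},\Ka(r))$; the long exact sequence then forces $H^i(\mathfrak{X},\Ea(r))=0$.

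The principal obstacle is part (i): executing the passage between the algebraic and formal settings requires either a formal-GAGA-type algebraization of $\Ea$ or a careful topological Nakayama argument, and in either case one must track coherence and $p$-adic completeness through the construction. A secondary subtlety in (ii) is the bookkeeping of the bounds $r$ during the descending induction, but this is routine once the base vanishing for the twisted free sheaves $\widehat{\Da}^{(m)}_{\mathfrak{X},\lambda}(s)$ is established.
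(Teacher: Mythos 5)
Your two routes to part (i) are not equally sound. The first (algebraize $\Ea$ to a coherent $\Db^{(m)}_{X,\lambda}$-module $\Eb$ with $\widehat{\Eb}\simeq\Ea$) is a genuine gap: the noetherian-local-sections property of $\widehat{\Da}^{(m)}_{\mathfrak{X},\lambda}$ does not supply a formal-GAGA statement for coherent modules over this sheaf, and no such algebraization result is established in the paper or its sources. The paper works instead with the reductions $\Ea_j=\Ea/p^{j+1}\Ea$ directly, never presenting $\Ea$ as the completion of a single coherent algebraic module.

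Your second option is in the spirit of the paper's argument, but the phrase ``lift it locally'' conceals the crux. The mod-$p$ surjection is encoded by finitely many global sections of $(\Ea/p\Ea)(r)$, and promoting it to a morphism $(\widehat{\Da}^{(m)}_{\mathfrak{X},\lambda}(-r))^{\oplus a}\to\Ea$ requires lifting these to \emph{global} sections of $\Ea(r)$: local lifts do not glue, and the surjectivity of $H^0(\mathfrak{X},\Ea(r))\to H^0(X,(\Ea/p\Ea)(r))$ is exactly what must be proved. The paper establishes it by first isolating the bounded $p$-torsion $\Ea_t$ (annihilated by some $p^c$), passing to the torsion-free quotient $\Ga=\Ea/\Ea_t$, exhibiting short exact sequences $0\to\Ga_0\to\Ea_{j+1}\to\Ea_j\to 0$ for $j\ge c$, and applying Lemma~\ref{lem 2.2.1} to the coherent $\Db^{(m)}_{X,\lambda}$-module $\Ga_0$ so that $H^1$ vanishes for large $r$ and the transition maps $H^0(\mathfrak{X},\Ea_{j+1}(r))\to H^0(\mathfrak{X},\Ea_j(r))$ become surjective. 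Only then do the generators of the base-level quotient $\Ea_c$, supplied by Lemma~\ref{lem 2.2.2}, lift through the inverse system to a global morphism, after which the Nakayama-type step you mention closes the argument. Without the $H^1$-vanishing and the torsion bookkeeping, the lifting step is unjustified.

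Your treatment of part (ii) matches the paper: base vanishing for the twists $\widehat{\Da}^{(m)}_{\mathfrak{X},\lambda}(s)$ via Proposition~\ref{prop 3.1.1}(ii) combined with Lemma~\ref{lem 2.2.1}, then descending induction on cohomological degree using part (i) to fit $\Ea$ into short exact sequences with free middle term. That part is correct as stated.
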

\begin{proof}
We start the proof of the part $(i)$ by remarking that the torsion subsheaf $\Ea_t$ of $\Ea$ is a coherent $\widehat{\Da}^{(m)}_{\mathfrak{X},\lambda}$-module. As $\mathfrak{X}$ is quasi-compact, there exists $c\in\N$ such that $p^c\Ea_t=0$. Defining $\Ga:=\Ea/\Ea_t$, $\Ga_0:=\Ga/\Ga_t$ and $\Ea_j:=\Ea/p^{j+1}\Ea$, we have for every $j\ge c$ an exact sequence 
\begin{eqnarray*}
0\rightarrow \Ga_0\xrightarrow{p^{j+1}}\Ea_{j+1}\rightarrow\Ea_j\rightarrow 0.
\end{eqnarray*}
Viewing $\mathfrak{X}$ as a closed subset of $X$ and denoting by $\psi$ this topological embedding, we can suppose that $\psi_*\Ga_0$ is a coherent $\Db^{(m)}_{X,\lambda}$-module via the canonical isomorphism of sheaves of rings $\Db^{(m)}_{X,\lambda}/p\Db^{(m)}_{X,\lambda}\simeq\psi_*\left(\widehat{\Da}^{(m)}_{\mathfrak{X},\lambda}/p\widehat{\Da}^{(m)}_{\mathfrak{X},\lambda}\right)$ (similarly, we can consider $\Ea_c$ as a coherent $\Db^{(m)}_{X,\lambda}$-module). By using the fact $\Ga_0$ is also a coherent $\Ab_{X}^{(m)}$-module, lemma \ref{lem 2.2.1} gives us an integers $r'_2(\Ga_0)$ such that the canonical maps 
\begin{eqnarray*}
H^0(\mathfrak{X},\Ea_{j+1}(r'))\rightarrow H^{0}(\mathfrak{X},\Ea_{j}(r'))
\end{eqnarray*} 
are surjective for $r'\ge r'_2(\Ga_0)$ and $j\ge c$. Moreover, lemma \ref{lem 2.2.2} gives another integer $r'_3(\Ea_c)$ such that, for every $r''\ge r'_3(\Ea_c)$ there exists $a \in\N$ and a surjection
\begin{eqnarray*}
\phi: \left(\Db^{(m)}_{X,\lambda}/p^c\Db^{(m)}_{X,\lambda}\right)^{\oplus a}\rightarrow \Ea_c(r'')\rightarrow 0.
\end{eqnarray*} 
From the previous morphisms and proceeding as in \cite[proposition 4.2. (i)]{HPSS}, we can define for every $r\ge r_2:=\text{max}\{r'_2(\Ea_c), r'_3(\Ga_0)\}$ the desired surjective morphism
\begin{eqnarray*}
\left(\widehat{\Da}^{(m)}_{\mathfrak{X},\lambda}\right)^{\oplus a}\rightarrow \Ea(r)\rightarrow 0.
\end{eqnarray*}
To show the part $(ii)$, we remark that if we fix $r_0\in\Z$ such that $H^k(X,\Ob(r))=0$ for every $k>0$ and $r\ge r_0$, exactly as we have done in lemma \ref{lem 2.2.1}, then via the second part of proposition \ref{prop 3.1.2} we also have that  $H^k(\mathfrak{X},\widehat{\Da}^{(m)}_{\mathfrak{X},\lambda}(r))=0$ and the rest of the proof can be deduced exactly as in the proof of lemma \ref{lem 2.2.1}.
\end{proof}
\justify
The same inductive argument exhibited in the second part of proposition $\ref{prop 2.2.3}$ shows
\begin{coro}\label{coro 3.1.3}
Let $\Ea$ be a coherent $\widehat{\Da}^{(m)}_{\mathfrak{X},\lambda}$-module. There exists $c=c(\Ea)\in\N$ such that for all $k>0$ the cohomology group $H^k(\mathfrak{X},\Ea)$ is annihilated by $p^c$.
\end{coro}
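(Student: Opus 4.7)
The plan is to mimic the descending induction on $k$ used in Proposition \ref{prop 2.2.3}(ii), transported to the formal setting. Fix a coherent $\widehat{\Da}^{(m)}_{\mathfrak{X},\lambda}$-module $\Ea$. By Proposition \ref{prop 3.1.2}(i), choose $r\in\Z$, $a\in\N$ and a surjection fitting in a short exact sequence
$$0 \longrightarrow \Ka \longrightarrow \left(\widehat{\Da}^{(m)}_{\mathfrak{X},\lambda}(-r)\right)^{\oplus a} \longrightarrow \Ea \longrightarrow 0.$$
The kernel $\Ka$ is again a coherent $\widehat{\Da}^{(m)}_{\mathfrak{X},\lambda}$-module, since $\widehat{\Da}^{(m)}_{\mathfrak{X},\lambda}$ has noetherian sections on the affine base $\Ba$ by Proposition \ref{prop coherence }(i). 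The middle term is the $p$-adic completion of the coherent $\Db^{(m)}_{X,\lambda}$-module $\left(\Db^{(m)}_{X,\lambda}(-r)\right)^{\oplus a}$, so Proposition \ref{prop 3.1.1}(ii) gives
$$H^k\!\left(\mathfrak{X}, \left(\widehat{\Da}^{(m)}_{\mathfrak{X},\lambda}(-r)\right)^{\oplus a}\right) \;\simeq\; H^k\!\left(X, \left(\Db^{(m)}_{X,\lambda}(-r)\right)^{\oplus a}\right)$$
for every $k>0$, and Proposition \ref{prop 2.2.3}(i) guarantees the right-hand side is killed by some power of $p$.

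For the base case $k = N := \dim X$, the vanishing $H^{N+1}(\mathfrak{X},\bullet)=0$ and the long exact cohomology sequence produce a surjection
$$H^N\!\left(\mathfrak{X}, \left(\widehat{\Da}^{(m)}_{\mathfrak{X},\lambda}(-r)\right)^{\oplus a}\right) \longrightarrow H^N(\mathfrak{X},\Ea) \longrightarrow 0.$$
By the computation of the previous paragraph the source is annihilated by some $p^c$, hence so is $H^N(\mathfrak{X},\Ea)$. For the inductive step at degree $k < N$, assume the statement has been established at degree $k+1$ for every coherent $\widehat{\Da}^{(m)}_{\mathfrak{X},\lambda}$-module, and apply it to $\Ka$: this yields an integer $t$ with $p^{t}H^{k+1}(\mathfrak{X},\Ka)=0$. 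The long exact sequence
$$H^k\!\left(\mathfrak{X}, \left(\widehat{\Da}^{(m)}_{\mathfrak{X},\lambda}(-r)\right)^{\oplus a}\right) \longrightarrow H^k(\mathfrak{X},\Ea) \longrightarrow H^{k+1}(\mathfrak{X},\Ka)$$
then sandwiches $H^k(\mathfrak{X},\Ea)$ between two groups annihilated by a power of $p$; if $p^{s}$ kills the leftmost term (via Propositions \ref{prop 2.2.3}(i) and \ref{prop 3.1.1}(ii)) and $p^{t}$ the rightmost, the elementary diagram chase shows $p^{s+t}$ kills the middle.

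The main obstacle I anticipate is the bookkeeping needed to produce a single exponent $c$ that works simultaneously for every $0<k\le N$: at each stage of the descending induction one picks up a fresh contribution from $H^{k}$ of the free term and from the induction applied to $\Ka$, and these exponents depend on the chosen resolution of $\Ea$. This is handled by taking $c$ to be the maximum over the finite list of exponents produced during the induction, since the statement only requires existence of such a $c=c(\Ea)$. A secondary point is checking that the coherence of $\Ka$, and hence the applicability of the inductive hypothesis, rests on the noetherianity of sections from Proposition \ref{prop coherence }(i) rather than on a full coherence statement for the integral sheaf $\widehat{\Da}^{(m)}_{\mathfrak{X},\lambda}$.
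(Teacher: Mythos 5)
Your proof is correct and follows essentially the same approach the paper intends: the paper's one-line proof refers to the descending-induction argument of Proposition \ref{prop 2.2.3}(ii), which in turn cites \cite[Corollary 2.2.2]{Huyghe2} and \cite[Proposition 4.1.19]{HPSS}, and your argument is exactly that descending induction transported to the formal setting via Propositions \ref{prop 3.1.1}(ii) and \ref{prop 3.1.2}(i). The sandwich lemma at the inductive step and the final maximum over the finitely many degrees $0<k\le\dim X$ are handled correctly, and your observation that $\Ka$ is coherent by noetherianity of local sections (Proposition \ref{prop coherence }(i)) is the right justification.
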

\justify
Now, we want to extend the part (i) of proposition \ref{prop 3.1.2} to the sheaves $\widehat{\Da}^{(m)}_{\mathfrak{X},\lambda,\Q}$. To do that, we need to show that the category of coherent $\widehat{\Da}^{(m)}_{\mathfrak{X},\lambda,\Q}$-modules admits integral models (definition \ref{defi I.models}).\\
Let $\text{Coh}(\widehat{\Da}^{(m)}_{\mathfrak{X},\lambda})$ be the category of coherent $\widehat{\Da}^{(m)}_{\mathfrak{X},\lambda}$-modules and let $\text{Coh}(\widehat{\Da}^{(m)}_{\mathfrak{X},\lambda})_\Q$ be the category of coherent $\widehat{\Da}^{(m)}_{\mathfrak{X},\lambda}$-modules  up to isogeny. This means that $\text{Coh}(\widehat{\Da}^{(m)}_{\mathfrak{X},\lambda})_\Q$ has the same class of objects as $\text{Coh}(\widehat{\Da}^{(m)}_{\mathfrak{X},\lambda})$ and, for any two objects $\Mb$ and $\Nb$ in Coh$(\widehat{\Da}^{(m)}_{\mathfrak{X},\lambda})_{\Q}$ one has
\begin{eqnarray*}
\text{Hom}_{\text{Coh}(\widehat{\Da}^{(m)}_{\mathfrak{X},\lambda})_{\Q}}(\Mb,\Nb)=\text{Hom}_{\text{Coh}(\widehat{\Da}^{(m)}_{\mathfrak{X},\lambda})}(\Mb,\Nb)\otimes_{\Z_p}\Q_p.
\end{eqnarray*}

\begin{prop}\label{prop 3.1.4}
The functor $\Mb\mapsto\Mb\otimes_{\Z_p}\Q_p$ induces an equivalence of categories between $\text{Coh}(\widehat{\Da}^{(m)}_{\mathfrak{X},\lambda})_\Q$ and $\text{Coh}(\widehat{\Da}^{(m)}_{\mathfrak{X},\lambda,\Q})$.
\end{prop}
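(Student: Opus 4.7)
The plan is to show that the rationalization functor is fully faithful and essentially surjective, exploiting the local description of coherent modules via the basis $\Ba$ of affines from proposition \ref{prop coherence }, on which the rings of sections of $\widehat{\Da}^{(m)}_{\mathfrak{X},\lambda}$ are twosided noetherian, together with the presentation result of proposition \ref{prop 3.1.2}(i).

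For faithfulness, if $f\colon\Mb\to\Nb$ is a morphism of coherent $\widehat{\Da}^{(m)}_{\mathfrak{X},\lambda}$-modules such that $f\otimes\Q_p=0$, then $\text{Im}(f)$ is a coherent subsheaf of $\Nb$ consisting of $p$-torsion sections. On any $\mathfrak{U}\in\Ba$ these sections form a finitely generated $p$-torsion module over a noetherian ring, hence are killed by some $p^{c_\mathfrak{U}}$; quasi-compactness of $\mathfrak{X}$ produces a uniform bound $p^{c}f=0$, so $f$ vanishes in the isogeny category. For fullness, given $\bar f\colon\Mb_\Q\to\Nb_\Q$, I would fix a surjection $\phi\colon\Lb:=(\widehat{\Da}^{(m)}_{\mathfrak{X},\lambda}(-r))^{\oplus a}\twoheadrightarrow\Mb$ from proposition \ref{prop 3.1.2}(i), with kernel $\Kb$. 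The composite $\bar f\circ\phi_\Q$ is a tuple of $a$ global sections of $\Nb_\Q(r)$. Since the underlying space of $\mathfrak{X}$ is noetherian, global sections commute with the filtered colimit defining $\otimes\Q_p$, so $H^{0}(\mathfrak{X},\Nb_\Q(r))=H^{0}(\mathfrak{X},\Nb(r))\otimes\Q_p$; thus some $p^{n}\bar f\circ\phi_\Q$ lifts to a morphism $g\colon\Lb\to\Nb$. The restriction $g|_{\Kb}$ vanishes after rationalization, so by the already-established faithfulness $p^{c}g|_{\Kb}=0$ for some $c$, and $p^{c}g$ descends to a map $\Mb\to\Nb$ realizing $p^{n+c}\bar f$, as needed.

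For essential surjectivity, let $\Ea$ be coherent over $\widehat{\Da}^{(m)}_{\mathfrak{X},\lambda,\Q}$. I would cover $\mathfrak{X}$ by finitely many $\mathfrak{U}_i\in\Ba$ and pick a finite generating set of each $\Ea(\mathfrak{U}_i)$ over the noetherian ring $\widehat{\Da}^{(m)}_{\mathfrak{X},\lambda,\Q}(\mathfrak{U}_i)$. The $\widehat{\Da}^{(m)}_{\mathfrak{X},\lambda}(\mathfrak{U}_i)$-submodule they generate is coherent and yields a local integral model $\Mb_i\subset\Ea|_{\mathfrak{U}_i}$ with $(\Mb_i)_\Q\simeq\Ea|_{\mathfrak{U}_i}$. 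On each overlap $\mathfrak{U}_i\cap\mathfrak{U}_j$, the two integral models $\Mb_i$ and $\Mb_j$ are finitely generated submodules of the same rational sheaf, so each is contained in a $p$-power multiple of the other. Clearing these denominators simultaneously on the finitely many pairwise overlaps, for instance by enlarging each $\Mb_i$ to include all restrictions of the other $\Mb_j$, produces a compatible system that glues to a coherent $\widehat{\Da}^{(m)}_{\mathfrak{X},\lambda}$-module $\Mb$ with $\Mb_\Q\simeq\Ea$.

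The main obstacle will be the gluing in essential surjectivity: one must arrange the local $p$-power comparisons on overlaps to satisfy the full cocycle condition, not merely pairwise containment. Finiteness of the cover combined with noetherianity on $\Ba$ bounds all the relevant denominators uniformly, and a finite inductive adjustment of the $\Mb_i$ inside $\Ea$ yields the required gluing datum; making this process terminate and be sufficiently canonical for the resulting module to be well-defined up to isogeny is the most delicate technical point of the argument.
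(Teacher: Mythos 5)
The paper's own proof is a single sentence: it observes that $\widehat{\Da}^{(m)}_{\mathfrak{X},\lambda,\Q}$ satisfies Berthelot's conditions 3.4.1 and then quotes \cite[proposition 3.4.5]{Berthelot1} as a black box. Your proposal attempts a direct, self-contained argument, which is a genuinely different route. The faithfulness and fullness halves are correct as you set them up: faithfulness is a clean noetherianity-plus-quasi-compactness bound on the image, and your fullness step (lift $\bar f\circ\phi_{\Q}$ after clearing one power of $p$, apply the already-proved faithfulness to $g|_{\Kb}$, then factor through the cokernel) is exactly right. These give a readable alternative to the appeal to Berthelot.

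However, there is a genuine gap in essential surjectivity, and it sits precisely at the step you flag as delicate. Your proposed repair, ``enlarging each $\Mb_i$ to include all restrictions of the other $\Mb_j$,'' does not type-check: $\Mb_j|_{\mathfrak{U}_i\cap\mathfrak{U}_j}$ is a subsheaf of $\Ea|_{\mathfrak{U}_i\cap\mathfrak{U}_j}$ only, and there is no natural way to ``include'' it into $\Mb_i$ over all of $\mathfrak{U}_i$; the enlargement is simply not a well-defined sheaf on $\mathfrak{U}_i$. Moreover even on the overlap, replacing $\Mb_i$ by $\Mb_i + \Mb_j$ gives a \emph{containment} of the two restrictions, not an equality, so the gluing datum still fails, and the cocycle issue you raise for triple overlaps is real. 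The standard fix goes the other way: for each $i$ take the preimage $\Fb_i := \rho_i^{-1}(j_{i*}\Mb_i)\subset\Ea$ under the restriction $\rho_i\colon\Ea\rightarrow j_{i*}(\Ea|_{\mathfrak{U}_i})$, and set $\Mb:=\bigcap_i\Fb_i$. Then $\Mb$ is by construction a $\widehat{\Da}^{(m)}_{\mathfrak{X},\lambda}$-submodule of $\Ea$, one has $\Fb_l|_{\mathfrak{U}_l}=\Mb_l$, so $\Mb|_{\mathfrak{U}_l}\subset\Mb_l$ is coherent by noetherianity of sections over opens in $\Ba$, and commensurability on the finitely many overlaps gives $p^{N}\Mb_l\subset\Mb|_{\mathfrak{U}_l}$, so $\Mb_{\Q}\simeq\Ea$. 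This is essentially the content of Berthelot's 3.4.5 that the paper invokes; your sketch recovers everything except this intersection construction, which should replace the enlargement step.
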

\begin{proof}
By definition, the sheaf $\widehat{\Da}^{(m)}_{\mathfrak{X},\lambda,\Q}$ satisfies \cite[conditions 3.4.1]{Berthelot1} and therefore \cite[proposition 3.4.5]{Berthelot1} allows to conclude the proposition.
\end{proof}
\justify
The proof of the next theorem follows exactly the same lines than in \cite[theorem 4.2.8]{HPSS}. We will reproduce the proof because it is a central result for our goal.

\begin{theo}\label{theo 3.1.5}
Let $\Ea$ be a coherent $\widehat{\Da}^{(m)}_{\mathfrak{X},\lambda,\Q}$-module. 
\begin{itemize}
\item[(i)] There is $r(\Ea)\in\Z$ such that, for every $r\ge r(\Ea)$ there exists $a\in\N$ and an  epimorphism of $\widehat{\Da}^{(m)}_{\mathfrak{X},\lambda,\Q}$-modules
\begin{eqnarray*}
\left(\widehat{\Da}^{(m)}_{\mathfrak{X},\lambda,\Q}(-r)\right)^{\oplus a}\rightarrow\Ea\rightarrow 0.
\end{eqnarray*}
\item[(ii)] For all $i>0$ one has $H^i(\mathfrak{X},\Ea)=0$.
\end{itemize}
\end{theo}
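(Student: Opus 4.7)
\textbf{Proof proposal for Theorem \ref{theo 3.1.5}.}

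The plan is to reduce everything to the integral statement from Proposition \ref{prop 3.1.2} and Corollary \ref{coro 3.1.3} by choosing an integral model of $\Ea$. Concretely, by Proposition \ref{prop 3.1.4} the functor $\Mb\mapsto \Mb\otimes_{\Z_p}\Q_p$ realises an equivalence between $\mathrm{Coh}(\widehat{\Da}^{(m)}_{\mathfrak{X},\lambda})_{\Q}$ and $\mathrm{Coh}(\widehat{\Da}^{(m)}_{\mathfrak{X},\lambda,\Q})$, so I can pick a coherent $\widehat{\Da}^{(m)}_{\mathfrak{X},\lambda}$-module $\Ea_0$ with $\Ea_0\otimes_{\Z_p}\Q_p \simeq \Ea$. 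All the remaining work is to transport the integral statements through this base change.

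For part (i), apply Proposition \ref{prop 3.1.2}(i) to $\Ea_0$ to obtain an integer $r(\Ea):=r_2(\Ea_0)$ such that for every $r\ge r(\Ea)$ there exist $a\in\N$ and an epimorphism
\begin{equation*}
\bigl(\widehat{\Da}^{(m)}_{\mathfrak{X},\lambda}(-r)\bigr)^{\oplus a}\longrightarrow \Ea_0\longrightarrow 0
\end{equation*}
of $\widehat{\Da}^{(m)}_{\mathfrak{X},\lambda}$-modules. Tensoring with $\Q_p$ is exact (localisation) and sends $\widehat{\Da}^{(m)}_{\mathfrak{X},\lambda}(-r)$ to $\widehat{\Da}^{(m)}_{\mathfrak{X},\lambda,\Q}(-r)$ and $\Ea_0$ to $\Ea$; this produces the required surjection.

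For part (ii), the key observation is that the higher cohomology of $\Ea_0$ has bounded $p$-torsion. Indeed, Corollary \ref{coro 3.1.3} furnishes $c=c(\Ea_0)\in\N$ with $p^{c}H^{i}(\mathfrak{X},\Ea_0)=0$ for every $i>0$. Since $\mathfrak{X}$ is a noetherian topological space, sheaf cohomology commutes with filtered colimits of abelian sheaves, and in particular with the localisation $\otimes_{\Z_p}\Q_p=\varinjlim_{n}p^{-n}(\cdot)$; hence
\begin{equation*}
H^{i}(\mathfrak{X},\Ea)=H^{i}(\mathfrak{X},\Ea_0\otimes_{\Z_p}\Q_p)=H^{i}(\mathfrak{X},\Ea_0)\otimes_{\Z_p}\Q_p,
\end{equation*}
which vanishes for $i>0$ because the right-hand side is annihilated by $p^{c}$ and by $p^{-c}$ simultaneously.

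The only non-cosmetic point is the commutation of cohomology with the localisation $\otimes_{\Z_p}\Q_p$; this is where I expect to spend most of the care, and it is the step I would flag as the main obstacle. It is however a standard fact on a noetherian space (or, alternatively, can be deduced by applying Proposition \ref{prop 3.1.1} to $\Ea_0$ together with the observation that the $p$-adic completion of $\Ea_0$ is $\Ea_0$ itself, so that $H^i(\mathfrak{X},\Ea_0)\otimes_{\Z_p}\Q_p$ computes $H^i(\mathfrak{X},\Ea)$ in the rigid-analytic/Berthelot sense). Once this is in place, both (i) and (ii) follow by pure transport of the integral results.
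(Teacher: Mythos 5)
Your proof is correct and follows the same route as the paper: pick an integral model via Proposition \ref{prop 3.1.4}, obtain (i) by tensoring the epimorphism from Proposition \ref{prop 3.1.2}(i) with $\Q_p$, and obtain (ii) from Corollary \ref{coro 3.1.3} together with the fact that on the noetherian space $\mathfrak{X}$ cohomology commutes with $\otimes_{\Z_p}\Q_p$ --- the filtered-colimit argument you give is precisely what the paper's citation to Berthelot's (3.4.0.1) encapsulates, so the step you flag as the "main obstacle" is not really a gap. One small caveat: the parenthetical alternative via Proposition \ref{prop 3.1.1} is slightly misapplied, since that proposition is stated for coherent $\Db^{(m)}_{X,\lambda}$-modules on the algebraic scheme $X$ rather than for coherent $\widehat{\Da}^{(m)}_{\mathfrak{X},\lambda}$-modules on $\mathfrak{X}$, but this side remark does not affect your main argument.
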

\begin{proof}
By the preceding proposition, there exists a coherent $\widehat{\Da}^{(m)}_{\mathfrak{X},\lambda}$-module $\Fa$ such that $\Fa\otimes_{\Z_p}\Q_p\simeq\Ea$. Therefore, applying proposition \ref{prop 3.1.2} to $\Fa$ gives $(i)$. Moreover,  as $\mathfrak{X}$ is a noetherian space, corollary \ref{coro 3.1.3} allows us to conclude that
\begin{eqnarray*}
H^i(\mathfrak{X},\Ea)=H^i(\mathfrak{X},\Fa)\otimes_{\Z_p}\Q_p=0
\end{eqnarray*}
for every $k>0$ \cite[(3.4.0.1)]{Berthelot1}.
\end{proof}
\subsection{Calculation of global sections}
We recall for the reader that throughout this section $\lambda+\rho\in\mathfrak{t}^*_\Q$ denotes a dominant and regular character, which is induced by $\lambda:\text{Dist}(\T)\rightarrow\Z_p$ via the correspondence (\ref{Iso_chars_Z_p}). In this subsection we propose to calculate the global sections of the sheaf $\widehat{\Da}^{(m)}_{\mathfrak{X},\lambda,\Q}$. Inspired in the arguments exhibited in \cite{HS2},  we will need the following lemma ( \cite[lemma 3.3]{HS2}) whose conclusion is an essential tool for our goal.
\begin{lem}\footnote{The proof as in \cite[lemme 3.3]{HS2}.}\label{G.sections}
Let $A$ be a noetherian $\Z_p$-algebra, $M$, $N$ two $A$-modules of finite type, $\psi: M\rightarrow N$ an $A$-lineal application and $\widehat{\psi}: \widehat{M}\rightarrow\widehat{N}$ the morphism obtained after $p$-adic completion. If $\psi\otimes_{\Z_p}\Q_p: M\otimes_{\Z_p}\Q_p\rightarrow N\otimes_{\Z_p}\Q_p$ is an isomorphism, then $\widehat{\psi}\otimes_{\Z_p}1:\widehat{M}\otimes_{\Z_p}\Q_p\rightarrow \widehat{N}\otimes_{\Z_p}\Q_p$ is an isomorphism as well.  
\end{lem}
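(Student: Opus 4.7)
The plan is to reduce to a finiteness statement about the kernel and cokernel of $\psi$, after which $p$-adic completion behaves predictably. I set $K := \text{ker}(\psi)$ and $C := \text{coker}(\psi)$; since $A$ is noetherian and $M,N$ are finitely generated, both $K$ and $C$ are finitely generated $A$-modules. The hypothesis that $\psi\otimes_{\Z_p}\Q_p$ is an isomorphism, together with the exactness of $(\bullet)\otimes_{\Z_p}\Q_p$, forces $K\otimes_{\Z_p}\Q_p = 0 = C\otimes_{\Z_p}\Q_p$. Hence every element of $K$ (resp.\ $C$) is annihilated by some power of $p$, and finite generation upgrades this to a single exponent $n\in\N$ with $p^n K = 0 = p^n C$.

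Second, I would split the four-term exact sequence $0\to K\to M\xrightarrow{\psi} N\to C\to 0$ into two short exact sequences through the image $I := \text{im}(\psi)$, and apply $p$-adic completion. Because $A$ is noetherian and every module involved is finitely generated over $A$, the $p$-adic completion functor is exact on them, producing
\begin{equation*}
0 \to \widehat{K} \to \widehat{M} \to \widehat{I} \to 0, \qquad 0 \to \widehat{I} \to \widehat{N} \to \widehat{C} \to 0,
\end{equation*}
with $\widehat{\psi}$ factoring as $\widehat{M}\twoheadrightarrow\widehat{I}\hookrightarrow\widehat{N}$. The annihilation $p^n K = 0$ gives $K/p^s K = K$ for all $s\geq n$, so $\widehat{K}=K$; likewise $\widehat{C}=C$.

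Finally, tensoring the two sequences above with $\Q_p$, which is exact, and using $\widehat{K}\otimes_{\Z_p}\Q_p = K\otimes_{\Z_p}\Q_p = 0$ and $\widehat{C}\otimes_{\Z_p}\Q_p = C\otimes_{\Z_p}\Q_p = 0$, one obtains isomorphisms $\widehat{M}\otimes_{\Z_p}\Q_p \xrightarrow{\simeq}\widehat{I}\otimes_{\Z_p}\Q_p \xrightarrow{\simeq} \widehat{N}\otimes_{\Z_p}\Q_p$ whose composition is precisely $\widehat{\psi}\otimes_{\Z_p}1$. The only non-formal ingredient is the exactness of $p$-adic completion on the category of finitely generated modules over the noetherian ring $A$; this is the single point where both the noetherianity of $A$ and the finite generation of $M$ and $N$ enter decisively, and is the step that deserves the most care.
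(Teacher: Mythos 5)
Your proof is correct and follows essentially the same route as the paper's: both reduce to the observation that the kernel and cokernel of $\psi$ are finitely generated $p$-torsion modules (hence equal to their own $p$-adic completions), and both invoke the exactness of $p$-adic completion on finitely generated modules over the noetherian ring $A$ (\cite[3.2.3 (ii)]{Berthelot1}) to transfer this to $\widehat{\psi}$. Your version simply spells out the factorization through the image and the resulting short exact sequences, which the paper leaves implicit.
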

\begin{proof}
Let $K$ be the kernel (resp. the cokernel) of $\psi$. Since the $p$-adic completion is an exact functor over the finitely generated $A$-modules \cite[3.2.3 (ii)]{Berthelot1}, the $p$-completion $\hat{K}$ is the kernel (resp. the cokernel) of $\hat{\psi}$. But $\hat{K}=K$ because $K$ is of $p$-torsion, and therefore $\hat{K}\otimes_{\Z_p}\Q_p=K\otimes_{\Z_p}\Q_p=0.$
\end{proof}
\justify
Let us identify the universal enveloping algebra $\Ub(\mathfrak{t}_\Q)$ of the Cartan subalgebra $\mathfrak{t}_\Q$ with the symmetric algebra $S(\mathfrak{t}_\Q)$, and let $Z(\mathfrak{g}_\Q)$ denote the center of the universal enveloping algebra $\Ub(\mathfrak{g}_\Q)$ of $\mathfrak{g}_\Q$. The classical Harish-Chandra isomorphism $Z(\mathfrak{g}_\Q)\simeq S(\mathfrak{t}_\Q)^{W}$ (the subalgebra of Weyl invariants) \cite[theorem 7.4.5]{Dixmier}, allows us to define for every linear form $\lambda\in \mathfrak{t}_\Q^*$ a central character \cite[7.4.6]{Dixmier} 
\begin{eqnarray*}
\chi_\lambda : Z(\mathfrak{g}_\Q)\rightarrow \Q_p
\end{eqnarray*}   
which induces the central reduction $\Ub(\mathfrak{g}_\Q)_{\lambda}:=\Ub(\mathfrak{g}_\Q)\otimes_{Z(\mathfrak{g}_\Q),\chi_\lambda+\rho}\Q_p$. If Ker$(\chi_{\lambda+\rho})_{\Z_p}:=D^{(m)}(\G)\cap \text{Ker}(\chi_{\lambda+\rho})$, we can consider the central redaction
\begin{eqnarray*}
D^{(m)}(\G)_{\lambda}:=D^{(m)}(\G)/D^{(m)}(\G)\text{Ker}(\chi_{\lambda+\rho})_{\Z_p}
\end{eqnarray*}
and its $p$-adic completion $\widehat{D}^{(m)}(\G)_{\lambda}$. It is clear that $D^{(m)}(\G)_{\lambda}$ is an integral model of $\Ub(\mathfrak{g}_\Q)_{\lambda}$.
\begin{theo} \label{C.G sections}
The homomorphism of $\Z_p$-algebras $\Phi^{(m)}_{\lambda}: D^{(m)}(\G)\rightarrow  H^{0}(X,\Db^{(m)}_{X,\lambda})$, defined by taking global sections in (\ref{c. map}), induces an isomorphism of $\Z_p$-algebras
\begin{eqnarray*}
\widehat{D}^{(m)}(\G)_{\lambda}\otimes_{\Z_p}\Q_p \xrightarrow{\simeq} H^0\left(\mathfrak{X},\widehat{\Da}^{(m)}_{\mathfrak{X},\lambda,\Q}\right).
\end{eqnarray*}
\end{theo}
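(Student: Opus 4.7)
The plan is to reduce the statement to the classical Beilinson--Bernstein theorem on the generic fibre and then lift back via the completion--versus--isomorphism lemma \ref{G.sections}. The overall scheme factors $\Phi^{(m)}_{\lambda}$ through the central reduction, identifies the rationalized map with the classical one, and then completes $p$-adically.

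First, observe that $\Db^{(m)}_{X,\lambda}$ is $p$-torsion free: by proposition \ref{why a TDO} it is Zariski-locally isomorphic to $\Db^{(m)}_{X}$, and the local description (\ref{locally_Berthelot}) shows $\Db^{(m)}_{X}$ is $p$-torsion free over the smooth scheme $X$. Hence $H^{0}(X,\Db^{(m)}_{X,\lambda})$ injects into $H^{0}(X,\Db^{(m)}_{X,\lambda})\otimes_{\Z_p}\Q_p = H^{0}(X_\Q,\Db_{\lambda})$ (remark \ref{rem 5.1.18}(a)). By remark \ref{rem 5.1.18}(b), $\Phi^{(m)}_{\lambda}\otimes 1_{\Q_p}=\Phi_{\lambda}$ is the operator-representation on the flag variety, and the classical Beilinson--Bernstein theorem under the dominant-regular hypothesis on $\lambda+\rho$ gives that $\Phi_{\lambda}$ is surjective with kernel $\Ub(\mathfrak{g}_\Q)\cdot\mathrm{Ker}(\chi_{\lambda+\rho})$. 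Combining with the injectivity above, $\Phi^{(m)}_{\lambda}$ vanishes on $\mathrm{Ker}(\chi_{\lambda+\rho})_{\Z_p}$ and hence on the left ideal it generates, so it factors through
\[
\overline{\Phi}^{(m)}_{\lambda}:\ D^{(m)}(\G)_{\lambda}\longrightarrow H^{0}(X,\Db^{(m)}_{X,\lambda}).
\]

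Next, I check the input needed for lemma \ref{G.sections}. Both source and target are finitely generated as modules over the noetherian $\Z_p$-algebra $D^{(m)}(\G)$: the source is a cyclic quotient of $D^{(m)}(\G)$, and by proposition \ref{canonical 2}(iii) the sheaf $\Db^{(m)}_{X,\lambda}$ is a coherent $\Ab^{(m)}_{X}$-module, so by proposition \ref{prop 1.4.2}(iii) its global sections are finitely generated over $D^{(m)}(\G)$. Tensoring $\overline{\Phi}^{(m)}_{\lambda}$ with $\Q_p$ and identifying $D^{(m)}(\G)_{\lambda}\otimes_{\Z_p}\Q_p=\Ub(\mathfrak{g}_\Q)_{\lambda}$ (this uses that any element of $\mathrm{Ker}(\chi_{\lambda+\rho})\subset Z(\mathfrak{g}_\Q)$ becomes, after multiplication by a suitable power of $p$, an element of $\mathrm{Ker}(\chi_{\lambda+\rho})_{\Z_p}=D^{(m)}(\G)\cap\mathrm{Ker}(\chi_{\lambda+\rho})$), recovers precisely the classical Beilinson--Bernstein isomorphism $\Ub(\mathfrak{g}_\Q)_{\lambda}\xrightarrow{\simeq}H^{0}(X_\Q,\Db_{\lambda})$.

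Applying lemma \ref{G.sections} to $\overline{\Phi}^{(m)}_{\lambda}$ yields an isomorphism after $p$-adic completion and inverting $p$:
\[
\widehat{D}^{(m)}(\G)_{\lambda}\otimes_{\Z_p}\Q_p\ \xrightarrow{\simeq}\ \widehat{H^{0}(X,\Db^{(m)}_{X,\lambda})}\otimes_{\Z_p}\Q_p.
\]
It remains to identify the right-hand side with $H^{0}(\mathfrak{X},\widehat{\Da}^{(m)}_{\mathfrak{X},\lambda,\Q})$. Since $\Db^{(m)}_{X,\lambda}$ is itself a coherent $\Db^{(m)}_{X,\lambda}$-module (proposition \ref{prop 3.2.10}), proposition \ref{prop 3.1.1}(iii) gives
\[
H^{0}(\mathfrak{X},\widehat{\Da}^{(m)}_{\mathfrak{X},\lambda})=\varprojlim_j H^{0}(X,\Db^{(m)}_{X,\lambda})/p^{j+1}H^{0}(X,\Db^{(m)}_{X,\lambda})=\widehat{H^{0}(X,\Db^{(m)}_{X,\lambda})},
\]
and tensoring with $\Q_p$ commutes with $H^{0}$ on noetherian formal schemes via \cite[(3.4.0.1)]{Berthelot1}. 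Chaining these identifications closes the argument.

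The main obstacle is the descent from the generic fibre to the integral level: showing that $\Phi^{(m)}_{\lambda}$ really kills $\mathrm{Ker}(\chi_{\lambda+\rho})_{\Z_p}$ (not merely up to $p$-torsion) relies critically on the $p$-torsion freeness of $\Db^{(m)}_{X,\lambda}$, which in turn rests on proposition \ref{why a TDO}; without this local trivialisation, torsion in $H^{0}$ could a priori obstruct the factorisation and the noetherian finite generation needed to invoke lemma \ref{G.sections} would collapse.
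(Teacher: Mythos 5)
Your proposal is correct and follows essentially the same route as the paper: factor $\Phi^{(m)}_{\lambda}$ through the central reduction $D^{(m)}(\G)_{\lambda}$ using the rationalized comparison with the classical Beilinson--Bernstein isomorphism, then invoke lemma \ref{G.sections} and proposition \ref{prop 3.1.1}. You supply some details the paper leaves implicit (notably the $p$-torsion-freeness argument for the integral factorisation and the verification that both modules satisfy the finite-generation hypothesis of lemma \ref{G.sections}), but the structure and key ingredients are the same.
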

\begin{proof}
The key of the proof of the theorem is the following commutative diagram, which is an immediate consequence of remark \ref{rem 5.1.18}
\begin{eqnarray*}
\begin{tikzcd}
D^{(m)}(\G) \arrow[r, "\Phi^{(m)}_{\lambda}"] \arrow[d,hook ]
& H^0(X,\Db^{(m)}_{X,\lambda}) \arrow[d,hook ] \\
\Ub(\mathfrak{g}_\Q) \arrow[r, "\Phi_{\lambda}"]
& H^{0}(X_\Q,\Db_{\lambda}).
\end{tikzcd}
\end{eqnarray*}
By the classical Beilinson-Bernsein theorem \cite{BB} and the preceding commutative diagram, we have that $\Phi^{(m)}_\lambda$ factors through the morphism $\overline{\Phi}^{(m)}_\lambda: D^{(m)}(\G)_{\lambda}\rightarrow H^{0}(X,\Db^{(m)}_{X,\lambda})$ which becomes an isomorphism after tensoring with $\Q_p$. The preceding lemma implies therefore that $\overline{\Phi}^{(m)}_\lambda$ gives rise to an isomorphism
\begin{eqnarray*}
\widehat{D}^{(m)}(\G)_{\lambda}\otimes_{\Z_p}\Q_p\xrightarrow{\simeq} \widehat{H^0(X,\Db^{(m)}_{X,\lambda})}\otimes_{\Z_p}\Q_p,
\end{eqnarray*}
 and proposition \ref{prop 3.1.1} together with the fact that $\mathfrak{X}$ is in particular a noetherian topological space end the proof of the theorem.
\end{proof}
\subsection{The localization functor}
In this section we will introduce the localization functor. For this, we will first fix the following notation which will make more pleasant the reading of the proof of our principal theorem. We will consider 
\begin{eqnarray*}
\widehat{\mathrm{D}}^{(m)}_{\mathfrak{X},\lambda,\Q}:= H^{0}\left(\mathfrak{X}, \widehat{\Da}^{(m)}_{\mathfrak{X},\lambda,\Q}\right).
\end{eqnarray*}
Now, let $E$ be a finitely generated $\widehat{\mathrm{D}}^{(m)}_{\mathfrak{X},\lambda,\Q}$-module. We define $\La oc^{(m)}_{\mathfrak{X},\lambda}(E)$ as the associated sheaf to the presheaf on $\mathfrak{X}$ defined by
\begin{eqnarray*}
\mathfrak{U}\mapsto \widehat{\Da}^{(m)}_{\mathfrak{X},\lambda,\Q}(\mathfrak{U})\otimes_{\widehat{\mathrm{D}}^{(m)}_{\mathfrak{X},\lambda,\Q}}E.
\end{eqnarray*}
It is clear that $\La oc^{(m)}_{\mathfrak{X},\lambda}$ is a functor from the category of finitely generated $\widehat{\mathrm{D}}^{(m)}_{\mathfrak{X},\lambda,\Q}$-modules to the category of coherent $\widehat{\Da}^{(m)}_{\mathfrak{X},\lambda,\Q}$-modules.

\subsection{The arithmetic Beilinson-Bernstein theorem}

We are finally ready to prove the main result of this paper. To start with, we will enunciate the following proposition. \footnote{This proof is exactly as in \cite[proposition 4.3.1]{HPSS}.}

\begin{prop}\label{prop 3.3.1}
Let $\Ea$ be a coherent $\widehat{\Da}^{(m)}_{\mathfrak{X},\lambda,\Q}$-module. Then $\Ea$ is generated by its global sections as $\widehat{\Da}^{(m)}_{\mathfrak{X},\lambda,\Q}$-module. Furthermore, every coherent $\widehat{\Da}^{(m)}_{\mathfrak{X},\lambda,\Q}$-module admits a resolution by finite free $\widehat{\Da}^{(m)}_{\mathfrak{X},\lambda,\Q}$-modules.
\end{prop}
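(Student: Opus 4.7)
My plan is the following. The second assertion follows easily from the first by iteration: once we have a surjection $(\widehat{\Da}^{(m)}_{\mathfrak{X},\lambda,\Q})^{\oplus n}\twoheadrightarrow\Ea$ produced from global generation together with finite generation of $H^0(\mathfrak{X},\Ea)$, its kernel is coherent by Proposition \ref{prop coherence }, and repeating the construction yields the desired resolution. So the essential task is to prove simultaneously that $\Ea$ is generated by its global sections and that $H^0(\mathfrak{X},\Ea)$ is a finitely generated $\widehat{\mathrm{D}}^{(m)}_{\mathfrak{X},\lambda,\Q}$-module.

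The starting point is to apply Theorem \ref{theo 3.1.5}(i) twice, once to $\Ea$ and once to the kernel of the first surjection, to produce a two-step exact presentation
$$(\widehat{\Da}^{(m)}_{\mathfrak{X},\lambda,\Q}(-r'))^{\oplus b}\to (\widehat{\Da}^{(m)}_{\mathfrak{X},\lambda,\Q}(-r))^{\oplus a}\to \Ea\to 0$$
for suitable $r'\ge r\gg 0$. Applying $H^0(\mathfrak{X},\cdot)$ and invoking the higher-cohomology vanishing of Theorem \ref{theo 3.1.5}(ii) keeps the sequence exact. Using that $\widehat{\mathrm{D}}^{(m)}_{\mathfrak{X},\lambda,\Q}$ is noetherian by Theorem \ref{C.G sections} (via the noetherianity of $\widehat{D}^{(m)}(\G)_\lambda$), this forces $H^0(\mathfrak{X},\Ea)$ to be finitely generated, provided one has first verified (by an inductive argument applying the same two-step presentation method to the coherent sheaf $\widehat{\Da}^{(m)}_{\mathfrak{X},\lambda,\Q}(-r)$ itself) that each $H^0(\mathfrak{X},\widehat{\Da}^{(m)}_{\mathfrak{X},\lambda,\Q}(-r))$ is finitely generated.

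For the global-generation claim itself, I consider the image $\Ka\subseteq \Ea$ of the canonical evaluation map
$$\phi\colon \widehat{\Da}^{(m)}_{\mathfrak{X},\lambda,\Q}\otimes_{\widehat{\mathrm{D}}^{(m)}_{\mathfrak{X},\lambda,\Q}} H^0(\mathfrak{X},\Ea)\longrightarrow \Ea,$$
which is a coherent subsheaf, and the coherent quotient $\Fa := \Ea/\Ka$. The long exact sequence of cohomology of $0\to\Ka\to\Ea\to\Fa\to 0$ combined with the vanishing $H^1(\mathfrak{X},\Ka)=0$ from Theorem \ref{theo 3.1.5}(ii) and the tautological identity $H^0(\mathfrak{X},\Ka)=H^0(\mathfrak{X},\Ea)$ forces $H^0(\mathfrak{X},\Fa)=0$. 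To conclude $\Fa=0$, I apply the localization functor $\La oc^{(m)}_{\mathfrak{X},\lambda}$ to the two-step presentation of $\Fa$ and compare with the sheaf-level one, using right-exactness of $\La oc^{(m)}_{\mathfrak{X},\lambda}$ together with the unit identification $H^0\circ\La oc^{(m)}_{\mathfrak{X},\lambda}=\mathrm{id}$ on finitely generated modules; the latter is obtained by applying $H^0(\mathfrak{X},\cdot)$ to a finite presentation and invoking Theorem \ref{theo 3.1.5}(ii) once more. This identifies $\Fa$ with $\La oc^{(m)}_{\mathfrak{X},\lambda}(H^0(\mathfrak{X},\Fa))=\La oc^{(m)}_{\mathfrak{X},\lambda}(0)=0$.

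The principal obstacle I anticipate is the bootstrap coupling these two ingredients: finite generation of the various $H^0(\mathfrak{X},\widehat{\Da}^{(m)}_{\mathfrak{X},\lambda,\Q}(-r))$ and the identification $\La oc^{(m)}_{\mathfrak{X},\lambda}\circ H^0\simeq \mathrm{id}$ on the twisted coherent sheaves one is treating must be established in tandem. This bootstrap is driven entirely by the $H^1$-vanishing of Theorem \ref{theo 3.1.5}(ii), which is the point at which the dominance and regularity of $\lambda+\rho$ enter decisively; without them, neither the passage to global sections is exact, nor is the inductive identification of twisted sheaves with the localizations of their global sections valid.
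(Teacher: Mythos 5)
Your outline contains a circularity that you flag as a ``bootstrap'' but never break. The pivotal step is to pass from $H^0(\mathfrak{X},\Fa)=0$ to $\Fa=0$. For that you invoke a comparison between the sheaf-level two-step presentation of $\Fa$ and the one obtained by applying $H^0(\mathfrak{X},\cdot)$ followed by $\La oc^{(m)}_{\mathfrak{X},\lambda}$. That comparison requires knowing that the counit $\La oc^{(m)}_{\mathfrak{X},\lambda}\bigl(H^0(\mathfrak{X},\widehat{\Da}^{(m)}_{\mathfrak{X},\lambda,\Q}(-r))\bigr)\to\widehat{\Da}^{(m)}_{\mathfrak{X},\lambda,\Q}(-r)$ is surjective; but that is precisely the assertion that $\widehat{\Da}^{(m)}_{\mathfrak{X},\lambda,\Q}(-r)$ is globally generated, which is the special case of the proposition you are proving. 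Each invocation pushes the question onto a further twist, so your ``inductive argument'' has no base case. The same infinite regress affects your claimed proof that $H^0(\mathfrak{X},\widehat{\Da}^{(m)}_{\mathfrak{X},\lambda,\Q}(-r))$ is finitely generated over $\widehat{\mathrm{D}}^{(m)}_{\mathfrak{X},\lambda,\Q}$: the two-step presentation you appeal to is again by twists whose $H^0$ you have not yet controlled. (Your auxiliary claim $H^0\circ\La oc^{(m)}_{\mathfrak{X},\lambda}\simeq\mathrm{id}$ on finitely presented modules is, by contrast, fine and does not beg the question.)

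The paper breaks the circle by descending to the integral model $\Db^{(m)}_{X,\lambda}(-r)$ over the scheme $X$, where two independent inputs are available. First, by Proposition~\ref{prop 1.4.2}(iii), $F:=H^0(X,\Db^{(m)}_{X,\lambda}(-r))$ is finitely generated over the noetherian $\Z_p$-algebra $D^{(m)}(\G)$ --- this uses nothing about the formal-scheme sheaves. Second, the \emph{classical} Beilinson--Bernstein theorem over $\Q_p$ gives global generation of $\Db_{\lambda}(-r)$, so the cokernel of the canonical map $\Db^{(m)}_{X,\lambda}\otimes_{D^{(m)}(\G)}F\to\Db^{(m)}_{X,\lambda}(-r)$ is a coherent module that vanishes after $\otimes_{\Z_p}\Q_p$ and is therefore $p$-torsion of bounded exponent. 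A finite free presentation of $F$ over $D^{(m)}(\G)$ then produces a map $(\Db^{(m)}_{X,\lambda})^{\oplus a}\to\Db^{(m)}_{X,\lambda}(-r)$ with $p$-torsion cokernel; after $p$-adic completion and inverting $p$ the cokernel disappears and one gets the desired surjection from a finite free $\widehat{\Da}^{(m)}_{\mathfrak{X},\lambda,\Q}$-module. To repair your argument, you should replace the circular ``localize-and-compare'' step by this integral-descent-plus-$p$-torsion step; the rest of your scaffolding (reducing to a line-bundle twist, bootstrapping a resolution via coherence of the kernels) is sound.
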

\begin{proof}
By theorem \ref{theo 3.1.5} we know that $\Ea$ is a quotient of a module $\left(\widehat{\Da}^{(m)}_{\mathfrak{X},\lambda,\Q}(-r)\right)^{a}$ for some $r\in\Z$ and some $a\in\N$. We can therefore assume that $\Ea=\left(\widehat{\Da}^{(m)}_{\mathfrak{X},\lambda,\Q}(-r)\right)$ for some $r\in\Z$. Let $F:=H^0(X, \Db^{(m)}_{X,\lambda}(-r))$, a finitely generated $D^{(m)}(\G)$-module by proposition \ref{prop 1.4.2}. Let us consider the linear map of $ \Db^{(m)}_{X,\lambda}$-modules equals to the composite
\begin{eqnarray}
 \Db^{(m)}_{X,\lambda}\otimes_{D^{(m)}(\G)}F\rightarrow  \Db^{(m)}_{X,\lambda}\otimes_{H^0(X, \Db^{(m)}_{X,\lambda})}F\rightarrow \Db^{(m)}_{X,\lambda}(-r)
\end{eqnarray}
where the first map is the surjection induced by the map $\Phi^{(m)}_{\lambda}$ of theorem \ref{C.G sections}. Let $\Fb$ be the cokernel of the composite map. Since $D^{(m)}(\G)$ is noetherian, the source of this map is a coherent $ \Db^{(m)}_{X,\lambda}$-module and so is $\Fb$. Moreover, this module is of $p$-torsion because $ \Db^{(m)}_{X,\lambda}(-r)\otimes_{\Z_p}\Q_p$ is generated by its global sections \cite{BB}. Now, let us take a linear surjection $\left(D^{(m)}(\G)\right)^{\oplus a}\rightarrow F$. By tensoring with $ \Db^{(m)}_{X,\lambda}$ we obtain the exact sequence of coherent modules
\begin{eqnarray*}
\left( \Db^{(m)}_{X,\lambda}\right)^{\oplus a}\rightarrow  \Db^{(m)}_{X,\lambda}(-r)\rightarrow \Fb\rightarrow 0.
\end{eqnarray*}
Passing to $p$-adic completions (which is exact in our situation \cite[chapter II, proposition 9.1]{Hartshorne1}) and inverting $p$ yields the linear surjection.
\end{proof}
\begin{theo}\label{Affinity}
Let us suppose that $\lambda:\text{Dist}(\T)\rightarrow\Z_p$ is a character of $\text{Dist}(\T)$ such that $\lambda + \rho\in\mathfrak{t}_\Q^*$ is a dominant and regular character of $\mathfrak{t}_\Q$. The functors $\La oc^{(m)}_{\mathfrak{X},\lambda}$ and $H^{0}(\mathfrak{X},\bullet)$ are quasi-inverse equivalence of categories between the abelian categories of finitely generated $\widehat{\mathrm{D}}^{(m)}_{\mathfrak{X},\lambda,\Q}$-modules and coherent  $\widehat{\Da}^{(m)}_{\mathfrak{X},\lambda,\Q}$-modules.
\end{theo}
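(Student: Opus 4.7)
The plan is to establish the equivalence by showing that the unit and counit of the adjunction $(\La oc^{(m)}_{\mathfrak{X},\lambda}, H^{0}(\mathfrak{X},\bullet))$ are natural isomorphisms. Before that, one must verify that the functors land in the claimed categories. For $\La oc^{(m)}_{\mathfrak{X},\lambda}$ this is immediate from its construction together with the noetherianity/coherence of $\widehat{\Da}^{(m)}_{\mathfrak{X},\lambda,\Q}$ (proposition \ref{prop coherence }): a finite presentation of $E$ maps, by right-exactness of the tensor product, to a presentation of $\La oc^{(m)}_{\mathfrak{X},\lambda}(E)$ by free coherent modules. For $H^{0}(\mathfrak{X},\bullet)$, given a coherent $\Ea$, proposition \ref{prop 3.3.1} yields a finite free presentation $\bigl(\widehat{\Da}^{(m)}_{\mathfrak{X},\lambda,\Q}\bigr)^{\oplus b}\to\bigl(\widehat{\Da}^{(m)}_{\mathfrak{X},\lambda,\Q}\bigr)^{\oplus a}\to\Ea\to 0$; applying $H^0$ and invoking the vanishing $H^{1}(\mathfrak{X},\Ka)=0$ on the coherent kernel (theorem \ref{theo 3.1.5}) together with theorem \ref{C.G sections} gives a presentation of $H^{0}(\mathfrak{X},\Ea)$ by finite free $\widehat{\mathrm{D}}^{(m)}_{\mathfrak{X},\lambda,\Q}$-modules.

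Next, I would show that the unit $\eta_{E}\colon E\to H^{0}(\mathfrak{X},\La oc^{(m)}_{\mathfrak{X},\lambda}(E))$ is an isomorphism. For $E=\widehat{\mathrm{D}}^{(m)}_{\mathfrak{X},\lambda,\Q}$ this is precisely theorem \ref{C.G sections}; additivity extends it to all finite free $E$. For a general finitely generated $E$, take a finite presentation $F_{1}\to F_{0}\to E\to 0$ with $F_{i}$ finite free, apply $\La oc^{(m)}_{\mathfrak{X},\lambda}$ (right exact), and then apply $H^{0}(\mathfrak{X},\bullet)$. The vanishing of $H^{1}$ on the coherent image of $\La oc^{(m)}_{\mathfrak{X},\lambda}(F_{1})\to\La oc^{(m)}_{\mathfrak{X},\lambda}(F_{0})$ (theorem \ref{theo 3.1.5}) ensures that the resulting sequence remains exact on the right, and the five lemma combined with the free case delivers $\eta_{E}\simeq\mathrm{id}$.

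Dually, I would show that the counit $\varepsilon_{\Ea}\colon \La oc^{(m)}_{\mathfrak{X},\lambda}(H^{0}(\mathfrak{X},\Ea))\to\Ea$ is an isomorphism. Starting from the finite free resolution $\bigl(\widehat{\Da}^{(m)}_{\mathfrak{X},\lambda,\Q}\bigr)^{\oplus b}\to\bigl(\widehat{\Da}^{(m)}_{\mathfrak{X},\lambda,\Q}\bigr)^{\oplus a}\to\Ea\to 0$ of proposition \ref{prop 3.3.1}, apply $H^{0}(\mathfrak{X},\bullet)$; by the vanishing argument above one obtains a finite presentation of $H^{0}(\mathfrak{X},\Ea)$ which, after applying the right-exact functor $\La oc^{(m)}_{\mathfrak{X},\lambda}$, yields a commutative diagram whose two outer vertical maps are the isomorphisms $\La oc^{(m)}_{\mathfrak{X},\lambda}\bigl(H^{0}(\mathfrak{X},\widehat{\Da}^{(m)}_{\mathfrak{X},\lambda,\Q})^{\oplus ?}\bigr)\simeq\bigl(\widehat{\Da}^{(m)}_{\mathfrak{X},\lambda,\Q}\bigr)^{\oplus ?}$ coming from theorem \ref{C.G sections}. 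The five lemma then forces $\varepsilon_{\Ea}$ to be an isomorphism.

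The step I expect to require the most care is the counit argument, because the localization functor is defined through sheafification and one must ensure that the natural transformation $\La oc^{(m)}_{\mathfrak{X},\lambda}(H^{0}(\mathfrak{X},-))\to \mathrm{id}$ is really compatible with the chosen resolutions; this is essentially bookkeeping, but it depends on the fact, verified above, that $H^{0}$ is exact on the subcategory of coherent $\widehat{\Da}^{(m)}_{\mathfrak{X},\lambda,\Q}$-modules (a consequence of theorem \ref{theo 3.1.5} (ii)). Once both compositions are identified with the identity, one obtains an equivalence of abelian categories; in particular $\La oc^{(m)}_{\mathfrak{X},\lambda}$ is automatically exact, recovering the optional second assertion announced in the introduction.
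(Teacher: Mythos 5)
Your proposal is correct and follows essentially the same route as the paper, which simply defers to \cite[Proposition 5.2.1]{Huyghe1}: one checks that unit and counit are isomorphisms by reducing, via finite free presentations, to the tautological case $E=\widehat{\mathrm{D}}^{(m)}_{\mathfrak{X},\lambda,\Q}$ (respectively $\Ea=\widehat{\Da}^{(m)}_{\mathfrak{X},\lambda,\Q}$) using proposition \ref{prop 3.3.1}, the cohomology vanishing of theorem \ref{theo 3.1.5}, and right-exactness of $\La oc^{(m)}_{\mathfrak{X},\lambda}$. One small correction: for the free case of the unit, $\La oc^{(m)}_{\mathfrak{X},\lambda}(\widehat{\mathrm{D}}^{(m)}_{\mathfrak{X},\lambda,\Q})=\widehat{\Da}^{(m)}_{\mathfrak{X},\lambda,\Q}$ by construction, so the unit is the identity by definition of $\widehat{\mathrm{D}}^{(m)}_{\mathfrak{X},\lambda,\Q}=H^0(\mathfrak{X},\widehat{\Da}^{(m)}_{\mathfrak{X},\lambda,\Q})$; theorem \ref{C.G sections} is not what does the work here (it is only needed to rewrite the source category in terms of $\widehat{D}^{(m)}(\G)_{\lambda}\otimes_{\Z_p}\Q_p$ in the principal theorem), though its noetherianity consequence does guarantee that a finitely generated $\widehat{\mathrm{D}}^{(m)}_{\mathfrak{X},\lambda,\Q}$-module admits a finite presentation, which your argument implicitly uses.
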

\begin{proof}
The proof of \cite[Proposition 5.2.1]{Huyghe1}  carries over word by word.
\end{proof}
\justify
Given that any equivalence between abelian categroies is exact, theorems \ref{C.G sections} and \ref{Affinity} clearly imply
\begin{theo}(\textbf{Principal theorem})\label{PT}
Let us suppose that $\lambda:\text{Dist}(\T)\rightarrow\Z_p$ is a character of $\text{Dist}(\T)$ such that $\lambda + \rho\in\mathfrak{t}_\Q^*$ is a dominant and regular character of $\mathfrak{t}_\Q$.
\begin{itemize}
\item[(i)] The functors $\La oc^{(m)}_{\mathfrak{X},\lambda}$ and $H^{0}(\mathfrak{X},\bullet)$ are quasi-inverse equivalence of categories between the abelian categories of finitely generated (left) $\widehat{D}^{(m)}(\G)_{\lambda}\otimes_{\Z_p}\Q_p$-modules and coherent  $\widehat{\Da}^{(m)}_{\mathfrak{X},\lambda,\Q}$-modules.
\item[(ii)] The functor $\La oc^{(m)}_{\mathfrak{X},\lambda}$ is an exact functor.
\end{itemize}
\end{theo}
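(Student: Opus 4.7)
The plan is to deduce the principal theorem as a formal consequence of Theorem \ref{C.G sections} and Theorem \ref{Affinity}, which together already contain all of the substantive content. The role of this final statement is essentially to translate the equivalence of Theorem \ref{Affinity} from the language of $\widehat{\mathrm{D}}^{(m)}_{\mathfrak{X},\lambda,\Q}$-modules to the language of $\widehat{D}^{(m)}(\G)_{\lambda}\otimes_{\Z_p}\Q_p$-modules and to record the exactness consequence.

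First I would invoke Theorem \ref{C.G sections}, which provides a canonical isomorphism of $\Z_p$-algebras $\widehat{D}^{(m)}(\G)_{\lambda}\otimes_{\Z_p}\Q_p \xrightarrow{\simeq} \widehat{\mathrm{D}}^{(m)}_{\mathfrak{X},\lambda,\Q}$. Under this isomorphism the category of finitely generated left $\widehat{D}^{(m)}(\G)_{\lambda}\otimes_{\Z_p}\Q_p$-modules is canonically equivalent to the category of finitely generated left $\widehat{\mathrm{D}}^{(m)}_{\mathfrak{X},\lambda,\Q}$-modules, and, by functoriality of the tensor product defining the presheaf $\mathfrak{U}\mapsto \widehat{\Da}^{(m)}_{\mathfrak{X},\lambda,\Q}(\mathfrak{U})\otimes_{\widehat{\mathrm{D}}^{(m)}_{\mathfrak{X},\lambda,\Q}} E$, the localization functor of subsection 4.3 is compatibly identified with the one appearing in Theorem \ref{Affinity}. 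Composing this identification with the quasi-inverse equivalence of Theorem \ref{Affinity} between finitely generated $\widehat{\mathrm{D}}^{(m)}_{\mathfrak{X},\lambda,\Q}$-modules and coherent $\widehat{\Da}^{(m)}_{\mathfrak{X},\lambda,\Q}$-modules yields part (i).

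For part (ii), exactness is automatic. Both the source and target categories are abelian, and any equivalence between abelian categories preserves all finite limits and colimits, in particular kernels and cokernels, so each functor is exact. Hence $\La oc^{(m)}_{\mathfrak{X},\lambda}$, being one half of such an equivalence by part (i), is exact.

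There is essentially no obstacle at this final step: the deep input — the $p$-torsion boundedness of higher cohomology of coherent $\Db^{(m)}_{X,\lambda}$-modules (Proposition \ref{prop 2.2.3}), its transfer to $p$-adic completions (Theorem \ref{theo 3.1.5}), the generation by global sections (Proposition \ref{prop 3.3.1}), and the identification of global sections via Lemma \ref{G.sections} combined with the classical Beilinson--Bernstein theorem — has been absorbed into Theorems \ref{C.G sections} and \ref{Affinity}. The only point worth verifying explicitly is the compatibility of the two presentations of the localization functor under the isomorphism of Theorem \ref{C.G sections}; this is immediate from the universal property of the tensor product but should be mentioned for completeness.
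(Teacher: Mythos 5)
Your proposal is correct and follows exactly the paper's argument: the Principal Theorem is deduced immediately from Theorem \ref{C.G sections} (which identifies $\widehat{D}^{(m)}(\G)_{\lambda}\otimes_{\Z_p}\Q_p$ with $\widehat{\mathrm{D}}^{(m)}_{\mathfrak{X},\lambda,\Q}$) together with Theorem \ref{Affinity}, with exactness following formally from the fact that any equivalence between abelian categories is exact. Your additional remark about verifying the compatibility of the two presentations of the localization functor under the isomorphism of global sections is a sensible bit of bookkeeping that the paper leaves implicit.
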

\section{The sheaves $\Da^{\dag}_{\mathfrak{X},\lambda}$}
\justify
In this section we will study the problem of passing to the inductive limit when $m$ varies. Let us recall that $\xi:\widetilde{X}:=\G/\textbf{N}\rightarrow X:= \G/\B$ is a locally trivial $\T$-torsor (subsection 3.4). For every couple of positive integers $m\le m'$ there exists a canonical homomorphism of sheaves of filtered rings \cite[(2.2.1.5)]{Berthelot1}
\begin{eqnarray} \label{inductive system 0}
\rho_{m',m}:\Db^{(m)}_{\widetilde{X}}\rightarrow\Db^{(m')}_{\widetilde{X}}.
\end{eqnarray}
Let us fix a character $\lambda:\text{Dist}(\T)\rightarrow\Z_p$. As we have remarked for $m\le m'$ we have a commutative diagram
\begin{eqnarray}\label{Trans_Dist}
\begin{tikzcd}[row sep=tiny]
D^{(m)}(\G) \arrow[dr, "\lambda"] \arrow[dd, hook ] &    \\
                               & \Z_p. \\
D^{(m')}(\G) \arrow [ur, "\lambda "]
\end{tikzcd}
\end{eqnarray}
Moreover, by \cite[(1.4.7.1)]{Berthelot1} we dispose of a canonical morphism $\Pb^{n}_{\tilde{X},(m')}\rightarrow \Pb^{n}_{\tilde{X},(m)}$.
\justify
In section 3.3 we have defined a $\T$-equivariant structure $\Phi^{n}_{(m)}:\sigma^*\Pb^{n}_{\tilde{X},n}\rightarrow p_1^*\Pb^{n}_{\tilde{X},n}$ on $\Pb^{n}_{\tilde{X},n}$ (w recall for the reader that $\sigma$ denotes the right action of $\T$ on $\tilde{X}$ and $p_1$ the first projection). By universal property of $\Pb^{n}_{\tilde{X},(m)}$ the preceding $\T$-equivariant structures fit into a commutative diagram   
\begin{eqnarray*}
\begin{tikzcd}
\sigma^{*}\Pb^{n}_{\tilde{X},(m')} \arrow[d] \arrow [r, "\Phi^{n}_{(m')}"]
& p_1^{*}\Pb^{n}_{\tilde{X},(m')} \arrow[d]\\
\sigma^{*}\Pb^{n}_{\tilde{X},(m)} \arrow[r,"\Phi^{n}_{(m)}"]
& p_1^{*}\Pb^{(m)}_{\tilde{X},(m)}.
\end{tikzcd}
\end{eqnarray*}
This implies that the morphisms $\Pb^{n}_{\tilde{X},(m')}\rightarrow \Pb^{n}_{\tilde{X},(m)}$ are $\T$-equivariant and therefore by lemma \ref{Eq.Filtration} and lemma \ref{Eq.Dual}, we can conclude that the canonical maps in (\ref{inductive system 0}) are $\T$-equivariant. In this way, we dispose of morphisms $\widetilde{\Db^{(m)}}\rightarrow\widetilde{\Db^{(m')}}$. The diagram (\ref{Trans_Dist}) implies that we also have maps $\Db^{(m)}_{X,\lambda}\rightarrow\Db^{(m')}_{X,\lambda}$ and therefore an inductive system
\begin{eqnarray}\label{inductive system}
\widehat{\xi_*(\rho_{m',m})^{\T}}:\widehat{\Da}^{(m)}_{\mathfrak{X},\lambda}\rightarrow\widehat{\Da}^{(m')}_{\mathfrak{X},\lambda}.
\end{eqnarray}
\begin{defi}\label{daga}
We will denote by $\Da^{\dag}_{\mathfrak{X},\lambda}$ the limit of the inductive system (\ref{inductive system}), tensored with $\Q_p$
\begin{eqnarray*}
\Da^{\dag}_{\mathfrak{X},\lambda}:=\left(\varinjlim_{m}\widehat{\Da}^{(m)}_{\mathfrak{X},\lambda}\right)\otimes_{\Z_p}\Q_p.
\end{eqnarray*}
\end{defi}

\begin{rem}
Let us suppose that $\lambda\in\text{Hom}(\T,\G_m)$ is an algebraic character and let us denote by $\lambda' : \text{Dist}(\T)\rightarrow \Z_p$ the character of $\text{Dist}(\T)$ induced by the correspondence (\ref{Iso_chars_Z_p}) . Let us denote by $\Da^{\dag}_{\mathfrak{X}}(\lambda)$ the inductive limit of the sheaves defined in \ref{char_alg}. By proposition \ref{HSS} we have $\Da^{\dag}_{\mathfrak{X}}(\lambda) = \Da^{\dag}_{\mathfrak{X},\lambda '}$. 
\end{rem}

\subsection{The localization functor $\mathcal{L}oc^{\dag}_{\mathfrak{X},\lambda}$}
\justify
As in the subsection 4.3 let us denote by $\mathrm{D}^{\dag}_{\mathfrak{X},\lambda}:=H^{0}(\mathfrak{X},\Da^{\dag}_{\mathfrak{X},\lambda})$. In a completely analogous way as we have done in the subsection referenced above, we define the localization functor $\mathcal{L}oc^{\dag}_{\mathfrak{X},\lambda}$ from the category of finitely presented $\mathrm{D}^{\dag}_{\mathfrak{X},\lambda}$-modules to the category of coherent $\Da^{\dag}_{\mathfrak{X},\lambda}$-modules. This is, if $E$ denotes a finitely presented $\mathrm{D}^{\dag}_{\mathfrak{X},\lambda}$-module, then $\mathcal{L}oc^{\dag}_{\mathfrak{X},\lambda}(E)$ denotes the associated sheaf to the presheaf on $\mathfrak{X}$ defined by
\begin{eqnarray*}
\mathfrak{U}\mapsto \Da^{\dag}_{\mathfrak{X},\lambda}(\mathfrak{U})\otimes_{\mathrm{D}^{\dag}_{\mathfrak{X},\lambda}}E.
\end{eqnarray*}
\justify
It is clear that $\La oc^{\dag}_{\mathfrak{X},\lambda}$ is a functor from the category of finitely presented $\mathrm{D}^{\dag}_{\mathfrak{X},\lambda}$-modules to the category of coherent $\Da^{\dag}_{\mathfrak{X},\lambda}$-modules.
\subsection{The arithmetic Beilinson-Bernstein theorem for the sheaves $\Da^{\dag}_{\mathfrak{X},\lambda}$}
\justify
In this subsection we will concentrate our efforts to show the following Beilinson-Bernstein theorem for the sheaf of rings $\Da^{\dag}_{\mathfrak{X},\lambda}$.
\begin{theo}\label{BB for dag}
Let us suppose that $\lambda:\text{Dist}(\T)\rightarrow\Z_p$ is a character of $\text{Dist}(\T)$ such that $\lambda + \rho\in\mathfrak{t}_\Q^*$ is a dominant and regular character of $\mathfrak{t}_\Q$.
\begin{itemize}
\item[(i)] The functors $\mathcal{L}oc^{\dag}_{\mathfrak{X},\lambda}$ and $H^{0}(\mathfrak{X},\bullet)$ are quasi-inverse equivalence of categories between the abelian categories of finitely presented (left) $\mathrm{D}^{\dag}_{\mathfrak{X},\lambda}$-modules and coherent $\Da^{\dag}_{\mathfrak{X},\lambda}$-modules.
\item[(ii)] The functor $\mathcal{L}oc^{\dag}_{\mathfrak{X},\lambda}$ is an exact  functor.
\end{itemize}
\end{theo}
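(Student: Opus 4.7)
The plan is to reduce Theorem \ref{BB for dag} to the finite-level Beilinson-Bernstein equivalence of Theorem \ref{PT} and then pass to the inductive limit as $m\to\infty$, following the strategy Berthelot and Huyghe use for $\Da^{\dag}_{\mathfrak{X}}$ \cite{Berthelot1, Huyghe1}. First I would identify the ring of global sections: since $\mathfrak{X}$ is quasi-compact and separated, sheaf cohomology commutes with filtered colimits, so applying Theorem \ref{C.G sections} termwise in the inductive system and inverting $p$ yields a canonical isomorphism
\begin{eqnarray*}
\mathrm{D}^{\dag}_{\mathfrak{X},\lambda} \;=\; H^0\bigl(\mathfrak{X},\Da^{\dag}_{\mathfrak{X},\lambda}\bigr) \;\simeq\; \varinjlim_{m}\bigl(\widehat{D}^{(m)}(\G)_{\lambda}\otimes_{\Z_p}\Q_p\bigr) \;=\; D^{\dag}(\G)_{\lambda}.
\end{eqnarray*}
By the same colimit argument, Theorem \ref{theo 3.1.5}(ii) yields $H^i(\mathfrak{X},\Da^{\dag}_{\mathfrak{X},\lambda}(-r))=0$ for all $i>0$ and for suitable $r$, which together with Proposition \ref{prop 3.3.1} lets me show, as in \cite[Prop.~5.2.1]{Huyghe1}, that every coherent $\Da^{\dag}_{\mathfrak{X},\lambda}$-module is generated by its global sections and admits a resolution by finite free $\Da^{\dag}_{\mathfrak{X},\lambda}$-modules.

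The key structural step will be a level-reduction statement: any coherent $\Da^{\dag}_{\mathfrak{X},\lambda}$-module $\Ea^{\dag}$ comes from a finite level, i.e.\ there exist $m_0\in\N$ and a coherent $\widehat{\Da}^{(m_0)}_{\mathfrak{X},\lambda,\Q}$-module $\Ea^{(m_0)}$ with
\begin{eqnarray*}
\Ea^{\dag} \;\simeq\; \Da^{\dag}_{\mathfrak{X},\lambda}\otimes_{\widehat{\Da}^{(m_0)}_{\mathfrak{X},\lambda,\Q}}\Ea^{(m_0)};
\end{eqnarray*}
symmetrically, every finitely presented $D^{\dag}(\G)_{\lambda}$-module is obtained by base change from a finitely generated $\widehat{D}^{(m_0)}(\G)_{\lambda}\otimes_{\Z_p}\Q_p$-module, using that each ring in the inductive system is noetherian and that a finite presentation over the colimit lifts to some $m_0$. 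To establish these, I would deduce coherence of $\Da^{\dag}_{\mathfrak{X},\lambda}$ from Proposition \ref{prop coherence } together with flatness of the transition maps $\widehat{\Da}^{(m)}_{\mathfrak{X},\lambda,\Q}\to\widehat{\Da}^{(m')}_{\mathfrak{X},\lambda,\Q}$; this flatness reduces, via the local identifications $\widehat{\Da}^{(m)}_{\mathfrak{X},\lambda}|_{\mathfrak{U}}\simeq\widehat{\Da}^{(m)}_{\mathfrak{U}}$ coming from Proposition \ref{why a TDO}, to the corresponding flatness for Berthelot's sheaves $\widehat{\Da}^{(m)}_{\mathfrak{U}}\to\widehat{\Da}^{(m')}_{\mathfrak{U}}$, which is known.

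Granting these ingredients, I would construct the two adjunction natural transformations
\begin{eqnarray*}
\mathcal{L}oc^{\dag}_{\mathfrak{X},\lambda}\circ H^{0}(\mathfrak{X},\bullet)\;\Longrightarrow\;\mathrm{id},\qquad H^{0}(\mathfrak{X},\bullet)\circ\mathcal{L}oc^{\dag}_{\mathfrak{X},\lambda}\;\Longrightarrow\;\mathrm{id}
\end{eqnarray*}
and verify that they are isomorphisms by evaluating on modules at finite level $m_0$ and invoking Theorem \ref{PT}. This gives (i). Part (ii) is then formal, since any equivalence of abelian categories is automatically exact. I expect the main obstacle to lie in the level-reduction step and the attendant flatness of the transition maps of the completed twisted operators after inverting $p$: one must transport Berthelot's flatness results through the $\T$-invariant/central-reduction construction of $\widehat{\Da}^{(m)}_{\mathfrak{X},\lambda}$, and show that the descent of a coherent $\Da^{\dag}$-module to some finite level $m_0$ is compatible with the twist by $\lambda$. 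Once this technical point is secured, the remainder of the argument is a mechanical transcription of Huyghe's proof to the twisted setting.
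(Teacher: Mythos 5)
Your proposal follows essentially the same route as the paper: coherence of $\Da^{\dag}_{\mathfrak{X},\lambda}$ via flatness of the transition maps (reduced locally to Berthelot's flatness theorem through the untwisting of Proposition \ref{why a TDO}), Berthelot's descent lemma to bring any coherent $\Da^{\dag}_{\mathfrak{X},\lambda}$-module down to a finite level $m_0$, and the $\dagger$-level cohomology vanishing and global generation derived by passing the finite-level statements through the filtered colimit. The one point where you diverge is the final verification: the paper does not re-invoke Theorem \ref{PT} but instead reproduces Huyghe's finite-free-resolution argument directly at level $\dagger$ (take a presentation, apply $\La oc^{\dag}_{\mathfrak{X},\lambda}$ then $H^0(\mathfrak{X},\bullet)$, both exact in the right degrees, and compare via the five lemma). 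Your alternative of descending to $m_0$ and invoking Theorem \ref{PT} is workable but adds a compatibility check the paper avoids, namely that the two localization functors commute with base change across levels, i.e.\ $\La oc^{\dag}_{\mathfrak{X},\lambda}\bigl(\mathrm{D}^{\dag}_{\mathfrak{X},\lambda}\otimes_{\widehat{\mathrm{D}}^{(m_0)}_{\mathfrak{X},\lambda,\Q}}E\bigr)\simeq\Da^{\dag}_{\mathfrak{X},\lambda}\otimes_{\widehat{\Da}^{(m_0)}_{\mathfrak{X},\lambda,\Q}}\La oc^{(m_0)}_{\mathfrak{X},\lambda}(E)$. Finally, in your flatness step you should be explicit that the local identifications $\widehat{\Da}^{(m)}_{\mathfrak{X},\lambda}|_{\mathfrak{U}}\simeq\widehat{\Da}^{(m)}_{\mathfrak{U}}$ are compatible with the transition morphisms $\rho_{m',m}$, which is exactly the content of Remark \ref{compatibility of m}; without it the reduction to Berthelot's flatness theorem does not go through.
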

\justify
To do this, we recall the following facts.
\begin{rem}\label{compatibility of m}
\begin{itemize}
\item[(i)]
Let us recall that in remark \ref{Invarian global sections of group} we have stated that $D^{(m)}(\T)$ is isomorphic to the subspace of $\T$-invariants $H^{0}(\T,\Db^{(m)}_{\T})^{\T}$. The isomorphism is in fact induced by the the action of $\T$ on itself by right translations \cite[theorem 4.4.8.3]{HS1} and is compatible with $m$ variable. This means that if $Q_m$ and $Q_m'$ denote those isomorphisms for $m\le m'$, then we have a commutative diagram 
\begin{eqnarray*}
\begin{tikzcd}
D^{(m)}(\T) \arrow{r}{Q_m} \arrow{d}{\phi_{m',m}}
& H^{0}(\T,\Db^{(m)}_{\T})^{\T} \arrow{d}{(\gamma_{m',m})^{\T}} \\
D^{(m')}(\T) \arrow{r}{Q_{m'}}
& H^{0}(\T,\Db^{(m')}_{\T})^{\T}
\end{tikzcd}
\end{eqnarray*}
where the morphisms $\phi_{m',m}$ are obtained by dualizing the canonical morphisms $\psi_{m',m}$ in subsection 2.4 and the morphisms $\gamma_{m',m}$ are defined in (\ref{inductive system 0}).
\item[(ii)] Again by remark \ref{Invarian global sections of group} the isomorphism of proposition \ref{why a TDO} are compatible for varying $m$.
\end{itemize}
\end{rem}
\justify
Let us recall the following proposition.
\begin{prop} \cite[Proposition 3.6.1]{Berthelot1}
Let $Y$ be a topological space and $\{\Db_{i}\}_{i\in J}$ be a filtered inductive system of coherent sheaves of rings on $Y$, such that for any $i\le j$ the morphisms $\Db_i\rightarrow\Db_j$ are flat. Then the sheaf $\Db^{\dag}:=\varinjlim_{i\in J}\Db_{i}$ is a coherent sheaf of rings.
\end{prop}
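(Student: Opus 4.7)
The plan is to verify the defining coherence condition for the sheaf of rings $\Db^{\dag}$: namely that for every open $U\subseteq Y$ and every morphism $\varphi:(\Db^{\dag})^{n}|_{U}\to\Db^{\dag}|_{U}$ of left $\Db^{\dag}|_{U}$-modules, the kernel $\ker(\varphi)$ is a locally finitely generated subsheaf. Local finite generation of $\Db^{\dag}$ over itself is automatic (the unit section generates), so this is the only thing to establish; the argument for right modules is symmetric, giving coherence on both sides.

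First I would exploit the filteredness of $J$. The map $\varphi$ is determined by sections $s_{1},\ldots,s_{n}\in\Db^{\dag}(U)$. Fix a point $y\in U$. Because a filtered colimit of sheaves is computed on stalks as the filtered colimit of stalks, there is an open neighbourhood $V\subseteq U$ of $y$ and an index $i\in J$ such that each $s_{k}|_{V}$ is the image of some $\tilde{s}_{k}\in\Db_{i}(V)$ under the canonical morphism $\rho_{i}:\Db_{i}\to\Db^{\dag}$. Let $\tilde{\varphi}_{i}:(\Db_{i})^{n}|_{V}\to\Db_{i}|_{V}$ be the map determined by the $\tilde{s}_{k}$, and for every $j\geq i$ let $\tilde{\varphi}_{j}:(\Db_{j})^{n}|_{V}\to\Db_{j}|_{V}$ be the corresponding map obtained by pushing the $\tilde{s}_{k}$ forward along $\Db_{i}\to\Db_{j}$; write $\Kb_{j}:=\ker(\tilde{\varphi}_{j})$.

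The key step is to propagate finite generation through the system. Since $\Db_{i}$ is coherent by hypothesis, $\Kb_{i}$ is locally finitely generated, so after further shrinking $V$ we may pick finitely many sections $r_{1},\ldots,r_{m}\in\Kb_{i}(V)\subseteq\Db_{i}(V)^{n}$ generating $\Kb_{i}|_{V}$. Flatness of the transition morphism $\Db_{i}\to\Db_{j}$ means that tensoring the exact sequence $0\to\Kb_{i}\to\Db_{i}^{n}\to\Db_{i}$ with $\Db_{j}$ preserves exactness, giving a canonical identification $\Kb_{j}\simeq\Kb_{i}\otimes_{\Db_{i}}\Db_{j}$ generated by the images of $r_{1},\ldots,r_{m}$. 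Since filtered colimits of sheaves of abelian groups are exact, passing to the colimit over $j\geq i$ preserves the kernel:
\begin{equation*}
\ker(\varphi)|_{V}\;=\;\varinjlim_{j\geq i}\Kb_{j}\;\simeq\;\Kb_{i}\otimes_{\Db_{i}}\Db^{\dag}|_{V},
\end{equation*}
and this is generated as a $\Db^{\dag}|_{V}$-submodule of $(\Db^{\dag})^{n}|_{V}$ by the images of $r_{1},\ldots,r_{m}$ under $(\rho_{i}|_{V})^{n}$. Hence $\ker(\varphi)$ is locally finitely generated at $y$, and since $y\in U$ was arbitrary the coherence condition holds.

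The only delicate point in the argument - and the one I would write out most carefully - is the interchange of the filtered colimit with the kernel; it reduces to two standard facts that must be invoked correctly at the sheaf level: (a) the filtered colimit of sheaves is computed at each stalk as the filtered colimit of the stalks, and (b) filtered colimits of modules commute with finite limits, so exactness of $0\to\Kb_{j}\to\Db_{j}^{n}\to\Db_{j}$ passes to the colimit on stalks and hence on sheaves. Everything else (the flatness step, the lifting of finitely many sections, the use of coherence of the individual $\Db_{i}$) is formal once this interchange is in place.
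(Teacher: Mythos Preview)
Your argument is correct and is the standard proof of this result. Note, however, that the paper does not prove this proposition: it is quoted verbatim from \cite[Proposition 3.6.1]{Berthelot1} and used as a black box in the subsequent proof of Proposition~\ref{coh. dag}. Your write-up reconstructs Berthelot's argument faithfully --- lift the defining sections of $\varphi$ locally to some level $i$, use coherence of $\Db_i$ to get a finitely generated kernel there, use flatness of the transition maps to identify $\Kb_j$ with $\Db_j\otimes_{\Db_i}\Kb_i$, and pass to the filtered colimit using its exactness on sheaves of abelian groups. The one place to be slightly more explicit is the flatness step: you invoke that flat base change preserves kernels, which follows by factoring $\tilde{\varphi}_i$ through its image and applying flatness to the two resulting short exact sequences; you say this but it is worth spelling out, since $\tilde{\varphi}_i$ is not assumed surjective.
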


\begin{prop}\label{coh. dag}
The sheaf of rings $\Da^{\dag}_{\mathfrak{X},\lambda}$ is coherent. 
\end{prop}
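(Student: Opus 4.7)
The plan is to apply the cited Berthelot proposition directly. We already dispose of the two ingredients required for that statement: coherence of each term of the inductive system, and flatness of the transition morphisms. Coherence of $\widehat{\Da}^{(m)}_{\mathfrak{X},\lambda,\Q}$ for every $m\in\N$ has been established in Proposition \ref{prop coherence } (ii), so the only real work is checking that for every pair $m\le m'$ the canonical map
\begin{eqnarray*}
\widehat{\Da}^{(m)}_{\mathfrak{X},\lambda,\Q}\longrightarrow \widehat{\Da}^{(m')}_{\mathfrak{X},\lambda,\Q}
\end{eqnarray*}
obtained after inverting $p$ from \eqref{inductive system} is flat.

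First I would reduce flatness to a local question on the basis $\Ba$ of affine opens of $\mathfrak{X}$ introduced in Proposition \ref{prop coherence } (i), namely those formal opens obtained as $p$-adic completions of open affines $U\in\Sb$ trivializing the torsor $\xi$. On such a $\mathfrak{U}$, Proposition \ref{why a TDO} produces an isomorphism of filtered $\Z_p$-algebras $\Db^{(m)}_{X,\lambda}|_U \simeq \Db^{(m)}_X|_U$, and by Remark \ref{compatibility of m} (ii) these isomorphisms are compatible as $m$ varies. Passing to $p$-adic completions on $\mathfrak{U}$ and tensoring with $\Q_p$, one obtains a commutative square identifying the transition morphism $\widehat{\Da}^{(m)}_{\mathfrak{X},\lambda,\Q}(\mathfrak{U}) \to \widehat{\Da}^{(m')}_{\mathfrak{X},\lambda,\Q}(\mathfrak{U})$ with the corresponding transition morphism $\widehat{\Da}^{(m)}_{\mathfrak{X},\Q}(\mathfrak{U}) \to \widehat{\Da}^{(m')}_{\mathfrak{X},\Q}(\mathfrak{U})$ for Berthelot's untwisted sheaves.

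Second, I would invoke the known flatness of the transition maps between Berthelot's sheaves of $p$-adically completed arithmetic differential operators on a smooth formal scheme, in the form given in \cite[3.5.3]{Berthelot1} (the maps $\widehat{\Db}^{(m)}_{\mathfrak{X},\Q} \to \widehat{\Db}^{(m')}_{\mathfrak{X},\Q}$ are flat on both sides). Combining this with the local identification above yields the required flatness for our twisted sheaves on the basis $\Ba$, and since $\Ba$ is a basis of the topology of $\mathfrak{X}$ this is enough to conclude that the morphism of sheaves is flat everywhere.

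The main obstacle, and the only non-formal point, is checking that the isomorphism of Proposition \ref{why a TDO} genuinely intertwines the two transition systems (on the twisted and the untwisted side). This boils down to tracing through the construction of $\widetilde{\Db^{(m)}}$ as $\T$-invariants of $\xi_*\Db^{(m)}_{\widetilde{X}}$ and checking compatibility with $m$ via the functoriality of the $\T$-equivariant structures (already observed just before Definition \ref{daga}) together with the compatibility of the Künneth-type identification $\Db^{(m)}_{U\times\T}(U\times\T)^{\T} \simeq \Db^{(m)}_{X}(U)\otimes_{\Z_p} D^{(m)}(\T)$ with $m$, guaranteed by Remark \ref{compatibility of m} (i). Once this naturality has been recorded, Berthelot's Proposition \ref{coh. dag} applies verbatim and gives the coherence of $\Da^{\dag}_{\mathfrak{X},\lambda}$.
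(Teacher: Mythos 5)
Your proof follows the same route as the paper's: reduce coherence to flatness of the transition maps via the quoted Berthelot result \cite[Proposition 3.6.1]{Berthelot1}, localize to a formal affine $\mathfrak{U}$ coming from $U\in\Sb$, use Proposition \ref{why a TDO} together with Remark \ref{compatibility of m} to identify the twisted transition map with the untwisted one, and conclude by the flatness theorem \cite[th\'eor\`eme 3.5.3]{Berthelot1}. The only slip is the final reference, which should point to the Berthelot proposition quoted immediately before the statement rather than to Proposition \ref{coh. dag} itself.
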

\begin{proof}
The previous proposition tells us that we only need to show that the morphisms $\widehat{\xi_{*}(\rho_{m',m})^{\T}}_{\Q}$ are flat. As this is a local property we can take $U\in\Sb$ and to verify this  property over the formal completion $\mathfrak{U}$. In this case, remark \ref{compatibility of m} and the argument used in the proof of the first part of proposition \ref{prop coherence } give us, by functoriality, the following commutative diagram  

\begin{eqnarray*}
\begin{tikzcd}[column sep=15ex]
\widehat{\Da}^{(m)}_{\mathfrak{X},\lambda,\Q}(\mathfrak{U}) \arrow{r}{\widehat{\xi_{*}(\rho_{m',m})^{\T}}_{\Q}( \mathfrak{U})} \arrow[d, "\simeqd"]
&\widehat{ \Da}^{(m')}_{\mathfrak{X},\lambda,\Q}(\mathfrak{U}) \arrow[d, "\simeqd"] \\
\widehat{\Da}^{(m)}_{\mathfrak{X},\Q}(\mathfrak{U}) \arrow{r}{ \widehat{\rho}_{m',m,\Q}( \mathfrak{U})}
& \widehat{\Da}^{(m')}_{\mathfrak{X},\Q}(\mathfrak{U})  
\end{tikzcd} 
\end{eqnarray*}
The flatness theorem \cite[theorem 3.5.3]{Berthelot1} states that the lower morphism is flat and so is the morphism on the top.
\end{proof}
\justify
From now on, we suppose that $\lambda:\text{Dist}(\T)\rightarrow\Z_p$ is a character of $\text{Dist}(\T)$ such that $\lambda + \rho\in\mathfrak{t}_\Q^*$ is a dominant and regular character of $\mathfrak{t}_\Q$.

\begin{lem}\label{lemma 5.2.4}
For every coherent $\Da^{\dag}_{\mathfrak{X},\lambda}$-module $\Ea$ there exists $m\ge 0$, a coherent $\widehat{\Da}^{(m)}_{\mathfrak{X},\lambda,\Q}$-module $\Ea_m$ and an isomorphism of $\Da^{\dag}_{\mathfrak{X},\lambda}$-modules
\begin{eqnarray*}
\tau: \Da^{\dag}_{\mathfrak{X},\lambda}\otimes_{\widehat{\Da}^{(m)}_{\mathfrak{X},\lambda,\Q}}\Ea_{m}\xrightarrow{\simeq}\Ea.
\end{eqnarray*}
Moreover, if $(m',\Ea_{m'},\tau')$ is another such triple, then there exists $l\ge\text{max}\{m,m'\}$ and an isomorphism of $\widehat{\Da}^{(l)}_{\mathfrak{X},\lambda,\Q}$-modules
\begin{eqnarray*}
\tau_{l}:\widehat{\Da}^{(l)}_{\mathfrak{X},\lambda,\Q}\otimes_{\widehat{\Da}_{\mathfrak{X},\lambda,\Q}^{(m)}}\Ea_{m}\xrightarrow{\simeq} \widehat{\Da}^{(l)}_{\mathfrak{X},\lambda,\Q}\otimes_{\widehat{\Da}_{\mathfrak{X},\lambda,\Q}^{(m')}}\Ea_{m'}
\end{eqnarray*}
such that $\tau'\circ\left(id_{\Da^{\dag}_{\mathfrak{X},\lambda}}\otimes\tau_l\right)=\tau$.
\end{lem}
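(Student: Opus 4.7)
The plan is to imitate Berthelot's treatment of coherent $\Da^{\dag}$-modules \cite[Proposition 3.6.2]{Berthelot1}, transported to the twisted setting. The two inputs on which the argument rests are the coherence of $\Da^{\dag}_{\mathfrak{X},\lambda}$ (Proposition \ref{coh. dag}) and, more precisely, the flatness of the transition morphisms $\widehat{\Da}^{(m)}_{\mathfrak{X},\lambda,\Q}\to\widehat{\Da}^{(m')}_{\mathfrak{X},\lambda,\Q}$ established in its proof, together with the noetherianity and quasi-compactness of $\mathfrak{X}$ and the fact that each $\widehat{\Da}^{(m)}_{\mathfrak{X},\lambda,\Q}$ has noetherian sections on the basis $\Ba$ of Proposition \ref{prop coherence }.

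For existence, I would cover $\mathfrak{X}$ by finitely many open affines $\mathfrak{U}_1,\ldots,\mathfrak{U}_n\in\Ba$ on which $\Ea$ admits a finite presentation
\[
\bigl(\Da^{\dag}_{\mathfrak{X},\lambda}|_{\mathfrak{U}_i}\bigr)^{a_i}\xrightarrow{\varphi_i}\bigl(\Da^{\dag}_{\mathfrak{X},\lambda}|_{\mathfrak{U}_i}\bigr)^{b_i}\longrightarrow \Ea|_{\mathfrak{U}_i}\longrightarrow 0,
\]
which exists because $\Ea$ is coherent and $\widehat{\Da}^{(m')}_{\mathfrak{X},\lambda,\Q}(\mathfrak{U}_i)$ is noetherian for every $m'$. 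Each $\varphi_i$ is a matrix with finitely many entries in $\Gamma(\mathfrak{U}_i,\Da^{\dag}_{\mathfrak{X},\lambda})=\varinjlim_{m}\Gamma(\mathfrak{U}_i,\widehat{\Da}^{(m)}_{\mathfrak{X},\lambda,\Q})$, so there exists a common level $m$ at which all the $\varphi_i$ are defined as matrices $\varphi_{m,i}$ on $\mathfrak{U}_i$. I would then set $\Ea_{m,i}:=\mathrm{coker}(\varphi_{m,i})$; the flatness of $\widehat{\Da}^{(m)}_{\mathfrak{X},\lambda,\Q}\to\Da^{\dag}_{\mathfrak{X},\lambda}$ yields a canonical isomorphism $\Da^{\dag}_{\mathfrak{X},\lambda}\otimes_{\widehat{\Da}^{(m)}_{\mathfrak{X},\lambda,\Q}}\Ea_{m,i}\simeq \Ea|_{\mathfrak{U}_i}$.

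To glue the $\Ea_{m,i}$ into a single coherent $\widehat{\Da}^{(m)}_{\mathfrak{X},\lambda,\Q}$-module, the key tool will be the standard identification
\[
\varinjlim_{m'\ge m}\mathrm{Hom}_{\widehat{\Da}^{(m')}_{\mathfrak{X},\lambda,\Q}}\bigl(\widehat{\Da}^{(m')}\otimes M,\widehat{\Da}^{(m')}\otimes N\bigr)\xrightarrow{\simeq}\mathrm{Hom}_{\Da^{\dag}_{\mathfrak{X},\lambda}}\bigl(\Da^{\dag}\otimes M,\Da^{\dag}\otimes N\bigr),
\]
valid for finitely presented modules $M,N$ over $\widehat{\Da}^{(m)}_{\mathfrak{X},\lambda,\Q}$ on a quasi-compact open; it follows from the coherence of each $\widehat{\Da}^{(m')}_{\mathfrak{X},\lambda,\Q}$ and the commutation of filtered colimits with finite limits. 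Applying it on the finitely many pairwise intersections $\mathfrak{U}_i\cap\mathfrak{U}_j$ and checking the cocycle identities on triple intersections (which hold automatically after base change to $\Da^{\dag}_{\mathfrak{X},\lambda}$), I may enlarge $m$ to some $m'$ so that all gluing data already exist and satisfy the cocycle at level $m'$; the $\Ea_{m',i}$ then glue to the required $\Ea_{m'}$, and the tautological map $\tau$ is an isomorphism by flatness.

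For the uniqueness, given a second triple $(m',\Ea_{m'},\tau')$, the composite $\tau^{-1}\circ\tau'$ is an isomorphism of $\Da^{\dag}_{\mathfrak{X},\lambda}$-modules $\Da^{\dag}\otimes\Ea_{m'}\xrightarrow{\simeq}\Da^{\dag}\otimes\Ea_{m}$; the same Hom--colimit identification, applied locally on a finite cover, produces a morphism $\tau_{l}$ at some level $l\ge\max\{m,m'\}$ whose kernel and cokernel are coherent and become zero after base change to $\Da^{\dag}_{\mathfrak{X},\lambda}$, hence already vanish at some possibly larger $l$. I expect the main obstacle to be exactly this descent of gluing and vanishing data to a finite level: it depends crucially on the Hom--colimit principle above, which in turn rests on the noetherian finiteness of $\widehat{\Da}^{(m)}_{\mathfrak{X},\lambda,\Q}$ on the basis $\Ba$ together with the quasi-compactness of $\mathfrak{X}$.
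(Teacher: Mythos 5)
Your argument is correct, and it is precisely the content of Berthelot's Proposition 3.6.2(ii), which the paper invokes verbatim as the proof after checking that $\mathfrak{X}$ is quasi-compact and separated and that the sheaves $\widehat{\Da}^{(m)}_{\mathfrak{X},\lambda,\Q}$ satisfy the hypotheses (3.4.1) of Berthelot's framework. What you have done is unfold the proof of that cited proposition — finite affine cover, descent of presentations to a finite level via the filtered-colimit description of sections, flat base change, and descent of gluing and vanishing data by the Hom--colimit principle — so the mathematical content is the same as the paper's, merely spelled out rather than delegated to a reference.
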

\begin{proof}
This is \cite[proposition 3.6.2 (ii)]{Berthelot1}. We remark that $\mathfrak{X}$ is quasi-compact and separated, and the sheaf $\hat{\Da}^{(m)}_{\mathfrak{X},\lambda,\Q}$ satisfies the conditions in \cite[3.4.1]{Berthelot1}.
\end{proof}

\begin{prop}
Let $\Ea$ be a coherent $\Da^{\dag}_{\mathfrak{X},\lambda}$-module.
\begin{itemize}
\item[(i)] There exists an integer $r(\Ea)$ such that, for all $r\ge r(\Ea)$ there is $a\in\N$ and an epimorphism of $\Da^{\dag}_{\mathfrak{X},\lambda}$-modules
\begin{eqnarray*}
\left(\Da^{\dag}_{\mathfrak{X},\lambda}(-r)\right)^{\oplus a}\rightarrow \Ea\rightarrow 0.
\end{eqnarray*}
\item[(ii)] For all $i>0$ one has $H^{i}(\mathfrak{X},\Ea)=0$.
\end{itemize}
\end{prop}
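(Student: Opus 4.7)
The plan is to reduce both assertions to the corresponding finite-level statements in Theorem \ref{theo 3.1.5}, via the comparison provided by Lemma \ref{lemma 5.2.4}. Concretely, given a coherent $\Da^{\dag}_{\mathfrak{X},\lambda}$-module $\Ea$, I invoke Lemma \ref{lemma 5.2.4} to obtain some level $m\in\N$, a coherent $\widehat{\Da}^{(m)}_{\mathfrak{X},\lambda,\Q}$-module $\Ea_m$, and an isomorphism $\Da^{\dag}_{\mathfrak{X},\lambda}\otimes_{\widehat{\Da}^{(m)}_{\mathfrak{X},\lambda,\Q}}\Ea_m\xrightarrow{\simeq}\Ea$. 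This reduction will be the common first step for both parts.

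For part (i), I would apply Theorem \ref{theo 3.1.5}(i) to the coherent $\widehat{\Da}^{(m)}_{\mathfrak{X},\lambda,\Q}$-module $\Ea_m$, obtaining an integer $r(\Ea_m)$ such that for every $r\ge r(\Ea_m)$ there exists $a\in\N$ and an epimorphism $\bigl(\widehat{\Da}^{(m)}_{\mathfrak{X},\lambda,\Q}(-r)\bigr)^{\oplus a}\rightarrow\Ea_m\rightarrow 0$. Tensoring this surjection with $\Da^{\dag}_{\mathfrak{X},\lambda}$ over $\widehat{\Da}^{(m)}_{\mathfrak{X},\lambda,\Q}$ (which preserves surjections as a right exact functor), and using the elementary compatibility $\Da^{\dag}_{\mathfrak{X},\lambda}\otimes_{\widehat{\Da}^{(m)}_{\mathfrak{X},\lambda,\Q}}\widehat{\Da}^{(m)}_{\mathfrak{X},\lambda,\Q}(-r)=\Da^{\dag}_{\mathfrak{X},\lambda}(-r)$ (the twist by $\Ob(-r)$ being an $\Ob_\mathfrak{X}$-linear operation), I obtain the desired epimorphism $\bigl(\Da^{\dag}_{\mathfrak{X},\lambda}(-r)\bigr)^{\oplus a}\rightarrow\Ea\rightarrow 0$, taking $r(\Ea):=r(\Ea_m)$.

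For part (ii), the key point is to rewrite $\Ea$ as an inductive limit of coherent modules of finite level. Using Proposition \ref{coh. dag} and the flatness of each transition map $\widehat{\Da}^{(m)}_{\mathfrak{X},\lambda,\Q}\rightarrow\widehat{\Da}^{(m')}_{\mathfrak{X},\lambda,\Q}$ (for $m\le m'$), I have
\begin{eqnarray*}
\Ea\;\simeq\;\varinjlim_{m'\ge m}\widehat{\Da}^{(m')}_{\mathfrak{X},\lambda,\Q}\otimes_{\widehat{\Da}^{(m)}_{\mathfrak{X},\lambda,\Q}}\Ea_{m},
\end{eqnarray*}
where each $\Ea_{m'}:=\widehat{\Da}^{(m')}_{\mathfrak{X},\lambda,\Q}\otimes_{\widehat{\Da}^{(m)}_{\mathfrak{X},\lambda,\Q}}\Ea_{m}$ is a coherent $\widehat{\Da}^{(m')}_{\mathfrak{X},\lambda,\Q}$-module by flatness. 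Since $\mathfrak{X}$ is a noetherian (in particular, quasi-compact and quasi-separated) topological space, sheaf cohomology commutes with filtered direct limits, so
\begin{eqnarray*}
H^{i}(\mathfrak{X},\Ea)\;\simeq\;\varinjlim_{m'\ge m}H^{i}(\mathfrak{X},\Ea_{m'}).
\end{eqnarray*}
Theorem \ref{theo 3.1.5}(ii) applied to each coherent module $\Ea_{m'}$ yields $H^{i}(\mathfrak{X},\Ea_{m'})=0$ for every $i>0$, hence $H^{i}(\mathfrak{X},\Ea)=0$.

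The main obstacle I anticipate is technical rather than conceptual: verifying cleanly that the inductive limit description of $\Ea$ holds globally on $\mathfrak{X}$ (not merely locally) and that the transition maps are indeed flat, so that each $\Ea_{m'}$ is genuinely coherent over the corresponding level. Both facts rest on Lemma \ref{lemma 5.2.4} and the flatness established in the proof of Proposition \ref{coh. dag}, so the required ingredients are already in place.
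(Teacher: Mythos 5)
Your proof is correct and follows essentially the same route as the paper: both invoke Lemma \ref{lemma 5.2.4} to descend to a coherent $\widehat{\Da}^{(m)}_{\mathfrak{X},\lambda,\Q}$-module $\Ea_m$, apply Theorem \ref{theo 3.1.5} at level $m$ (tensoring the resulting surjection for part (i)), and for part (ii) write $\Ea$ as a filtered colimit of coherent finite-level modules and use that cohomology on the noetherian space $\mathfrak{X}$ commutes with filtered direct limits. The only cosmetic difference is that you spell out the flatness of transition maps (from Proposition \ref{coh. dag}) more explicitly, which the paper leaves implicit.
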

\begin{proof}\footnote{This is exactly as in \cite[theorem 4.2.8]{HPSS}}
Let $\Ea$ be a coherent $\Da^{\dag}_{\mathfrak{X},\lambda}$-coherent module. The preceding proposition tells us that there exists $m\in\N$, a coherent $\widehat{\Da}^{(m)}_{\mathfrak{X},\lambda,\Q}$-module $\Ea_m$ and an isomorphism of $\Da^{\dag}_{\mathfrak{X},\lambda}$-modules
\begin{eqnarray*}
\tau:\Da^{\dag}_{\mathfrak{X},\lambda}\otimes_{\widehat{\Da}^{(m)}_{\mathfrak{X},\lambda,\Q}}\Ea_m\xrightarrow{\simeq}\Ea.
\end{eqnarray*} 
Now we use theorem \ref{theo 3.1.5} for $\Ea_m$ and we get the desired surjection in (i) after tensoring with $\Da^{\dag}_{\mathfrak{X},\lambda}$. To show (ii) we may use the fact that, as $\mathfrak{X}$ is a noetherian topological space, cohomology commutes with direct limites. Therefore, given that $\widehat{\Da}^{(l)}_{\mathfrak{X},\lambda,\Q}\otimes_{{\widehat{\Da}^{(m)}_{\mathfrak{X},\lambda,\Q}}}\Ea_{m}$ is a coherent $\Da^{(l)}_{\mathfrak{X},\lambda,\Q}$-module for every $l\ge m$, we have 
\begin{eqnarray*}
H^{i}(\mathfrak{X},\Ea)=\varinjlim_{l\ge m}H^{i}\left(\mathfrak{X},\widehat{\Da}^{(l)}_{\mathfrak{X},\lambda,\Q}\otimes_{{\widehat{\Da}^{(m)}_{\mathfrak{X},\lambda,\Q}}}\Ea_{m}\right)=0,
\end{eqnarray*}
for every $i>0$.
\end{proof}

\begin{prop}
Let $\Ea$ be a coherent $\Da^{\dag}_{\mathfrak{X},\lambda}$-module. Then $\Ea$ is generated by its global sections as $\Da^{\dag}_{\mathfrak{X},\lambda}$-module. Moreover, $\Ea$ has a resolution by finite free $\Da^{\dag}_{\mathfrak{X},\lambda}$-modules and $H^0(\mathfrak{X},\Ea)$ is a $D^{\dag}_{\mathfrak{X},\lambda}$-module of finite presentation. 
\end{prop}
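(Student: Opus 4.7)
My plan is to bootstrap all three assertions from the finite level Theorem \ref{PT} via the descent of coherent modules provided by Lemma \ref{lemma 5.2.4}. Given a coherent $\Da^{\dag}_{\mathfrak{X},\lambda}$-module $\Ea$, that lemma furnishes an integer $m$, a coherent $\widehat{\Da}^{(m)}_{\mathfrak{X},\lambda,\Q}$-module $\Ea_m$ and an isomorphism $\Da^{\dag}_{\mathfrak{X},\lambda}\otimes_{\widehat{\Da}^{(m)}_{\mathfrak{X},\lambda,\Q}}\Ea_m\xrightarrow{\simeq}\Ea$. Applying Theorem \ref{PT} at level $m$, the module $\Ea_m$ is identified with $\La oc^{(m)}_{\mathfrak{X},\lambda}(E_m)$ where $E_m:=H^0(\mathfrak{X},\Ea_m)$ is finitely generated over $\widehat{\mathrm{D}}^{(m)}_{\mathfrak{X},\lambda,\Q}$. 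A choice of finitely many generators of $E_m$ yields a surjection $(\widehat{\Da}^{(m)}_{\mathfrak{X},\lambda,\Q})^{\oplus a_0}\twoheadrightarrow\Ea_m$ classified by the images of these generators in $H^0(\mathfrak{X},\Ea_m)$.

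Next, I would tensor this surjection over $\widehat{\Da}^{(m)}_{\mathfrak{X},\lambda,\Q}$ with $\Da^{\dag}_{\mathfrak{X},\lambda}$. The flatness of each transition morphism $\widehat{\Da}^{(m)}_{\mathfrak{X},\lambda,\Q}\to\widehat{\Da}^{(m')}_{\mathfrak{X},\lambda,\Q}$, established inside the proof of Proposition \ref{coh. dag}, combined with the stability of flatness under filtered colimits, makes $\Da^{\dag}_{\mathfrak{X},\lambda}$ flat over $\widehat{\Da}^{(m)}_{\mathfrak{X},\lambda,\Q}$. The resulting surjection $(\Da^{\dag}_{\mathfrak{X},\lambda})^{\oplus a_0}\twoheadrightarrow\Ea$ is classified by the images in $H^0(\mathfrak{X},\Ea)$ of the original $a_0$ global sections of $\Ea_m$, through the functorial map $H^0(\mathfrak{X},\Ea_m)\to H^0(\mathfrak{X},\Ea)$. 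This already shows that $\Ea$ is generated by finitely many global sections. Since $\Da^{\dag}_{\mathfrak{X},\lambda}$ is a coherent sheaf of rings by Proposition \ref{coh. dag}, the kernel of this surjection is again a coherent $\Da^{\dag}_{\mathfrak{X},\lambda}$-module, so iterating yields a resolution of $\Ea$ by finite free $\Da^{\dag}_{\mathfrak{X},\lambda}$-modules.

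For the finite presentation of $H^0(\mathfrak{X},\Ea)$, I would truncate the resolution to $(\Da^{\dag}_{\mathfrak{X},\lambda})^{\oplus a_1}\to(\Da^{\dag}_{\mathfrak{X},\lambda})^{\oplus a_0}\to\Ea\to 0$ and apply $H^0(\mathfrak{X},\cdot)$. Splitting the presentation at its middle image (a coherent $\Da^{\dag}_{\mathfrak{X},\lambda}$-module, as a subquotient of coherent modules) into two short exact sequences and using the vanishing $H^i(\mathfrak{X},\Fa)=0$ for every coherent $\Fa$ and every $i>0$ (part (ii) of the immediately preceding proposition), both long exact sequences degenerate so that $H^0$ preserves the right exactness throughout. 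Combined with the identification $H^0(\mathfrak{X},\Da^{\dag}_{\mathfrak{X},\lambda})=\mathrm{D}^{\dag}_{\mathfrak{X},\lambda}$, this yields the desired finite presentation of $H^0(\mathfrak{X},\Ea)$.

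The main technical hinge of the argument is the combined use of the flatness of $\Da^{\dag}_{\mathfrak{X},\lambda}$ over each $\widehat{\Da}^{(m)}_{\mathfrak{X},\lambda,\Q}$ together with the cohomological vanishing $H^{>0}=0$ on coherent modules (both now available in the text), so that the finite level equivalence can be transported through the inductive limit without breaking exactness. Once these are granted, the argument becomes a formal bootstrap from Theorem \ref{PT}, closely paralleling the untwisted arguments of \cite{Huyghe1, Huyghe2}.
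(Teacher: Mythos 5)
Your proposal is correct and follows essentially the same bootstrap as the paper: descend to a coherent $\widehat{\Da}^{(m)}_{\mathfrak{X},\lambda,\Q}$-module $\Ea_m$ via Lemma \ref{lemma 5.2.4}, use the level-$m$ results to obtain finite generation by global sections and a finite free presentation, then transport through the inductive limit using the flatness of $\Da^{\dag}_{\mathfrak{X},\lambda}$ over $\widehat{\Da}^{(m)}_{\mathfrak{X},\lambda,\Q}$ and the vanishing of higher cohomology. The only superficial difference is that the paper directly cites Proposition \ref{prop 3.3.1} to get a finite free resolution of $\Ea_m$ and then tensors it in one step, whereas you construct the first surjection from Theorem \ref{PT} and iterate using the coherence of $\Da^{\dag}_{\mathfrak{X},\lambda}$; these are the same argument packaged slightly differently.
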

\begin{proof}\footnote{This is exactly as in \cite[theorem 5.1]{Huyghe1}}
Theorem \ref{theo 3.1.5} gives us a coherent $\widehat{\Da}^{(m)}_{\mathfrak{X},\lambda,\Q}$-module $\Ea_m$ such that $\Ea\simeq\Da^{\dag}_{\mathfrak{X},\lambda}\otimes_{\widehat{\Da}^{(m)}_{\mathfrak{X},\lambda,\Q}}\Ea_m$. Moreover, $\Ea_m$ has a resolution by finite free $\widehat{\Da}^{(m)}_{\mathfrak{X},\lambda,\Q}$-modules ( proposition \ref{prop 3.3.1}). Both results clearly imply the first and the second part of the lemma. The final part of the lemma is therefore a consequence of the first part and the acyclicity of the the functor $H^{0}(\mathfrak{X},\bullet)$.
\end{proof}

\begin{proof}[Proof of theorem \ref{BB for dag}]
All in all, we can follow the same arguments of \cite[corollary 2.3.7]{Huyghe2}. We start by taking $(\mathrm{D}^{\dag}_{\mathfrak{X},\lambda})^{\oplus a}\rightarrow (\mathrm{D}^{\dag}_{\mathfrak{X},\lambda})^{\oplus b}\rightarrow E\rightarrow 0$ a finitely presented $\mathrm{D}^{\dag}_{\mathfrak{X},\lambda}$-module. By localizing and applying the global sections functor, we obtain a commutative diagram
\begin{eqnarray*}
\begin{tikzcd}
(\mathrm{D}^{\dag}_{\mathfrak{X},\lambda})^{\oplus a} \arrow[d] \arrow[r] 
& (\mathrm{D}^{\dag}_{\mathfrak{X},\lambda})^{\oplus b} \arrow[d] \arrow[r] 
& E \arrow[d] \arrow[r] 
& 0\\
(\mathrm{D}^{\dag}_{\mathfrak{X},\lambda})^{\oplus a}  \arrow[r] 
& (\mathrm{D}^{\dag}_{\mathfrak{X},\lambda})^{\oplus b} \arrow[r] 
& H^0(\mathfrak{X},\mathscr Loc_{\mathfrak{X},\lambda}^{\dag}(E)) \arrow[r] 
& 0.
\end{tikzcd} 
\end{eqnarray*}
which tells us that $E\rightarrow H^{0}(\mathfrak{X},\La oc^{\dag}_{\mathfrak{X},\lambda}(E))$ is an isomorphism. To show that if $\Ea$ is coherent $\Da^{\dag}_{\mathfrak{X},\lambda}$-module then the canonical morphism $\Da^{\dag}_{\mathfrak{X},\lambda}\otimes_{\mathrm{D}^{\dag}_{\mathfrak{X},\lambda}}H^{0}(\mathfrak{X},\Ea)\rightarrow\Ea$  is an isomorphism the reader can follow the same argument as before. As we have remarked, the second assertion follows because any equivalence between abelian categories is exact. 
\end{proof}

\subsubsection{Calculation of global sections}
\justify
Let us recall that in the subsection 4.2 we have used the fact that  associated to the  linear form $\lambda\in\mathfrak{t}^*_\Q$ there exists a central character $\chi_{\lambda}: Z(\mathfrak{g}_\Q)\rightarrow \Q_p$, where $Z(\mathfrak{g}_\Q)$ denotes the center of the universal enveloping algebra $\Ub(\mathfrak{g}_\Q)$. In this case, if Ker$(\chi_{\lambda+\rho})_{\Z_p}:=D^{(m)}(\G)\cap \text{Ker}(\chi_{\lambda+\rho})$, we can consider the central redaction
\begin{eqnarray*}
D^{(m)}(\G)_{\lambda}:=D^{(m)}(\G)/D^{(m)}(\G)\text{Ker}(\chi_{\lambda+\rho})_{\Z_p}
\end{eqnarray*}
and its $p$-adic completion $\widehat{D}^{(m)}(\G)_{\lambda}$. We have $D^{(m)}(\G)_{\lambda}\otimes_{\Z_p}\Q_p= \Ub(\mathfrak{g}_\Q)_{\lambda}$. We have shown that there exists a canonical isomorphism of $\Z_p$-algebras $\widehat{D}^{(m)}(\G)_{\lambda}\otimes_{\Z_p}\Q_p \xrightarrow{\simeq} H^0\left(\mathfrak{X},\widehat{\Da}^{(m)}_{\mathfrak{X},\lambda,\Q}\right)$. Taking the inductive limit we can conclude that if $D^{\dag}(\G)_{\lambda}:=\varinjlim_{m}\widehat{D}^{(m)}(\G)_{\lambda}$, then we also have a canonical isomorphism of $\Z_p$-algebras $D^{\dag}(\G)_{\lambda}\xrightarrow{\simeq} H^{0}(\mathfrak{X},\Da^{\dag}_{\mathfrak{X},\lambda})$. Theorem \ref{BB for dag} and this calculation complete the Beilinson-Bernstein correspondence.


\newpage

\end{document}